\providecommand{\BBb}[1]{{\mathbb{#1}}}
\providecommand{\Bold}[1]{{\mathbf{#1}}}
\providecommand{\cal}[1]{{\mathcal{#1}}}  
\newcommand{\goth}[1]{{\mathfrak #1}}
\newcommand{\ang}[1]{\langle#1\rangle}
\newcommand{\bigdot}{\mathbin{\raise.65\jot\hbox{$\scriptscriptstyle\bullet$}}}
\newcommand{\C}{{\BBb C}}
\newcommand{\Cn}{{\BBb C}^n}
\newcommand{\class}{\operatorname{class}}
\newcommand{\Dm}{\BBb D}
\newcommand{\dual}[2]{\langle\,#1,\,#2\,\rangle}
\newcommand{\eOm}{e_{\Omega}}
\newcommand{\fracc}[2]{{
                \textstyle\frac{#1}{\raise 1pt\hbox{$\scriptstyle #2$}}}}
\newcommand{\fracp}{\fracc1p}
\newcommand{\fracnp}{\fracc np}
\newcommand{\fracci}[2]{{\frac{#1}{\raise 1pt\hbox{$\scriptscriptstyle #2$}}}}
\newcommand{\fracpi}{\fracci1p}
\newcommand{\g}{\gamma_0}
\newcommand{\im}{\operatorname{i}}
\newcommand{\kd}[1]{\pmb{[\![}{#1}\pmb{]\!]}} 
\newcommand{\lap}{\operatorname{\Delta}}
\newcommand{\loc}{\operatorname{loc}}
\newcommand{\lOm}{\ell_{\Omega}}
\newcommand{\mlap}{-\!\operatorname{\Delta}}
\newcommand{\nrm}[2]{\|#1\|_{#2}}
\newcommand{\Nrm}[2]{\bigl\|#1\bigr\|_{#2}}
\newcommand{\norm}[2]{\mathinner{\|}#1\,|#2\|}
\newcommand{\Norm}[2]{\mathinner{\bigl\|\,#1\,\big|#2\bigr\|}}
\newcommand{\order}{\operatorname{order}}
\newcommand{\op}[1]{\operatorname{#1}}
\newcommand{\OP}{\operatorname{OP}}
\newcommand{\N}{\BBb N}
\newcommand{\pr}{\operatorname{pr}}     
\newcommand{\ord}[1]{$\raisebox{1ex}{{\footnotesize #1}}$}
\renewcommand{\Re}{\operatorname{Re}}
\newcommand{\R}{{\BBb R}}
\newcommand{\Rn}{{\BBb R}^{n}}
\newcommand{\Rp}{\overline{{\BBb R}}_+}
\newcommand{\rOm}{r_{\Omega}}
\newcommand{\Sdm}{\BBb S}
\newcommand{\set}[2]{\{\,#1 \mid #2\,\}}
\newcommand{\Set}[2]{\bigl\{\,#1\bigm| #2\,\bigr\}}
\newcommand{\singsupp}{\operatorname{sing\,supp}}
\newcommand{\supp}{\operatorname{supp}}
\newcommand{\Z}{\BBb Z}
\renewcommand{\hat}[1]{\overset{{\scriptscriptstyle \wedge}}{#1}}
\newcounter{rmcount}\renewcommand{\thermcount}{{\rm\roman{rmcount}}}
\newenvironment{rmlist}{%
\begin{list}{{\rm(\thermcount)}}{\setlength{\labelwidth}{\leftmargin}%
\usecounter{rmcount}}}{\end{list}}
\newcounter{Rmcount}\renewcommand{\theRmcount}{{\rm\Roman{Rmcount}}}
\newenvironment{Rmlist}{%
\begin{list}{{\rm(\theRmcount)}}{\setlength{\labelwidth}{\leftmargin}%
\usecounter{Rmcount}}}{\end{list}}
\numberwithin{equation}{section}
\newtheorem{thm}{Theorem}
\numberwithin{thm}{section}
\newtheorem{prop}[thm]{Proposition}
\newtheorem{lem}[thm]{Lemma}
\newtheorem{cor}[thm]{Corollary}
\theoremstyle{definition}
\newtheorem{defn}[thm]{Definition}
\newtheorem{exmp}[thm]{Example}
\theoremstyle{remark}
\newtheorem{rem}[thm]{Remark}
\title[Parametrices of semi-linear problems]{%
Parametrices and\\ exact paralinearisation
\\  of semi-linear boundary problems}
\author{Jon Johnsen}
\address{Department of Mathematical Sciences,
Aalborg University, Fredrik~Bajers Vej~7G, 9220 Aalborg~{\O}st, Denmark} 
\email{jjohnsen@math.aau.dk}
\keywords{Exact paralinearisation, moderate linearisation, parameter domain,  
inverse regularity properties, parametrix, pseudo-differential operators,
type $1,1$-operators.}
\thanks{~\\[4\jot]{\tt Appeared in 
Communications in Partial Differential Equations, {\bf 33\/}~(2008), 1729--1787.}%
}
\subjclass[2000]{35A17,35B65}
\begin{document}

\begin{abstract}
The subject is 
parametrices for semi-linear problems,
based on parametrices for linear boundary problems 
and on non-linearities that decompose into 
solution-dependent linear operators acting on the solutions.
Non-linearities of product type 
are shown to admit this via exact paralinearisation. 
The parametrices give regularity properties under weak conditions;
improvements in subdomains result from pseudo-locality of type~$1,1$-operators.
The framework encompasses a broad class of boundary
problems in H{\"o}lder and $L_p$-Sobolev spaces
(and also Besov and Lizorkin--Triebel spaces). 
The Besov analyses of homogeneous distributions, tensor products and
halfspace extensions have been revised.
Examples include the von~Karman equation.
\end{abstract}
\maketitle%
\noindent\section{Introduction}
   \label{intr-sect}

This article presents a parametrix construction for semi-linear
boundary problems 
as well as the resulting regularity properties 
in $L_p$-Sobolev spaces.
The work is based on investigations of 
pseudo-differential boundary operators, paramultiplication
and function spaces of J.-M.~Bony, G.~Grubb, V.~Rychkov and the author 
\cite{Bon,G4,Ry99,JJ94mlt,JJ96ell}; it is also inspired by joint
work with T.~Runst \cite{JoRu97} on solvability of semi-linear problems.

Assume eg that $A$ is an elliptic
differential operator, that $\{A,T\}$ is a linear elliptic boundary problem
on a domain $\Omega\subset\Rn$ and that,
for a suitable non-linear operator $Q$, the function $u$ is a given solution
of the problem
\begin{equation}
  Au+Q(u)=f\quad\text{in $\Omega$},\qquad Tu=\varphi \quad\text{on
$\partial\Omega$}. 
  \label{intrprb-eq}
\end{equation}
It is then a main point
to establish a family of parametrices $P^{(N)}_u$, $N\in \N$, that are 
linear operators yielding the following new formula for $u$: 
\begin{equation}
  u= P^{(N)}_{u}(Rf +K\varphi+\cal Ru)+(RL_u)^Nu.
  \label{intrpar-eq}
\end{equation}
Here $\left(\begin{smallmatrix} R& K\end{smallmatrix}\right)$  
is a left-parametrix of the linear problem, ie 
$\left(\begin{smallmatrix} R& K\end{smallmatrix}\right)
\left(\begin{smallmatrix}A\\ T\end{smallmatrix}\right)=I-\cal R$ where $\cal
R$ has range in $C^\infty(\overline{\Omega})$, 
while $L_u$ is an exact paralinearisation
of $Q(u)$. 
In \eqref{intrpar-eq}, $P^{(N)}_u$ has order zero and
can roughly be seen as a modifier of
data's contribution to $u$, while $(RL_u)^N$ is an error term analogous
to the negligible errors in pseudo-differential calculi; it can have any
finite degree of smoothness by choosing $N$ large enough.
Precise assumptions on $\{A,T\}$ and especially $Q$ will 
follow further below.

The motivation was partly to provide an alternative to 
boot-strap arguments, for in the general $L_p$-setting these can 
require somewhat lengthy descriptions, even though the strategy is clear.
It was also hoped to find purely analytical
proofs, without iteration, of the regularity properties.

These goals are achieved with the parametrix formula
\eqref{intrpar-eq}, for the regularity of $u$ can be read
off in a simple way from the right hand side, as explained below. 
And along with stronger a priori regularity of the solution,
the parametrices allow increasingly weaker assumptions on the data. 
Moreover, the formula \eqref{intrpar-eq} clearly gives a structural
information, that here is utilised to prove that
additional regularity properties in
subregions also carry over to the solutions.

Furthermore, as a gratis consequence of the
method, the parametrix formulae may,  
depending on the problem and its data,
yield that the solution belongs to spaces, on which the non-linear terms are
of higher order than the linear terms, or are ill-defined. 
(Such results can often also be obtained with
iteration, if the a priori information of the solution is used in each step.)

Compared to results derived from the paradifferential calculus 
of J.-M.~Bony \cite{Bon}, the set-up is restricted here to
non-linearities of product type, as defined below,
but in the present work 
the regularity of non-zero boundary data $\varphi$ is taken fully into account
via the term $K\varphi$ (this was undiscussed in \cite{Bon}). 
Non-linear boundary conditions can also be covered with the present methods,
but this will be a straightforward extension, and therefore left out.

As usual, the differential operator $Au+Q(u)$ is called semi-linear
when it depends linearly on the highest order derivatives of $u$. 
For such operators, it could be natural to introduce (as below) four
\emph{parameter domains} $\Dm_{\kappa}$, $\Dm(Q)$, $\Dm(A,Q)$ and
$\Dm_u$. Whilst the first two describe $\{A,T\}$ 
($\kappa$ is the class of $T$) 
and $Q$, 
the others account for spaces on which
\eqref{intrprb-eq} has regularity properties resp.\ parametrices as 
expected for a semi-linear problem.

\bigskip

Notation and preliminaries are settled in Section~\ref{prelim-sect}.
In a general framework the main result follows in
Section~\ref{main-sect}. 
Some needed facts on paramultiplication are given in Section~\ref{prod-sect}.
In Section~\ref{pdty-sect} the exact paralinearisation of
non-linearities of product type is studied.
Section~\ref{vK-sect} presents the consequences for
the stationary von~Karman problem, and the weak solutions are carried over to
general $L_p$-Sobolev spaces.
The subject of Section~\ref{sys-sect} is the parametrix and regularity results
obtained for general systems of semi-linear elliptic boundary problems 
in vector bundles; this set-up should be natural in view of
the von~Karman problem treated in Section~\ref{vK-sect}.
Concluding remarks follow in  Section~\ref{finrem-sect}.

\subsection{The model problem}   \label{model-ssect}
Throughout $\Omega\subset\Rn$ is an open set with $C^\infty$-boundary
$\Gamma:=\partial \Omega$; $n\ge2$. It is an essential, standing assumption
that $\Omega$ is bounded.
The subject is exemplified in the rest of the introduction by
the following model problem, where 
$\lap=\partial^2_{x_1}+\dots+\partial^2_{x_n}$ is the Laplacian,
$\gamma_0 u=u|_{\Gamma}$ the trace,
\begin{equation}
  \begin{aligned}
  \mlap u+ u\cdot\partial_{x_1}u&=f\quad\text{in}\quad\Omega,  \\
  \gamma_0 u&=\varphi\quad\text{on}\quad\Gamma.
  \end{aligned}
  \label{bsc-eq}
\end{equation}
In relation to the parametrices, \eqref{bsc-eq} has much in common with
the stationary Navier--Stokes equation,  but
it is not a system, so it is simpler to present.

Denoting 
the inverse of $\left(\begin{smallmatrix}\mlap\\
\gamma_0\end{smallmatrix}\right)$ 
by $\left(\begin{smallmatrix} R_D& K_D\end{smallmatrix}\right)$,  
where the subscript $D$ refers to the Dirichl\'et problem for $\mlap$,
the formula \eqref{intrpar-eq} amounts to the following, when applied to a
given solution $u$,
\begin{equation}
  u= P^{(N)}_{u}(R_Df +K_D\varphi)+(R_DL_u)^Nu.
  \label{bscpar-eq}
\end{equation}
This expression should be new even when data and solutions are
given in the Sobolev spaces $H^s$. But the usefulness of
parametrices gets an extra dimension when the $L_p$-theory is discussed, so
it will be natural to consider at least Sobolev spaces 
$H^{s}_{p}(\overline{\Omega})$ and H{\"o}lder--Zygmund
classes $C^s_*(\overline{\Omega})$. 

However, these are special cases of 
Besov spaces $B^{s}_{p,q}(\overline{\Omega})$ and
Lizorkin--Triebel spaces $F^{s}_{p,q}(\overline{\Omega})$
(the definition is recalled in \eqref{bspq-eq}--\eqref{fspq-eq}
below), since 
\begin{align}
  H^s_p&=F^{s}_{p,2}  \quad\text{for $1<p<\infty$ and $s\in \R$},
  \label{HF-eq}  \\
  C^s_*&=B^s_{\infty,\infty} \quad\text{for $s\in \R$}.
\end{align}
For the well-known $W^s_p$ spaces, $W^s_p=B^s_{p,p}$ for non-integer
$s>0$ and $W^m_p=F^m_{p,2}$ for $m\in\N$, $1<p<\infty $. 
To avoid formulations with many scales,
the exposition will be based on the $B^{s}_{p,q}$ and
$F^{s}_{p,q}$ spaces, and for brevity
$E^{s}_{p,q}$ will denote a space
that is either $B^{s}_{p,q}$ or $F^{s}_{p,q}$ (in every occurrence
within, say the same formula or theorem).

Moreover, $B^{s}_{p,q}(\overline{\Omega})$ and
$F^{s}_{p,q}(\overline{\Omega})$ are defined for $p$, $q\in\,]0,\infty]$
($p<\infty$ for $F^{s}_{p,q}$) and $s\in\R$,
where the incorporation of $p$, $q<1$ is convenient for non-linear
problems, for as non-linear maps often have natural co-domains with
$p<1$, the $H^s$- and $H^s_p$-scales would be too tight frameworks.
The price one pays for this roughly equals  the burdening of the 
exposition that would result from a limitation to $p$, $q\ge1$.

Furthermore, $F^m_{p,1}$, $1\le p<\infty$ was in 
\cite{JJ04Dcr,JJ05DTL} shown to be maximal domains for type $1,1$-operators,
ie pseudo-differential operators in $\OP(S^m_{1,1})$;
cf Section~\ref{sym-ssect} below.
Such operators show up in the linearisations,
so the $F$-scale is likely to appear anyway in connection with the
parametrices. 

If desired, the reader can of course specialise to, 
say $H^s_p$ by setting $q=2$ in the $F$-scale, 
cf \eqref{HF-eq}.
The main part of the paper deals with the parametrix construction
and its consequences, and it does not rely on a specific
choice of $L_p$-Sobolev spaces.

\bigskip

For simplicity, \eqref{bsc-eq} will in the introduction be
discussed in the Besov scale $B^{s}_{p,q}$. As a basic
requirement the spaces should fulfil the following two inequalities,
where for brevity $t_+=\max(0,t)$ stands for the positive part of $t$,
\begin{subequations}
    \label{bsc-ineq}
  \begin{align}
  s&>\fracp +(n-1)(\fracp-1)_+
  \label{bsc'-ineq}
\\
  s&>\tfrac12+n(\fracp-\tfrac12)_+.
  \label{bsc''-ineq}
  \end{align}
\end{subequations}
It is known how these allow one to make sense of the trace and the product,
respectively. Working under such conditions, a main question for 
\eqref{bsc-eq} is the following 
\emph{inverse regularity} problem:
\par
\medskip\noindent
\hspace*{\textwidth}%
\newlength{\tagw}\settowidth{\tagw}{\mbox{(IR)}}%
\hspace*{-\tagw}%
\mbox{(IR)}\hspace*{-\textwidth}
\newlength{\problemwidth}\setlength{\problemwidth}{\textwidth}%
\addtolength{\problemwidth}{-2.5\parindent}%
\addtolength{\problemwidth}{-\tagw}%
\addtolength{\problemwidth}{-2em}%
\hspace*{2.5\parindent}%
\begin{minipage}{\problemwidth}
{  given a solution $u$ in one Besov space
  $B^{s}_{p,q}(\overline{\Omega})$,\\
  for  data $f$  in
  $B^{t-2}_{r,o}(\overline{\Omega})$ and $\varphi$ in
  $B^{t-\fracci1r}_{r,o}(\Gamma)$,
  \\
  \vphantom{$B^{t-\fracci1r}_{r,o}$}
  will $u$ be in $B^{t}_{r,o}(\overline{\Omega})$ too?%
}%
\end{minipage}%
\providecommand{\pbref}[1]{\mbox{(#1)}}%
\par
\medskip\noindent
Consider eg a solution
$u$ in $H^1(\overline{\Omega})$ for data 
$f\in C^{\alpha}(\overline{\Omega})$,
$\varphi\in C^{2+\alpha}(\Gamma)$, $0<\alpha<1$.
(For $\varphi=0$ and `small' $f\in  H^{-1}$ solutions exist in $H^1_0$ 
for $n=3$ by the below Proposition~\ref{small-prop}.)
The question is then whether $u$ also 
belongs to $C^{2+\alpha}(\overline{\Omega})$. The
latter space equals $B^{2+\alpha}_{\infty,\infty}(\overline{\Omega})$ while
$H^1= B^1_{2,2}$, so problem 
\pbref{IR} 
clearly contains a classical issue; actually \pbref{IR} 
is somewhat sharper because of the third parameter. 

In comparison with \pbref{IR}, \emph{direct} regularity properties
are used for the collection of mapping properties of eg
$u\mapsto u\partial_1 u$ or $\mlap u+u\partial_1 u$.
An account of these clearly constitutes
another regularity problem (often addressed before 
\pbref{IR} is solved), so it is proposed to distinguish this 
from \pbref{IR} by using the terms direct/inverse.

In connection with \pbref{IR}, one purpose of this paper is to test how weak
conditions one can impose in addition to \eqref{bsc-ineq}. 
Along with this, it is described how the \emph{parametrix}
formula in \eqref{bscpar-eq} 
(cf also \eqref{bsc''-eq} and Theorems~\ref{ir-thm} and \ref{semiell-thm} 
below) 
yields the expected regularity properties.
The result is a flexible framework implying
that $u\in B^t_{r,o}$, also in certain cases when 
the map $u\mapsto u\partial_1 u$ 
has higher order than $\mlap$ on 
the target space 
$B^t_{r,o}$, or when $u\partial_1 u$ is
ill-defined on $B^t_{r,o}$. Examples of this are given 
in Theorem~\ref{mlap-thm}; cf Remark~\ref{illdef-rem}.

Briefly stated, the above results and their generalisations are deduced from
an exact paralinearisation $L_u$ of $u\partial_1 u$
together with the parametrix $\left(\begin{smallmatrix} R_D &
K_D\end{smallmatrix}\right)$ of 
$\left(\begin{smallmatrix}\mlap\\ \g\end{smallmatrix}\right)$, belonging
to the Boutet de~Monvel calculus of pseudo-differential boundary
operators. When combined with a
Neumann series, these ingredients yield $P^{(N)}_u$ and
the parametrix formula \eqref{bscpar-eq}. 
This resembles the usual elliptic theory at the place where 
non-principal terms are included, but for one thing a finite
series suffices here, as in \cite{Bon}, 
since the error term $(R_DL_u)^Nu$ in \eqref{bscpar-eq}
only needs to belong to $B^{t}_{r,o}$; secondly, it is less simple in the
present context to keep track of the spaces on which the various steps are
meaningful.

As another consequence of \eqref{bscpar-eq}, 
if in an open subregion $\Xi\Subset\Omega$ 
(ie $\Xi$ has compact closure in
$\Omega$, hence positive distance to the boundary) data locally have
additional properties such as $f\in B^{t_1-2}_{r_1,o_1}(\Xi,\loc)$, 
then $u\in
B^{t_1}_{r_1,o_1}(\Xi,\loc)$ also holds.
These local improvements are deduced from the 
pseudo-local property of type $1,1$-operators, 
which was proved recently by the author
in \cite{JJ08vfm}; 
cf Section~\ref{sym-ssect}.

\subsection{On the parametrices}   \label{par-ssect}
It is perhaps instructive first to review the
corresponding linear problem, with $u$, $f$ and $\varphi$ as in \pbref{IR}: 
\begin{equation}
   \begin{pmatrix} \mlap\\ \g\end{pmatrix} u
   =  \begin{pmatrix} f\\ \varphi \end{pmatrix}.
  \label{lin-eq}
\end{equation}
For the proof that $u\in B^t_{r,o}(\overline{\Omega})$, 
there is a straightforward method introduced 
by G.~Grubb in \cite[Thm.~5.4]{G3} in a context of $H^s_p$ 
and classical Besov spaces with $1<p<\infty$.  

The argument uses that
$\left(\begin{smallmatrix}\mlap\\
\g\end{smallmatrix}\right)$ is an elliptic Green operator belonging to the
calculus of L.~Boutet de~Monvel \cite{BM71}, hence has a parametrix 
$\left(\begin{smallmatrix}R_D& K_D\end{smallmatrix}\right)$ there
(this calculus is used throughout,
not just for \eqref{lin-eq} but also for the semi-linear problems,
cf~Section~\ref{sys-sect} below). 
As shown in \cite[Ex.~3.15]{G3}, it is possible to take 
the singular Green operator part of $R_D$ such that the class%
\footnote{The class is the minimal $r\in\Z\cup\{\,\pm\infty\,\}$
with continuity  $H^r\to\cal D'$ of the operator.}
of $R_D$ equals
\begin{equation}
  \class(\g)-\order(\mlap)=1-2=-1.
\end{equation}
With this choice,
$\left(\begin{smallmatrix}R_D& K_D\end{smallmatrix}\right)$ has 
continuity properties in $H^s_p$ spaces as accounted for in \cite[Thm.~5.4]{G3}; 
under the assumptions in \eqref{bsc'-ineq},
continuity from
$B^{t-2}_{r,o}(\overline{\Omega})\oplus B^{t-\fracci1r}_{r,o}(\Gamma)$ 
to $B^{t}_{r,o}(\overline{\Omega})$ follows from
\cite[Thm.~5.5]{JJ96ell}.

Being a parametrix, $\left(\begin{smallmatrix}R_D& K_D\end{smallmatrix}\right)
\left(\begin{smallmatrix}\mlap\\\g\end{smallmatrix}\right)=I-\cal R$ 
for some regularising operator $\cal R$ with range in
$C^\infty(\overline{\Omega})$, and class $1$ (although 
$\left(\begin{smallmatrix} \mlap\\\gamma_0\end{smallmatrix}\right)$ is
invertible, 
$\cal R$ has been retained here for easier comparison with the general case). 
So, using the just mentioned continuity, 
an application of $\left(\begin{smallmatrix}R_D&
K_D\end{smallmatrix}\right)$ to both sides of \eqref{lin-eq} gives that 
\begin{equation}
  u=R_D f+ K_D\varphi +\cal Ru \quad\text{belongs to}\quad
    B^{t}_{r,o}(\overline{\Omega}).
  \label{lin-eq'}
\end{equation}
This only requires the mapping properties of
$\left(\begin{smallmatrix}\mlap\\ \g\end{smallmatrix}\right)$ and 
$\left(\begin{smallmatrix}R_D& K_D\end{smallmatrix}\right)$, 
that are as stated whenever $(s,p,q)$ and
$(t,r,o)$ both satisfy \eqref{bsc'-ineq}.

\bigskip

In the \emph{parametrix construction }
the first step is this: given a solution $u$
of \eqref{bsc-eq}, find a \emph{linear},
$u$-dependent operator $L_u$ such that, with a sign convention,  
\begin{equation}
  L_uu=-u\partial_1 u.
  \label{Luu-eq}
\end{equation}
Here it seems decisive to utilise paralinearisation.
On $\Rn$ this departs from paramultiplication, that 
yields a decomposition of the usual `pointwise' product 
\begin{equation}
  v\cdot w= \pi_{1}(v,w)+\pi_{2}(v,w)+\pi_{3}(v,w),
\end{equation}
where the $\pi_{j}$ are 
paraproducts (cf \eqref{paramultiplication-eq} below). In the
notation of J.-M.~Bony \cite{Bon}, 
paramultiplication by $v$ is written $T_vw$ instead of
$\pi_1(v,w)$, and $\pi_3(v,w)=T_wv=\pi_1(w,v)$, 
whilst $R(v,w)=vw-T_vw-T_wv=\pi_2(v,w)$ is the remainder.

More specifically, the linearisation $L_u$ has the following form
for $\Omega=\Rn$,
\begin{equation}
  \begin{split}
  -L_u g&= \pi_1(u,\partial_1g)+ \pi_2(u,\partial_1g)+\pi_3(g,\partial_1u)
\\
          &=T_u(\partial_1g)+R(u,\partial_1 g)+T_{\partial_1u}(g).
  \end{split}
  \label{paral-eq} 
\end{equation}
Here the last line should emphasise how $u$ and $g$ enter.
As a comparison $g\mapsto u\partial_1g$ can be written
$T_u(\partial_1\cdot)+R(u,\partial_1\cdot)+T_{\partial_1(\cdot)}(u)$;
otherwise this notation will not be used.

In the usual paralinearisation, the $\pi_2$-term is omitted since it is of
higher regularity (leading to the famous formula
$F(u(x))=\pi_1(F'(u(x)),u(x))\,\,+$ smoother terms). 
But $\pi_2(u,\partial_1\cdot)$ is first of all 
\emph{not} regularising in the present
context, where $u$ may be given in $B^{s}_{p,q}$ or $F^{s}_{p,q}$ also 
for $s<\fracnp$ (this is possible by \eqref{bsc''-ineq}), thus allowing $u$
to be  
unbounded. Secondly, \eqref{bsc''-ineq} is the only `non-linear' limitation 
within the theory, and this arises because $\pi_{2}(u,\partial_1 g)$ may or
may not be defined; by incorporation of this term
into $L_u$ as in \eqref{paral-eq}, the resulting limitation is
whether or not $L_u$ itself is defined on~$g$. 

In view of this, $L_u$ in \eqref{paral-eq} is throughout referred to
as an \emph{exact} paralinearisation of $u\partial_1 u$.
As explained in Section~\ref{sym-ssect} below, linearisation at 
$u\in B^{s_0}_{p_0,q_0}$ leads to a pseudo-differential operator 
in $\op{OP}(S^{\omega}_{1,1})$
for $\omega=1+(\fracci n{p_0}-s_0)_++\varepsilon$.
Besides the number $1$ coming from $\partial_{x_1}$, the term
$(\fracc n{p_0}-s_0)_++\varepsilon$ appears because 
$u(x)$ may be unbounded on $\Omega$
($\varepsilon\ge0$, non-trivial only for $s_0=\fracc n{p_0}$). 

As accounted for in Section~\ref{pdty-sect} below, $L_u$ has this order on 
\emph{all}
spaces $B^{s}_{p,q}$ where it is shown to be defined; the collection of
these spaces will from Section~\ref{Dm-ssect} onwards be referred to as the
parameter domain of $L_u$, denoted by $\Dm(L_u)$.
Moreover, the order is the same as that
of $Q(u):=u\partial_1 u$ on $B^{s_0}_{p_0,q_0}$.
Therefore the Exact Paralinearisation Theorem (Theorem~\ref{mod-thm}) can be
summed up thus:

\begin{thm}   \label{intr-thm}
On every space in $\Dm(L_u)$, the exact paralinearisation $g\mapsto L_u(g)$ 
is of the same order as the non-linear map $Q$ on the space 
$B^{s_0}_{p_0,q_0}\ni u$.
\end{thm}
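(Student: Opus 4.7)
The plan is to exploit the additive decomposition \eqref{paral-eq},
\[
-L_u g = T_u(\partial_1 g) + R(u, \partial_1 g) + T_{\partial_1 u}(g),
\]
and treat the three summands separately as operators acting on an arbitrary space $B^s_{p,q}\in\Dm(L_u)$. The benchmark, namely the order of $Q$ on $B^{s_0}_{p_0,q_0}$, can itself be read off from this very decomposition applied to $u\cdot\partial_1 u$, and by the discussion preceding the theorem it equals $\omega:=1+(n/p_0-s_0)_{+}+\varepsilon$. Since $\omega$ depends only on $u$, it suffices to verify that each of the three terms above is a bounded operator from $B^s_{p,q}$ into a Besov space shifted down by exactly $\omega$.

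For the two genuine paraproducts I would invoke the continuity results collected in Section~\ref{prod-sect}: paramultiplication $T_v$ by $v\in B^{\sigma}_{r,o}$ is bounded of order $(n/r-\sigma)_{+}+\varepsilon$ (with $\varepsilon=0$ except in the critical case $\sigma=n/r$). Prepending $\partial_1$ of order~$1$, the first term $T_u(\partial_1 g)$ then has total order $1+(n/p_0-s_0)_{+}+\varepsilon=\omega$. For $T_{\partial_1 u}(g)$, the factor $\partial_1 u\in B^{s_0-1}_{p_0,q_0}$ yields order $(n/p_0-(s_0-1))_{+}=1+(n/p_0-s_0)_{+}$ whenever $s_0<1+n/p_0$, which is ensured by \eqref{bsc''-ineq}, once again equal to $\omega$.

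The remaining term $R(u,\partial_1 g)$ is the delicate one. Its very definition rests on a positivity condition for the combined Besov regularities of $u$ and $\partial_1 g$, and this is precisely the inequality that cuts out $\Dm(L_u)$ within the family of all Besov spaces. On any space in this parameter domain, the bilinear estimates for $R$ from Section~\ref{prod-sect} produce a loss of exactly $\omega$ in the target; the argument differs technically from the $\pi_1$, $\pi_3$ analysis because the dyadic blocks of the two entries have overlapping spectral supports, so the resolution-wise product estimates combine via a Young-type inequality rather than a geometric series, but the resulting order is the same.

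Once all three summands are seen to land in a common Besov space with loss $\omega$, the conclusion is immediate. The main obstacle I expect is the case analysis in $s_0$ versus $n/p_0$ (subcritical, critical, supercritical), plus the bookkeeping of the third index $q$ across each bilinear estimate --- precisely the work that Section~\ref{prod-sect} is designed to render routine.
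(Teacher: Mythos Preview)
Your overall plan---decompose $L_u$ into $\pi_1$, $\pi_2$, $\pi_3$ and bound each term separately against the target order $\omega$---is exactly the paper's approach in the proof of Theorem~\ref{mod-thm}, and your treatment of $T_u(\partial_1 g)$ and of the remainder $R(u,\partial_1 g)$ is on the right track.

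However, your analysis of $T_{\partial_1 u}(g)$ contains a genuine error. You assert that $(n/p_0-(s_0-1))_{+}=1+(n/p_0-s_0)_{+}$ ``whenever $s_0<1+n/p_0$, which is ensured by \eqref{bsc''-ineq}''. Both parts of this are wrong. First, \eqref{bsc''-ineq} is a \emph{lower} bound on $s_0$, not an upper one; nothing prevents $s_0\ge 1+n/p_0$. Second, even in the range $n/p_0\le s_0<1+n/p_0$ your identity fails: the left side equals $1+n/p_0-s_0\in(0,1]$ while the right side equals $1$. So your argument leaves the case $s_0\ge n/p_0$ unhandled for this term.

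The fix is what the paper does: observe that the $\pi_3$-term has order $(n/p_0-s_0+1)_{+}$, which is \emph{at most} $\omega=1+(n/p_0-s_0)_{+}$ in every regime (strictly smaller when $s_0>n/p_0$, and $0$ when $s_0\ge 1+n/p_0$, using the $L_\infty$-embedding). The paper's proof makes this three-regime comparison explicit, noting that regardless of whether $(n/p_0-s_0+d_j)_{+}$ vanishes for none, one, or both $j\in\{0,1\}$, the maximum of the three contributions is exactly $\omega$. You need this case distinction; the arithmetic shortcut you attempted does not go through.
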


This is shown for arbitrary product type operators in Theorem~\ref{mod-thm},
and in a vector bundle set-up in Theorem~\ref{ptyp-thm} below. 
For composition operators $F(u(x))$ it is known that
the theorem holds if $s_0>\fracc n{p_0}$ since then
$L_u\in \op{OP}(S^{0}_{1,1})$.

On an open set $\Omega\subset\Rn$ one can combine the linearisation in 
\eqref{paral-eq} with
prolongation and restriction. When $\rOm$ denotes restriction from $\Rn$ to
$\Omega$, prolongation $\lOm$ is as usual a continuous linear map 
\begin{equation}
  \lOm\colon E^{s}_{p,q}(\overline{\Omega})\to E^{s}_{p,q}(\Rn);
  \qquad \rOm\circ\lOm=I.
  \label{ext-eq}
\end{equation}
In \cite{T2,T3} there was given a construction, for each $N$, of $\lOm$ such
that \eqref{ext-eq} holds for $|s|<N$, $p,q>1/N$.
While it would be possible to work with this here,
it is a more convenient result of V.~Rychkov \cite{Ry99,Ry99BPT} 
that $\lOm$ can be so constructed that \eqref{ext-eq} holds for all $s\in
\R$ and all $p$, $q\in \,]0,\infty]$ ($p<\infty$ in the $F$-case), a
so-called \emph{universal} extension operator.
This construction was made for bounded Lipschitz domains. Briefly stated,
the basic step is to apply a fine version of the Calderon reproducing
formula $u=\sum \varphi_\nu*(\psi_\nu*u)$ 
near a boundary point, where the convolution $\psi_\nu*u$ 
(is defined when both $u$ and $\psi$ are supported in a cone and)
has a meningful extension by $0$ to $\Rn\setminus\Omega$ since it is a
function; whereafter the convolution by $\varphi_\nu$ gives a smooth
function on $\Rn$; the whole process is controlled in $B^{s}_{p,q}$-
and $F^{s}_{p,q}$-spaces via equivalent norms involving maximal functions,
established for this purpose in \cite{Ry99}.

Using this, the operator $L_u$ in \eqref{Luu-eq} is for the boundary problem
\eqref{bsc-eq} taken as
\begin{equation}
  L_u g= -\rOm\pi_1(\lOm u,\partial_1\lOm g)
         -\rOm\pi_2(\lOm u,\partial_1\lOm g)
         -\rOm\pi_3(\lOm g,\partial_1\lOm u).
  \label{Lu-eq}
\end{equation}
As a convenient abuse, this is also called the exact
paralinearisation of $u\partial_1 u$. 
It is not surprising that the mapping
properties given in and before Theorem~\ref{intr-thm} carry over to $L_u$ on
$\Omega$, and this turns out to be decisive for the construction.

To focus on the simple algebra behind the parametrix formula,
precise assumptions on the spaces will be suppressed until
Section~\ref{Dm-ssect}. First it is noted that 
equation \eqref{bsc-eq}, by application of $\left(\begin{smallmatrix} R_D&
K_D\end{smallmatrix}\right)$ and insertion of \eqref{Luu-eq},
will entail that
\begin{equation}
  u-R_DL_uu=R_Df+K_D\varphi+\cal Ru.
  \label{bsc'-eq}
\end{equation}
The idea is now to apply the finite Neumann series 
\begin{equation}
  P^{(N)}_{u}:=I + R_DL_u +\dots+ (R_DL_u)^{N-1}.
\end{equation}
This will constitute the desired parametrix.
Because $(R_DL_u)^j$ is {\em linear\/}
\begin{equation}
  P^{(N)}_u(I-R_DL_u)=I-(R_DL_u)^N,   
\end{equation}
hence the resulting parametrix formula is
\begin{equation}
  u=P^{(N)}_{u}(R_Df+K_D\varphi+\cal Ru)
    +(R_DL_u)^N (u).
  \label{bsc''-eq}
\end{equation}
Note that in comparison with \eqref{lin-eq'}, there are two extra
ingredients here, namely $P^{(N)}_u$ and $(R_DL_u)^Nu$, that describe the
effects of the non-linear terms.

As a main application of \eqref{bsc''-eq}, one can read off the regularity
of a given solution $u\in B^{s}_{p,q}(\overline{\Omega})$ in the following
way: 
An uncomplicated analysis given in Theorem~\ref{ir-thm} below
shows two new fundamental results, namely 
\begin{align}
  \exists N &\colon B^{s}_{p,q}(\overline{\Omega}) 
   \xrightarrow{\ (R_DL_u)^N\ }
   B^{t}_{r,o}(\overline{\Omega})
  \label{exN-eq}  \\
  \forall N &\colon B^{t}_{r,o}(\overline{\Omega}) 
   \xrightarrow{\ P^{(N)}_u\ }
   B^{t}_{r,o}(\overline{\Omega}).
  \label{allN-eq}
\end{align}
Since $R_Df+K_D\varphi+\cal Ru$ is in $B^t_{r,o}(\overline{\Omega})$ by the
linear theory, it is therefore clear that all terms on the right hand side of
\eqref{bsc''-eq}  
belong to $B^{t}_{r,o}$, as desired, provided $N$ is chosen
as in \eqref{exN-eq}.

The possibility of picking $P^{(N)}_{u}$ sufficiently regularising 
resembles the Hadamard parametrices, cf the description in \cite[17.4]{H}.
It is not intended to give a symbolic calculus containing $P^{(N)}_u$ 
(the difficulties in this are elucidated in 
Remark~\ref{symb-rem});
it is rather a point that
the parametrices and resulting regularity properties
may be obtained by simpler means.

Seemingly \eqref{bsc''-eq}--\eqref{allN-eq} have not
been crystallised before in connection with boundary problems. 
This might be a little surprising, since in a sense
they boil down to the fact that $R_DL_u$ is of negative order. 
Along with the algebra above, it is of course all-important
to account for the spaces on which  the various steps are both 
meaningful and
give the conclusions \eqref{bsc''-eq}--\eqref{allN-eq}.
However, first some terminology is settled.

\subsection{Maps, orders and parameter domains}
  \label{Dm-ssect}

A (possibly) non-linear operator $T$ is said to have order
$\omega$ on $E^{s}_{p,q}$ if $T$ maps this space into $E^{s-\omega}_{p,q}$
and $\norm{T(f)}{E^{s-\omega}_{p,q}}\le c\norm{f}{E^s_{p,q}}$ for some
constant $c$.  In general this leads to a function $\omega(s,p,q)$, for
typically $T$ is given along with a natural range of
parameters $(s,p,q)$ for which it makes sense on $E^s_{p,q}$; then the set
of such $(s,p,q)$ is denoted by $\Dm(T)$ and is called the \emph{parameter}
domain of $T$.

The order is differently defined if
$E^s_{p,q}$ and $E^{s-\omega}_{p,q}$ are considered over 
manifolds of unequal dimensions. But here it suffices to note that for 
the outward normal derivative of order $k-1$ at $\Gamma$, ie for $\gamma_{k-1}
f:=((\frac{\partial}{\partial\vec n})^{k-1} f)|_{\Gamma}$, there is a
well-known parameter domain $\Dm_k$ given by
\begin{equation}
  \label{Dmk-eq}
  \Dm_k=\bigl\{\,(s,p,q) \bigm| s>k+\fracp-1+(n-1)(\fracp-1)_+\,\bigr\}.
\end{equation}
For if $(s,p,q)\in \Dm_k$ there is continuity of the trace $\gamma_k\colon
B^{s}_{p,q}(\overline{\Omega})\to B^{s-k-\fracpi}_{p,q}(\Gamma)$ and of
$\gamma_k\colon F^{s}_{p,q}(\overline{\Omega})\to
B^{s-k-\fracpi}_{p,p}(\Gamma)$.  
The $k^{\op{th}}$ domain $\Dm_k$ 
is also the usual choice for elliptic boundary problems of class $k\in\Z$.

The notion of parameter domains (that was introduced jointly with T.~Runst
\cite{JoRu97}) will be convenient throughout.
Indeed, despite its simple nature, the model problem
\eqref{bsc-eq} requires four different
parameter domains for the analysis of \pbref{IR}; further below
these will be introduced as 
$\Dm(\cal A)$, $\Dm(Q)$, $\Dm(\cal A,Q)$ and $\Dm(L_u)$
along with their general analogues.

To characterise the properties leading to parametrices,
let $\cal N$ be a non-linear operator defined on $E^s_{p,q}$ for $(s,p,q)$
running in a parameter domain $\Dm(\cal N)$.
When compared to a linear operator $A$ having order $d_A$ on a domain
$\Dm(A)$, then $\cal N$ is said to be
$A$-moderate \emph{on} $E^{s}_{p,q}$ in $\Dm(A)\cap\Dm(\cal N)$ 
if $\cal N$ is a map $E^{s}_{p,q}\to E^{s-\sigma}_{p,q}$ for some $\sigma<d_A$.
For short $\cal N$ is simply called 
$A$-\emph{moderate} if such a $\sigma$ exists on every
space in $\Dm(A)\cap\Dm(\cal N)$.

To generalise this notion, a linear operator $L_u$ will be called a
linearisation of ${\cal N}$ if  
for every $u\in E^s_{p,q}$ with $(s,p,q)$ in $\Dm(\cal N)$,
\begin{equation}
  \cal N(u)=-L_u(u).
\end{equation}
Here $L_u$ should be a meaningful linear operator parametrised by the
$u$ (running through the spaces) in $\Dm(\cal N)$, 
or possibly for $u$ in a larger parameter domain $\Dm(\cal L)$. 

It will be required that, for $u\in
E^{s_0}_{p_0,q_0}$ fixed,  $g\mapsto L_u(g)$ 
should be of order $\omega(s,p,q)$,
ie be a map $E^s_{p,q}\to E^{s-\omega(s,p,q)}_{p,q}$, 
on every $E^s_{p,q}$ in a parameter domain denoted $\Dm(L_u)$.
(It will be seen in Theorem~\ref{mod-thm}, ie the full version of the Exact
Paralinearisation Theorem, that $\Dm(\cal L)=\R\times\,]0,\infty]^2$ because
the operator $L_u$ is a meaningful object for all $u$; but once $u$ is fixed,
the parameter domain of $g\mapsto L_u(g)$ is much smaller, and its
determination is a main point in Theorem~\ref{mod-thm}.)
Although $\omega$ is a function $\omega(s,p,q,s_0,p_0,q_0)$, 
the arguments $s_0,p_0,q_0$ are often left out, since $u$ is fixed in 
$E^{s_0}_{p_0,q_0}$; but 
for generality's sake $(s,p,q)$ is kept though $\omega$ often is a constant
in this paper.

\begin{defn}
  \label{mod-defn}
A linearisation $L_u$ with parameter domain $\Dm(L_u)\supset \Dm(\cal N)$ 
is said to be \emph{moderate} if, for every linearisation point 
$u$ in an arbitrary $E^{s_0}_{p_0,q_0}$ in $\Dm(\cal N)$, 
\begin{equation}
 \omega_{\max}:= \sup_{\Dm(L_{u})\times\Dm(\cal N)}
\omega(s,p,q,s_0,p_0,q_0)<\infty .
\end{equation}
In case there is some $(s_0,p_0,q_0)$ in $\Dm(\cal N)$ such that
$\sup_{(s,p,q)\in \Dm(L_{u})} \omega(s,p,q)<\infty$, then $L_u$ is said to
be \emph{moderate on $E^{s_0}_{p_0,q_0}$}.
And $L_u$ is said to be \emph{$A$-moderate} 
on $E^{s_0}_{p_0,q_0}\ni u$ if $(s,p,q)\in \Dm(A)\cap \Dm(L_u)$ implies
\begin{equation}
  \omega(s,p,q,s_0,p_0,q_0)< d_A.
  \label{order-ineq}
\end{equation}
\end{defn}

Moderate linearisations are therefore those that,
regardless of the linearisation point $u$, have 
uniformly bounded  orders on their entire parameter domains.
Clearly $\cal N$ is $A$-moderate on $E^{s_0}_{p_0,q_0}$ (in $\Dm(\cal N)$)
if $L_u$ is so, for since
$-L_uu=\cal N(u)$ holds at $(s_0,p_0,q_0)$ 
it is trivial that $\cal N$ is a map $E^{s_0}_{p_0,q_0}\to
E^{s_0-\omega(s_0,p_0,q_0)}_{p_0,q_0}\subset E^{s_0-d_A}_{p_0,q_0}$.

\begin{rem}
  \label{Lu-rem}
With the third term of \eqref{Luu-eq} equal
to $\rOm\pi_3(\lOm\cdot,\partial_1 \lOm u)$, 
the regularity of $L_u g$
is known to depend mainly on $g$. Indeed, if
$u\in B^{s_0}_{p_0,q_0}(\overline{\Omega})$, then $L_u g$
has in general only $(\fracc n{p_0}-s_0)_++1+\varepsilon$ 
derivatives less than $g$; cf Theorem~\ref{intr-thm}.
This value is a constant independent of $g$ 
and  $\fracc n{p_0}-s_0<\tfrac{n}{2}$
holds by \eqref{bsc''-ineq}, so $\omega_{\op{max}}<\infty$ and
$L_u$ is moderate; and $\lap$-moderate if eg $s_0\ge \fracc n{p_0}$. 

The linearisation $g\mapsto u\partial_1g$ might 
look natural, but since $u\partial_1 g\in B^{s}_{p,q}$
can be shown to hold if $s\le s_0$, it is of non-constant order  
$\omega(t,r,o)\ge t-s_0$ 
on $B^{t}_{r,o}\ni g$, hence not moderate 
because $\omega_{\max}\ge\sup_t t-s_0=\infty$.
Moreover, this order is larger than that of
$\mlap$ when $t>s_0+2$, so in this region it is not $\lap$-moderate.
\end{rem}

Before justifying the formal steps in
\eqref{bsc'-eq}--\eqref{bsc''-eq},
it is convenient to present the parameter
domains for problem \pbref{IR} first. This is done by merely stating
the consequences of the following sections, 
with reference to the general results.

Departing from the linear part of \eqref{bsc-eq},
the Dirichl\'et condition leads to \eqref{bsc'-ineq}, and
since the problem has 
class~$1$, one can reformulate this using \eqref{Dmk-eq}, by introduction of
the parameter domain of $\cal A=\left(\begin{smallmatrix} \mlap\\
\gamma_0\end{smallmatrix}\right)$ as
\begin{equation}
  \Dm(\cal A)=\Dm_1=\{\,(s,p,q)\mid s>\fracp+(n-1)(\fracp-1)_+ \,\}.
\end{equation}

For the quadratic operator $Q(u):=u\partial_1 u$ 
one should have a parameter domain $\Dm(Q)$ such that
$Q$ is well defined on all $B^{s}_{p,q}$ and $F^{s}_{p,q}$ in this domain.
This question is treated in Proposition~\ref{nlnr2-prop} below,
in a context of product type operators studied in
Section~\ref{est-ssect}. This yields precisely the
condition \eqref{bsc''-ineq}, cf \eqref{stQdom-eq} and
Figure~\ref{stdom-fig} there; this amounts to the 
\emph{quadratic} standard domain of $Q$,
\begin{equation}
 \Dm(Q)=\{\,(s,p,q)\mid s>\tfrac{1}{2}+(\fracnp-\tfrac{n}{2})_+\,\}.
  \label{Qdm-eq}
\end{equation}

In the important determination of the spaces on which $Q$ is $\lap$-moderate,
one can depart from the conclusion of Proposition~\ref{nlnr2-prop} below
that $Q$ is of order $\sigma(s,p,q)=1+(\fracnp-s)_++\varepsilon$,
with an $\varepsilon\ge0$ nontrivial only for $s=\fracnp$.
Ie $Q$ is a bounded map
\begin{equation}
  Q\colon B^s_{p,q}\to B^{s-\sigma(s,p,q)}_{p,q}.
\end{equation}
(More precisely, one should instead of $Q$ consider $\left(\begin{smallmatrix}
Q\\0\end{smallmatrix}\right)$ and check where it is $\cal A$-moderate, but
it is a convenient abuse to focus on $Q$ and $\lap$ instead.)

In principle one can now introduce a parameter domain of $\lap$-moderacy for
$Q$ by solving the inequality $\sigma(s,p,q)<2$ on $\Dm(\cal A)\cap\Dm(Q)$,
cf~\eqref{order-ineq}; this leads to
\begin{equation}
  \Dm(\cal A,Q) :=
  \Dm(\cal A)\cap \bigl\{\,(s,p,q)\in \Dm(Q) 
         \bigm| \sigma(s,p,q)<2 \,\bigr\}.
  \label{delQ-id}
\end{equation}
However, this calculation is made for a general semi-linear problem with the
result summed up in Corollary~\ref{Amod-cor} below.
If $n\ge 3$ for simplicity, one finds from this result and 
the obvious inclusion $\Dm(\cal A)=\Dm_1\subset\Dm(Q)$ that  
\begin{equation}
  \Dm(\cal A,Q)=
  \bigl\{\,(s,p,q) \bigm|  s>\tfrac{1}{2}+(\fracnp-\tfrac{3}{2})_+\,\bigr\}.
  \qquad (n\ge 3)
  \label{delQ-eq}
\end{equation}
So far the considerations are classical in nature (even if formulated for
the $B^{s}_{p,q}$-spaces). But the use of parameter domains and the concise
$\Dm$-notation will be particularly useful for the next remarks, that also
explain how general regularity results the present methods can give.

Using the exact paralinearisation,
$Q(u)=-L_u(u)$ holds on the entire quadratic standard domain
$\Dm(Q)$, as verified in Lemma~\ref{Lu-lem} below.
But as a new observation, $g\mapsto L_u(g)$ is for a fixed 
$u\in B^{s_0}_{p_0,q_0}$ defined on every space in  
\begin{equation}
  \Dm(L_u)=
  \{\,(s,p,q)\mid s>1-s_0+(\fracnp+\fracc n{p_0}-n)_+\,\}.
  \label{bscDmLu-eq}
\end{equation}
This is part of the content of the Exact Paralinearisation Theorem
in Section~\ref{mod-ssect} below.

It is not difficult to infer that 
$\Dm(L_u)\supset\Dm(Q)$ holds for $(s_0,p_0,q_0)\in \Dm(Q)$, 
in general with a considerable gap\,---\,for the borderline of $\Dm(Q)$ is
obtained from $\Dm(L_u)$ by setting $(s,p,q)$ and $(s_0,p_0,q_0)$ equal, so
when $(s_0,p_0,q_0)\in \Dm(Q)$, then $(s,p,q)$ can lie
an exterior part of $\Dm(Q)$ 
without violating the inequality in \eqref{bscDmLu-eq}.
It is also clear
that $\Dm(L_u)$ increases with improving a priori regularity of $u$, 
ie with increasing $s_0$ or $p_0$. 

Moreover, given a solution $u$ in
some $B^{s_0}_{p_0,q_0}$ in $\Dm(\cal A,Q)$, the parametrices and the
resulting inverse regularity properties are established in the domain
\begin{equation}
  \Dm_u=\Dm_1\cap\Dm(L_u).
  \label{Dmu-id}
\end{equation}
This is larger than $\Dm(\cal A,Q)$, for \eqref{delQ-id} gives
$\Dm(\cal A,Q)\subset \Dm_1\cap\Dm(Q)\subset \Dm_1\cap\Dm(L_u)$.

It is now possible to sketch a proof of the parametrix formula
\eqref{bsc''-eq} and the
crucial properties in \eqref{allN-eq}--\eqref{exN-eq}.
Given a solution $u$ of \eqref{bsc-eq} in, say $B^{s_0}_{p_0,q_0}$ with
$(s_0,p_0,q_0)$ in $\Dm(\cal A,Q)$,
Theorem~\ref{intr-thm} shows that $L_u$ has order
$\sigma(s_0,p_0,q_0)<2$ on all spaces in $\Dm(L_u)$. 
Therefore $R_DL_u$ is defined and has order $-\delta$, for some $\delta>0$, 
on \emph{all} spaces $B^{s}_{p,q}$ in $\Dm_u$. 
Since $\Dm_u$ is upwards unbounded, 
the composite $(R_DL_u)^N$ is defined and  
has order $-N\delta$ on $\Dm_u$. 
So via embeddings, 
$(R_DL_u)^{N}$ maps any $B^{s}_{p,q}$ in $\Dm_u$ to $C^k(\overline{\Omega})$
for all sufficiently large $N$, hence it fulfills \eqref{exN-eq}. 
(This breaks down for the other linearisations
 in Remark~\ref{Lu-rem}, since they are not moderate.)
Clearly $(R_DL_u)^N$ is then also of order $0$ on every space in $\Dm_u$, so
since $P^{(N)}_{u}$ is a sum of such powers, it satisfies \eqref{allN-eq}.
Then \eqref{bsc'-eq}--\eqref{bsc''-eq} 
follow as identities in $B^{s_0}_{p_0,q_0}$,
since this space is in $\Dm(\cal A,Q)\subset\Dm_u$, 
in particular the parametrix formula is obtained.
As seen after \eqref{allN-eq} this also gives the desired regularity 
$u\in B^{t}_{r,o}$ at once.

\bigskip

The deduction of the parametrix formula after \eqref{Dmu-id}
is of course rather straightforward. However, this is partly because 
a few commutative diagrams have been suppressed in the
explanation. Moreover, it is easy to envisage that the arguments
extend to a whole range of eg semi-linear elliptic problems, and perhaps
it is most natural to comment on the generalisations first.

For other problems the domain $\Dm(\cal A,Q)$ of $\cal A$-moderacy 
will generally be more complicated than the polygon in \eqref{delQ-eq}. 
Eg it may be non-convex and operators corresponding to $(R_DL_u)^N$
can have orders bounded with respect to $N$ (unlike 
$-N\delta$). This is the case for composition type problems
with $Q(u)=F\circ u$ in \cite{JoRu97}; cf Figure~1 there.
Furthermore, the parameter domains can be `tight'
in the sense that they (unlike the above examples)
need not be upwards unbounded; here parabolic initial and
boundary value problems could be mentioned, for if the given data 
only fulfill finitely many compatibility conditions, then solutions can only
exist in $B^{s}_{p,q}$ for $s$ below a certain limit. Cf \cite{G4} for
the determination of specific compatibility conditions for fully
inhomogeneous problems.

In view of this, it seems practical to assume only that
the parameter domain $\Dm_u$ is connected.
Under this hypothesis it is possible to prove the existence of the
desired $N$ in \eqref{exN-eq} by continuous induction along an arbitrary 
curve from $(\fracnp,s)$ to $(\fracc nr,t)$, running inside the parameter
domain $\Dm_u$. 

These techniques are presented in Section~\ref{main-sect},
where the brief argument after \eqref{Dmu-id} is replaced by an analytical
proof of the parametrix formula. In fact 
the set-up in Section~\ref{main-sect} is both axiomatic 
and general, 
allowing also parabolic problems and linearisations of non-constant order. 
The last aspect might be important for problems with
linearisations of $F(u(x))$ at unbounded solutions $u$.

However, this paper mainly focuses on non-linearities with a product
structure, as there are ample examples of such problems,
and because more general classes would burden the exposition with 
more technicalities, or even atypical phenomena. Therefore generalised
multiplication is reviewed in Section~\ref{prod-sect}, and a class of
non-linearites of product type has been introduced in
Section~\ref{pdty-sect}; these are of the form $P_2(P_0u\cdot P_1u)$
for linear differential operators $P_j$ with constant coefficients.
As an example the von Karman equation is treated in Section~\ref{vK-sect}.
The abstract results of Section~\ref{main-sect} are exploited systematically
in Section~\ref{sys-sect} on general systems of semi-linear elliptic
boundary problems of product type.

\section{Preliminaries}   \label{prelim-sect}
\subsection{Notation}   \label{notation-ssect}
For simplicity $t_{\pm}:=\max(0,\pm t)$ for $t\in\R$.
The bracket $\kd{\mathsf{A}}$ stands for $1$ and $0$
when the assertion $\mathsf{A}$ is true resp. false.  
When $\alpha\in \N_0^n$ is a multiindex,
$D^\alpha:=(-\im)^{|\alpha|}\partial^{\alpha_1}_{x_1}\dots
\partial^{\alpha_n}_{x_n}$ where $|\alpha|=\alpha_1+\dots +\alpha_n$. 

The space of smooth functions with  compact support is denoted 
by $C^\infty_0(\Omega)$ or $\cal D(\Omega)$, 
when $\Omega\subset\Rn$ is open;
$\cal D'(\Omega)$ is the dual space of distributions on $\Omega$.
$\ang{u,\varphi}$ denotes the action of $u\in\cal D'(\Omega)$
on $\varphi\in C^\infty_0(\Omega)$. The restriction
$\rOm\colon \cal D'(\Rn)\to\cal D'(\Omega)$ is the
transpose of the extension by 0 outside of $\Omega$, denoted
$\eOm\colon C^\infty_0(\Omega)\to C^\infty_0(\Rn)$. 
Using this, $C^{\infty}(\overline{\Omega})=r_\Omega C^\infty(\Rn)$ etc.

The Schwartz space of rapidly decreasing $C^\infty$-functions 
is written $\cal S$ or $\cal S(\Rn)$, while $\cal S'(\Rn)$ stands for
the space of tempered distributions.
The Fourier transformation of $u$ is $\cal Fu(\xi)=\hat u(\xi)=\int_{\Rn}
e^{-ix\cdot\xi}u(x)\,dx$, with inverse  $\cal F^{-1}v(x)=\check v(x)$. 
The space of slowly increasing functions, ie $C^\infty$-functions $f$
fulfilling $|D^\alpha f(x)|\le c_\alpha\ang{x}^{N_\alpha}$ for all
mulitindices $\alpha$ is written $\cal O_M(\Rn)$;
hereby $\ang{x}=(1+|x|^2)^{1/2}$.

The singular support of $u\in \cal D'$, denoted $\singsupp u$, is the
complement of the largest open set on which $u$ acts a $C^\infty$-function.
Outside of $F:=\singsupp u$, mollification behaves as nicely as 
one could expect (the following could be folklore):
for $\psi\in C^\infty_0(\Rn)$, with 
$\psi_k(x)=\varepsilon_k^{-n}\psi(\varepsilon_k^{-1}x)$ for 
$0\le\varepsilon_k\to0$, one has 
\begin{equation}
  \psi_k*u\to c_0u \quad\text{in}\quad C^\infty(\Rn\setminus F); 
  \quad c_0=\int \psi\,dx.
  \label{psi*u-eq}
\end{equation}
For if $K\Subset\Rn$ with $K\cap F=\emptyset$ and 
$1=\varphi+\eta$  with $\varphi\in C^\infty_0(\Rn)$ 
and $\supp \varphi\cap F=\emptyset$, $K\cap\supp\eta=\emptyset$, 
uniform continuity of $D^\alpha(\varphi u)$ gives
$\sup_K|D^\alpha(\psi_k*\varphi u-c_0\varphi u)|\searrow 0$.
And by the theorem of supports, $\psi_k*(\eta u)=0$ near $K$, eventually.

It will later be convenient that this holds more generally for
$\psi\in \cal S(\Rn)$, even though
the convolution $\psi_k*(\eta u)$ need not vanish in $K$:

\begin{lem}   \label{singsupp-lem}
For $u\in \cal S'(\Rn)$, $\psi\in \cal S(\Rn)$, the regularising sequence
$\psi_k*u$ converges to $(\int\psi\,dx)\cdot u$ in the
$C^\infty$-topology on $\Rn\setminus\singsupp u$; 
ie it fulfills \eqref{psi*u-eq}.
\end{lem}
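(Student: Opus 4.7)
The plan is to adapt the argument sketched just before the lemma: there, the crucial step was that $\psi_k * (\eta u)$ vanished near $K$ by the theorem of supports, which used $\psi\in C^\infty_0$ and fails when $\psi$ is merely Schwartz. I would replace this ingredient by a quantitative estimate drawn from the rapid decay of~$\psi$.

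Fix an arbitrary compact set $K \Subset \Rn\setminus F$ with $F := \singsupp u$, and pick $\varphi \in C^\infty_0(\Rn)$ with $\supp\varphi \cap F = \emptyset$ and $\varphi = 1$ on an open $W$ with $K \Subset W$; set $\eta = 1 - \varphi$, so that $\supp\eta \cap W = \emptyset$. The piece $\varphi u \in C^\infty_0(\Rn)$ is handled as in the compactly supported case: substituting $y = \varepsilon_k z$ in
\begin{equation*}
  \psi_k * D^\alpha(\varphi u)(x) = \int \psi(z)\, D^\alpha(\varphi u)(x - \varepsilon_k z)\,dz
\end{equation*}
and invoking uniform continuity of $D^\alpha(\varphi u)$ together with $\psi \in \cal S$ gives $\psi_k * (\varphi u) \to c_0 \varphi u$ in the $C^\infty$-topology on~$\Rn$; since $\varphi = 1$ on $W\supset K$, this amounts to $\psi_k*(\varphi u)\to c_0 u$ in $C^\infty$ near~$K$.

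The new ingredient is to show that $\psi_k * (\eta u) \to 0$ in $C^\infty$ on $K$. Note $\eta \in \cal O_M(\Rn)$, so $\eta u \in \cal S'(\Rn)$. Choose a cutoff $\chi \in C^\infty(\Rn)\cap\cal O_M(\Rn)$ with $\chi = 1$ on $\supp\eta$ and $\chi = 0$ on an open $W'\subset W$ with $K\Subset W'$; this is possible since $\supp\eta$ is closed and disjoint from $W$. Then $\chi\eta u = \eta u$ as tempered distributions, so for every multi-index $\gamma$,
\begin{equation*}
  D^\gamma\bigl(\psi_k * (\eta u)\bigr)(x) = \ang{\eta u,\ \chi(\cdot)(D^\gamma\psi_k)(x-\cdot)},
\end{equation*}
and the $\cal S'$-seminorm bound for $\eta u$ reduces matters to estimating $\sup_y |y|^{|\alpha|}|D^\beta_y f_{k,x}(y)|$ for $f_{k,x}(y) := \chi(y)(D^\gamma\psi_k)(x-y)$, uniformly in $x\in K$ and for $|\alpha|,|\beta|\le N$.

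The main obstacle is precisely this seminorm estimate, since $(D^\gamma\psi_k)(x-y) = \varepsilon_k^{-n-|\gamma|}(D^\gamma\psi)((x-y)/\varepsilon_k)$ carries a prefactor that blows up as $k\to\infty$. However, for $x\in K$ and $y\in\supp\chi$ one has $|x-y|\ge d := \op{dist}(K,\supp\chi) > 0$, so the rapid decay $|D^\delta\psi(z)|\le C_M\ang{z}^{-M}$ supplies (for $\varepsilon_k<d$) a factor $(\varepsilon_k/|x-y|)^M$ with $M$ at our disposal. Combining this with $|y|^{|\alpha|} \le C_K(1+|x-y|^{|\alpha|})$ for $x\in K$ and the polynomial growth of $\chi$ and its derivatives, one obtains $\sup_y |y|^{|\alpha|}|D^\beta_y f_{k,x}(y)| \le C_{\alpha,\beta,\gamma,M}\,\varepsilon_k^{M - n - |\beta| - |\gamma|}$, uniformly in $x\in K$. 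Choosing $M$ sufficiently large defeats the blow-up, so that $D^\gamma(\psi_k * \eta u)\to 0$ uniformly on $K$ for every~$\gamma$. Added to the contribution from $\varphi u$, this yields the asserted convergence $\psi_k * u\to c_0 u$ in the $C^\infty$-topology on $\Rn\setminus\singsupp u$.
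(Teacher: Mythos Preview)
Your proof is correct and follows essentially the same route as the paper: split $u=\varphi u+\eta u$, treat $\varphi u$ by uniform continuity, and for $\eta u$ exploit the $\cal S'$-seminorm bound together with the rapid decay of $\psi$ and the positive distance $\op{dist}(K,\supp\eta)>0$ to beat the blow-up $\varepsilon_k^{-n-|\gamma|}$. The only (inessential) difference is that you introduce an extra cutoff $\chi$, whereas the paper simply writes $\ang{\eta u,\phi}=\ang{u,\eta\phi}$ so that $\eta$ itself plays the role of your $\chi$ in the seminorm estimate; this makes the paper's version a line shorter but changes nothing conceptually.
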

\begin{proof}
Continuing the above,
one has $0<\op{dist}(K, \supp \eta)\le 
\varepsilon_k \ang{\varepsilon_k^{-1}(x-y)}$ 
for $x\in K$, $ y\in \supp\eta$, and
\begin{equation}
  |\dual{\eta u}{D^\alpha_x\psi_k(x-\cdot)}|  
  \le c\sup_{y\in \Rn,\ |\beta|\le N}
        \ang{y}^N \bigl| D^{\beta}_y
        (\frac{\eta(y)}{\varepsilon_k^{n+|\alpha|}}
        D^\alpha\psi(\frac{x-y}{\varepsilon_k})) \bigr|.
\end{equation}
Since $\ang{y}^N\le c_K\ang{\tfrac{x-y}{\varepsilon_k}}^N$ for
$\varepsilon_k<1$, and   
powers of $\ang{\varepsilon_k^{-1}(x-y)}$ may be absorbed in
an $\cal S$-seminorm on $\psi$, it follows that $\sup_K |D^\alpha(\psi_k*(\eta u))|\le
C\varepsilon_k\searrow 0$. 
\end{proof}

\begin{rem}
Lemma~\ref{singsupp-lem} was called the Regular Convergence Lemma 
(and $\Rn\setminus \singsupp u$ the regular set of $u$)
in \cite{JJ08vfm}, where it played a significant role in
investigations of type $1,1$-operators and products.
\end{rem}

\subsection{Spaces}  \label{spaces-ssect}
Norms and quasi-norms 
are written $\norm xX$ for $x$ in a vector space $X$; recall that
$X$ is quasi-normed if the triangle inequality is replaced by the existence
of $c\ge1$ such that all $x$ and $y$ in $X$ fulfil
$\norm{x+y}X\le c(\norm xX+\norm yX)$ (``quasi-'' will be suppressed
when the meaning is settled by the context). 
Eg $L_p(\Rn)$ and $\ell_p(\N)$ for
$p\in\,]0,\infty]$ are quasi-normed with $c=2^{(\fracpi-1)_+}$;
this is seen because both
$\ell_p$ and $L_p$ for $0<p\le1$ satisfy the following, for $\lambda=p$, 
 \begin{equation}
 \nrm{f+g}{}\le(\nrm{f}{}^\lambda+\nrm {g}{}^\lambda)^{1/\lambda},
 \label{1.27}
 \end{equation}
where on the right H{\"o}lder's inequality applies to the dual exponents
$1/p$ and $1/(1-p)$.

For brevity $\nrm{f}{p}:=\norm{f}{L_p}$ for $f\in
L_p(\Omega)$, with $\Omega\subset\Rn$ an open set.
 $X_1\oplus X_2$ denotes the product space topologised by
$\norm{x_1}{X_1}+\norm{x_2}{X_2}$. 
For a bilinear operator $B\colon X_1\oplus X_2\to Y$,
continuity is equivalent to boundedness and to
existence of a constant $c$ such that $\norm{B(x_1,x_2)}{Y}\le
c\norm{x_1}{X_1}\norm{x_2}{X_2}$. 
In the affirmative case, the least possible $c$ is the operator norm  
$\nrm{B}{}=\sup \{\,\norm{B(x_1,x_2)}{Y}
\mid \text{for $j=1,2$}\colon \norm{x_j}{X_j}\le 1\,\}$.

The spaces $B^{s}_{p,q}(\Rn)$ and $F^{s}_{p,q}(\Rn)$
are, with conventions as in \cite{Y1}, defined as follows:
First a Littlewood--Paley decomposition is constructed using a
function $\Psi$ in $C^\infty(\R)$ for which $\Psi\equiv0$ and $\Psi\equiv1$
holds for $t\ge 13/10$ and $t\le 11/10$, respectively. Then $\Psi_j(\xi):=
\Psi(2^{-j}|\xi|)$ and
\begin{equation}
  \Phi_j(\xi)=\Psi_j(\xi)-\Psi_{j-1}(\xi)\qquad (\Psi_{-1}\equiv0)
  \label{LP-eq}
\end{equation}
gives $\Psi_j=\Phi_0+\dots+\Phi_j$ for every $j\in\N_0$, hence
$1\equiv\sum_{j=0}^\infty \Phi_j$ on $\Rn$.
As a shorthand $\varphi(D)$ will denote the pseudo-differential operator
with symbol $\varphi$, ie $\varphi(D)u=\cal F^{-1}(\varphi\cdot \cal
Fu)$, say for $\varphi\in\cal S(\Rn)$. 

For a {\em smoothness index $s\in\R$}, 
an \emph{integral-exponent $p\in\left]0,\infty\right]$} 
and  \emph{sum-exponent} $q\in\left]0,\infty\right]$, 
the {\em Besov space} $B^{s}_{p,q}(\Rn)$ 
and the {\em Lizorkin--Triebel space} $F^{s}_{p,q}(\Rn)$ are defined as
\begin{align}
 B^{s}_{p,q}(\Rn)&=\bigl\{\,u\in\cal S'(\Rn)\bigm|
 \Norm{ \{2^{sj} \norm{\Phi_j(D)u(\cdot)}{L_p} \}_{j=0}
 ^\infty}{\ell_q} <\infty\,\bigr\},
 \label{bspq-eq} 
    \\
 F^{s}_{p,q}(\Rn)&=\bigl\{\,u\in\cal S'(\Rn)\bigm|
 \Norm{ \norm{ \{2^{sj}\Phi_j(D) u\}_{j=0}
  ^\infty}{\ell_q} (\cdot)}{L_p} <\infty\,\bigr\}\,.
 \label{fspq-eq}
\end{align}
Throughout it will be tacitly understood that $p<\infty$ whenever
Lizorkin--Triebel spaces are under consideration.
$B^{s}_{p,q}(\Rn;\loc)$ etc.\  denote the spaces of
distributions that locally belong to the above ones.

The spaces are described in eg \cite{RuSi96,T2,T3,Y1}. 
They are quasi-Banach
spaces with the quasi-norms  given by the finite expressions in
\eqref{bspq-eq} and \eqref{fspq-eq}. Using \eqref{1.27} twice, they are seen
to fulfill \eqref{1.27}
for $\lambda=\min(1,p,q)$.

Among the embedding properties of these spaces  one has
$ B^{s}_{p,q}\hookrightarrow B^{s-\varepsilon}_{p,q}$
for $\varepsilon>0$, and if in the second line 
$\Omega\subset\Rn$ is open and bounded with
$B^{s}_{p,q}(\overline{\Omega}):=\rOm B^{s}_{p,q}(\Rn)$ endowed with the
infimum norm, 
\begin{align}
  B^{s}_{p,q}&\hookrightarrow B^t_{r,o}
  \quad\text{for}\quad s-\frac np=t-\frac nr,\ p>r;\ o=q,
\\
  B^{s}_{p,q}(\overline{\Omega})&\hookrightarrow
B^{s}_{r,q}(\overline{\Omega}) 
  \quad\text{for}\quad p\ge r.
  \label{fmem-eq}
\end{align}
The analogous holds for $F^{s}_{p,q}$, except that
$F^{s}_{p,q}\hookrightarrow F^t_{r,o}$ if only $s-\frac np=t-\frac nr$, $p>r$.
Moreover, $B^{s}_{p,q}\hookrightarrow L_\infty $ holds if and only if
$s>n/p$ or both $s=n/p$ and $q\le 1$; and $F^{s}_{p,q}\hookrightarrow
L_\infty $ if and only if $s>n/p$ or both $s=n/p$ and $p\le 1$.

\bigskip

For the reader's sake a few lemmas are recalled. They are concerned
with convergence of a series $\sum_{j=0}^\infty u_j$ fulfilling the dyadic
ball \emph{condition}: for some $A>0$
\begin{equation}
  \supp \cal Fu_j\subset \{\,\xi\in \Rn\mid |\xi|\le A2^j\,\},
\quad\text{for} \quad j\ge0.
  \label{DBC-cnd}
\end{equation}

\begin{lem}[The dyadic ball criterion]
  \label{B-lem}
Let $s>\max(0,\fracc np-n)$ for $p,q \in \,]0,\infty]$ and suppose
$u_j\in \cal S'(\Rn)$ fulfil \eqref{DBC-cnd} and
\begin{equation}
  B:=
  (\sum_{j=0}^\infty 2^{sjq}\nrm{u_j}{p}^q)^{\fracci1q}<\infty.
  \label{B-id}
\end{equation}
Then $\sum_{j=0}^\infty u_j$ converges in $\cal S'(\Rn)$ to some 
$u$ lying in $B^s_{p,q}(\Rn)$
and $\norm{u}{B^s_{p,q}}\le c B$ for some $c>0$ depending on
$n$, $s$, $p$ and $q$.
\end{lem}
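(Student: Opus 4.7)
My plan is to prove this in two steps: first convergence in $\mathcal{S}'(\Rn)$, then the Besov estimate.

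\medskip
\textbf{Step 1 (Convergence in $\mathcal{S}'$).} Since each $u_j$ has Fourier support in a ball of radius $A2^j$, it is (identified with) a smooth slowly increasing function, and a Nikol'skii-type inequality gives
\begin{equation*}
  \nrm{u_j}{1}\le c(A2^j)^{n(\fracp-1)_+}\nrm{u_j}{p}
\end{equation*}
for any $p\in\,]0,\infty]$. Thus for $\varphi\in\cal S(\Rn)$,
$|\ang{u_j,\varphi}|\le c\,2^{jn(\fracp-1)_+}\nrm{u_j}{p}\nrm{\varphi}{\infty}$. Writing $2^{jn(\fracp-1)_+}\nrm{u_j}{p}=2^{j(n(\fracp-1)_+-s)}\cdot 2^{sj}\nrm{u_j}{p}$, the hypothesis $s>n(\fracp-1)_+$ makes the first factor a geometric sequence, and H{\"o}lder's inequality (in its $\ell_q$ form, with \eqref{1.27} absorbing the case $q<1$) yields a bound by $c\,B\,\nrm{\varphi}{\cal S}$. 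Hence $\sum u_j$ converges in $\cal S'(\Rn)$ to some $u$.

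\medskip
\textbf{Step 2 (Besov estimate).} By the dyadic ball condition and the localisation of $\Phi_k$ in $\{2^{k-1}\le|\xi|\le 2^{k+1}\}$, there exists $N_0=N_0(A)\in\Z$ with $\Phi_k(D)u_j=0$ whenever $j<k-N_0$; hence
\begin{equation*}
  \Phi_k(D)u=\sum_{j\ge k-N_0}\Phi_k(D)u_j.
\end{equation*}
For each term one has $\nrm{\Phi_k(D)u_j}{p}\le c\nrm{u_j}{p}$ with $c$ independent of $k,j$: this is standard convolution for $p\ge1$, while for $p<1$ it follows from the fact that $u_j$ has compactly supported spectrum (use Peetre's maximal function or the Plancherel--Polya type multiplier estimates underlying \eqref{bspq-eq}).

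\medskip
\textbf{Step 3 (Assembly).} With $\lambda=\min(1,p)$, the quasi-norm inequality \eqref{1.27} gives
\begin{equation*}
  \bigl(2^{sk}\nrm{\Phi_k(D)u}{p}\bigr)^{\lambda}\le c\sum_{j\ge k-N_0}2^{(k-j)s\lambda}\bigl(2^{sj}\nrm{u_j}{p}\bigr)^{\lambda}.
\end{equation*}
Put $a_j=2^{sj}\nrm{u_j}{p}$ and $c_m=2^{-ms\lambda}\kd{m\ge-N_0}$. Then $(2^{sk}\nrm{\Phi_k(D)u}{p})^{\lambda}\le c\,(c*a^{\lambda})_k$, a discrete convolution. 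Since $s>0$, the sequence $(c_m)\in\ell_1$, so Young's inequality gives the $\ell_{q/\lambda}$-bound for $q/\lambda\ge1$; for $q/\lambda<1$ one instead applies \eqref{1.27} termwise and exchanges summations. Taking $\lambda$-th root produces $\nrm{u}{B^s_{p,q}}\le cB$.

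\medskip
\textbf{Main obstacle.} The delicate point is the uniform bound $\nrm{\Phi_k(D)u_j}{p}\le c\nrm{u_j}{p}$ for $p<1$, since Fourier multipliers are not generally bounded on $L_p$ below $1$; this forces one to exploit the compactness of $\supp\cal Fu_j$ via a maximal-function argument. The combinatorial bookkeeping around $N_0$ and the $q/\lambda<1$ case is routine but requires the hypothesis $s>0$ precisely to make the convolving sequence $(c_m)$ summable.
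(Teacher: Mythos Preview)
Your overall strategy---split $\Phi_k(D)u$ into $\sum_{j\ge k-N_0}\Phi_k(D)u_j$, bound each piece, then sum via a discrete Young inequality---is the standard one (the paper itself gives no proof, referring to \cite{Y1}). For $p\ge1$ the argument is essentially correct. But Step~2 contains a genuine error for $p<1$: the uniform bound $\nrm{\Phi_k(D)u_j}{p}\le c\nrm{u_j}{p}$ is \emph{false} when $0<p<1$ and $j\gg k$. Counterexample in $n=1$, $p=\tfrac12$: let $u_j(x)=2^j\psi(2^jx)$ with $\hat\psi\in C^\infty_0([-1,1])$, $\hat\psi(0)=1$; then $\nrm{u_j}{1/2}=2^{-j}\nrm{\psi}{1/2}\to0$ while $\Phi_0(D)u_j\to\check\Phi_0$ in $\cal S$, so the ratio blows up. The Peetre maximal-function route you invoke gives only $\nrm{\Phi_k(D)u_j}{p}\le c\,2^{(j-k)_+a}\nrm{u_j}{p}$ with $a>n/p$, which in Step~3 would force $s>n/p$---strictly stronger than the hypothesis $s>\fracc np-n$.

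The correct ingredient is the quasi-Banach convolution inequality: for $0<p\le1$ and $f,g\in L_p$ with Fourier supports in $B(0,R)$, one has $\nrm{f\ast g}{p}\le cR^{n(1/p-1)}\nrm{f}{p}\nrm{g}{p}$. Taking $f=\check\Phi_k$ (so $\nrm{\check\Phi_k}{p}=c\,2^{-kn(1/p-1)}$), $g=u_j$ and $R\sim2^j$ yields
\[
  \nrm{\Phi_k(D)u_j}{p}\le c\,2^{(j-k)n(1/p-1)}\nrm{u_j}{p},\qquad j\ge k-N_0.
\]
Carrying this factor through Step~3 (with $\lambda=p$) makes the convolving sequence $c_m=2^{-m(s-n(1/p-1))p}\kd{m\ge-N_0}$, whose summability is precisely $s>\fracc np-n$. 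So the two halves of the hypothesis $s>\max(0,\fracc np-n)$ play distinct roles: $s>0$ handles $p\ge1$, while for $p<1$ the extra $\fracc np-n$ is exactly what absorbs the unavoidable growth of the multiplier constant---contrary to your final remark that only $s>0$ is used.

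(Minor: in Step~1 the bound $\nrm{u_j}{1}\le c\nrm{u_j}{p}$ also fails for $p>1$, since band-limited $L_p$-functions need not lie in $L_1$; but there $|\ang{u_j,\varphi}|\le\nrm{u_j}{p}\nrm{\varphi}{p'}$ gives the $\cal S'$-convergence directly.)
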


\begin{lem}[The dyadic ball criterion]
  \label{F-lem}
Let $s>\max(0,\fracc np-n)$ for $0<p<\infty$, $0< q\le \infty$, and suppose
$u_j\in \cal S'(\Rn)$ fulfil \eqref{DBC-cnd} and
\begin{equation}
  F(q):=
  \Nrm{(\sum_{j=0}^\infty 2^{sjq}|u_j(\cdot)|^q)^{\fracci1q}}{p}<\infty.
\end{equation}
Then $\sum_{j=0}^\infty u_j$ converges in $\cal S'(\Rn)$ to some 
$u$ lying in $F^s_{p,r}(\Rn)$ for
\begin{equation}
  r\ge q,\quad r>\tfrac{n}{n+s},
  \label{r-eq}
\end{equation}
and $\norm{u}{F^s_{p,r}}\le cF(r)$ for some $c>0$ depending on
$n$, $s$, $p$ and $r$.
\end{lem}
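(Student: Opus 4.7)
The plan is to prove an $F$-scale analogue of Lemma~\ref{B-lem} by first establishing $\cal S'$-convergence of $\sum u_j$ and then estimating the $F^s_{p,r}$-quasi-norm through the Fefferman--Stein vector-valued maximal inequality.

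First I would verify convergence of $\sum u_j$ in $\cal S'(\Rn)$. The pointwise inequality $\sup_j 2^{sj}|u_j(x)| \le \bigl(\sum_l 2^{slq}|u_l(x)|^q\bigr)^{1/q}$ yields $\nrm{u_j}{p} \le 2^{-sj}F(q)$; moreover the spectral-ball condition makes each $u_j$ an entire function of exponential type, so Nikolski's inequality gives $\nrm{u_j}{\infty} \le c\,2^{jn/p}\nrm{u_j}{p}$. Combined with the hypothesis $s > n/p - n$, pairing $\sum u_j$ against a Schwartz function then yields an absolutely convergent series, whence a tempered limit $u$.

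For the norm estimate I would apply the Littlewood--Paley piece $\Phi_k(D)$ to $u$. The spectral support of $u_j$ forces $\Phi_k(D) u_j = 0$ whenever $k > j + N_0$ for a fixed $N_0 = N_0(A)$ determined by $\supp \Phi_0$; hence
\[
  \Phi_k(D) u \;=\; \sum_{j\ge k - N_0}\Phi_k(D) u_j.
\]
Peetre's maximal-function inequality, applied to the band-limited $u_j$, provides a pointwise majorization $|\Phi_k(D) u_j(x)|\le c\,\omega_{j-k}\,(M|u_j|^a)^{1/a}(x)$ with a weight $\omega_{j-k}$ controlled by the Schwartz decay of $\Phi_0^\vee$ and the spectrum radius $2^j$; here $M$ is the Hardy--Littlewood maximal operator and $a$ is an auxiliary parameter to be fixed. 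Multiplying by $2^{sk}$, summing over $j\ge k - N_0$, and invoking the $\lambda$-triangle inequality (with $\lambda = \min(1,r)$) in $\ell_r$ over $k$ yields
\[
\Bigl(\sum_k (2^{sk}|\Phi_k(D) u(x)|)^r\Bigr)^{1/r} \le c \Bigl(\sum_j (2^{sj}(M|u_j|^a)^{1/a}(x))^r\Bigr)^{1/r},
\]
once the geometric sum of $\omega_{j-k}$-weights is absorbed (which is where $s > 0$ enters). Taking the $L_p$-norm and invoking the Fefferman--Stein vector-valued maximal inequality for the averaged operator $M_a := (M|\cdot|^a)^{1/a}$ then delivers a bound by $cF(r)$.

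The hard part will be the simultaneous coordination of the auxiliary exponent $a$: Fefferman--Stein demands $a < \min(p,r)$, whereas summation of the Peetre weights forces $a$ sufficiently large for $\omega_{j-k}$ to decay in $j-k$. The assumptions $s > \max(0,n/p-n)$ and $r > n/(n+s)$ amount to $\min(p,r) > n/(n+s)$, which is exactly the region where an admissible $a$ exists; the auxiliary condition $r \ge q$ enters only when passing from the $\ell^q$-based hypothesis $F(q) < \infty$ to the $\ell^r$-conclusion through the embedding $\ell^q\hookrightarrow \ell^r$.
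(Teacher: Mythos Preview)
Your proposal is correct, but it does substantially more work than the paper. The paper does not prove the lemma from scratch; it simply invokes the \emph{standard} dyadic ball criterion for the $F$-scale (as proved in~\cite{Y1}), which carries the additional hypothesis $s>\tfrac nq-n$, and then observes that this extra constraint can be removed by enlarging~$q$. Concretely: given $q$ with $F(q)<\infty$ and $s>\max(0,\tfrac np-n)$, one passes to any $r\ge q$ with $r>\tfrac{n}{n+s}$ (equivalently $s>\tfrac nr-n$); the embedding $\ell_q\hookrightarrow\ell_r$ gives $F(r)\le F(q)<\infty$, and now the standard criterion applies with $r$ in place of $q$, yielding $u\in F^s_{p,r}$ and the bound $\norm{u}{F^s_{p,r}}\le cF(r)$. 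That is the entire argument.

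What you have written is, in effect, a direct proof of that standard criterion itself---the Peetre maximal-function bound, the geometric summation in $j-k$, and the Fefferman--Stein inequality are exactly the ingredients behind the cited result in~\cite{Y1}. Your identification of the constraint $\min(p,r)>\tfrac{n}{n+s}$ as the admissibility window for the auxiliary exponent~$a$ is right, and your treatment of $\cal S'$-convergence is fine. So nothing is wrong; you have just reproved the black box that the paper is content to cite. The paper's route is shorter precisely because it offloads the analytic work to the literature, whereas yours is self-contained.
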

This follows from the usual version in
which $s>\max(0,\fracnp-n,\fracc nq-n)$ is required, for one can just
pass to larger values of $q$ if necessary. Lemma~\ref{F-lem} emphasises
that the interrelationship between $s$ and $q$ is inconsequential for the
mere existence of the sum. 

It is also well known that the restrictions on $s$
can be entirely removed if $\sum u_j$ fulfils the dyadic \emph{corona}
condition: for some $A>0$, 
$\supp\cal Fu_0\subset\{\,|\xi|\le A \,\}$ and
\begin{equation}
  \supp \cal Fu_j\subset \{\,\xi\in \Rn\mid \tfrac{1}{A}2^j\le|\xi|\le
A2^j\,\},
  \quad\text{for}\quad j>0.
  \label{DCC-cnd}
\end{equation}

\begin{lem}[The dyadic corona criterion]
  \label{DCC-lem}
Let $u_j\in \cal S'(\Rn)$ fulfil \eqref{DCC-cnd} and \eqref{B-id}.
Then $\sum_{j=0}^\infty u_j$ converges in $\cal S'(\Rn)$ to some 
$u$ for which $\norm{u}{B^s_{p,q}}\le c B$ for some $c>0$ that depends on
$n$, $s$, $p$ and $q$.
And similarly for $F^{s}_{p,q}(\Rn)$, if $F(q)<\infty$.
\end{lem}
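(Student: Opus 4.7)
The plan is to exploit the corona condition, which ensures that each $u_j$ has spectrum in a dyadic shell, so that the Littlewood--Paley pieces $\Phi_k(D)u_j$ vanish for all but a bounded number of indices $k$. Since $\supp\Phi_k$ lies in a shell $\{\,c2^k\le|\xi|\le C2^k\,\}$ for $k\ge1$ and $\supp\Phi_0\subset\{\,|\xi|\le C\,\}$, there is an integer $M=M(A)$ with $\Phi_k(D)u_j=0$ whenever $|k-j|>M$. Hence
\begin{equation*}
  \Phi_k(D)u=\sum_{|j-k|\le M}\Phi_k(D)u_j
\end{equation*}
is a finite sum, which removes any interchange issue between $\Phi_k(D)$ and the series defining $u$.

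The first step is to establish a Plancherel--Polya type bound $\nrm{\Phi_k(D)u_j}{p}\le c\nrm{u_j}{p}$ uniformly in $j,k$, valid for all $0<p\le\infty$; this is available because each $u_j$ has compactly supported spectrum, so convolution by $\check{\Phi}_k$ is controlled by the Peetre maximal function, the standard band-limited Fourier multiplier estimate underlying Besov/Triebel theory. Once this bound is in hand, the quasi-triangle inequality applied to at most $2M+1$ terms gives
\begin{equation*}
  \nrm{\Phi_k(D)u}{p}^q\le C\sum_{|j-k|\le M}\nrm{u_j}{p}^q,
\end{equation*}
and a shift of index yields
\begin{equation*}
  \sum_{k=0}^\infty 2^{skq}\nrm{\Phi_k(D)u}{p}^q\le C\cdot 2^{|s|qM}\sum_{j=0}^\infty 2^{sjq}\nrm{u_j}{p}^q=C'B^q,
\end{equation*}
so $\norm{u}{B^s_{p,q}}\le cB$. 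The corona condition reduces, for each fixed $k$, the sum over $j$ to a bounded number of terms, which is why no restriction on $s$ is needed here, in contrast to Lemma~\ref{B-lem} where the ball support permitted every $u_j$ with $j\ge k-M$ to contribute.

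The $F^s_{p,q}$-case is obtained by pushing the $\ell_q$-aggregation inside the $L_p$-norm: the same finite shift in $j$ acts as a bounded shift on $\ell_q$, while the pointwise Plancherel--Polya bound is upgraded to a vector-valued maximal inequality of Fefferman--Stein type on $L_p(\ell_q)$, again with no $s$-constraint. Convergence of $\sum u_j$ in $\cal S'(\Rn)$ comes out as a by-product: the tails $\sum_{j>N}u_j$ satisfy the same Besov (resp.\ Triebel--Lizorkin) estimates applied to the truncated series, hence are Cauchy in $B^s_{p,q}$ (resp.\ $F^s_{p,q}$), and the embedding into $\cal S'(\Rn)$ transfers this to tempered-distribution convergence.

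The main obstacle is the uniform multiplier estimate for $p<1$, where classical $L_p$-boundedness of $\Phi_k(D)$ breaks down; the band-limited nature of the $u_j$ supplied by the corona condition is what rescues the Peetre maximal function argument in this range. Everything else is bookkeeping over the $O(1)$ overlapping indices.
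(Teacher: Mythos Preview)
The paper does not supply its own proof of this lemma; immediately after stating it the author writes ``These lemmas are proved in eg \cite{Y1}'' and moves on. Your argument is exactly the standard one found in such references: the corona condition forces $\Phi_k(D)u_j=0$ unless $|j-k|\le M$, so each Littlewood--Paley piece of the sum is a finite combination of the $u_j$, and the uniform multiplier bound $\nrm{\Phi_k(D)u_j}{p}\le c\nrm{u_j}{p}$ (trivial for $p\ge1$, via the Peetre maximal inequality for band-limited functions when $p<1$) together with an index shift yields the estimate with no restriction on $s$. The $F$-case via the Fefferman--Stein vector-valued maximal inequality is likewise the textbook route.

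One small gap worth noting: your Cauchy-tail argument for $\cal S'$-convergence only works directly when $q<\infty$ (respectively $p<\infty$ in the $F$-case). For $q=\infty$ the tails need not be small in $B^s_{p,\infty}$; the usual fix is to observe that $2^{(s-\varepsilon)j}\nrm{u_j}{p}\le 2^{-\varepsilon j}B\in\ell_1$, so the series converges in $B^{s-\varepsilon}_{p,1}\hookrightarrow\cal S'$, after which the $B^s_{p,\infty}$ estimate can be read off for the already-existing limit.
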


These lemmas are proved in eg \cite{Y1}.
To estimate the numbers $B$ and $F$ in the above criteria, the
following summation lemma is often useful: for 
any sequence $(a_j)$ in $\C$, $s<0$ and $q$, $r\in \,]0,\infty]$, 
\begin{equation}
  (\sum_{j=0}^\infty 2^{sjq}(\sum_{k=0}^j
|a_k|^r)^{\tfrac{q}{r}})^{\fracci1q}
  \le c(s,q,r)
  \norm{ 2^{sj}a_j}{\ell_q}.
  \label{slem-eq}
\end{equation}
For $s>0$ the analogous holds if the second sum is over $k\ge j$ instead.
(Cf \cite[Lem.~3.8]{Y1} for $r=1$.)

For the estimates of the exact paralinearisation in Section~\ref{brdl-ssect}
and \ref{sym-ssect}, the following vector-valued
Nikol$'$ski\u\i--Plancherel--Polya inequality will be convenient. 
\begin{lem}
  \label{vNPP-lem}
Let $0<r<p<\infty$, $0<q\le\infty$ and $A>0$.
There is a constant $c$ such that for every sequence of functions  
$f_k\in L_r(\Rn)\cap\cal S'(\Rn)$
with $\supp \cal Ff_k\subset B(0,A2^k)$, 
\begin{equation}
  \Norm{(\sum_{k=0}^\infty |f_k|^q)^{1/q}}{L_p}\le
  c\Norm{\sup_k 2^{(\fracci nr-\fracci np)k}|f_k|}{L_r}.
\end{equation}
\end{lem}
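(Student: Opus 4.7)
The statement is a vector-valued extension of the classical Nikol'ski\u\i--Plancherel--Polya inequality, whose scalar version $\|f\|_{L_p} \le c R^{n(1/r-1/p)}\|f\|_{L_r}$ (for $\hat f\subset B(0,R)$ and $0<r<p\le\infty$) follows from Young's convolution inequality applied to $f=f*\psi_R$ for a bump $\psi_R$ satisfying $\hat\psi_R\equiv 1$ on $B(0,R)$. My plan is to reduce the vector-valued case to this scalar fact via Peetre's maximal function and a Hölder-type interpolation.

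First, I invoke the Peetre maximal function estimate. For each $f_k$ with Fourier support in $B(0,A2^k)$ and any $t\in(0,r)$, one has a.e.~in $x$
\[
|f_k(x)|\le c\bigl(M|f_k|^t(x)\bigr)^{1/t},
\]
where $M$ is the Hardy--Littlewood maximal operator and $c$ depends only on $n,A,t$. This converts the spectral condition into a purely pointwise maximal-function bound. Combining with the trivial pointwise bound $|f_k(x)|\le 2^{-\alpha k}H(x)$, where $H:=\sup_k 2^{\alpha k}|f_k|$ and $\alpha=n/r-n/p$, geometric interpolation yields, for any $\theta\in[0,1]$,
\[
|f_k(x)|\le c\,2^{-\alpha k(1-\theta)}H(x)^{1-\theta}\bigl(M|f_k|^t(x)\bigr)^{\theta/t}.
\]

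Next, raise to the $q$-th power and sum in $k$; the geometric series $\sum_k 2^{-\alpha kq(1-\theta)}$ converges for $\theta<1$, giving pointwise
\[
\Bigl(\sum_k|f_k(x)|^q\Bigr)^{1/q}\le c\,H(x)^{1-\theta}\Bigl(\sum_k(M|f_k|^t(x))^{q/t}\Bigr)^{\theta/q}.
\]
Take the $L_p$-norm and split via Hölder's inequality in $x$ with exponents calibrated so that the factor $H^{1-\theta}$ contributes precisely $\|H\|_{L_r}^{1-\theta}$ (which forces $(1-\theta)$ to be linked to $r/p$). For the remaining factor, invoke the Fefferman--Stein vector-valued maximal inequality (valid since $p/t>1$ and $q/t>1$ are ensured by choosing $t<\min(p,q,r)$) to bound
\[
\Bigl\|\bigl(\sum_k(M|f_k|^t)^{q/t}\bigr)^{t/q}\Bigr\|_{L_{p/t}}\le c\Bigl\|\bigl(\sum_k|f_k|^q\bigr)^{t/q}\Bigr\|_{L_{p/t}},
\]
i.e., the maximal-function factor is controlled by $X^\theta$ where $X:=\|(\sum_k|f_k|^q)^{1/q}\|_{L_p}$. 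Assembling gives the self-improving inequality $X\le c\|H\|_{L_r}^{1-\theta}X^\theta$; since $\theta<1$, standard absorption yields $X\le c'\|H\|_{L_r}$.

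The main obstacle is the simultaneous calibration of $(t,\theta)$ so that all exponent conditions---for Peetre ($t<r$), for Fefferman--Stein ($t<\min(p,q)$), and for the two Hölder applications (so that the first factor delivers exactly $\|H\|_{L_r}^{1-\theta}$ and the second fits into the $L_{p/t}$-maximal-function estimate)---are compatible in the entire parameter range $0<r<p<\infty$, $0<q\le\infty$. The case $q=\infty$ is handled in parallel by replacing sums by suprema throughout, and the cases where $p,q<1$ require the quasi-norm form of Hölder but otherwise proceed identically. Once the exponents are pinned down, the remaining arithmetic is routine.
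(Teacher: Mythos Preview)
Your absorption scheme has a genuine exponent obstruction. Set $X=\bigl\|(\sum_k|f_k|^q)^{1/q}\bigr\|_{L_p}$, $H=\sup_k 2^{\alpha k}|f_k|$ with $\alpha=n/r-n/p$. After the pointwise bound $G\le cH^{1-\theta}\bigl(\sum_k(M|f_k|^t)^{q/t}\bigr)^{\theta/q}$, a H\"older split of $\|G\|_{L_p}$ requires $1/p_1+1/p_2=1/p$. For the first factor to produce $\|H\|_{L_r}^{1-\theta}$ you need $p_1(1-\theta)=r$; for the second factor, after Fefferman--Stein in $L_{p_2\theta/t}(\ell_{q/t})$, to produce $X^\theta$ you need $p_2\theta=p$. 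Inserting both into the H\"older constraint gives $(1-\theta)/r+\theta/p=1/p$, hence $(1-\theta)(1/r-1/p)=0$, which forces $r=p$ (or $\theta=1$, which kills the geometric sum). So for $r<p$ there is \emph{no} admissible $\theta\in(0,1)$: at $\theta=0$ one only gets $\|G\|_{L_p}\le c\|H\|_{L_p}$, at $\theta=1$ the tautology $X\le cX$, and nothing in between closes. The underlying reason is that once you replace $|f_k|$ by $(M|f_k|^t)^{1/t}$ you have discarded the spectral scale of $f_k$; the only place $r$ can then enter is through $H$, and H\"older alone cannot convert an $L_p$-estimate into an $L_r$-estimate with $r<p$.

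The paper does not argue via maximal functions at all. It indicates (in the parenthetical remark following the lemma) that the vector-valued statement reduces to the \emph{scalar} Nikol\cprime ski\u\i--Plancherel--Polya inequality by means of an elementary pointwise sequence inequality from \cite[Lem.~4]{BrMi01}, with the details carried out in \cite{JoSi07}. The scalar NPP, applied to each $f_k$ separately, is precisely what supplies the passage from $L_r$ to $L_p$ that your scheme is missing; the Brezis--Mironescu lemma then handles the summation in $k$ without any Fefferman--Stein machinery or bootstrap.
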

The ordinary Nikol$'$ski\u\i--Plancherel--Polya inequality results
from this if $f_k\ne0$ holds only for one value of $k$. 
(Lemma~\ref{vNPP-lem} itself can
be reduced to this version by means of an elementary inequality in
\cite[Lem.~4]{BrMi01}, cf \cite{JoSi07}.)

\bigskip

To treat the examples in Proposition~\ref{x32-prop} below
tensor products will be useful. 
However, lacking a thorough reference to 
this, the next result is given.
It improves \cite[Prop.~2.7]{JoRu97} by including the case $s>0$.
A proof using the above dyadic corona criterion is supplied, partly because
\cite[Prop.~2.7]{JoRu97} was stated without details, partly because   
it then is more natural to omit the details behind the better known, but
analogous, paraproduct estimates recalled in Remark~\ref{pmest-rem} below.

\begin{lem}
  \label{tensor-lem}
The continuous map $(u,v)\mapsto u\otimes v$ from $\cal
S'(\R^{n'})\times\cal S'(\R^{n''})$ to $\cal S'(\R^{n'+n''})$
restricts to bounded bilinear maps
\begin{align}
  B^{s}_{p,q}(\R^{n'})\times B^{s}_{p,q}(\R^{n''}) & \to
  B^{s}_{p,q}(\R^{n'+n''}) \quad\text{for}\quad s>0,
  \label{Bs-eq} \\
  B^{s'}_{p,q}(\R^{n'})\times L_{p}(\R^{n''}) & \to
  B^{s'}_{p,q}(\R^{n'+n''}) \quad\text{for}\quad s'<0, 1\le p\le\infty,
  \label{Bs'-eq} \\
  B^{s'}_{p,q'}(\R^{n'})\times B^{s''}_{p,q''}(\R^{n''}) & \to
  B^{s'+s''}_{p,q}(\R^{n'+n''}) \quad\text{for}\quad s',s''<0, 
  \fracc1q=\fracc1{q'}+\fracc1{q''}.
  \label{Bs''-eq} 
\end{align}
\end{lem}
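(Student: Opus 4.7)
The plan is to decompose $u$ and $v$ by Littlewood--Paley operators acting in the two sets of variables separately and then to invoke the dyadic corona criterion of Lemma~\ref{DCC-lem}. Writing $u_j=\Phi_j(D')u$ on $\R^{n'}$ and $v_k=\Phi_k(D'')v$ on $\R^{n''}$, the tensor product decomposes as $u\otimes v=\sum_{j,k\ge 0}u_j\otimes v_k$; regrouping by $\ell=\max(j,k)$ yields $u\otimes v=\sum_{\ell\ge 0}T_\ell$ with
\begin{equation*}
  T_\ell=u_\ell\otimes v_{<\ell}+u_{<\ell}\otimes v_\ell+u_\ell\otimes v_\ell,\qquad u_{<\ell}=\sum_{j<\ell}u_j,\quad v_{<\ell}=\sum_{k<\ell}v_k.
\end{equation*}
Because the spectrum of $u_\ell$ lies in a corona $|\xi'|\sim 2^\ell$ whilst that of $v_{<\ell}$ is contained in $|\xi''|\le A\cdot 2^{\ell-1}$, and symmetrically for the other summands, the Fourier support of $T_\ell$ lies in a corona $|(\xi',\xi'')|\sim 2^\ell$. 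Thus $\sum_\ell T_\ell$ satisfies the dyadic corona condition \eqref{DCC-cnd}, so Lemma~\ref{DCC-lem} is applicable.

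By Fubini the $L_p$-norm on a tensor factorises, $\nrm{f\otimes g}{L_p(\R^{n'+n''})}=\nrm{f}{L_p(\R^{n'})}\nrm{g}{L_p(\R^{n''})}$, which reduces matters to bounding $(\sum_\ell 2^{s\ell q}\nrm{T_\ell}{L_p}^q)^{1/q}$ (or the analogous sum with weight $2^{(s'+s'')\ell q}$) by products of partial-sum norms. For case \eqref{Bs-eq}, the crux is that $\nrm{v_{<\ell}}{L_p}$ is uniformly bounded by $C\nrm{v}{B^s_{p,q}}$: for $p\ge 1$ this follows from $L_p$-boundedness of partial sums together with $B^s_{p,q}\hookrightarrow L_p$; for $0<p<1$ the $p$-subadditive inequality $\nrm{v_{<\ell}}{L_p}^p\le\sum_{k<\ell}\nrm{v_k}{L_p}^p$ combined with $\nrm{v_k}{L_p}\le 2^{-sk}\nrm{v}{B^s_{p,\infty}}$ turns the estimate into a convergent geometric series. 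In case \eqref{Bs'-eq}, $p\ge 1$ delivers uniform $L_p$-control of $v_{<\ell}$ and $v_\ell$, while the summation lemma \eqref{slem-eq} with $s'<0$ absorbs the tails $\sum_\ell 2^{s'\ell q}\nrm{u_{<\ell}}{L_p}^q$.

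For case \eqref{Bs''-eq}, H\"older's inequality for sequences with $1/q=1/q'+1/q''$ separates
\begin{equation*}
  \Bigl(\sum_\ell (2^{s'\ell}\nrm{u_\ell}{L_p}\cdot 2^{s''\ell}\nrm{v_{<\ell}}{L_p})^q\Bigr)^{1/q}\le \Nrm{\{2^{s'\ell}\nrm{u_\ell}{L_p}\}}{\ell_{q'}}\cdot \Nrm{\{2^{s''\ell}\nrm{v_{<\ell}}{L_p}\}}{\ell_{q''}},
\end{equation*}
whereupon \eqref{slem-eq} with $s''<0$ identifies the second factor with $C\nrm{v}{B^{s''}_{p,q''}}$, while the first is $\nrm{u}{B^{s'}_{p,q'}}$.

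The main obstacle is the bookkeeping with partial sums $u_{<\ell},v_{<\ell}$ in the quasi-Banach regime $0<p<1$: the $L_p$ triangle inequality is unavailable, and one must consistently replace it by $p$-subadditivity and extract the necessary decay from the sign of $s$ (resp.\ $s',s''$) before summing geometrically. Once this replacement is in place, the symmetric treatment of $u_{<\ell}\otimes v_\ell$ and of the diagonal term $u_\ell\otimes v_\ell$ is entirely analogous, and the argument reduces to standard dyadic bookkeeping.
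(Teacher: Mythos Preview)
Your proposal is correct and follows essentially the same route as the paper. Your grouping $T_\ell=u_\ell\otimes v_{<\ell}+u_{<\ell}\otimes v_\ell+u_\ell\otimes v_\ell$ is exactly the paper's split $u_kv^{k-1}+u^kv_k$ (expand $u^k=u_{<k}+u_k$), and the rest---corona criterion, tensor factorisation of the $L_p$-norm, uniform $L_p$-bounds on partial sums for $p\ge 1$, the summation lemma \eqref{slem-eq} for the negative-index cases, and H\"older for \eqref{Bs''-eq}---matches the paper step for step; the only cosmetic difference is that for $p<1$ the paper controls $\nrm{v_{<\ell}}{p}$ via the embedding $B^{s}_{p,q}\hookrightarrow F^{0}_{p,1}$ rather than your explicit geometric-series bound.
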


\begin{proof}
For $u\in \cal S'(\R^{n'})$ and $v\in \cal S'(\R^{n''})$ 
there is a decomposition, 
when $\Psi'_N=\Phi'_0+\dots+\Phi'_N$ refers to a
Littlewood--Paley decomposition on 
$\R^{n'}$ with the present conventions,
so that $u_k=\Phi'_k(D)u$, $u^k=\Psi'_k(D)u$, 
and similarly for $v$ on $\R^{n''}$, 
\begin{equation}
  u\otimes v= \lim_{N\to\infty}\cal F^{-1}((\Psi'_N\otimes \Psi''_N)
    \cal F(u\otimes v)) =\sum_{k=0}^\infty (u_kv^{k-1}+u^kv_{k}).
\end{equation}
Both series on the right-hand side 
fulfill the dyadic corona condition
\eqref{DCC-cnd},
since $\xi=(\xi',\xi'')$ for $\xi'\in \R^{n'}$, $\xi''\in
\R^{n''}$ and $|(\xi',0)|\le|\xi|\le|\xi'|+|\xi''|$ yield eg
\begin{equation}
  \xi\in \supp\cal F(u_{k}v^{k-1})\implies
  \tfrac{11}{20}2^k\le |\xi| \le \tfrac{13}{10}(2^k+2^{k-1})=\tfrac{39}{20}2^k.
\end{equation}
For $1\le p\le\infty$ the usual convolution estimate gives
\begin{equation}
  2^{sk}\norm{u_kv^{k-1}}{L_p(\R^{n'+n''})}\le
  2^{ks}\nrm{u_k}{p}\nrm{\cal F^{-1}\Psi''}{1}\nrm{v}{p},
  \label{Lp-eq}
\end{equation}
and since $B^{s}_{p,q}\hookrightarrow L_p$ for $s>0$, $p\ge1$, it follows
from Lemma~\ref{DCC-lem} by calculation of the $\ell_q$-norms that
\begin{equation}
  \Norm{\sum u_kv^{k-1}}{B^{s}_{p,q}(\R^{n'+n''})}\le
  c\norm{u}{B^{s}_{p,q}(\R^{n'})}\norm{v}{B^{s}_{p,q}(\R^{n''})}.
\end{equation}
The other series is treated the same way, and thus follows \eqref{Bs-eq} for
$p\ge1$. For $p<1$ one has $\nrm{v^{k-1}}{p}\le
\nrm{|v_0|+\dots+|v_k|}{p}\le\norm{v}{F^0_{p,1}}\le c\norm{v}{B^{s}_{p,q}}$,
and this instead of \eqref{Lp-eq} extends
\eqref{Bs-eq} to all $p\in \,]0,\infty]$.

Since \eqref{Lp-eq} holds for all $s$, it suffices for \eqref{Bs'-eq} 
to estimate $\sum u^kv_k$.
By \eqref{slem-eq},
\begin{equation}
  \sum 2^{s'kq}\nrm{u^k}{p}^q\le 
  \sum 2^{s'kq}(\nrm{u_0}{p}+\dots+\nrm{u_k}{p})^q\le
  c\sum 2^{s'kq}\nrm{u_k}{p}^q.
  \label{s'-ineq}
\end{equation}
Using Lemma~\ref{DCC-lem}, it follows as above that
$\norm{\sum u^kv_k}{B^{s'}_{p,q}}\le c\norm{u}{B^{s'}_{p,q}}\nrm{v}{p}$.

To prove \eqref{Bs''-eq} one can use the summation lemma for both $u^k$ and
$v^{k-1}$ since both $s'$, $s''<0$. Combining this with
H{\"o}lder's inequality for $\ell_{q}$, the above procedure gives a bound of
$\norm{u\otimes v}{B^{s'+s''}_{p,q}}$ by
$c\norm{u}{B^{s'}_{p,q'}}\norm{v}{B^{s''}_{p,q''}}$. 
\end{proof}

\subsection{Examples}
  \label{exmp-ssect}

The delta measure $\delta_0\in B^{\fracci np-n}_{p,\infty}(\Rn)$ for
$0<p\le\infty$; this well-known fact follows directly from \eqref{bspq-eq}
since $2^{j(\fracci np-n)}\nrm{\check\Phi_j}{p}$ is $j$-independent.

Other examples include $|x|^a$, that for $a>-n$ and $p\ge 2$ was shown 
in \cite{Y3} to be
locally in $B^{\fracci np+a}_{p,\infty}(\Rn)$ at $x=0$. 
For $0<p\le\infty$ there is a technical treatment via
differences in \cite[Sect.2.3]{RuSi96} of $|x|^a|\log|x||^{-b}$, but without
details for the case $b=0$ that is used in the present paper.

As a novelty, the Regular Convergence Lemma (Lemma~\ref{singsupp-lem}) 
yields a direct argument
for a large class of homogeneous \emph{distributions}:  
recall that $u\in \cal D'(\Rn)$ is homogeneous
of degree $a\in \C$ if 
\begin{equation}
  \dual{u}{\varphi}=t^{n+a}\dual{u}{\varphi(t\cdot)},\quad\forall t>0,
  \quad \forall\varphi\in C^\infty_0(\Rn).
    \label{homo-eq}
\end{equation}
When $u\in \cal S'(\Rn)$ this extends to $\varphi\in \cal S(\Rn)$ by
closure. This applies to the Littlewood--Paley decomposition
$1=\sum_{j=0}^{\infty}\Phi_j$, where 
$\Phi_j(\xi)=\Phi(2^{-j}\xi)$ for $j\ge1$ and a fixed $\Phi\in C^\infty$ 
(namely $\Phi=\Psi(|\cdot|)-\Psi(2|\cdot|)$), so \eqref{homo-eq} gives directly
\begin{equation}
  2^{ja}\Phi_j(D)u(x)=\dual{u}{2^{j(a+n)}\check\Phi(2^jx-2^j\cdot)}
    =\Phi(D)u(2^jx).
  \label{Phiu-eq}
\end{equation}
Therefore $2^{j(\fracci np+\Re a)}\nrm{\Phi_j(D)u}{p}=
\nrm{\Phi(D)u}{p}$, which is a constant independent of $j$.
This can be exploited if
$u$ is assumed to have the origin as the only singularity:

\begin{prop}   \label{homo-prop}
Let $u\in \cal D'(\Rn)$ be $C^\infty$ on $\Rn\setminus\{0\}$ and homogeneous
of degree $a\in \C$ there, ie \eqref{homo-eq} holds for all 
$\varphi\in C^\infty_0(\Rn\setminus\{0\})$.

Then $u$ is locally at $x=0$ in 
$B^{\fracci np+\Re a}_{p,\infty}(\Rn)$ for $0<p\le\infty$. 
If $-n<\Re a<0$ it holds for 
$-\tfrac{n}{\Re a}<p\le\infty$ that $u\in B^{\fracci np+\Re
a}_{p,\infty}(\Rn)$; this holds also for $p=\infty$ if $\Re a=0$.
The Besov space conclusions are sharp with
respect to $s$ and $q$, unless $u$ is a homogenenous polynomial (which is
the only case in which $u\in  C^\infty(\Rn)$).
\end{prop}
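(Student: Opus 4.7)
The plan is to exploit the dilation identity \eqref{Phiu-eq}, after first recognising that it applies rigorously only to a globally homogeneous distribution. I would begin by checking $u \in \cal S'(\Rn)$: splitting $u = \chi u + (1-\chi)u$ with $\chi \in C^\infty_0(\Rn)$, $\chi \equiv 1$ near $0$, the first summand is compactly supported, while the second is smooth and satisfies $|\partial^\alpha[(1-\chi)u](x)| \le C_\alpha(1+|x|)^{\Re a - |\alpha|}$ so it lies in $\cal O_M(\Rn)$. Next I would pass to a homogeneous tempered extension $u_H$ of $u|_{\Rn \setminus \{0\}}$ to all of $\Rn$ (via standard finite-part/meromorphic continuation, with a $\log$-insertion when $a \in \{-n,-n-1,\dots\}$), so that $u - u_H$ is supported at $\{0\}$ and hence a finite sum of $\partial^\alpha\delta_0$ terms.

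For the globally homogeneous $u_H$ the identity \eqref{Phiu-eq} yields
\[
  \nrm{\Phi_j(D)u_H}{p} = 2^{-j(n/p+\Re a)}\nrm{\Phi(D)u_H}{p} \qquad (j\ge1),
\]
so the crux is to show $\nrm{\Phi(D)u_H}{p} < \infty$ for every $0<p\le\infty$. I would prove the stronger statement $\Phi(D)u_H \in \cal S(\Rn)$: it is smooth by the compact Fourier support of $\Phi$, and because $\Phi \equiv 0$ near $\xi=0$ every moment $\int\check\Phi(y)y^\beta\,dy = (-i)^{|\beta|}\partial^\beta\Phi(0)$ vanishes. Writing $\Phi(D)u_H(x) = \int\check\Phi(y)[u_H(x-y)-P_{N-1}(y;x)]\,dy$ with $P_{N-1}$ the Taylor polynomial of $u_H$ about $x$, splitting the integral at $|y|<|x|/2$ to invoke the homogeneity bound $|\partial^\beta u_H(z)|\le C_\beta|z|^{\Re a-|\beta|}$ on the main piece, and using the Schwartz decay of $\check\Phi$ on the tail, one finds $|\Phi(D)u_H(x)|\le C_N|x|^{\Re a - N}$ for every $N$; the same argument applied to $\partial^\gamma\Phi(D)u_H = (\partial^\gamma\check\Phi)*u_H$ handles the derivatives.

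This establishes $\sup_{j\ge1}2^{j(n/p+\Re a)}\nrm{\Phi_j(D)u_H}{p}<\infty$, and then a cutoff $\chi$ together with the Schwartz character of $\Phi_0(D)(\chi u_H)$ gives the local statement $\chi u_H\in B^{n/p+\Re a}_{p,\infty}(\Rn)$; the $\partial^\alpha\delta_0$-corrections each lie in $B^{n/p-n-|\alpha|}_{p,\infty}\hookrightarrow B^{n/p+\Re a}_{p,\infty}$ in the relevant range. For the global statement with $-n<\Re a<0$ and $p>-n/\Re a$ the missing ingredient is $\Phi_0(D)u\in L_p(\Rn)$; a convolution estimate yields $|\Phi_0(D)u(x)|\le C(1+|x|)^{\Re a}$ at infinity, whose $L_p$-integrability is exactly $p\Re a + n<0$, and the case $\Re a=0$, $p=\infty$ is immediate. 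Sharpness follows from \eqref{Phiu-eq} read in reverse: $2^{j(n/p+\Re a)}\nrm{\Phi_j(D)u_H}{p}$ is a nonzero constant (vanishing would by the scaling force $\hat u_H$ to be supported at $\{0\}$, hence $u_H$ to be a polynomial), so the sequence does not lie in $\ell_q$ for any $q<\infty$, and raising $s$ drives it to infinity.

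The principal obstacle is the reduction to the globally homogeneous $u_H$ and the bookkeeping of the $\partial^\alpha\delta_0$-corrections, ensuring that the orders $|\alpha|$ appearing are compatible with the target Besov scale via the canonical nature of $u_H$; once past this, the dilation identity together with the Schwartz character of $\Phi(D)u_H$ reduce everything to routine estimates.
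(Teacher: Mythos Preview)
Your route through an auxiliary globally homogeneous $u_H$ differs from the paper's and leaves a gap you flag but do not close. Nothing ties the orders $|\alpha|$ of the $\partial^\alpha\delta_0$-terms in $u-u_H$ to $a$: the embedding $B^{n/p-n-|\alpha|}_{p,\infty}\hookrightarrow B^{n/p+\Re a}_{p,\infty}$ needs $|\alpha|\le -n-\Re a$, which is vacuous for $\Re a>-n$ and in any case places no constraint on which $\partial^\alpha\delta_0$ may actually occur in $u-u_H$. And once you insert a logarithm for $a\in\{-n,-n-1,\dots\}$, your $u_H$ is no longer homogeneous, so the scaling identity \eqref{Phiu-eq} on which the scheme rests collapses for it.

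The paper avoids both issues by working with $u$ itself throughout, never introducing $u_H$. The replacement for your Taylor-remainder argument is a Fourier-side one-liner: from $u\in\cal E'+\cal O_M$ a short bootstrap ($\xi^\alpha D^\beta\hat u\in\cal F(\cal E'+L_1)\subset C^0$ for $|\alpha|$ large) gives $\hat u\in C^\infty(\Rn\setminus\{0\})$; since $\Phi$ vanishes near $\xi=0$ one has $\Phi\hat u\in C^\infty_0$, hence $\Phi(D)u\in\cal S$ and $\nrm{\Phi(D)u}{p}<\infty$ for every $p$. For the local statement the paper writes $\Phi_j(D)(\chi u)(x)=2^{-ja}\Phi(D)(\chi(2^{-j}\cdot)u)(2^jx)$ and handles the dilated cutoff via Lemma~\ref{singsupp-lem} (the Regular Convergence Lemma, proved just before): because $\Phi$ kills a neighbourhood of $\singsupp\hat u=\{0\}$, one has $\Phi\cdot(\hat u*2^{jn}\hat\chi(2^j\cdot))\to\chi(0)\Phi\hat u$ in $C^\infty_0$, so $2^{j(n/p+\Re a)}\nrm{\Phi_j(D)(\chi u)}{p}\to\nrm{\Phi(D)u}{p}$. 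The global and sharpness parts then run essentially as you sketched.
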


\begin{proof}
The function $D^\alpha u$ on $\Rn\setminus\{0\}$
acts on $\varphi$ like $t^{a-|\alpha|}D^\alpha u
(t^{-1}x)$. Hence $D^\alpha u$ has degree $a-|\alpha|$, and
$t=|x|$ entails $|D^\alpha u(x)|\le c_\alpha |x|^{\Re a-|\alpha|}$ 
for $x\ne0$, all $|\alpha|\ge 0$.

If $u\in C^\infty(\Rn)$, the homogeneity of $D^\alpha u$ gives
$D^\alpha u\equiv0$ for $\Re a-|\alpha|<0$ 
(otherwise $D^\alpha u$ would be discontinuous at $x=0$), and that
$D^\alpha u(0)=0$ for $\Re a-|\alpha|>0$. Therefore Taylor's formula   
gives at once that $u\equiv0$ if $\Re a\notin\N_0$, or else that
$u$ is a homogeneous polynomial (and $a\in \N_0$). 

The homogeneity and smoothness on $\Rn\setminus\{0\}$ together
imply that $u\in \cal S'$
with $\cal F u$ in $C^\infty(\Rn\setminus\{0\})$. This is known,
cf \cite[Thm~7.1.18]{H},
but easy to see with a few ideas used here anyway: 
for $\chi\in C^\infty_0(\Rn)$, $\chi(0)=1$, one has $u=\chi
u+(1-\chi)u$, where the second term is in
$\cal O_M$ by the above, ie $u\in \cal E'+\cal O_M\subset\cal S'$. And 
$\xi^{\alpha}D^\beta\hat u=
\cal FD^\alpha((-x)^\beta u)$ is in $\cal F(\cal E'+L_1)\subset C^0$ for
$|\alpha|>a+|\beta|+n$, so $\hat u$ is $C^\infty$ for $x\ne 0$. 

By the Paley--Wiener--Schwartz Theorem 
$\Phi_j(D)(\chi u)\in \cal S(\Rn)$. In particular for $j=0$ this gives
$\nrm{\Phi_0(D)(\chi u)}{p}<\infty$, while
for $j\ge1$ it follows from \eqref{homo-eq} ff that, 
cf \eqref{Phiu-eq},
\begin{equation}
  \Phi_j(D)(\chi u)(x)=\dual{u}{\chi2^{jn}\check\Phi(2^jx-2^j\cdot)}
    =2^{-ja}\Phi(D)(\chi(2^{-j}\cdot)u)(2^jx).
  \label{Phichiu-eq}
\end{equation}
Here $\chi(2^{-j}\cdot)$ is handled with Lemma~\ref{singsupp-lem}, for 
since $\Phi=0$ near $\singsupp \hat u=\{0\}$, 
\begin{equation}
  (2\pi)^{-n}\Phi(\hat u*(2^{jn}\hat\chi(2^{j}\cdot)))
  \to \chi(0)\Phi\hat u\quad\text{in}\quad C^\infty_0(\Rn).
\end{equation}
Then the continuity of the embedding $\cal S\hookrightarrow L_p$ and of the
quasi-norm $\nrm{\cdot}{p}$ gives
\begin{equation}
 \lim_{j\to\infty} 2^{j(\fracci np+\Re a)}\nrm{\Phi_j(D)(\chi u)}{p}=
 \nrm{\Phi(D)u}{p}<\infty.
\end{equation}
Hence $\chi u\in  B^{\fracci np+\Re a}_{p,\infty}(\Rn)$ for all $p$.
Note that the right hand side is zero if and only if 
$\Phi\hat u\equiv 0$, that by the homogeneity of $\hat u$
is equivalent to $\supp\hat u\subset\{0\}$, 
that holds if and only if $u$ is a polynomial.

For $-n<\Re a<0$ and some $p\in \,]-\tfrac{n}{\Re
a},\infty]\subset \,]1,\infty]$, note first that
\begin{equation}
  |x|^{\Re a} \text{ is in $L_p$ for $|x|>1$} \iff   p\Re a<-n.
  \label{Lp1-eq}
\end{equation}
Since $L_p* L_1\subset L_p$ for $p\ge1$, 
it follows that $\check\Phi_0*u$ belongs to $L_p+\cal S\subset L_p$.
Likewise $\check\Phi_0*u\in L_\infty$ for $\Re a=0$, for $u$ is bounded for
$|x|>1$ by the first part of this proof.
Now \eqref{Phiu-eq} gives that 
$2^{j(\fracci np+\Re a)}\nrm{\Phi_j(D)u}{p}$ equals $\nrm{\Phi(D)u}{p}$,
which is finite since $\Phi\hat u\in C^\infty_0$. Therefore 
$u\in B^{\fracci np+\Re a}_{p,\infty}(\Rn)$.

Because $L_p\supset B^{\fracci np+\Re a}_{p,\infty}$ for $\Re a+\fracnp>0$, the
range for $p$ is sharp, up to the end point $p=-n/\Re a $, by \eqref{Lp1-eq}.
Since the other
Besov space conclusions follow from \emph{identities}, the spaces
$B^{\fracci np+\Re a}_{p,\infty}$ are optimal (unless $u$ is a polynomial).
\end{proof}

\begin{rem}
By Proposition~\ref{homo-prop},  
$\tfrac{P(x)}{Q(x)}\in B^{\fracci np}_{p,\infty}(\Rn;\loc)$, 
$0<p\le\infty$, for two homogeneous
polynomials $P$, $Q$ both of degree $a\ge1$ 
such that $Q(x)=0$ only for $x=0$.
In case $P\ne Q$ are real and $n\ge2$, this has a special singularity 
since every neighbourhood of the origin 
is mapped onto the proper interval
$[\min_{|x|=1}\tfrac{P}{Q},\max_{|x|=1}\tfrac{P}{Q}]$.
But the obtained Besov regularity 
$B^{\fracci np}_{p,\infty}$
is the same as the well-known one for simple 
jump discontinuities across a hyperplane.
\end{rem}
Invoking Lemma~\ref{tensor-lem},
the above analysis now leads to results for homogeneous distributions that
are constant in $n-k$ variables. A local version is given with optimal results
for $1\le p<\infty$.

\begin{prop}
  \label{x32-prop}
If $\Omega\subset\Rn$, $n\ge2$ is an open set with $0\in\Omega$ and the
variables are split as $x=(x',x'')$ for $x'=(x_1,\dots,x_k)$, 
$x''=(x_{k+1},\dots,x_n)$, then it holds for every 
$u(x')$ in $\cal D'(\R^k)$ that is homogeneous of degree $a\in \C$ and
$C^\infty$ outside of the origin that
\begin{equation}
  f(x)=\rOm [u(x')\otimes 1(x'')]
\end{equation}
belongs to $B^{s}_{p,\infty}(\overline{\Omega})$ for
$s\le\fracc kp+\Re a$, except possibly for $p=\infty$ if $\Re a=0$. 
For $p\ge 1$ this result is sharp with respect to $s$. 
\end{prop}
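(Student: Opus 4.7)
The plan is to reduce the statement to an $\R^n$-assertion by multiplying with a product cutoff, apply Proposition~\ref{homo-prop} to the $\R^k$-factor, and then use the tensor product estimates of Lemma~\ref{tensor-lem}. Since $\Omega$ is bounded with $0\in\Omega$, I choose $\chi'\in C^\infty_0(\R^k)$ and $\chi''\in C^\infty_0(\R^{n-k})$ so that $\chi'(x')\chi''(x'')\equiv 1$ on $\overline\Omega$. Then $f=\rOm\bigl[(\chi'u)\otimes\chi''\bigr]$, so it suffices to prove that $(\chi'u)\otimes\chi''\in B^{s_0}_{p,\infty}(\R^n)$ with $s_0:=\fracc kp+\Re a$. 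The subcritical cases $s<s_0$ then follow at once from $B^{s_0}_{p,\infty}\hookrightarrow B^{s}_{p,\infty}$ and restriction to $\overline\Omega$, so only $s=s_0$ needs attention.

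For the existence at $s=s_0$, the localising factor $\chi'$ places us at the origin, so Proposition~\ref{homo-prop} gives $\chi'u\in B^{s_0}_{p,\infty}(\R^k)$ for every $0<p\le\infty$ (and $\Re a\ne 0$ when $p=\infty$, which is the precise point where the exception in the statement enters). I now invoke Lemma~\ref{tensor-lem}: if $s_0>0$ apply \eqref{Bs-eq} with $\chi''\in B^{s_0}_{p,\infty}(\R^{n-k})$ (automatic since $\chi''\in\mathcal S$); if $s_0<0$ and $1\le p\le\infty$ apply \eqref{Bs'-eq} with $\chi''\in L_p(\R^{n-k})$. These cases immediately yield $(\chi'u)\otimes\chi''\in B^{s_0}_{p,\infty}(\R^n)$. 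The borderline $s_0=0$ and the sub-Banach range $0<p<1$ with $s_0<0$ are not directly covered by the three displayed tensor inequalities; this is where the main technical obstacle lies. For these cases I would repeat the dyadic corona splitting $u\otimes v=\sum (u_kv^{k-1}+u^kv_k)$ from the proof of Lemma~\ref{tensor-lem} but carry out the $L_p$-estimate of each term using the rapid decay of $\hat\chi''\in\mathcal S(\R^{n-k})$ in place of the convolution inequality \eqref{Lp-eq}, which is what fails for $p<1$. Since $\chi''$ is Schwartz, $\nrm{\chi''^{\,k}}{p}$ and $\nrm{\chi''_k}{p}$ are uniformly bounded and $2^{Mk}$-rapidly decreasing respectively for any $M$, so the $\R^k$-factor $\chi'u\in B^{s_0}_{p,\infty}(\R^k)$ is the only source of the $2^{s_0 k}$-growth, and the dyadic corona criterion (Lemma~\ref{DCC-lem}) concludes.

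For the sharpness in $s$ when $p\ge 1$, suppose $f\in B^t_{p,q}(\overline\Omega)$ for some $t>s_0$ and any $q$. Choose $\psi\in C^\infty_0(\R^{n-k})$ such that $\int\chi''(x'')\psi(x'')\,dx''=1$, extend $f$ to $\R^n$ by $(\chi'u)\otimes\chi''$, and define the partial integration $\tilde f(x'):=\int f(x',x'')\psi(x'')\,dx''$. By Fubini each Littlewood--Paley block on $\R^k$ commutes with this integration, and Minkowski's inequality for $L_p$ (valid for $p\ge 1$) shows that the $B^t_{p,q}(\R^k)$-quasinorm of $\tilde f$ is bounded by a constant times the $B^t_{p,q}(\R^n)$-quasinorm of $f$. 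But by construction $\tilde f=\chi'u$, so this would yield $\chi'u\in B^t_{p,q}(\R^k)$ with $t>s_0$, contradicting the sharpness asserted in Proposition~\ref{homo-prop}.
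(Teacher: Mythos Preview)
The existence half tracks the paper: localise with a product cutoff, apply Proposition~\ref{homo-prop} on $\R^k$, then use \eqref{Bs-eq}--\eqref{Bs'-eq}. You correctly flag the residual cases---the borderline $s_0=0$ and the sub-Banach range $p<1$ with $s_0<0$---and your remedy (rerunning the corona splitting with the factorisation $\|g\otimes h\|_p=\|g\|_p\|h\|_p$, valid for all $p>0$, together with the rapid decay of $\nrm{\chi''_k}{p}$) is sound. The paper instead closes the gap by H\"older-interpolating $\nrm{v_j}{p}$ between two admissible exponents $p_1<p<p_2$, which is a one-liner and avoids reopening the dyadic machinery. A small slip: the exception $p=\infty$, $\Re a=0$ does not originate in Proposition~\ref{homo-prop} (which covers all $p$) but in the fact that neither \eqref{Bs-eq} nor \eqref{Bs'-eq} reaches $s=0$.

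The sharpness half has a genuine gap. You ``extend $f$ to $\R^n$ by $(\chi'u)\otimes\chi''$'', but this particular extension lies only in $B^{s_0}_{p,\infty}(\R^n)$, not in the assumed $B^{t}_{p,q}(\R^n)$, so your Minkowski bound has nothing to act on. The paper first multiplies by a product cutoff $\theta_1\otimes\theta_2$ supported in $\Omega$, so that $w:=(\theta_1 u)\otimes\theta_2$ genuinely inherits the assumed $B^{s}_{p,\infty}(\R^n)$-regularity. Even after that repair, ``Fubini $+$ Minkowski'' yields at best $\nrm{\Phi'_j(D')\tilde w}{L_p(\R^k)}\le C\nrm{\Phi'_j(D')w}{L_p(\R^n)}$, and the right-hand side is \emph{not} controlled by $\norm{w}{B^{s}_{p,\infty}(\R^n)}$: since $\Phi'_j(D')$ leaves $\xi''$ unrestricted, every $\R^n$-block $\Phi_l(D)w$ with $l\ge j-c$ contributes, and the resulting series diverges whenever $s\le 0$. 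The paper circumvents this by also applying $\Phi''_0(D'')$, so that $\supp(\Phi'_j\otimes\Phi''_0)$ meets $\supp\Phi_l$ only for $|l-j|\le 2$; then $L_p*L_1\subset L_p$ for $p\ge 1$ and the tensor factorisation $\nrm{\Phi'_j(D')\Phi''_0(D'')w}{p}=\nrm{\Phi'_j(D')(\theta_1 u)}{p}\cdot\nrm{\Phi''_0(D'')\theta_2}{p}$ (with $\theta_2>0$ so the second factor is nonzero) give $\norm{\theta_1 u}{B^{s}_{p,\infty}(\R^k)}<\infty$, forcing $s\le\fracc kp+\Re a$ by Proposition~\ref{homo-prop}.
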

\begin{proof}
By Proposition~\ref{homo-prop},
it follows from \eqref{Bs-eq}--\eqref{Bs'-eq} 
that $v(x)=(\varphi_1(x')u(x'))\otimes\varphi_2(x'')$
is in $B^{\fracci kp+\Re a}_{p,\infty}(\Rn)$ for 
$\fracc kp+\Re a\ne0$ when the $\varphi_j$ are both in $C^\infty_0$. 
For $\Re a=0$ this excludes $p=\infty$, while for $\Re a<0$ 
a gap is left at $p_0=k/(-\Re a)$, but this  
can be closed by H{\"o}lder's inequality, for if 
$\fracc1{p_0}=\tfrac{1}{2p_1}+\tfrac{1}{2p_2}$ for some
exponents $p_1<p_0<p_2$, 
then each $j\ge 0$ yields
$\nrm{v_j}{p_0}\le
\prod_{m=1,2}(2^{j(\tfrac{k}{p_m}+\Re a)}\nrm{v_j}{p_m})^{\tfrac{1}{2}}
\le \prod \norm{v}{B^{\tfrac{k}{p_m}+\Re a}_{p_m,\infty}}^{1/2}$.
Taking $\varphi_1\otimes\varphi_2$ equal to $1$ on $\Omega$ 
one finds, with the mentioned exception $p=\infty $ for $\Re a=0$,
\begin{equation}
  f=\rOm v\in B^{s}_{p,\infty}(\overline{\Omega})
  \quad\text{for}\quad s=\fracc kp +\Re a,\ 0<p\le\infty.
\end{equation}
Conversely, if $f$ is in this space for some $s$, it holds that
$w=(\theta_1\otimes\theta_2)f\in B^{s}_{p,\infty}(\Rn)$ when the $\theta_j\in 
C^\infty_0$ are supported sufficiently close to the origin.
The support of $\Phi'_j(\xi')\Phi''_0(\xi'')\hat w$ 
intersects that of $\Phi_k(\xi)$ only for $|j-k|\le2$,
so for $p\ge1$ the convolution result $L_p*L_1\subset L_p$ gives
$\nrm{\Phi'_j(D')\Phi''_0(D'') w}{p}\le
c\sum_{|h|\le2}\nrm{\Phi_{j+h}(D)w}{p}$. Consequently
\begin{multline}
  \sup_{j\ge0} 2^{sj}\norm{\Phi''_0(D'')\theta_2}{L_p(\R^{n-2})}
   \norm{\Phi'_j(D')(\theta_1 f)}{L_p(\R^2)}
  \\
  \le c\sup_{j\ge0} \sum_{|h|\le 2} 2^{sj}\nrm{\Phi_{j+h}(D)w}{p}
  \le c_1 \norm{w}{B^{s}_{p,\infty}}.
\end{multline}
Taking $\theta_2$ positive yields $\hat\theta_2(0)=\int
\theta_2\ne0$, so $\nrm{\Phi''_0(D'')\theta_2}{p}>0$ and
as a result of this $\norm{\theta_1f}{B^{s}_{p,\infty}(\R^2)}<\infty$. 
Then Proposition~\ref{homo-prop} gives $s\le\fracc kp+\Re a$. 
\end{proof}

Since $\delta_0$ has degree $-n$ on $\Rn$, it is a special case that, 
for $0\in \Omega$, $x=(x',x_n)$,
\begin{equation}
  f(x)=1(x')\otimes\delta_0(x_n)\quad\text{is in}\quad
  B^{\fracci1p-1}_{p,\infty}(\overline{\Omega}),\ 0< p\le\infty.
  \label{1delta-ex}
\end{equation}

\section{The general parametrix construction}
  \label{main-sect}
\subsection{An abstract framework}
For the applicability's sake Theorem~\ref{ir-thm} below is proved 
in a general set-up. If desired, the
reader may think of the spaces $X^s_p$ as
$H^s_p(\overline{\Omega})$ and consider $A$ to be an elliptic operator like
$\left(\begin{smallmatrix}\mlap\\ \gamma_0\end{smallmatrix}\right)$ etc.
The concepts in Section~\ref{Dm-ssect} are used freely, in particular
this is so for parameter domains.

\bigskip

In the following five axioms, $n\in\N$ and $d\in\R$ are two fixed numbers,
playing the role of the dimension and the order of the linear
operator $A$, respectively, and $I$ denotes the identity map:

\begin{Rmlist}
  \item   \label{XY-cnd}
Two scales $X^s_p$ and $Y^s_p$ of vector spaces are given with $(s,p)$
in a common parameter set ${\Sdm}\subset\R\times\,]0,\infty]$. 
In the $X^s_p$-scale there are the usual simple, Sobolev and finite-measure
embeddings; ie for $(s,p),\,(t,r)\in {\Sdm}$,
\begin{align}
  X^s_p&\subset X^{s-\varepsilon}_p \quad\text{when}\quad
   \varepsilon>0,
  \label{s-emb} 
  \\
  X^s_p&\subset X^t_r \quad\text{when}\quad
 s\ge t \quad\text{and}\quad s-\fracnp=t-\fracc nr,
  \label{sob-emb}
\\
  X^s_p&\subset X^{s}_r \quad\text{when}\quad       
  p\ge r.
  \label{fm-emb}
\end{align}

  \item  \label{AXY-cnd} There is a linear map $A:=A_{(s,p)}$, 
with  parameter domain $\Dm(A)\subset\Sdm$,
\begin{equation}
  A\colon X^s_p\to Y^{s-d}_p,\quad (s,p)\in \Dm(A).
\end{equation}
There is also for all $(s,p)\in\Dm(A)$, 
a linear map $\widetilde{A}\colon Y^{s-d}_p\to X^s_p$  such that 
\begin{equation}
  \cal R:=I_{X^s_p}-\widetilde{A}A \quad\text{has range in}\quad
  \bigcap_{(s,p)\in\Dm(A)} X^s_p.
  \label{R-eq}
\end{equation}

Inclusions $\bigcup_{\Dm(A)}X^s_p\subset\cal X$ and
$\bigcup_{\Dm(A)}Y^{s-d}_p\subset\cal Y$ 
hold for some  vector spaces $\cal X$, $\cal Y$;
and for $(s,p)$, $(t,r)\in\Dm(A)$ there is a commutative diagram
\begin{equation}
\begin{CD}
 X^s_p\cap X^t_r    @>I>> X^s_p    \\
 @VIVV               @VVA_{(s,p)}V \\
 X^t_r      @>A_{(t,r)}>> \cal Y .
\end{CD}  
  \label{Acd-eq} 
\end{equation}
Likewise $\widetilde{A}$ should be
unambiguously
defined on $Y^{s-d}_p\cap Y^{t-d}_r$.

  \item   \label{B-cnd} There is a non-linear operator $\cal N$, with
parameter domain $\Dm(\cal N)\subset\Sdm$, which for
every $(s_0,p_0)$ in $\Dm(\cal N)$ and 
every  $u\in X^{s_0}_{p_0}$ has a linearisation $B_u$, 
ie $\cal N(u)=-B_u(u)$, where $B_u$ is a linear map
\begin{equation}
  B_u\colon X^s_p\to Y^{s-d+\delta(s,p)}_p
\quad\text{with}\quad \Dm(B_u)\supset \Dm(\cal N). 
\end{equation}
For $(s,p)$, $(t,r)\in \Dm(B_u)$ there is a commutative diagram analogous
to \eqref{Acd-eq} for $B_u$ (hence for $\cal N$).

  \item   \label{Dm-cnd} For $u$ as in \eqref{B-cnd}, the domain
$\Dm(A)\cap\Dm(B_u)$ is connected with
respect to the metric $\op{dist}((s,p),(t,r))$ given by 
$((s-t)^2+(\fracnp-\fracc nt)^2)^{1/2}$.

  \item   \label{del-cnd} For $u$ as in \eqref{B-cnd}, the function
$\delta(s,p)$ satisfies 
\begin{gather}
  (s+\delta(s,p),p)\in\Dm(A) 
   \quad\text{for every}\quad (s,p)\in\Dm(A)\cap\Dm(B_u), 
  \label{delta-lift}\\
  \inf\Set{\delta(s,p)}{(s,p)\in K} >0
  \quad\text{for every}\quad K\Subset \Dm(A)\cap\Dm(B_u).
  \label{delK-cnd}
\end{gather}  
\end{Rmlist}
For the proof of Theorem~\ref{ir-thm} below it is unnecessary to assume that
the embeddings in \eqref{XY-cnd} should hold for the $Y^s_p$ spaces too
(although they often do so in practice). As it stands
\eqref{XY-cnd} is easier to verify in applications to
parabolic boundary problems; cf Remark~\ref{parabolic-rem} below.

For $X^s_p=H^s_p(\overline{\Omega})$ it is natural to let
$\Sdm=\R\times\,]1,\infty[\,$; the $L_2$-theory comes out for $\Sdm=\R\times
\{2\}$. Besov spaces $B^{s}_{p,q}$ would often require $q$ to be fixed and
$\Sdm=\R\times\,]0,\infty]$. Anyhow $\cal X=\cal D'(\Omega)$ could be a
typical choice.
Continuity of $A$ and $\widetilde{A}$ is not required
(although both will be bounded in most applications). 

Suppressing $(s,p)$ in $A$ is harmless in the sense that 
$A$ by \eqref{Acd-eq} is a well-defined map with domain
$\bigcup_{\Dm(A)}X^s_p$ in $\cal X$; it is linear only on each `fibre' $X^s_p$.
Similarly $\widetilde{A}$ is a map on $\bigcup_{\Dm(A)}Y^{s-d}_p$.
Moreover, $A$ eg extends to a linear map on the algebraic direct sum
$\bigoplus X^s_p\subset\cal X$ if and only if
(when $'$ indicates finitely many non-trivial vectors)
\begin{equation*}
  0=\sideset{}{'}\sum_{\Dm(A)}v_{(s,p)}\implies
  \sum_{\Dm(A)}A_{(s,p)}(v_{(s,p)})=0.
\end{equation*}
By \eqref{Acd-eq} ff, $\cal R$ may be thought of as an operator from
$\bigcup_{\Dm(A)} X^s_p$ to $\bigcap_{\Dm(A)} X^s_p$.

For brevity the arguments $s_0$, $p_0$ are suppressed in the
function $\delta$. By \eqref{B-cnd}, the map $\cal N$ 
sends $X^s_p$ into $Y^{s-d+\delta(s,p)}_{p}$ for 
each $(s,p)$ in $\Dm(\cal N)$ 
(since $\Dm(\cal N)\subset\Dm(B_u)$ for every $u$ in $X^s_p$).
This fact will be used tacitly.
Note that $\delta(s,p)>0$ by \eqref{delK-cnd}, so \eqref{B-cnd} implies
that $\cal N(u)$ has $B_u$ as a moderate linearisation with
$\omega=d-\delta(s,p)$, according to Definition~\ref{mod-defn}.

Via the transformation $(s,p)\mapsto(\fracnp,s)$, the reader should
constantly think of $\Dm(A)$, $\Dm(\cal N)$ and $\Dm(B_u)$ 
as subsets of  $[0,\infty[\,\times \R$.  
In the examples the boundary of $\Dm(\cal N)$ (or a part thereof) often
consists of the 
 $(s_0,p_0)$ for which $\delta\equiv 0$, so it may seem natural to require
 $\Dm(\cal N)$ to be open in $[0,\infty[\,\times\R$. 
However, such an assumption is avoided because it is unnecessary and
potentially might exclude application to weak solutions of certain problems;
cf the below Section~\ref{vK-sect}. 

The function $\delta$ is in practice 
often constant with respect to $(s,p)$, but  
depending effectively on $(s_0,p_0)$; cf Remark~\ref{Lu-rem}.  
When this is the case and furthermore
 $\cal N$ has a natural parameter domain $\Dm(\cal N)$ on which $\delta$ can
take both positive and negative values, it is natural to use
\begin{equation}
 \Dm(\cal N,\delta)=\{\,(s_0,p_0)\in\Dm(\cal N)\mid \delta>0\,\} 
\end{equation}
as the parameter domain of
$\cal N$, instead of $\Dm(\cal N)$. 
Then $\cal N$ will be $A$-moderate on the domain
\begin{equation}
 \Dm(A,\cal N)=\Dm(A)\cap\Dm(\cal N,\delta).
\end{equation}
With $\sigma(s,p):=d-\delta(s,p)$, it is clear that $\Dm(A,\cal N)$
is a generalisation of the domain $\Dm(\cal A,Q)$ introduced for the model
problem in \eqref{delQ-id}. We now return to this.

\begin{exmp}
  \label{modl-ex}
To elucidate \eqref{XY-cnd}--\eqref{del-cnd} above, one may
in \eqref{bsc-eq}
set $A=\left(\begin{smallmatrix}\mlap\\ \g \end{smallmatrix}\right)$
and $X^s_p=B^{s}_{p,q}(\overline{\Omega})$, whereby $q\in\,]0,\infty]$
is kept fixed. For the operator $\widetilde{A}$ there is a
parametrix of $A$ belonging to the Boutet de~Monvel calculus
(cf Section~\ref{ell-ssect} below). Using $L_u$ from \eqref{Lu-eq}, 
$B_u$ and $Y^s_p$ are taken as
\begin{equation}
  B_uv=  \begin{pmatrix} L_u v\\ 0\end{pmatrix}, 
  \qquad
  Y^{s-2}_p= 
  B^{s-2}_{p,q}(\overline{\Omega})
  \oplus
  B^{s-\fracpi}_{p,q}(\Gamma).
\end{equation}
For any $\varepsilon\in\,]0,1[$ it is possible to take $\delta(s,p)$
as the constant function 
\begin{equation}
  \delta(s,p)=
  \begin{cases}
   1 &\text{for } s_0>\fracc n{p_0},\\
   1-\varepsilon &\text{for } s_0=\fracc n{p_0},\\
   s_0-\fracc n{p_0}+1 &\text{for } \fracc n{p_0}> s_0>\fracc n{p_0}-1.
  \end{cases}
  \label{dsp-eq}
\end{equation}
See the below Theorem~\ref{mod-thm}. As mentioned in Remark~\ref{mod-rem},
this theorem and Corollary~\ref{Amod-cor} also gives the parameter domains,
for any fixed $u\in X^{s_0}_{p_0}$, 
\begin{align}
  \Dm(A)&=
        \set{(s,p)}{s>\fracp+(n-1)(\fracp-1)_+}=\Dm_1,
  \\
  \Dm(\cal N)&=\Set{(s,p)}{s>\tfrac{1}{2}+
         (\fracnp-\tfrac{n}{2})_+},
  \label{Ndm-eq}
\\
  \Dm(\cal N,\delta)&=\Set{(s,p)}{s>\tfrac{1}{2}+
         (\fracnp-\tfrac{3}{2}+\tfrac{1}{2}\kd{n=2})_+}
  = \Dm(A,\cal N),
  \label{Ndm-eq'}
\\
    \Dm(B_u)&=\Set{(s,p)}{s>1-s_0+
         (\fracnp+\fracc n{p_0}-n)_+}.
  \label{Bdm-eq}
\end{align}
Being isometric to a polygon in $[0,\infty[\,\times\R$,
the set $\Dm(A)\cap\Dm(B_u)$ clearly satisfies \eqref{Dm-cnd}; 
when $(s_0,p_0)\in \Dm(A)\cap\Dm(B_u)$, then
condition~\eqref{del-cnd} may be verified directly from \eqref{dsp-eq}.
\end{exmp}

\subsection{The Parametrix Theorem}
Using the above abstract framework, it is now possible to establish
a main result of the article in a widely applicable version.
 
\begin{thm}
  \label{ir-thm}
Let $X^s_p$, $Y^s_p$ and the mappings $A$ and $\cal N$ be given
such that conditions \eqref{XY-cnd}--\eqref{del-cnd} above are satisfied.
\begin{itemize}
  \item[(1)] For every 
\begin{equation}
  u\in X^{s_0}_{p_0} \quad\text{with $(s_0,p_0)\in\Dm(A)\cap\Dm(\cal N)$}
  \label{uhyp-eq}
\end{equation}
the parametrix $P^{(N)}=\sum_{k=0}^{N-1}(\widetilde{A}B_u)^{k}$
is for every $N\in\N$ a linear operator 
\begin{equation}
  P^{(N)}\colon X^{s}_{p}\to X^{s}_{p}  
\quad\text{for all $(s,p)\in\Dm(A)\cap\Dm(B_u)=:\Dm_u$.}
\end{equation}
And for every $(s',p')$, $(s'',p'')\in\Dm_u$ 
there exists $N'\in\N$ such that the
``error term'' $(\widetilde{A}B_u)^{N}$ is a linear map
\begin{equation}
  (\widetilde{A}B_u)^{N}\colon X^{s'}_{p'}\to X^{s''}_{p''}
  \quad\text{for}\quad N\ge N'.
  \label{err-eq}
\end{equation}

  \item[{(2)}]
If some $u$ fulfils \eqref{uhyp-eq} and solves the equation 
\begin{align}
  Au+\cal N(u)&=f
  \label{smln-eq} \\
  \text{with data}\quad f&\in Y^{t-d}_r\quad\text{for some $(t,r)\in \Dm_u$},
  \label{fhyp-eq}  
\end{align}
one has for every $N\in\N$ the parametrix formula
\begin{equation}
  u=P^{(N)}(\widetilde{A}f+\cal Ru)+ (\widetilde{A}B_u)^Nu.
  \label{par-eq}
\end{equation}
And consequently $u\in X^t_r$ too.
\end{itemize}
\end{thm}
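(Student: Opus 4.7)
My plan is to split the argument into three steps matching the theorem's statement; the step for the error term \eqref{err-eq} in part~(1) is the place I expect the real work.

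\textbf{Step 1 (mapping properties of $\widetilde AB_u$ and $P^{(N)}$).} For $(s,p)\in\Dm_u:=\Dm(A)\cap\Dm(B_u)$, axiom \eqref{B-cnd} gives $B_u\colon X^s_p\to Y^{s-d+\delta(s,p)}_p$, condition \eqref{delta-lift} puts $(s+\delta(s,p),p)\in\Dm(A)$, so \eqref{AXY-cnd} makes $\widetilde A\colon Y^{s-d+\delta}_p\to X^{s+\delta}_p$ available. Applying \eqref{delK-cnd} to the compact singleton $\{(s,p)\}$ forces $\delta(s,p)>0$, and then \eqref{s-emb} embeds $X^{s+\delta}_p\hookrightarrow X^s_p$. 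Composing gives $\widetilde AB_u\colon X^s_p\to X^s_p$; iterating and summing passes this to every $(\widetilde AB_u)^k$ and to $P^{(N)}$.

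\textbf{Step 2 (the error term; the main obstacle).} The difficulty is that $\widetilde AB_u$ only raises $s$ at fixed $p$, whereas $(s',p')$ and $(s'',p'')$ may live at different $p$'s. I would use continuous induction on the connected set $\Dm_u$ (axiom \eqref{Dm-cnd}). Set
\[
S:=\{(t,r)\in\Dm_u\mid \exists N\in\N\colon (\widetilde AB_u)^N\text{ sends } X^{s'}_{p'}\to X^t_r\},
\]
which contains $(s',p')$ trivially. For \emph{openness} in $\Dm_u$: if $(t,r)\in S$ is realised by $N$, apply $\widetilde AB_u$ once more at $(t,r)\in\Dm_u$ to get $(\widetilde AB_u)^{N+1}\colon X^{s'}_{p'}\to X^{t+\delta(t,r)}_r$, and then combine the three embeddings \eqref{s-emb}, \eqref{sob-emb}, \eqref{fm-emb} to reach every $(t',r')\in\Dm_u$ with $|t'-t|$, $|n/r'-n/r|<\delta(t,r)/2$ (use \eqref{fm-emb} then \eqref{s-emb} when $r'\le r$, and \eqref{sob-emb} then \eqref{s-emb} when $r'\ge r$). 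For \emph{closedness}: given $(t_j,r_j)\in S$ with $(t_j,r_j)\to(t,r)\in\Dm_u$, the compact $K=\{(t_j,r_j)\}_j\cup\{(t,r)\}\Subset\Dm_u$ supplies a uniform $\delta_0>0$ through \eqref{delK-cnd}, and for $j$ large the same embedding recipe starting from $X^{t_j+\delta_0}_{r_j}$ reaches $X^t_r$. Connectedness then yields $S=\Dm_u$, and in particular $(s'',p'')\in S$, which is \eqref{err-eq}.

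\textbf{Step 3 (parametrix formula and regularity).} Using $\cal N(u)=-B_uu$ from \eqref{B-cnd}, I would rewrite \eqref{smln-eq} as $Au=f+B_uu$, apply $\widetilde A$, and invoke $\widetilde AA=I-\cal R$ from \eqref{R-eq}\,---\,reading the compositions unambiguously through the commutative diagrams in \eqref{AXY-cnd} and \eqref{B-cnd}. This gives $(I-\widetilde AB_u)u=\widetilde Af+\cal R u$. The telescoping identity $P^{(N)}(I-\widetilde AB_u)=I-(\widetilde AB_u)^N$ is purely formal, and applying $P^{(N)}$ produces the parametrix formula \eqref{par-eq} at once. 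For the regularity $u\in X^t_r$: $\widetilde Af\in X^t_r$ by \eqref{AXY-cnd} (since $(t,r)\in\Dm_u\subset\Dm(A)$ and $f\in Y^{t-d}_r$); $\cal R u\in\bigcap_{\Dm(A)}X^s_p\subset X^t_r$ by \eqref{R-eq}; Step~1 then places $P^{(N)}(\widetilde Af+\cal R u)$ in $X^t_r$; and choosing $N$ from Step~2 so that $(\widetilde AB_u)^Nu\in X^t_r$, the formula \eqref{par-eq} puts $u$ in $X^t_r$.
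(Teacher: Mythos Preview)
Your proof is correct and follows the paper's approach closely: Steps~1 and~3 match the paper almost verbatim, and Step~2 is the same connectedness argument, only repackaged as a clopen argument directly on $\Dm_u$ rather than the paper's continuous induction along a curve $k\colon[a,b]\to\Dm_u$ (your version has the mild advantage of using only the connectedness hypothesis in \eqref{Dm-cnd} as stated, without implicitly passing to path-connectedness). One small omission: you establish \eqref{err-eq} for \emph{some} $N$, whereas the theorem asks for all $N\ge N'$; this follows at once by writing $(\widetilde AB_u)^N=(\widetilde AB_u)^{N-N'}(\widetilde AB_u)^{N'}$ and invoking Step~1 at $(s'',p'')$.
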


\begin{proof}
For arbitrary $(s,p)\in \Dm_u$, one can use
\eqref{AXY-cnd} and \eqref{delta-lift} to see 
that $\widetilde{A}$ is defined on $Y^{s-d+\delta(s,p)}_{p}$, hence 
that $\widetilde{A}B_u$ is a well defined composite
\begin{equation}
  X^{s}_{p}\overset{B_u}{\longrightarrow} Y^{s-d+\delta(s,p)}_{p}
  \overset{\widetilde{A}}{\longrightarrow} X^{s+\delta(s,p)}_{p}.
  \label{pos-eq}
\end{equation}
Since $X^{s+\delta}_{p}\hookrightarrow X^{s}_{p}$ by
\eqref{XY-cnd}, the operator $\widetilde{A}B_u$ is of order $0$ 
on $X^{s}_{p}$; hence 
$P^{(N)}:= \sum_{j=0}^{N-1} (\widetilde{A}B_u)^j$
is a linear map $X^{s}_{p}\longrightarrow X^{s}_{p}$. 
This shows the claim on $P^{(N)}$. 

Concerning $(\widetilde{A}B_u)^N$,
there is, by \eqref{Dm-cnd}, a continuous
map $k\colon I\to\Dm_u$, with $I=[a,b]$, such that 
\begin{equation}
  k(a)=(s',p'),\qquad k(b)=(s'',p'').
   \label{ab-eq}
\end{equation}
Clearly $\delta_k:=\inf\bigl\{\,\delta(s,p) \bigm| 
(s,p)\in k(I)\,\bigr\}>0$ by \eqref{del-cnd}, and for $(s,p)\in k(I)$
\begin{equation}
  X^{s}_{p}
  \overset{\widetilde{A}B_u}{\longrightarrow} X^{s+\delta(s,p)}_{p}
  \hookrightarrow X^{s+\delta_k}_{p}.
  \label{pos'-eq}
\end{equation}
With $X_{k(\tau)}:= X^{s}_{p}$ when
$k(\tau)=(s,p)$, let $M:=\sup T$ for
\begin{equation}
  T=\bigl\{\,\tau\in I \bigm| \exists N\in\N\colon 
  (\widetilde{A}B_u)^N(X^{s'}_{p'})
  \subset X_{k(\tau)}\,\bigr\}.
\end{equation}
Then $a\le M\le b$ since 
$\widetilde{A}B_u(X^{s'}_{p'})\subset X^{s'+\delta}_{p'}\subset X_{k(a)}$. 
It now suffices to show that $b\in T$, for 
$(\widetilde{A}B_u)^N (X^{s'}_{p'})\subset X_{k(b)}=X^{s''}_{p''}$ 
for some $N\in\N$ then; and 
$(\widetilde{A}B_u)^{N}$ equals
$(\widetilde{A}B_u)^{N-N'}(\widetilde{A}B_u)^{N'}$ 
for $N>N'$, so the full claim on $\widetilde{A}B_u$ would follow 
because \eqref{pos'-eq} shows that $(\widetilde{A}B_u)^{N-N'}$ is of order 
$0$ on $X^{s''}_{p''}$.

For one thing $M\in T$: by continuity
of $k$ there is a $\tau'<M$ in  $T$ such that, when $|\cdot|$ denotes the
Euclidean norm on $\R^2$ 
and the isometry $(s,p)\leftrightarrow(\fracc np,s)$ is suppressed,  
\begin{equation}
  |k(\tau')-k(M)|<\delta_k/2
  \label{phi-eq}
\end{equation}
and $(\widetilde{A}B_u)^{N-1}(X^{s'}_{p'})\subset X_{k(\tau')}$ for some $N$. 
But by \eqref{pos'-eq} this
entails that $(\widetilde{A}B_u)^{N}(X^{s'}_{p'})$ is a subset of a space with
upper index at least $\delta_k$ higher than that of $X_{k(\tau')}$,
so the embeddings in \eqref{XY-cnd} show that 
$(\widetilde{A}B_u)^{N}(X^{s'}_{p'})$ 
is contained in any space in the intersection of ${\Sdm}$ and a convex
polygon; cf the dashed line in ~Figure~\ref{ball-fig} below. 
It
follows that $(\widetilde{A}B_u)^{N}(X^{s'}_{p'})$ is contained in every 
$X^{s}_{p}$ lying in ${\Sdm}$ and fulfilling
\begin{equation}
    |k(\tau')-(s,p)|<\delta_k/\sqrt{2},
  \label{phi'-eq}
\end{equation}
so in particular $(\widetilde{A}B_u)^{N}(X^{s'}_{p'})\subset X_{k(M)}$ 
is found from \eqref{phi-eq}; whence $M\in T$.

\begin{figure}[htbp]
\setlength{\unitlength}{0.00083333in}
{\renewcommand{\dashlinestretch}{30}
\begin{picture}(5000,3300)(-550,-10)
\thicklines
\path(-500,-10)(-500,3000)
\path(-520,3000)(-500,3100)(-480,3000)
\thinlines
\put(2748,1297){\ellipse{2580}{2580}}
\path(2748,1297)(1836,2209)
\dashline{60.000}(32,405)(2748,3121)(4600,3121)
\put(2748,3121){\makebox(0,0){$\bullet$}}
\put(2748,1297){\makebox(0,0){$\bullet$}}
\put(2748,1297){\makebox(600,100)[r]{$X_{k(\tau')}$}}
\put(1836,841){\makebox(0,0)[lb]{$\bullet$}}
\put(1836,841){\makebox(600,0)[rt]{$X_{k(M)}$}}
\put(2235,1924){\makebox(0,0)[lb]{$\delta_k/\sqrt2$}}
\put(2500,3200){\makebox(0,0)[r]{$(\widetilde{A}B_u)^{N}(X^{s'}_{p'})$ }}
\put(-500,3200){\makebox(0,0)[c]{$s$}}
\end{picture}
}
\caption{The $(\fracnp,s)$-plane with the ball in \eqref{phi'-eq} 
and a polygon of spaces containing
$(\widetilde{A}B_u)^{N}(X^{s'}_{p'})$.}
  \label{ball-fig}
\end{figure}
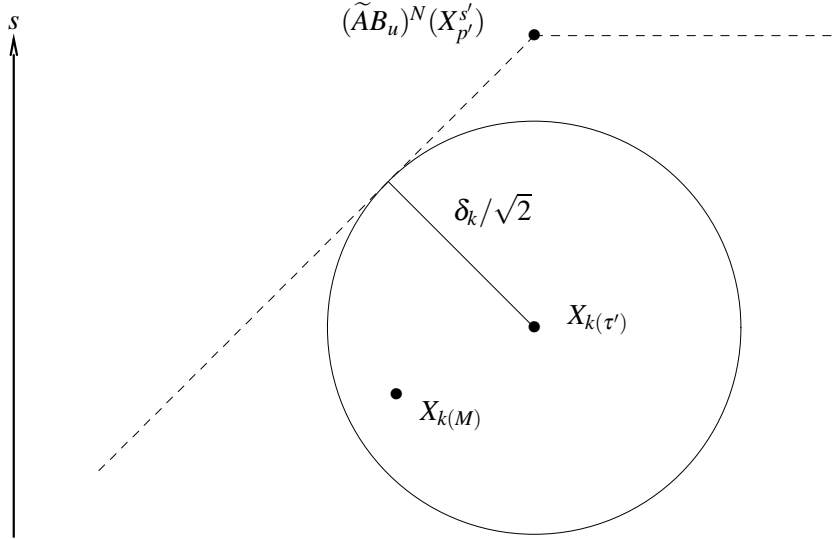

Secondly, $M=b$ follows from $k(I)$'s connectedness:
assuming that $\tau\in \,]M,b]$ exists, the curve $k(\tau)$
would for some $\tau>M$ lie 
in the open $\tfrac{\delta_k}{2}$-ball around $k(M)$. Then 
$|k(\tau)-k(M)|\le\tfrac{\delta_k}{2}<\tfrac{\delta_k}{\sqrt2}$ would
hold, so that the proved fact $M\in T$ would imply (as above) that 
$(\widetilde{A}B_u)^N(X^{s'}_{p'})\subset X_{k(\tau)}$,
contradicting that $\tau\notin T$.

According to  \eqref{AXY-cnd}, \eqref{B-cnd} and the assumptions in the
theorem, the mapping $\widetilde{A}$ has the same meaning on both sides of
\eqref{smln-eq}, regardless of whether one refers to $Y^{s_0-d}_{p_0}$ or to
$Y^{t-d}_r$ (on the left and the right hand sides, respectively).  Therefore
\eqref{R-eq} and the assumption $(s_0,p_0)\in \Dm(\cal N)$ entail 
\begin{equation}
  (I-\cal R)u- \widetilde{A}B_uu= \widetilde{A}f.
  \label{smln'-eq}
\end{equation}
For the given $u$ and $f$, it follows by calculation of the telescopic sum
that 
\begin{equation}
  P^{(N)}(I-\widetilde{A}B_u)u=
  \sum_{j=0}^{N-1}(\widetilde{A}B_u)^j(I-\widetilde{A}B_u)u=
 (I-(\widetilde{A}B_u)^N)u,
  \label{PN-eq}
\end{equation}
and $P^{(N)}$ has the same meaning when applied to both sides of
\eqref{smln'-eq}. Therefore \eqref{smln'-eq}, \eqref{PN-eq} yield
\eqref{par-eq}. 

Note that the term $P^{(N)}(\widetilde{A}f+\cal Ru)$ in \eqref{par-eq} 
is in $X^t_r$ in view of \eqref{R-eq} and 
the proved fact that $P^{(N)}$ has order $0$ on $X^t_r$. By \eqref{err-eq} also
$(\widetilde{A}B_u)^{N}u$ is in $X^t_r$, so this holds for every given
solution $u$ too. 
\end{proof}

Applications of Theorem~\ref{ir-thm} to systems of elliptic boundary
problems are developed in Section~\ref{sys-sect} below for non-linear terms
of product type. In this context
\eqref{delta-lift} in \eqref{del-cnd} is redundant, 
for with $\Dm(A)$ equal to one of the standard domains
$\Dm_k$ it is for any $\eta>0$ clear that $(s+\eta,p)$ belongs to
$\Dm(A)$ when $(s,p)$ does so. But \eqref{delta-lift} is inserted in
preparation for applications to other non-linearities,
like $|u|^a$ with non-integer $a>0$;
and to parabolic problems, cf Remark~\ref{parabolic-rem} below. 

\subsection{A solvability result} 
As an addendum to the Parametrix Theorem, 
it is used for the solvability in eg problem
\pbref{IR} and Theorem~\ref{mlap-thm} below that bilinear
perturbations of linear homeomorphisms always give well-posed
problems locally, ie for sufficiently small data.

It should be folklore how to obtain this from the fixed-point theorem of
contractions. The proof extends to
any quasi-Banach space $X$ for which
$\nrm{\cdot}{}^\lambda$ is subadditive for some $\lambda\in\,]0,1]$, for
$d(x,y)=\nrm{x-y}{}^\lambda$ is a complete metric on $X$ then.
(For $B^{s}_{p,q}$ and $F^{s}_{p,q}$ the existence of $\lambda$ is easy to
see, cf \eqref{fspq-eq} ff.) 
In lack of a reference the next result is given, including details for the
lesser known quasi-Banach space case.

\begin{prop}
  \label{small-prop}
Let $A\colon X\to Y$ be a linear homeomorphism between two quasi-Banach spaces
and $B\colon X\oplus X\to Y$ be a bilinear bounded map. When
$\norm{\cdot}{X}^\lambda$ is subadditive for some $\lambda\in \,]0,1]$ and
$y\in  Y$ fulfills
\begin{equation}
  \norm{A^{-1}y}{X}< {\nrm{A^{-1}B}{}}^{-1}4^{{-1}/{\lambda}},
  \label{y-ball}
\end{equation}
then the ball
$\norm{x}{X}<\nrm{A^{-1}B}{}^{-1}2^{{-1}/{\lambda}}$ contains 
a unique solution of the equation 
\begin{equation}
  Ax+B(x)=y.
\end{equation}
This solution depends continuously on $y$ in the ball \eqref{y-ball}.
\end{prop}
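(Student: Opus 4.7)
The plan is to apply the Banach fixed-point theorem to the nonlinear map
\[
  T(x) := A^{-1}y - A^{-1}B(x,x),
\]
whose fixed points are precisely the solutions of $Ax + B(x) = y$ (reading $B(x)$ as $B(x,x)$). The hypothesis that $\nrm{\cdot}{X}^\lambda$ is subadditive makes $d(x_1,x_2) := \nrm{x_1-x_2}{X}^\lambda$ a complete translation-invariant metric on $X$, which is the correct substitute for a norm in the quasi-Banach setting.

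First I would check that $T$ stabilises a suitable closed ball. Set $r_0 := \nrm{A^{-1}B}{}^{-1} 2^{-1/\lambda}$, which is the radius appearing in the conclusion. Subadditivity gives
\[
  \nrm{T(x)}{X}^\lambda \le \nrm{A^{-1}y}{X}^\lambda + \nrm{A^{-1}B}{}^\lambda\,\nrm{x}{X}^{2\lambda}.
\]
Requiring this to be $\le\bar r^\lambda$ for all $\nrm{x}{X}\le\bar r$ is a quadratic inequality in $\bar r^\lambda$; its right-hand-side $\bar r^\lambda(1-\nrm{A^{-1}B}{}^\lambda\bar r^\lambda)$ is maximised at $\bar r^\lambda = 1/(2\nrm{A^{-1}B}{}^\lambda)$, producing exactly the threshold $\nrm{A^{-1}y}{X}^\lambda < 1/(4\nrm{A^{-1}B}{}^\lambda)$. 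This is precisely \eqref{y-ball}, so there exists $\bar r < r_0$ with $T(\bar B_{\bar r})\subset\bar B_{\bar r}$.

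Next I would establish strict contractivity on $\bar B_{\bar r}$. By bilinearity,
\[
  T(x_1) - T(x_2) = -A^{-1}\bigl(B(x_1-x_2,\,x_1) + B(x_2,\,x_1-x_2)\bigr),
\]
whence
\[
  d(T(x_1),T(x_2)) \le \nrm{A^{-1}B}{}^\lambda\bigl(\nrm{x_1}{X}^\lambda + \nrm{x_2}{X}^\lambda\bigr)\,d(x_1,x_2).
\]
On $\bar B_{\bar r}$ the prefactor is at most $2\bar r^\lambda\nrm{A^{-1}B}{}^\lambda < 1$, so Banach's fixed-point theorem in $(X,d)$ furnishes a unique fixed point in $\bar B_{\bar r}$. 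Uniqueness in the full \emph{open} ball of radius $r_0$ follows from the same estimate: two distinct fixed points $x_1,x_2$ there would satisfy $\nrm{x_1}{X}^\lambda+\nrm{x_2}{X}^\lambda < 2r_0^\lambda = \nrm{A^{-1}B}{}^{-\lambda}$, forcing the contradiction $d(x_1,x_2)=d(T(x_1),T(x_2))<d(x_1,x_2)$.

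Finally, for continuous dependence on $y$: given $Ax_i+B(x_i)=y_i$ with both $x_i$ in a fixed closed sub-ball of $\{\nrm{x}{X}<r_0\}$, the same bilinear identity rearranges to
\[
  \bigl(1-\nrm{A^{-1}B}{}^\lambda(\nrm{x_1}{X}^\lambda+\nrm{x_2}{X}^\lambda)\bigr)\nrm{x_1-x_2}{X}^\lambda \le \nrm{A^{-1}}{}^\lambda\nrm{y_1-y_2}{Y}^\lambda,
\]
with strictly positive coefficient on the left, yielding $\nrm{x_1-x_2}{X}\le C\nrm{y_1-y_2}{Y}$. The only real subtlety is arranging the constants $4^{-1/\lambda}$ and $2^{-1/\lambda}$ to come out exactly as stated, which is forced by optimising the quadratic inequality above; the rest is routine fixed-point bookkeeping translated into the $\lambda$-metric.
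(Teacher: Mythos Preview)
Your proposal is correct and follows essentially the same route as the paper: rewrite the equation as a fixed-point problem $x = A^{-1}y - A^{-1}B(x,x)$, use subadditivity of $\nrm{\cdot}{X}^\lambda$ to get the contraction estimate via bilinearity, solve the quadratic inequality in $\bar r^\lambda$ to locate the self-mapped ball, and derive continuous dependence from the same bilinear identity. The only cosmetic difference is that you spell out the uniqueness extension to the full open ball of radius $r_0$ explicitly, whereas the paper handles it by noting there is leeway in the choice of the closed-ball radius $a$.
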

\begin{proof}
When $R:=A^{-1}$, the equation is equivalent to $x=Ry-RB(x)=:F(x)$, where also
$RB=:B'$ is bilinear and  $\nrm{B'}{}\le\nrm{R}{}\nrm{B}{}$. Bilinearity gives
\begin{equation}
  \nrm{F(x)-F(z)}{}^\lambda\le
  \nrm{B'(x,x-z)}{}^\lambda+\nrm{B'(x-z,z)}{}^\lambda\le 
  \nrm{B'}{}^\lambda(\nrm{x}{}^\lambda+\nrm{z}{}^\lambda)\nrm{x-z}{}^\lambda,
  \label{Flambda-eq}
\end{equation}
so $F$ is a contraction on the closed 
ball $K_a=\{\,x\in  X\mid \nrm{x}{}\le a\,\}$ if $a$ fulfills
$2\nrm{B'}{}^\lambda a^\lambda <1$.
By the assumptions $D:=1-4\nrm{Ry}{}^\lambda\nrm{B'}{}^\lambda>0$, so
\begin{equation}
  \nrm{Ry}{}^\lambda+\nrm{B'}{}^\lambda t^{2} <t
\iff
  t\in \,\big]\frac{1-\sqrt{D}}{2\nrm{B'}{}^\lambda}\,,\,
          \frac{1+\sqrt{D}}{2\nrm{B'}{}^\lambda}\big[\, ;
  \label{tint-eq}
\end{equation}
here the interval contains $t=a^\lambda$, when $a^\lambda$ 
is sufficiently close to $(2\nrm{B'}{}^\lambda)^{-1}$.
So, since $x\in K_a$ implies 
$\nrm{F(x)}{}^\lambda\le \nrm{Ry}{}^\lambda+
\nrm{B'}{}^\lambda a^{2\lambda}$, it follows from \eqref{tint-eq} that
$F(x)\in  K_a$, ie $F$ is a map $K_a\to K_a$  for such $a$.
Hence $x=F(x)$ is uniquely solved in $K_a$. If also $Ax'+B(x')=y'$ for some
$x'\in  K_a$,  
\begin{equation}
  \nrm{x-x'}{}^\lambda\le
  \nrm{R(y-y')}{}^\lambda+2a^\lambda\nrm{B'}{}^\lambda\nrm{x-x'}{}^\lambda,
\end{equation}
so $d(x,x')\le cd(Ry,Ry')$ for
$c=(1-2a^\lambda\nrm{B'}{}^\lambda)^{-1}<\infty$. This gives 
the well-posedness in $K_a$, but with the leeway in the choice of $a$ the 
proposition follows.
\end{proof}

\section{Preliminaries on products}
  \label{prod-sect}
A brief review of results on pointwise multiplication
is given before the non-linear operators of product type are introduced in
Sections~\ref{pdty-sect} and \ref{sys-sect} below.

\subsection{Generalised multiplication}
\label{plin-ssect} 

In practice non-linearities often involve multiplication of a
non-smooth function and a distribution in $\cal D'\setminus L_{1}^{\loc}$,
as in $u\partial_1 u$ when $u\in H^{\frac12+\varepsilon}$ 
for small $\varepsilon>0$. 
Although it suffices for a mere construction of solutions to
extend $(u,v)\mapsto u\cdot v$ by continuity to a bounded
bilinear form defined on $H^{s}\times H^{-s}$ for some $s>0$, the proof
of the regularity properties will in general involve extensions to
$F^{s}_{p,q}\times F^{-s}_{p,q}$ for \emph{several} exponents $p$ and $q$. 
This would clearly cause a problem of consistency among the various
extensions, and for $q=\infty$ there would, moreover, not be
density of smooth functions to play on. Commutative
diagrams like \eqref{Acd-eq} would then be demanding to verify for the
product type operators, so a more unified approach to
multiplication is preferred here.  

Since a paper of L.~Schwartz \cite{Swz54} it has been known that
products with a few reasonable properties cannot be everywhere defined 
on $\cal D'\times \cal D'$, and as a 
consequence many notions of multiplication exist, cf the survey
\cite{Ober}. But for the present theory it is important to use a 
product $\pi(\cdot,\cdot)$ that 
works well together with paramultiplication on $\Rn$ and also 
allows a localised version $\pi_{\Omega}$ to be defined on an open set
$\Omega\subset\Rn$. 
A product $\pi$ with these properties was analysed in \cite{JJ94mlt},
cf also \cite{RuSi96},
and for the reader's sake a brief review is given.

\bigskip

The product $\pi$ is defined on $\Rn$ by simultaneous 
regularisation of both factors: for $\psi_k(\xi)=
\psi(2^{-k}\xi)$ with $\psi\in C^\infty_0(\Rn)$
equal to $1$ in a neighbourhood of $\xi=0$, 
\begin{equation}
  \pi(u,v):=\lim_{k\to\infty} (\psi_k(D)u)\cdot(\psi_k(D)v).
  \label{pi-eq}
\end{equation}
Here $u$ and $v\in\cal S'(\Rn)$, and they are required to  
have the properties that this limit should both exist in $\cal D'(\Rn)$ for
\emph{all} $\psi$ of the specified 
type and be independent of the choice of $\psi$. ($\psi_k(D)u:=
\cal F^{-1}(\psi_k \hat u)$ etc.) 

This formal definition is from \cite{JJ94mlt}, but analogous limits have
been known for a long time in the $\cal D'$-context. 
As shown in \cite[Sect.~3.1]{JJ94mlt},
$\pi(u,v)$ coincides with the usual pointwise multiplication:
\begin{gather}
  L_p^{\loc}(\Rn)\times L_q^{\loc}(\Rn)
  \overset{{\displaystyle\cdot}}{\longrightarrow}
  L_r^{\loc}(\Rn),
  \qquad 0\le \fracc1r=\fracp+\fracc1q\le1,
  \label{LLL-eq}
\\
  \cal O_M(\Rn)\times \cal S'(\Rn)
  \overset{\displaystyle\cdot}{\longrightarrow}
  \cal S'(\Rn).
  \label{OSS-eq}
\end{gather} 
For later reference, the main tool for \eqref{LLL-eq} and localisation 
to open sets $\Omega$ is recalled from \cite[Prop.~3.7]{JJ94mlt}: 
if either $u$ or $v$ vanishes in $\Omega$, then any $\psi$ as 
in \eqref{pi-eq} gives
\begin{equation}
 0=\lim_{k\to\infty} \rOm( \psi_k(D)u\cdot \psi_k(D)v)) 
   \quad\text{in $\cal D'(\Omega)$}.
  \label{pi-lim}
\end{equation}
When $\pi(u,v)$ is defined, \eqref{pi-lim} implies 
$\supp\pi(u,v)\subset\supp u\cap\supp v$ 
(this is obvious for \eqref{LLL-eq}--\eqref{OSS-eq}). 
But as a consequence of Lemma~\ref{singsupp-lem}, the
limit in \eqref{pi-lim}  
exists in any case when one of the factors vanish in $\Omega$.

Using \eqref{pi-lim}, 
$\pi_\Omega(u,v)$ is \emph{defined} 
for an arbitrary open set $\Omega\subset\Rn$
on  those $u$, $v$ in $\cal D'(\Omega)$ for 
which $U$, $V\in\cal S'(\Rn)$ exist such that $\rOm U=u$,
$\rOm V=v$ and
\begin{equation}
  \pi_\Omega(u,v):= \lim_{k\to\infty} \rOm
  [(\psi_k(D)U)\cdot  (\psi_k(D)V)]
  \quad\text{exists in $\cal D'(\Omega)$}
  \label{piO-eq}
\end{equation}
independently of $\psi\in C^\infty_0(\Rn)$ with $\psi\equiv1$ near $\xi=0$. 
Hereby $\pi_\Omega$ is well defined, for \eqref{pi-lim} implies that the
limit is independent of the `extension' $(U,V)$, hence the
$\psi$-independence is so  (cf \cite[Def~7.1]{JJ94mlt}). 
But as $\pi(U,V)$ need not be defined, it is clearly 
essential that $\rOm$ is applied before passing to the limit.

\subsection{Boundedness of generalised multiplication}
  \label{pdbdd-ssect}
Using \eqref{piO-eq}, it is easy to see, and well known, 
that $\pi_{\Omega}$ inherits boundedness from $\pi$ on ${\Rn}$ as follows: 

\begin{prop}
  \label{piO-prop}
Let each of the spaces $E_0$, $E_1$ and $E_2$ be either a Besov space
 $B^{s}_{p,q}(\Rn)$ or a Lizorkin--Triebel space $F^{s}_{p,q}(\Rn)$, 
chosen so that $\pi(\cdot,\cdot)$ is a bounded bilinear
operator  
\begin{equation}
  \pi\colon E_0\oplus E_1\to E_2.
  \label{piEEE-eq}
\end{equation}
For the corresponding spaces $E_k(\overline{\Omega}):=\rOm E_k$
over an arbitrary open set $\Omega\subset\Rn$, endowed with the infimum norm,
$\pi_\Omega$ is bounded
\begin{equation}
    \pi_{\Omega}(\cdot,\cdot)\colon E_0(\overline{\Omega})\oplus
    E_1(\overline{\Omega})\to E_2(\overline{\Omega}).
  \label{piOEEE-eq}
\end{equation}
\end{prop}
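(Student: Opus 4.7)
The plan is to reduce the bound on $\Omega$ to the corresponding bound on $\Rn$ by lifting via the extension operator implicit in the infimum norm and then projecting back with $r_\Omega$. Fix $u\in E_0(\overline\Omega)$ and $v\in E_1(\overline\Omega)$. For any $\varepsilon>0$, by definition of the quotient (infimum) norm one finds $U\in E_0$ and $V\in E_1$ with $r_\Omega U=u$, $r_\Omega V=v$ and
\begin{equation*}
  \norm{U}{E_0}\le (1+\varepsilon)\norm{u}{E_0(\overline\Omega)},\qquad
  \norm{V}{E_1}\le (1+\varepsilon)\norm{v}{E_1(\overline\Omega)}.
\end{equation*}
The hypothesis \eqref{piEEE-eq} then supplies $\pi(U,V)\in E_2$ with a bound $\norm{\pi(U,V)}{E_2}\le c\norm{U}{E_0}\norm{V}{E_1}$.

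Next I verify that the defining limit \eqref{piO-eq} for $\pi_\Omega(u,v)$ exists and coincides with $r_\Omega\pi(U,V)$. Since $U,V\in\cal S'(\Rn)$ admit the product $\pi(U,V)$ on $\Rn$, the regularised products $(\psi_k(D)U)\cdot(\psi_k(D)V)$ converge in $\cal D'(\Rn)$ to $\pi(U,V)$, independently of $\psi$. Restriction $r_\Omega\colon \cal D'(\Rn)\to\cal D'(\Omega)$ is continuous, so
\begin{equation*}
  \lim_{k\to\infty} r_\Omega\bigl[(\psi_k(D)U)\cdot(\psi_k(D)V)\bigr]
     = r_\Omega\pi(U,V)\quad\text{in }\cal D'(\Omega),
\end{equation*}
which shows that the limit in \eqref{piO-eq} exists. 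Its independence of the representative pair $(U,V)$ is exactly the content of \eqref{pi-lim}: two choices $(U,V)$ and $(U',V')$ differ on $\Rn\setminus\Omega$ only, and the resulting difference vanishes after $r_\Omega$ in the limit; hence $\pi_\Omega(u,v)$ is well defined and equals $r_\Omega\pi(U,V)$ for \emph{any} admissible extensions.

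Combining the two steps,
\begin{equation*}
  \Norm{\pi_\Omega(u,v)}{E_2(\overline\Omega)}
  \le \Norm{\pi(U,V)}{E_2}
  \le c\,\norm{U}{E_0}\norm{V}{E_1}
  \le c(1+\varepsilon)^2 \norm{u}{E_0(\overline\Omega)}\norm{v}{E_1(\overline\Omega)},
\end{equation*}
where the first inequality uses that $r_\Omega$ has operator norm at most one on $E_2\to E_2(\overline\Omega)$ (by definition of the infimum norm). Letting $\varepsilon\to 0$ yields \eqref{piOEEE-eq} with the same multiplicative constant $c$ as in \eqref{piEEE-eq}. The only conceptual point requiring care is the passage of the regularisation limit through $r_\Omega$ and the consequent $\psi$- and extension-independence on $\Omega$; both follow mechanically from \eqref{pi-lim} and the continuity of restriction, so no new estimate beyond \eqref{piEEE-eq} is needed.
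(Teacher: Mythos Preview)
Your proof is correct and is precisely the argument the paper has in mind: the paper omits the proof entirely, calling it ``easy to see, and well known'' and pointing to the definition \eqref{piO-eq}. Your lift-and-project argument via near-optimal extensions, followed by the observation that $\pi_\Omega(u,v)=r_\Omega\pi(U,V)$ (with well-definedness supplied by \eqref{pi-lim}), is exactly the intended route.
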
 
In the result above it is a central question under which conditions
\eqref{piEEE-eq} actually holds. This was almost completely analysed
in \cite[Sect.~5]{JJ94mlt} by means of paramultiplication. 
To prepare for the definition and analysis (further below) 
of the exact paralinearisation, this will now be recalled.

First, 
by using \eqref{LP-eq} and setting $\Phi_j\equiv 0\equiv \Psi_j$ for $j<0$,
the paramultiplication operators $\pi_m(\cdot,\cdot)$ 
with $m=1$, $2$, $3$ (in the sense of M.~Yamazaki \cite{Y1,Y2,Y3}), are
defined for those $f$, $g\in\cal S'(\Rn)$ for which the 
series below converge in $\cal D'(\Rn)$:
\begin{subequations}
  \label{paramultiplication-eq}
\begin{align}
  \pi_1(f,g)&=\sum_{j=0}^{\infty} \Psi_{j-2}(D)f\Phi_j(D)g\\
  \pi_2(f,g)&=\sum_{j=0}^{\infty}
  (\Phi_{j-1}(D)f\Phi_j(D)g +\Phi_{j}(D)f\Phi_j(D)g
       +\Phi_{j}(D)f\Phi_{j-1}(D)g)\\
  \pi_3(f,g)&=\sum_{j=0}^{\infty} \Phi_j(D)f\Psi_{j-2}(D)g
\end{align}
\end{subequations}
This applies to \eqref{pi-eq} by taking
$\psi_k=\Psi_k$, for 
$\Psi_k=\Phi_0+\dots+\Phi_k$, so that bilinearity gives that the limit
on the right  
hand side of \eqref{pi-eq} equals $\sum_{m=1,2,3} \pi_{m}(u,v)$,
whenever each $\pi_m(u,v)$ exists\,---\,but this existence is
easily analysed for each $m$ by standard estimates. 
In fact $\pi_1(f,g)$ and $\pi_3(f,g)$ both exist for all $f$, $g\in \cal
S'(\Rn)$, as observed in \cite[Ch.~16]{MeCo97}, so $\pi(u,v)$ is
defined if and only if the second series $\pi_2(u,v)$ is so.
Finally the $\psi$-independence is established post festum;
cf \cite[Sect.~6.4]{JJ94mlt}.

For convenience $E^s_{p,q}$ will now denote a space which
(for every value of $(s,p,q)$) 
may be either a Besov or a Lizorkin--Triebel space on $\Rn$.
It was proved in \cite[Thm.~4.2]{JJ94mlt}, albeit with \eqref{Dpp-cnd} and
\eqref{Dpp'-cnd} essentially covered by \cite{F3}, that if 
\begin{equation}
  \norm{fg}{E^{s_2}_{p_2,q_2}}\le c
  \norm{f}{E^{s_0}_{p_0,q_0}}
  \norm{g}{E^{s_1}_{p_1,q_1}}  
\end{equation}
holds for all Schwartz functions $f$ and $g$, then
\begin{subequations}
  \label{nec1-cnd}
\begin{align}
 s_0+s_1&\ge n(\fracc1{p_0}+\fracc1{p_1}-1),  
  \label{Dp-cnd}\\
 s_0+s_1&\ge0.
  \label{Dpp-cnd} 
\end{align}
\end{subequations}
As a supplement to this, the following were also
established there:
\begin{subequations} 
  \label{nec2-cnd}
\begin{align}
  s_0+s_1&=\fracc{n}{p_0}+\fracc{n}{p_1}-n  
  &\text{implies }&
  \begin{cases}
  \fracc{1}{q_0}+\fracc{1}{q_1}\ge1 \ \text{in $BB\bigdot$-cases},\\
  \fracc{1}{q_0}+\fracc{1}{p_1}\ge1 \ \text{in $BF\bigdot$-cases};
  \end{cases}
  \label{Dp'-cnd}\\
  s_0+s_1&=0
  &\text{implies } & \fracc1{q_0}+\fracc1{q_1}\ge1.
  \label{Dpp'-cnd}
\end{align}
\end{subequations}
The main interest lies in the $BB\bigdot$- and $FF\bigdot$-cases and the
case with $\max(s_0,s_1)>0$ (for $s_0=s_1=0$ H{\"o}lder's
inequality applies). In this situation the sufficiency of the above
conditions was entirely confirmed by means of \eqref{paramultiplication-eq}, cf
the following version of \cite[Cor~6.12]{JJ94mlt} for isotropic spaces:

\begin{thm}  
  \label{dom-thm}
When $\max(s_0,s_1)>0$, then it holds in the 
$BB\bigdot$- and $FF\bigdot$-cases that
$E^{s_0}_{p_0,q_0}$ and $E^{s_1}_{p_1,q_1}$ on $\Rn$ are `multiplicable' 
if and only if both
\eqref{Dp-cnd}--\eqref{Dpp-cnd} and \eqref{Dp'-cnd}--\eqref{Dpp'-cnd}  
hold.
\end{thm}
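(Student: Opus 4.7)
The plan is to establish the sufficiency direction, since the necessity of \eqref{Dp-cnd}--\eqref{Dpp-cnd} and \eqref{Dp'-cnd}--\eqref{Dpp'-cnd} has already been recorded in the text (and covered in \cite{F3,JJ94mlt}). The backbone of the argument is the paramultiplication decomposition
\[
  f \cdot g \;=\; \pi_1(f,g) + \pi_2(f,g) + \pi_3(f,g)
\]
from \eqref{paramultiplication-eq}, combined with the dyadic ball and corona criteria in Lemmas~\ref{B-lem}, \ref{F-lem} and \ref{DCC-lem}. I would estimate each paraproduct separately on $E^{s_0}_{p_0,q_0}\times E^{s_1}_{p_1,q_1}\to E^{s_2}_{p_2,q_2}$ with $s_2=s_0+s_1$, $\tfrac{1}{p_2}=\tfrac{1}{p_0}+\tfrac{1}{p_1}$ (or any admissible variant obtained via Sobolev embeddings).

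First I would dispose of $\pi_1$ and $\pi_3$. By spectral support calculus, each summand $\Psi_{j-2}(D)f \cdot \Phi_j(D)g$ in $\pi_1(f,g)$ has Fourier transform supported in a dyadic annulus $\{c\,2^j\le|\xi|\le C\,2^j\}$, so the dyadic corona criterion (Lemma~\ref{DCC-lem}) applies without any lower bound on $s_2$. Using H\"older's inequality in $L_{p_2}$, together with Nikol'ski\u\i--Plancherel--Polya in its vector-valued form (Lemma~\ref{vNPP-lem}) to shift integrability when $\tfrac{1}{p_2}\ne\tfrac{1}{p_0}+\tfrac{1}{p_1}$, each term is controlled by $\|\Psi_{j-2}(D)f\|_{p_0}\|\Phi_j(D)g\|_{p_1}$ up to a geometric factor $2^{j\alpha}$ which is absorbed thanks to \eqref{Dp-cnd}. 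Summing in $\ell_{q_2}$ (or inside $L_{p_2}(\ell_{q_2})$ in the $FF\bigdot$-case using the vector-valued maximal inequality) and invoking H\"older's inequality for sequences yields the desired bound by $\|f\|_{E^{s_0}_{p_0,q_0}}\|g\|_{E^{s_1}_{p_1,q_1}}$. Since $\max(s_0,s_1)>0$ guarantees $L_\infty$-control on one factor or the appropriate embedding, the summation constraint on the $q$'s enters only on the borderline of \eqref{Dp-cnd}, where it reduces to \eqref{Dp'-cnd}. The case of $\pi_3$ is symmetric.

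The main obstacle is the remainder $\pi_2(f,g)$, whose summands $\Phi_{j\pm 1}(D)f\cdot \Phi_j(D)g$ have Fourier support only in the dyadic \emph{ball} $\{|\xi|\le C\,2^j\}$. Here the ball criterion Lemmas~\ref{B-lem} and \ref{F-lem} require $s_2>\max\bigl(0,\tfrac{n}{p_2}-n\bigr)$, which translates precisely into \eqref{Dp-cnd} together with \eqref{Dpp-cnd}. The borderline regimes $s_0+s_1=n(\tfrac{1}{p_0}+\tfrac{1}{p_1}-1)$ and $s_0+s_1=0$ are where the weight $2^{s_2 j}$ offers no decay; in those cases I would exploit the flexibility of the sum-exponent $r\ge q$ in Lemma~\ref{F-lem} and apply H\"older in the sequence indices, producing exactly the sum-exponent inequalities in \eqref{Dp'-cnd}--\eqref{Dpp'-cnd}. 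The asymmetry between the $BB\bigdot$ and $BF\bigdot$ situations reflects the order in which the $L_{p}$- and $\ell_{q}$-norms are applied, and is what produces the two different inequalities inside \eqref{Dp'-cnd}.

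In summary: $\pi_1$, $\pi_3$ require only the basic index balance because of the corona localisation; $\pi_2$ is governed by the ball criterion and is the source of both necessary conditions \eqref{Dp-cnd}--\eqref{Dpp-cnd}; and the refined $q$-conditions \eqref{Dp'-cnd}--\eqref{Dpp'-cnd} arise solely on the two critical hyperplanes where the paraproduct sums lose all their geometric decay and must be summed by H\"older. The hypothesis $\max(s_0,s_1)>0$ is used to sidestep the degenerate case $s_0=s_1=0$ (where H\"older's inequality in $L_p$ is sharper than what the paramultiplication machinery delivers), and to ensure that at least one factor can be taken in an $L_\infty$-ish space after embedding; this rules out the extra log-type obstructions that would otherwise appear.
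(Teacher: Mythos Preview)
The paper does not give its own proof of this theorem; it is quoted from \cite[Cor~6.12]{JJ94mlt}, with the method indicated just before the statement and in Remark~\ref{pmest-rem} (paramultiplication plus the dyadic ball and corona criteria). Your outline follows that same route, so at the level of strategy you are aligned with the cited source.

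That said, there is a misattribution running through your sketch. As the paper records immediately after \eqref{paramultiplication-eq}, the series $\pi_1(f,g)$ and $\pi_3(f,g)$ converge for \emph{all} $f,g\in\cal S'(\Rn)$; multiplicability is therefore decided entirely by $\pi_2$. Consequently \emph{all} of \eqref{Dp-cnd}--\eqref{Dpp'-cnd} are constraints on $\pi_2$, not on $\pi_1$ or $\pi_3$. Your claims that \eqref{Dp-cnd} is ``absorbed'' in the $\pi_1$ estimate and that \eqref{Dp'-cnd} arises from summing $\pi_1$ are misplaced. Likewise, the hypothesis $\max(s_0,s_1)>0$ is not there to provide $L_\infty$-control for $\pi_1$; its role (as the paper notes parenthetically) is simply to exclude the degenerate case $s_0=s_1=0$, where H{\"o}lder's inequality is the correct tool and the paraproduct machinery gives nothing sharper.

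Your handling of the borderlines is also too optimistic. Lemmas~\ref{B-lem} and~\ref{F-lem} require the \emph{strict} inequality $s_2>\max(0,\tfrac{n}{p_2}-n)$, so they do not apply directly when equality holds in \eqref{Dp-cnd} or \eqref{Dpp-cnd}. On those hyperplanes one must argue convergence of $\pi_2$ by other means---e.g.\ direct $L_p$-summability as in Lemma~\ref{F0-lem}, or via the approximation spaces $A^s_{p,q}$ as in Theorem~\ref{mod'-thm}---and it is in that finer analysis that the sum-exponent conditions \eqref{Dp'-cnd}--\eqref{Dpp'-cnd} actually appear. ``H{\"o}lder in the sequence indices'' is part of the story but does not by itself produce a limit in $\cal D'$.
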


The spaces that receive $\pi(E^{s_0}_{p_0,q_0},E^{s_1}_{p_1,q_1})$ were
almost characterised in \cite{JJ94mlt}, departing from at least 8 other
necessary conditions, but the below
Theorem~\ref{mod-thm} will imply what is needed in this direction.

\begin{rem}
  \label{pmest-rem}
To prepare for Theorem~\ref{mod-thm} below, a few estimates of the $\pi_j$
are recalled. When $\fracc1{p_2}=\fracc1{p_0}+\fracc1{p_1}$, 
$\fracc1{q_2}=\fracc1{q_0}+\fracc1{q_1}$, there is boundedness 
\begin{align}
  \pi_1&\colon L_\infty\oplus B^{s}_{p,q}\to B^{s}_{p,q}
  \label{pi1LB-eq} \\
  \pi_1&\colon B^{s_0}_{p_0,q_0}\oplus B^{s_1}_{p_1,q_1}
    \to B^{s_0+s_1}_{p_2,q_2}
   \quad\text{for}\quad s_0<0,
  \label{pi1BBB-eq}  \\
  \pi_2&\colon B^{s_0}_{p_0,q_0}\oplus B^{s_1}_{p_1,q_1}
    \to B^{s_0+s_1}_{p_2,q_2}
   \quad\text{for}\quad s_0+s_1>(\fracc n{p_2}-n)_+.
  \label{pi2BBB-eq}
\end{align}
Since $\pi_3(f,g)=\pi_1(g,f)$, also $\pi_3$ is covered by this.
Analogous results hold for the Lizorkin--Triebel spaces, except  
that Lemma~\ref{F-lem} 
for $s_0+s_1>(\fracc n{p_2}-n)_+$ entails
\begin{equation}
    \pi_2\colon F^{s_0}_{p_0,q_0}\oplus  F^{s_1}_{p_1,q_1}
  \to F^{s_0+s_1}_{p_2,t}
\quad\text{when}\quad
\begin{cases}
  t\ge q_2 & \text{for $q_2\ge p_2$}
\\
 t>\tfrac{n}{n+s_0+s_1} & \text{for $q_2<p_2$.}
  \label{pi2FFF-eq}
\end{cases}
\end{equation}
These estimates all follow from the dyadic corona and ball criteria in a way
that is standard by now, so details are omitted
(the arguments can be found in a refined version for a special case in
Proposition~\ref{chi-prop} below, cf also the proof of
Lemma~\ref{tensor-lem}). They go back to the 
paradifferential estimates of M.~Yamazaki \cite{Y1}, but in the simpler context
of paramultiplication an account of the estimates may be found in
eg \cite[Thm~5.1]{JJ94mlt},
though with \eqref{pi2FFF-eq} as a small improvement.  
\end{rem}

\begin{rem}
  \label{Wm-rem}
It is used in Section~\ref{finrem-sect} below that multiplication cannot
define a continuous map $W^m_1\oplus W^m_1\to\cal D'$ when $2m<n$. 
When the range is a
Besov space this follows on $\Rn$ from \eqref{Dp-cnd}, 
but for the general statement
an explicit proof should be in order. If $\rho\in C^\infty_0$ is real and
$\rho_k(x)=\tfrac{1}{k}2^{k(n-m)}\rho(2^k x)$, it is easy to see that
$\norm{\rho_k}{W^m_1}=\cal O(\tfrac{1}{k})\searrow 0$. But for $\varphi\in
C^\infty_0$ non-negative with $\varphi(0)=1$, $2m<n$ implies
\begin{equation}
  \dual{\rho_k^2}{\varphi}=k^{-2}2^{k(n-2m)}\int
\rho^2(y)\varphi(2^{-k}y)\,dy
\to \infty.
\end{equation}
This argument works for open sets
$\Omega\ni0$ and extends to all
$\Omega\subset\Rn$ by translation.
\end{rem}

\subsection{Extension by zero}
  \label{ext-ssect}
Having presented the product $\pi(\cdot,\cdot)$ formally,  
the opportunity is taken to make a digression needed later.

In Section~\ref{vK-sect}--\ref{sys-sect} the operators $A$ and 
$\widetilde{A}$ of
Section~\ref{main-sect} will be realised through 
the Boutet de~Monvel calculus of linear boundary problems, so it will
be all-important to have commuting diagrams like \eqref{Acd-eq} 
for the operators in the calculus. 
Avoiding too many details, the main step is to show that
truncated pseudo-differential operators are defined independently of the
spaces. As the question is local, it is enough to treat them on the
half-space $\Rn_+=\{\,x_n>0\,\}$, where they are of the form
$P_+=r^+Pe^+$ for a ps.d.o.\ $P$ defined on 
$\cal S'(\Rn)$, 
so it suffices to define $e^+$ on all spaces with $s$ close
to $0$ ($e^+:=e_{\Rn_+}$, $r^+:=r_{\Rn_+}$). 
However,  setting
$e^+u=\pi(\chi,v)$ when $r^+v=u$ and $\chi$ denotes the characteristic
function of $\Rn_+$, it follows from \eqref{pi-lim} that $\pi(\chi,v)$ 
at most depends on $v$ in the null set $\{\,x_n=0\,\}$. 
But since the spaces in the next
result only contain trivial distributions supported in this hyperplane,
this suffices for a space-independent definition
of $e^+u$ when $u$ belongs to these spaces.

\begin{prop}   \label{chi-prop}
The characteristic function $\chi$ of $\Rn_+$ yields a bounded map
\begin{equation}
  \pi(\chi,\cdot)\colon E^s_{p,q}(\Rn)\to E^s_{p,q}(\Rn),
  \label{chi-eq}
\end{equation}
for Besov and Lizorkin--Triebel spaces with 
$\fracp-1+(n-1)(\fracp-1)_+<s<\fracp$.
\end{prop}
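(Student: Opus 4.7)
The plan is to split
\begin{equation*}
  \pi(\chi,u) = \pi_1(\chi,u)+\pi_2(\chi,u)+\pi_3(\chi,u)
\end{equation*}
as in \eqref{paramultiplication-eq} and estimate each of the three pieces in the spirit of the estimates recalled in Remark~\ref{pmest-rem}; since $\chi\in L_\infty$, the $\psi$-independence of the sum is then automatic from its existence, cf.\ Section~\ref{plin-ssect}. For $\pi_1(\chi,u)=\sum_j \Psi_{j-2}(D)\chi\cdot\Phi_j(D)u$ the factors $\Psi_{j-2}(D)\chi$ are uniformly bounded by $\nrm{\chi}{\infty}=1$, and the Fourier supports of the summands lie in coronas $|\xi|\sim 2^j$. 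The dyadic corona criterion (Lemma~\ref{DCC-lem}) in both the $B$ and $F$ cases then gives boundedness $\pi_1(\chi,\cdot)\colon E^{s}_{p,q}(\Rn)\to E^{s}_{p,q}(\Rn)$ with no restriction on $(s,p,q)$; in the $F$-case one uses the pointwise bound by the Peetre maximal function to absorb $\Psi_{j-2}(D)\chi$.

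For $\pi_3(\chi,u)=\sum_j \Phi_j(D)\chi\cdot\Psi_{j-2}(D)u$, I would exploit the tensor structure $\chi(x)=1(x')\otimes H(x_n)$, where $H$ is the Heaviside function. A direct computation, using $\Phi(0)=0$ so that $\int\check\Phi\,dz=0$, yields for $j\ge1$
\begin{equation*}
  \Phi_j(D)\chi(x)=F(2^j x_n),\qquad F(t):=\int_{z_n<t}\check\Phi(z)\,dz,
\end{equation*}
with $F$ rapidly decreasing at both $\pm\infty$. Hence $|\Phi_j(D)\chi(x)|\le C_N(1+2^j|x_n|)^{-N}$, in particular $\nrm{\Phi_j(D)\chi}{\infty}\le C$ uniformly. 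The summands of $\pi_3$ again have Fourier supports in coronas $|\xi|\sim 2^j$, so Lemma~\ref{DCC-lem} and a Peetre-type maximal function estimate on $\Psi_{j-2}(D)u$ in the $x_n$-direction give the required bound provided $s<\fracp$; this is where the upper restriction on $s$ enters, as the $L_p$-content of $\Phi_j(D)\chi$ in a strip near $\{x_n=0\}$ is of order $2^{-j/p}$ and must beat the weight $2^{sj}$.

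The resonant term $\pi_2(\chi,u)$ collects the finitely many near-diagonal products $\Phi_{j+k}(D)\chi\cdot\Phi_j(D)u$ for $k\in\{-1,0,1\}$, whose Fourier supports sit in balls $|\xi|\le A2^j$ rather than coronas. Therefore the dyadic ball criterion (Lemmas~\ref{B-lem}--\ref{F-lem}) must be used, and imposes the threshold $s>\max(0,\fracnp-n)$. Combined with the pointwise decay estimate on $\Phi_j(D)\chi$ and H\"older's inequality in the $\ell_q$ index, this yields the full lower bound $s>\fracp-1+(n-1)(\fracp-1)_+$ stated in the proposition, which just matches the trace-type condition from \eqref{Dmk-eq} with $k=0$.

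The main obstacle is the $F$-case of $\pi_3$ and $\pi_2$: there the $\ell_q$-norm sits inside the $L_p$-norm, so one cannot simply pull out $\nrm{\Phi_j(D)\chi}{\infty}$ and must transfer the regularity of $\chi$ from $L_\infty$ to an $L_r$-scale in the transverse direction via the vector-valued Nikol$'$ski\u\i--Plancherel--Polya inequality (Lemma~\ref{vNPP-lem}), exploiting that $\chi$ is constant in $x'$. Once this is done, the $B$- and $F$-estimates run in parallel and the Proposition follows, with \eqref{chi-eq} as a consequence of summing the three bounds.
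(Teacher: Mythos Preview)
Your treatment of $\pi_1$ and $\pi_3$ is essentially the paper's: the corona criterion handles $\pi_1$ with no restriction on $s$, and for $\pi_3$ the tensor structure $\chi(x)=1(x')\otimes H(x_n)$ together with $\nrm{\Phi_j(D)\chi}{L_p(\R,x_n)}\sim 2^{-j/p}$ and a one-dimensional Nikol$'$ski\u\i--Plancherel--Polya estimate on $\Psi_{j-2}(D)u$ gives the bound for $s<\fracp$. So far so good.

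The genuine gap is your handling of $\pi_2$. You write that the dyadic ball criterion imposes $s>\max(0,\fracnp-n)$ and that ``combined with the pointwise decay estimate on $\Phi_j(D)\chi$ and H\"older's inequality in the $\ell_q$ index, this yields the full lower bound''. But for $p>1$ the ball criterion only gives $s>0$, whereas the proposition requires the range $\fracp-1<s\le0$ as well; no amount of extra decay of $\Phi_j(D)\chi$ in $L_\infty$ will push Lemma~\ref{B-lem} below $s=0$, because the restriction $s>0$ there is a structural feature of the ball criterion, not a matter of sharper input estimates. The paper closes this gap by abandoning the ball criterion altogether for this range: it computes $\Phi_j(D)\sum_{k\ge j-1}\chi_k u_k$ directly, passes to mixed norms $L_{p,x'}(L_{1,x_n})$, uses $\nrm{H_k}{L_1(\R)}\sim 2^{-k}$ (the crucial gain of one full derivative), and then applies the summation lemma \eqref{slem-eq} with the positive exponent $s+1-\fracp>0$. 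This mixed-norm step, exploiting the anisotropic structure of $\chi$, is the missing idea in your sketch.

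For the $F^{s}_{p,q}$-case you propose direct estimates via Lemma~\ref{vNPP-lem}; the paper takes a different and shorter route, invoking Franke's $F^{s}_{p,q}$-result \cite[Cor.~3.4.6]{F2} for the operator $M_\chi$ and then identifying $M_\chi$ with $\pi(\chi,\cdot)$ by density on $B^{s+\varepsilon}_{p,1}\hookrightarrow F^{s}_{p,q}$. Your direct approach is not unreasonable, but as written it is only a plan; in particular the $\pi_2$-term in the $F$-scale for negative $s$ would again require the mixed-norm machinery just described, not merely Lemma~\ref{vNPP-lem}.
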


The $F^{s}_{p,q}$-part of this will be based on a
similar result of J.~Franke \cite[Cor.~3.4.6]{F2}.
In principle Franke analysed another product
as he estimated $\chi v$ for $\supp\hat v$ compact 
and extended by continuity to $F^{s}_{p,q}$ (for $q=\infty$ using
Fatou's lemma).
But the full treatment of $P_+$ in
$B^{s}_{p,q}$ and $F^{s}_{p,q}$-spaces is also based on the 
splitting of $\pi$ in \eqref{paramultiplication-eq}, so it is  important
that Franke's product equals $\pi(\chi,v)$. This 
was exploited  in \cite{JJ96ell}, albeit without details,
so it is natural to take the opportunity to return to this point during the

\begin{proof}[Proof of Proposition~\ref{chi-prop}]
In view of \eqref{paramultiplication-eq} it suffices for $B^{s}_{p,q}$ 
to show bounds
\begin{equation}
  \norm{\pi_m(\chi,u)}{B^s_{p,q}}\le C\norm{u}{B^s_{p,q}}
  \quad\text{for $m=1$, $2$, $3$}.
  \label{pim-eq}
\end{equation}
Using Remark~\ref{pmest-rem},
this holds for $m=1$ for every $s$ because $\chi\in
L_\infty$. And $L_\infty\subset B^0_{\infty,\infty}$, 
so for $m=2$ it holds for $s>(\fracnp-n)_+$, while for $m=3$
it does so for $s<0$. The last two restrictions on $s$ will be
relaxed using the anisotropic structure of $\chi$.

For brevity $u_k:=\Phi_k(D)u$, $u^k:=\Psi_k(D)u$ etc.
Now $\pi_{3}(\chi,u)=\sum_{k\ge2}\chi_ku^{k-2}$.
If $H$ is the Heaviside function, $\chi(x)=1(x')\otimes H(x_n)$ and
\begin{equation}
  \chi_k=
   c\cal F^{-1}(\Phi_k(\xi)\delta_0(\xi')\otimes\hat H(\xi_n))
  =1(x')\otimes
  \cal F^{-1}_{\xi_n\to x_n}(\Phi_k(0,\xi_n)\hat H).
  \label{xk-eq}
\end{equation}
For the second factor, note that $2^{k}\hat H(2^{k}\xi_n)=\hat H(\xi_n)$
since $H$ is homogeneous of degree zero, so
\begin{equation}
  H_k(x_n)=\cal F^{-1}(\Phi_1(0,2^{-k}\cdot)\hat H)(x_n)=
    2^{k}\cal F^{-1}(\Phi_1(0,\cdot)\hat H(2^{k}\cdot))(2^{k}x_n)
  =H_1(2^{k}x_n).
\end{equation}
Here $H_k$ refers to the decomposition
$1=\sum\Phi_j(0,\xi_n)$ on $\R$. For $k\ge1$ this gives 
\begin{equation}
  \norm{H_k}{L_p(\R)}
  =2^{-(k-1)/p}\norm{H_1}{L_p(\R)}<\infty.
  \label{Hk-eq}
\end{equation}
Indeed, $\Phi_1(0,\cdot)\hat H\in \cal S(\R)$
because $\cal FH=\tfrac{-\im}{\tau}\cal F(\partial_t H(t))=\tfrac{1}{\im\tau}$
for $\tau\ne0$; hence $H_1\in L_p$.
Note that $\tilde H:=H-H_0$, 
by \eqref{Hk-eq} and Lemma~\ref{DCC-lem},
is in $B^{1/p}_{p,\infty}(\R)$ for
$0<p\le\infty$. 

To handle the factor $1(x')$ in \eqref{xk-eq}, there is a mixed-norm estimate
\begin{equation}
  \norm{\chi_k u^{k-2}}{L_p}^p\le
  \int (\sup_{t\in \R}|u^{k-2}(x',t)|)^p\,dx' 
  \norm{H_k}{L_p(\R)}^p
\end{equation}
so that $s-\fracp<0$ in view of the summation lemma 
\eqref{slem-eq} yields
\begin{equation}
  \begin{split}
  \sum_{k>1} 2^{skq}\nrm{\chi_k u^{k-2}}{p}^q
  &\le c\sum_{k>1} 2^{(s-\fracp)kq}(\sum_{0\le l\le k}
     \norm{u_l}{L_p(L_\infty)}^{\min(1,p)})^{\frac{q}{\min(1,p)}}\nrm{H_1}{p}^q 
\\&\le c\nrm{H_1}{p}^q \sum_{k\ge0} 2^{(s-\fracpi)kq}
   \norm{u_k}{L_p(L_\infty)}^q
\\&\le c\norm{\tilde H}{B^{\fracpi}_{p,\infty}}^q \norm{u}{B^{s}_{p,q}}^q.
  \end{split}
\end{equation}
Indeed, the last step follows from the Nikol$'$ski\u\i--Plancherel--Polya
inequality, cf Lem.~\ref{vNPP-lem} ff,
when this is used in the $x_n$-variable (for fixed $x'$ the
Paley--Wiener--Schwartz Theorem gives that $u(x',\cdot)$ has its spectrum in
the region $|\xi_n|\le 2^{k+1}$). By the dyadic corona criterion, cf
Lemma~\ref{DCC-lem}, this proves $\pi_3(\chi,u)\in  B^s_{p,q}$, hence
the case $m=3$ for $s<\fracp$. 

For $m=2$ only $\fracp-1<s\le0$ remains; this implies $1<p\le\infty$.
It can be assumed that $u_0=0$, for $u$ may be replaced by $u-u_0-u_1$
because $\chi\in L_\infty$ implies that $\pi_2(\chi,u_0+u_1)$ belongs to 
$\bigcap_{t>0} B^t_{p,q}$ by Lemma~\ref{DCC-lem}.
Then $\pi_2(\chi,u)$ is split in three contributions, with details given for
$\sum\chi_ku_k$ (terms with $\chi_k u_{k-1}$ and $\chi_{k-1}u_k$ are treated
analogously). In the following it is convenient to replace 
$(u_j)$ temporarily by $(0,\dots,0,u_N,\dots,u_{N+M},0,\dots)$, in
which the entries are also called $u_j$ for simplicity. In this way the
below series trivially converge.

Note that the 
Nikol$'$ski\u\i--Plancherel--Polya inequality used in $x_n$ yields
\begin{equation}
  \nrm{\Phi_j(D)\sum_{k\ge j-1} \chi_k u_k}{p}\le c
  \sum_{k\ge j-1}\Norm{\check\Phi_j*(\chi_ku_k)}{L_{p,x'}(L_{1,x_n})}
  2^{j(1-\fracpi)}.
  \label{chi2-eq}
\end{equation}
In this mixed-norm expression, Fubini's theorem gives for $k\ge1$
\begin{equation}
    \int |\check\Phi_j*(\chi_ku_k)(x',x_n)|\,dx_n\le \nrm{H_k}{1}
  \iint|\check\Phi_j(x'-y',x_n)|\,dx_n \sup_{t\in\R}|u_k(y',t)|
  \,dy' .
\end{equation}
Reading this as a convolution on $\R^{n-1}$, the usual $L_p$-estimate leads to 
\begin{equation}
  \norm{\check\Phi_j*(\chi_ku_k)}{L_p(L_1)}\le
  \nrm{H_k}{1}\nrm{\check\Phi_j}{1}\norm{u_k}{L_p(L_\infty)}.
\end{equation}
In view of \eqref{chi2-eq} and \eqref{slem-eq} ff this gives, 
since $s+1-\fracp>0$ and $\supp\cal F(\chi_k u_k)$ is disjoint from 
$\supp\Phi_j$ unless $k>j-2$ (and since $u_0=0$),
\begin{equation}
  \begin{split}
  \sum_{j\ge0}2^{sjq}\nrm{\check\Phi_j*\sum_{k\ge0}\chi_ku_k}{p}^q
  &\le c
  \sum_{j\ge0}2^{(s+1-\fracpi)jq}(\sum_{k\ge j-1}\nrm{H_k}{1}
     \norm{u_k}{L_p(L_\infty)})^q
\\
  &\le c' \sum_{j\ge0} 2^{(s+1-\fracpi)jq}\nrm{H_j}{1}^q
     \norm{u_j}{L_p(L_\infty)}^q    
\\
  &\le c' \norm{\tilde H}{B^1_{1,\infty}(\R)}^q
          \sum_{j\ge0} 2^{sjq}\nrm{u_j}{p}^q
   <\infty.
  \end{split}
\end{equation}
For $q<\infty$ the right hand side tends to $0$ for $N\to\infty$,
so the $\pi_2$-series is fundamental in $B^s_{p,q}$. There
is also convergence for $q=\infty$, since $u\in B^{s-\varepsilon}_{p,1}$ for
some $\varepsilon>0$ such that $s-\varepsilon+1-\fracp>0$. 
The above estimate then also 
applies to the original $(u_j)$, which yields \eqref{pim-eq} for $m=2$. 

To cover the $F^{s}_{p,q}$-case, note the continuity 
$B^{s+\varepsilon}_{p,1}\xrightarrow{\pi(\chi,\cdot)} B^{s+\varepsilon}_{p,1}
\hookrightarrow F^{s}_{p,q}$ for $p<\infty$
and sufficiently small $\varepsilon>0$.
If Franke's multiplication by $\chi$ is denoted $M_\chi$, it follows 
that $B^{s+\varepsilon}_{p,1} \xrightarrow{M_{\chi}} F^{s}_{p,q}$ is
continuous. Since $\cal F^{-1}C^\infty_0$ is dense in
$B^{s+\varepsilon}_{p,1}$ and $M_\chi$ extends the pointwise product
on $\cal F^{-1}C^\infty_0$ by
$\chi$, it follows that $M_\chi$ coincides with $\pi(\chi,\cdot)$ for all
Besov spaces with $(s,p,q)$ as in the theorem, if $p<\infty$. But then they
coincide on all the $F^{s}_{p,q}$ spaces, so $\pi(\chi,\cdot)$ is bounded on
$F^{s}_{p,q}$ as claimed.
\end{proof}

\begin{rem}
The above direct treatment of the Besov spaces should be
of some interest in itself, in view of the mixed-norm estimates 
that allow a concise proof of all cases.
\end{rem}

\section{Product type operators}
  \label{pdty-sect}

A basic class of non-linear operators and their paralinearisations
can now be formally introduced:

\begin{defn}  \label{pdtO-defn}
Operators of \emph{product type} $(d_0,d_1,d_2)$ 
on an open set $\Omega\subset\Rn$ are maps (or finite sums of maps) 
of the form
\begin{equation}
  (v,w)\mapsto P_2\pi_{\Omega}(P_0 v, P_1 w),  
  \label{pdtO-eq}
\end{equation}
for linear partial differential operators $P_j$ of order $d_j$, $j=0,1,2$,
with constant coefficients.
The quadratic map $u\mapsto P_2\pi_\Omega(P_0u,P_1u)$ is also said
to be of product type.
\end{defn}

Although \eqref{pdtO-eq} often just amounts to $P_2(P_0v\cdot P_1w)$,
it is in general essential to use $\pi_\Omega$ from \eqref{piO-eq}
in this definition,
because the product cannot always be reduced to one of the
forms in \eqref{LLL-eq}--\eqref{OSS-eq}.
In Section~\ref{sys-sect} below the notion of product type operators will
be extended to certain maps between vector bundles.

The case with $P_2=I$ is throughout referred to as an operator 
of type $(d_0,d_1)$.
Generally $d_0$, $d_1$, $d_2$ appear in the same order as the $P_j$ are
applied. 

If for simplicity $P_2=I$ is considered, the operator 
$\pi_\Omega(P_0u,P_1u)$ may of course be viewed as a homogeneous second order
polynomial $p(z_1,\dots,z_N)$ composed with a jet $J_k u=(D^\alpha
u)_{|\alpha|\le k}$, $k=\max(d_0,d_1)$.
But in general this jet description is too rigid, 
for a given operator of product type with $P_2=I$
may be the restriction of one  with $P_2\ne I$, cf Example~\ref{MA-ex}. 
And conversely $P_2\pi_\Omega(P_0u, P_1 u)$ may 
extend another one of the type in \eqref{pdtO-eq}. 

These differences lie not only in the various expressions such operators can
be shown to have, but also in the parameter domains
they \emph{may} be given. Consider eg
\begin{equation}
  u\mapsto u\cdot\partial_{1}u,
  \qquad
  u\mapsto \frac{1}{2}\partial_{1}(u^2).
  \label{uu-eq}
\end{equation}
The latter coincides with the former at least for $u\in C^\infty$.
By H{\"o}lder's inequality, $\partial_1(u^2)$ is
a bounded bilinear map $L_4(\Rn)\to H^{-1}(\Rn)$,
so its natural parameter domain contains $(s,p)=(0,4)$. But it is not easy
to make sense of $u\partial_1 u$, as a map $L_4\to H^{-1}$;
even with the product $\pi$ it is problematic, for by \eqref{Dpp-cnd} this is
not well defined on $L_4\oplus H^{-1}_4$. 
Hence it seems best (in analogy with minimal and maximal differential
operators in $L_2(\Omega)$) to treat the expressions in
\eqref{uu-eq} as two different operators, with different parameter domains.

More general classifications of non-linear operators are available in the
literature; the reader may consult eg
\cite[Sect.~5]{Bon} and \cite[\S~2]{Y3}. But as discussed in the introduction,
the product type operators defined above are adequate for fixing ideas and for
important applications.

\begin{exmp}
  \label{MA-ex}
For a useful commutation of differentiations to
the left of the pointwise product, consider as in
Section~\ref{vK-sect} below the `von Karman bracket':
\begin{equation}
    [v,w] :=D^2_{1} v D^2_{2} w +D^2_{2} v D^2_{1}w
           -2D^2_{12} v D^2_{12} w.
  \label{vKbr-eq}
\end{equation}
Introducing the expression
\begin{equation}
  B(v,w)= D^2_{12}(D_1vD_2w+D_2vD_1w)-D^2_1(D_2vD_2w)
        -D^2_2(D_1vD_1w),
  \label{vKB-eq}
\end{equation}
then $B(v,w)=[v,w]$ whenever $v$ and $w$
are regular enough to justify application of Leibniz' rule. Clearly
$B(\cdot,\cdot)$ is a case with $P_2\neq I$.
\end{exmp}

\begin{defn}
  \label{plin-defn}
For each choice of
$\Psi_k$ in \eqref{LP-eq}, the \emph{exact paralinearisation} $L_u$ 
of $Q(u)=P_2\pi(P_0u,P_1u)$ on $\Rn$
is defined as follows, 
\begin{equation}
  L_u g= -P_2\pi_1(P_0 u,P_1 g)-P_2\pi_2(P_0 u,P_1 g)
         -P_2\pi_3(P_0 g,P_1 u).
  \label{plin-eq}  
\end{equation}
For $\Omega\subset\Rn$ and a universal extension operator $\lOm$,
cf \eqref{ext-eq}, 
the composite $g\mapsto \rOm L_U(\lOm g)$ with $U=\lOm u$ is
also referred to as the exact paralinearisation; it is written $L_u$ for
brevity.  
\end{defn}
The rationale is that $L_ug$ has circa the
same regularity as $g$ (contrary to the case of linearisations that are not
moderate). Cf Theorem~\ref{mod-thm} below. 

Conceptually, Definition~\ref{plin-defn} invokes an interchange of the maps
$\lOm$ and $P_j$, compared to \eqref{pdtO-eq}, 
where $P_0$ and $P_1$ are applied before
the implicit extensions to $\Rn$ in $\pi_\Omega$; cf \eqref{piO-eq}.
The advantage is that $L_ug$ then has the structure of a
composite map $r_\Omega\circ P_u\circ \lOm (g)$ for a certain
pseudo-differential operator $P_u$ of type $1,1$; 
cf Theorem~\ref{symb-thm} below. 

However, as justification
$P_j\lOm v=\lOm P_jv$ in $\Omega$, whence the localisation
property in \eqref{pi-lim} implies that $-L_uu$ gives back the original
product type operator:

\begin{lem}
  \label{Lu-lem}
Let $u$ belong to a Besov or Lizorkin--Triebel space
$E^{s}_{p,q}(\overline{\Omega})$ such that the parameters 
$(s-d_j,p,q)_{j=0,1}$ fulfil \eqref{nec1-cnd}--\eqref{nec2-cnd} 
and $s>\max(d_0,d_1)$. Then
\begin{equation}
  P_2\pi_{\Omega}(P_0 u, P_1 u)=-L_u(u).
  \label{piLuu-id}
\end{equation}
This holds for any choice of $\lOm$ and $\Psi_k$ (or $\Phi_k$) 
in the definition of $L_u$.
\end{lem}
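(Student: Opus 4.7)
Set $U:=\lOm u\in E^{s}_{p,q}(\Rn)$, so that $P_jU\in E^{s-d_j}_{p,q}(\Rn)$ with $r_\Omega(P_jU)=P_ju$ on $\Omega$ for $j=0,1$ (since $P_j$ has constant coefficients, it commutes with $\rOm$ and $\lOm$ in the sense that $r_\Omega P_jU=P_jr_\Omega U=P_ju$). The strategy is to rewrite $-L_uu$ as $\rOm$ applied to an object on $\Rn$, split that object by paramultiplication, and recognise it as $\pi_\Omega(P_0u,P_1u)$.

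First I would verify that the multiplication hypothesis on $(s-d_j,p,q)$
makes $\pi(P_0U,P_1U)$ a well-defined element of $\cal S'(\Rn)$, with
\begin{equation*}
 \pi(P_0U,P_1U)=\pi_1(P_0U,P_1U)+\pi_2(P_0U,P_1U)+\pi_3(P_0U,P_1U),
\end{equation*}
using Theorem~\ref{dom-thm} and Remark~\ref{pmest-rem}: the extreme paraproducts $\pi_1,\pi_3$ always converge in $\cal S'(\Rn)$ by \cite{MeCo97}, and the hypothesis \eqref{nec1-cnd}--\eqref{nec2-cnd} is exactly what guarantees convergence of $\pi_2(P_0U,P_1U)$ and hence of the full product $\pi(P_0U,P_1U)$, independently of $\Psi_k$ (cf.\ \cite[Sect.~6.4]{JJ94mlt}). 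Then, applying the constant-coefficient operator $P_2$ termwise and using $P_2\rOm=\rOm P_2$,
\begin{equation*}
  -L_uu=\rOm\bigl[P_2\pi_1(P_0U,P_1U)+P_2\pi_2(P_0U,P_1U)+P_2\pi_3(P_0U,P_1U)\bigr]=\rOm P_2\pi(P_0U,P_1U),
\end{equation*}
where the swap $\pi_3(P_0g,P_1u)\leadsto\pi_3(P_0u,P_1u)$ for $g=u$ is automatic.

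Next I would identify $\rOm\pi(P_0U,P_1U)$ with $\pi_\Omega(P_0u,P_1u)$. By the definition \eqref{piO-eq}, $\pi_\Omega(P_0u,P_1u)$ is computed by taking \emph{any} $\cal S'$-extensions of $P_0u$ and $P_1u$, forming the pre-limit $\rOm[\psi_k(D)(\cdot)\,\psi_k(D)(\cdot)]$ and passing to the limit; the extensions $P_0U,P_1U$ are admissible, so $\rOm\pi(P_0U,P_1U)=\pi_\Omega(P_0u,P_1u)$. Commuting $P_2$ with $\rOm$ once more yields \eqref{piLuu-id}.

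Finally, the independence from the choices of $\lOm$ and $\Psi_k$ follows in two steps. The right-hand side $P_2\pi_\Omega(P_0u,P_1u)$ of \eqref{piLuu-id} depends neither on $\lOm$ (by the well-definedness of $\pi_\Omega$ under change of extension, which is a consequence of the localisation \eqref{pi-lim}) nor on $\Psi_k$ (since $\pi$ itself is $\psi$-independent whenever it exists, see \cite[Sect.~6.4]{JJ94mlt}). Hence $L_uu$ is likewise independent of these choices, even though individual summands $\pi_j$ in \eqref{plin-eq} may shift when $\Psi_k$ is altered. The main obstacle I anticipate is precisely this bookkeeping of the splitting: one must ensure that each $\pi_j(P_0U,P_1U)$ individually defines a tempered distribution under the stated hypothesis, so that the termwise application of $\rOm P_2$ is legitimate\,---\,this is where the full force of Theorem~\ref{dom-thm} together with the condition $s>\max(d_0,d_1)$ (which ensures $P_ju\in L_p^{\loc}$-type regularity sufficient for the paraproducts) enters.
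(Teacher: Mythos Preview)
Your proposal is correct and follows essentially the same route as the paper: extend via $\lOm$, invoke Theorem~\ref{dom-thm} to ensure $\pi(P_0U,P_1U)$ exists on $\Rn$ and splits as $\pi_1+\pi_2+\pi_3$, then identify the restriction with $\pi_\Omega(P_0u,P_1u)$ via \eqref{piO-eq} and \eqref{pi-lim}, deducing independence of $\lOm$ and $\Psi_k$ from the intrinsic definition of $\pi_\Omega$. The only minor imprecision is your closing remark on the role of $s>\max(d_0,d_1)$: it is not needed for the convergence of $\pi_1,\pi_3$ (which holds for all tempered distributions), but rather to place the pair $(s-d_0,s-d_1)$ in the regime $\max(s_0,s_1)>0$ where Theorem~\ref{dom-thm} applies.
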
 
\begin{proof}
Let $P_2=I$ for simplicity.
Theorem~\ref{dom-thm} gives that the parameters $(s-d_j,p,q)_{j=0,1}$ 
belong to the parameter domain of $\pi$ on $\Rn$, so it holds for all
$v$, $w\in  E^s_{p,q}(\overline{\Omega})$ that
\begin{equation}
  \pi_{\Omega}(P_0 v, P_1 w)= \rOm \pi(P_0\lOm v, P_1\lOm w)=
  \rOm\lim_{k\to\infty} (\psi_k(D) P_0 \lOm v) \cdot 
                        (\psi_k(D) P_1 \lOm w)).
  \label{piOLL-eq}
\end{equation}
Indeed, $\pi(P_0\lOm v,P_1\lOm w)$ is defined, and
$\rOm$ commutes with the limit by its $\cal D'$-continuity,
whilst the $P_j\lOm v$ restrict to $P_j v$,
so the product $\pi_\Omega(P_0,P_1 w)$ exists and
\eqref{piOLL-eq} holds.

The choice of $\lOm$ is inconsequential for 
$\rOm \pi(P_0\lOm v, P_1\lOm w)$,
since the left hand side of \eqref{piOLL-eq} does not
depend on this; similarly one can take $\psi_k=\Psi_k$ 
(the formal definition of $\pi_\Omega$ in \eqref{piO-eq} is essential
here). Now \eqref{piLuu-id}
follows upon insertion of $v=w=u$, for by
\eqref{paramultiplication-eq} ff and the formula $\Psi_k=\Phi_0+\dots+\Phi_k$,
the right hand side of \eqref{piOLL-eq} then equals the formula for
$-L_u(u)$ in \eqref{plin-eq}, since the $\pi_j$-series converge by the
assumption on $(s,p,q)$ and the remarks following \eqref{paramultiplication-eq}.  
\end{proof}

The above introduction of paralinearisation is not the only
possible, but the intention here is to make the relation to the `pointwise'
product on $\Omega$ clear. 

\subsection{Estimates of product type operators.}
  \label{est-ssect}

For a general product type operator 
$B(\cdot,\cdot):=\pi(P_0\cdot,P_1\cdot)$
a large collection of boundedness properties now follows from the theory
reviewed in Section~\ref{plin-ssect}--\ref{pdbdd-ssect}. 
Indeed, using Theorem~\ref{dom-thm} it is clear  
that $\pi(P_0\cdot,P_1\cdot)$ 
is bounded from $E^{s_0}_{p_0,q_0}\oplus E^{s_1}_{p_1,q_1}$
to some Besov or Lizorkin--Triebel space provided
\begin{equation}
  s_0+s_1> d_0+d_1+(\fracc n{p_0}+\fracc n{p_1}-n)_+.
  \label{stdom-eq}
\end{equation}
The \emph{standard} domain $\Dm(B)$ of the bilinear operator $B$
is the set of (pairs of triples of) parameters 
$(s_j,p_j,q_j)_{j=0,1}$ that satisfy this inequality. 
Since 
it works equally well for the $BBB$- and 
$FFF$-cases, the notation is the same in the two cases.

For the map $Q(u):=B(u,u)$ the parameter domain $\Dm(Q)$ 
derived from \eqref{stdom-eq} is 
termed the \emph{quadratic} standard domain of $Q$ (or of $B$).
For this domain one has the next result on the \emph{direct} regularity
properties of product type non-linearities.

\begin{prop} 
  \label{nlnr2-prop}
Let $B(v,w)$ be an operator of product type $(d_0,d_1,d_2)$ with $d_0\le d_1$. 
The quadratic standard domain $\Dm(Q)$ consists of the $(s,p,q)$ fulfilling
  \begin{equation}
    s>\tfrac{d_0+d_1}{2}+(\fracc np-\tfrac{n}{2})_+,
    \label{stQdom-eq}
  \end{equation}
and for each such $(s,p,q)$ the non-linear operator $Q$ is  bounded 
\begin{equation}
    Q\colon B^s_{p,q}\to
    B^{s-\sigma(s,p,q)}_{p,q}
    \label{n22}
\end{equation}
  when $\sigma(s,p,q)$, for some $\varepsilon>0$, is taken equal to
  \begin{equation}
    \sigma(s,p,q)=d_2+d_1+(\fracc np+d_0-s)_++\varepsilon
     \kd{\fracc np+d_0=s} \kd{q>1}.
   \label{sgm-eq}
  \end{equation}
Similar results hold for $F^{s}_{p,q}$ provided $\kd{q>1}$ is replaced by
$\kd{p>1}$. 
\end{prop}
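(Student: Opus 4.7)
The strategy is to reduce to bilinear estimates on $\Rn$ via the universal extension $\lOm$, then decompose $\pi(P_0\lOm u,P_1\lOm u)$ into the three paraproduct pieces of \eqref{paramultiplication-eq} and track where each one lands. By Proposition~\ref{piO-prop} it suffices to establish the analogous bilinear bound on all of $\Rn$. The characterisation \eqref{stQdom-eq} of $\Dm(Q)$ drops out at once from Theorem~\ref{dom-thm}: specialising $(s_0,s_1,p_0,p_1,q_0,q_1)=(s-d_0,s-d_1,p,p,q,q)$, the necessary conditions \eqref{Dp-cnd}--\eqref{Dpp-cnd} collapse to the single inequality $s\ge\tfrac{d_0+d_1}{2}+(\fracnp-\tfrac{n}{2})_+$, and $\Dm(Q)$ is by definition the region where strict inequality holds.

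For the precise order $\sigma$, I invoke the paraproduct bounds of Remark~\ref{pmest-rem}. For $\pi_1$: if $s>d_0+\fracnp$, the embedding $E^{s-d_0}_{p,q}\hookrightarrow L_\infty$ combined with \eqref{pi1LB-eq} places $\pi_1(P_0 u,P_1 u)$ in $B^{s-d_1}_{p,q}$; if $s<d_0+\fracnp$, first Sobolev-embed $P_0 u\in B^{s-d_0-n/p}_{\infty,\infty}$ (a space of negative smoothness) and apply \eqref{pi1BBB-eq} with $p_0=q_0=\infty$ to land in $B^{2s-d_0-d_1-n/p}_{p,q}$. The piece $\pi_3$ is handled symmetrically, noting that $d_0\le d_1$ forces its target to embed into that of $\pi_1$. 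For $\pi_2$: the hypothesis $(s,p,q)\in\Dm(Q)$ amounts exactly to $2s-d_0-d_1>(\tfrac{2n}{p}-n)_+$, so \eqref{pi2BBB-eq} sends $\pi_2$ into $B^{2s-d_0-d_1}_{p/2,q/2}$; one Sobolev-embeds down to exponent $p$ at a cost of $\fracnp$ smoothness and then uses the trivial monotonicity $q/2\le q$ to normalise the third parameter.

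All three pieces thus lie in $B^{s-d_1-(\fracnp+d_0-s)_+}_{p,q}$ (taking the minimal smoothness among them); applying the differential operator $P_2$ costs a further $d_2$, yielding the value of $\sigma$ in \eqref{sgm-eq}. The borderline $s=d_0+\fracnp$ with $q>1$ (respectively $p>1$ in the $F$-scale) is the only genuinely delicate case and accounts for the term $\varepsilon\kd{\fracnp+d_0=s}\kd{q>1}$ in \eqref{sgm-eq}: one first passes to $B^{s-\varepsilon}_{p,q}$ to re-enter the subcritical subcase treated above. The switch from $\kd{q>1}$ to $\kd{p>1}$ in the Lizorkin--Triebel setting reflects that $F^{n/p}_{p,q}\hookrightarrow L_\infty$ requires $p\le 1$ and not $q\le 1$.

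The $F^s_{p,q}$ version otherwise follows the same blueprint, but invokes Lemma~\ref{F-lem}, the vector-valued Nikol$'$ski\u\i--Plancherel--Polya inequality (Lemma~\ref{vNPP-lem}), and the estimate \eqref{pi2FFF-eq} in place of its $B$-analogue. Finally, the transfer back to $\Omega$ is automatic from Proposition~\ref{piO-prop}. The main technical care required is the bookkeeping of the third parameter through the composition of Sobolev and $\ell$-type embeddings used in the $\pi_2$-estimate, together with the precise $\varepsilon$-adjustment at the borderline; both are routine once the above decomposition is in place.
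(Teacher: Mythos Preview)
Your proposal is correct and follows essentially the same route as the paper. The paper's own proof is a one-line reduction to Theorem~\ref{mod-thm} (applied with $(s_0,p_0,q_0)=(s,p,q)$ via Lemma~\ref{Lu-lem}), and the proof of Theorem~\ref{mod-thm} carries out exactly the paraproduct-by-paraproduct analysis you describe, with the same case split on the sign of $\fracnp+d_0-s$ for $\pi_1$ and $\pi_3$, the same use of \eqref{pi2BBB-eq}/\eqref{pi2FFF-eq} plus Sobolev embedding for $\pi_2$, and the same $\varepsilon$-loss at the borderline when $q>1$ (resp.\ $p>1$); so your direct argument is just the diagonal specialisation of that proof, bypassing the packaging through $L_u$.
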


Analogous results for open sets $\Omega\subset\Rn$ can be derived from
Proposition~\ref{piO-prop}. Details on this are left out for simplicity,
and so is the proof, for it follows from the below Theorem~\ref{mod-thm}
by application of $L_u$ to $u$, cf Lemma~\ref{Lu-lem}
(note that \eqref{stQdom-eq} implies $(s-d_1)+(s-d_0)>0$, thence
$s>\max(d_0,d_1)$).

\begin{rem}
  \label{dom-rem}
By \eqref{stQdom-eq} the quadratic domain $\Dm(Q)$ only depends on the
orders via the mean $(d_0+d_1)/2$.
The correction $\fracc np-\tfrac n2$ occurring for $p<2$ is
independent of $d_0$ and $d_1$; cf Figure~\ref{stdom-fig}. 
\end{rem}

\begin{figure}[htbp]
\setlength{\unitlength}{0.008mm}
\begingroup\makeatletter\ifx\SetFigFont\undefined%
\gdef\SetFigFont#1#2#3#4#5{%
  \reset@font\fontsize{#1}{#2pt}%
  \fontfamily{#3}\fontseries{#4}\fontshape{#5}%
  \selectfont}%
\fi\endgroup%
{\renewcommand{\dashlinestretch}{30}
\begin{picture}(8424,7000)(0,-10)
\thicklines
\path(672.000,6372.000)(612.000,6612.000)(552.000,6372.000)
\path(612,6612)(612,612)(8412,612)
\path(8172.000,552.000)(8412.000,612.000)(8172.000,672.000)
\path(12,612)(612,612)(612,12)
\thinlines
\put(8650,570){\makebox(0,0){$\fracnp$}}
\put(612,6800){\makebox(0,0){$s$}}
\put(510,1850){\makebox(0,0)[rc]{${\scriptstyle\tfrac{d_0+d_1}{2}}$}}
\path(550,1812)(674,1812)
\put(4212,480){\makebox(0,0)[ct]{${\scriptstyle\tfrac{n}{2}}$}}
\path(4212,550)(4212,674)
\put(7000,4200){\makebox(0,0)[lc]{$s=\tfrac{d_0+d_1}{2}
+(\fracnp-\tfrac{n}{2})_+$}}
\texture{0 0 0 888888 88000000 0 0 80808 
	8000000 0 0 888888 88000000 0 0 80808 
	8000000 0 0 888888 88000000 0 0 80808 
	8000000 0 0 888888 88000000 0 0 80808 }
\shade\path(612,6012)(612,1812)(4212,1812)(8412,6012)
\dottedline{100}(8412,6012)(8812,6412)
\put(4200,5000){\makebox(0,0){$\Dm(Q)$}}
\end{picture}
}
  \caption{The quadratic standard domain $\Dm(Q)$}
  \label{stdom-fig}
\end{figure}

\subsection{Moderate linearisations of product type operators}
  \label{mod-ssect}

The properties of exact paralinearisations of product type operators will
now be derived. This will in two ways give better results
than the usual linearisation theory
in, say \cite{Bon} and \cite[Thm.~16.3]{MeCo97}: 
first of all, the $\pi_2$-terms are incorporated into $L_u$,
which is useful since
they need not be regularising in the context here. 
Secondly, the family $L_u$ is obtained
for $u$ running through the (large) set $\bigcup B^{s}_{p,q}$, 
and it is only in the quadratic standard domain, where $u$ is regular enough
to make $-L_uu=Q(u)$ a meaningful formula, that
$L_u$ is a linearisation of $Q$.
 
\begin{thm}[The Exact Paralinearisation Theorem]
  \label{mod-thm}
Let $B$ be of product type
$(d_0,d_1,d_2)$ with $d_0\le d_1$ as in Definition~\ref{pdtO-defn}; 
and let $\lOm$ be a universal extension from $\Omega$ to $\Rn$.

When $u\in B^{s_0}_{p_0,q_0}(\overline{\Omega})$ for some arbitrary
$(s_0,p_0,q_0)$,  then the exact paralinearisation in 
Definition~\ref{plin-defn} yields a linear operator $L_u$
with parameter domain $\Dm(L_u)$ given by 
\begin{equation}
  s>d_0+d_1-s_0+(\fracnp+\fracc n{p_0}-n)_+.
  \label{DmLu-eq}
\end{equation}
For $\varepsilon>0$ the operator $L_u$ is of order $\omega$ as follows,
\begin{gather}
 L_u\colon B^s_{p,q}(\overline{\Omega})\to
  B^{s-\omega}_{p,q}(\overline{\Omega}) 
  \quad\text{for $(s,p,q)\in \Dm(L_u)$},
   \\
  \omega = d_2+ d_1+(\fracc n{p_0}-s_0+d_0)_+
  +\varepsilon\kd{\fracc n{p_0}-s_0+d_0=0}\kd{q_0>1}, 
  \label{om-eq}
\end{gather}
In particular, when $Q(u):=B(u,u)$ and $(s_0,p_0,q_0)\in \Dm(Q)$, 
cf \eqref{stQdom-eq}, then $L_u$ is a
\emph{moderate linearisation} of $Q$.
Corresponding results hold for Lizorkin--Triebel spaces 
when $u\in F^{s_0}_{p_0,q_0}(\overline{\Omega})$, provided the factor 
$\kd{q_0>1}$ in \eqref{om-eq} is replaced by $\kd{p_0>1}$.
\end{thm}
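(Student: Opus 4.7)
\emph{Plan.} I would first reduce to $\Omega=\Rn$ and to $P_2=I$. The $\Omega$-version of $L_u$ is by definition $g\mapsto r_\Omega L^{\Rn}_{\ell_\Omega u}(\ell_\Omega g)$, and by the universal extension property \eqref{ext-eq} the maps $\ell_\Omega$ and $r_\Omega$ are bounded on every $E^{s}_{p,q}$-scale, so any estimate on $\Rn$ transfers verbatim. Further, $P_2$, being a constant-coefficient differential operator of order $d_2$, is bounded $E^{s}_{p,q}(\Rn)\to E^{s-d_2}_{p,q}(\Rn)$ for every $(s,p,q)$; pulling $P_2$ outside the three paraproducts reduces matters to the analogue with $d_2=0$. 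It is also harmless to apply $P_0$ and $P_1$ to $u$ and $g$ first: $P_0u\in E^{s_0-d_0}_{p_0,q_0}$, $P_1u\in E^{s_0-d_1}_{p_0,q_0}$ and $P_jg\in E^{s-d_j}_{p,q}$.

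Next I would estimate the three terms of $-L_ug=\pi_1(P_0u,P_1g)+\pi_2(P_0u,P_1g)+\pi_3(P_0g,P_1u)$ separately via the paraproduct bounds in Remark~\ref{pmest-rem}. For $\pi_1(P_0u,P_1g)$, in the subcase $s_0-d_0>\fracc n{p_0}$ the Sobolev embedding $B^{s_0-d_0}_{p_0,q_0}\hookrightarrow L_\infty$ and \eqref{pi1LB-eq} land the output in $B^{s-d_1}_{p,q}$ (loss $d_1$); in the opposite subcase, the Sobolev embedding $B^{s_0-d_0}_{p_0,q_0}\hookrightarrow B^{s_0-d_0-\fracci n{p_0}}_{\infty,q_0}$ followed by \eqref{pi1BBB-eq} yields the bound with total loss $d_1+(\fracc n{p_0}-s_0+d_0)_+$. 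By the identity $\pi_3(P_0g,P_1u)=\pi_1(P_1u,P_0g)$, the analogous argument with $d_0,d_1$ exchanged produces a loss of $d_0+(\fracc n{p_0}-s_0+d_1)_+$; since $d_0\le d_1$, an elementary case-check shows this never exceeds the $\pi_1$-loss. For $\pi_2(P_0u,P_1g)$, the estimate \eqref{pi2BBB-eq} puts the product in $B^{(s_0-d_0)+(s-d_1)}_{p_2,q_2}$ with $\fracc1{p_2}=\fracc1p+\fracc1{p_0}$, $\fracc1{q_2}=\fracc1q+\fracc1{q_0}$, subject to $(s_0-d_0)+(s-d_1)>(\fracc n{p_2}-n)_+$, which is precisely the defining inequality \eqref{DmLu-eq} of $\Dm(L_u)$; a Sobolev embedding back to integrability $p$ costs the smoothness $\fracc n{p_0}$, giving a loss $d_0+d_1-s_0+\fracc n{p_0}$, which is again dominated by the $\pi_1$-loss.

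Taking the worst of the three contributions reinstates the claimed order $\omega=d_2+d_1+(\fracc n{p_0}-s_0+d_0)_+$. The endpoint $\fracc n{p_0}-s_0+d_0=0$ is where $P_0u$ barely fails to be in $L_\infty$: since the embedding $B^{n/p_0}_{p_0,q_0}\hookrightarrow L_\infty$ holds iff $q_0\le 1$ (resp.\ $p_0\le 1$ in the $F$-case), when $q_0>1$ (resp.\ $p_0>1$) one sacrifices a small $\varepsilon>0$ via $u\in B^{s_0}_{p_0,q_0}\hookrightarrow B^{s_0-\varepsilon}_{p_0,q_0}$ before applying the previous analysis, which accounts for the term $\varepsilon\kd{\fracci n{p_0}-s_0+d_0=0}\kd{q_0>1}$. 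The $F$-case is handled identically, except that \eqref{pi2FFF-eq} produces a slightly different sum-exponent bound, dictating the replacement $\kd{q_0>1}\rightsquigarrow\kd{p_0>1}$. Finally, for $(s_0,p_0,q_0)\in\Dm(Q)$, the order $\omega$ depends only on the linearisation point, so $\omega_{\max}=\omega<\infty$ uniformly on $\Dm(L_u)$, and $-L_uu=Q(u)$ by Lemma~\ref{Lu-lem}; hence $L_u$ is moderate in the sense of Definition~\ref{mod-defn}.

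The main obstacle will be the careful bookkeeping of the integration and summation exponents $p_2,q_2$ produced by $\pi_2$ together with the ensuing Sobolev embedding back to $(p,q)$, and in particular the sharp identification of the endpoint case $s_0-d_0=\fracc n{p_0}$ and of the differences between the Besov and Lizorkin--Triebel sum-exponent conventions appearing in \eqref{pi2FFF-eq}.
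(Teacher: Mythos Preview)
Your proposal is correct and follows essentially the same route as the paper's proof: estimate the three paraproduct pieces via the bounds in Remark~\ref{pmest-rem} plus Sobolev embeddings, and take the worst loss. The paper is only slightly more explicit about two points you gloss over: the $\pi_3$-endpoint $s_0-d_1=\fracc n{p_0}$ (where one must choose the auxiliary $\varepsilon_1<d_1-d_0$ so that no extra $\varepsilon$ enters $\omega$), and the $F$-case (where a mixed estimate $\pi_1\colon B^r_{\infty,\infty}\oplus F^{s}_{p,q}\to F^{s+r}_{p,q}$ for $r<0$ is invoked rather than a purely $FFF$-estimate).
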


\begin{rem}
  \label{LuQ-rem}
Clearly $\omega$ is independent of $(s,p,q)$; because it formally
equals $\sigma(s_0,p_0,q_0)$, it can be said that, for a product type
operator, the paralinearisation $L_u$ inherits the order of the non-linear
operator $Q(u)$ on the space $E^{s_0}_{p_0,q_0}\ni u$.
\end{rem}

\begin{proof}
Since the nature of the proof is well known, the
formulation will be brief and based on the estimates recalled in
Remark~\ref{pmest-rem}. 

In the following $(s_1,p_1,q_1)$ is arbitrary in $\Dm(L_u)$, ie
together with the given $(s_0,p_0,q_0)$ it fulfils
\eqref{stdom-eq}.
It is therefore seen from Remark~\ref{pmest-rem} and the Sobolev embeddings
that, with $p_2$ and $q_2$ as in Remark~\ref{pmest-rem},
\begin{equation}
  \begin{split}
  \pi_2(P_0\lOm\cdot, P_1\lOm\cdot)\colon
  B^{s_0}_{p_0,q_0}(\overline{\Omega})\oplus 
  B^{s_1}_{p_1,q_1}(\overline{\Omega})
  & \to B^{s_0-d_0+s_1-d_1}_{p_2,q_2}
\\
  &\hookrightarrow B^{s_1-d_1-(\fracc n{p_0}-s_0+d_0)}_{p_1,q_1}.
  \end{split}  
  \label{pi2-eq}
\end{equation}

The $\pi_1$-term in $L_u$ is straightforward to treat for
$s_0-d_0<\fracc n{p_0}$: in this case the Sobolev embedding 
$B^{s_0-d_0}_{p_0,q_0}\hookrightarrow 
 B^{s_0-d_0-\fracci n{p_0}}_{\infty,\infty}$ goes into a space with negative
smoothness index, so 
the estimate \eqref{pi1BBB-eq} gives, for $\varepsilon_0=0$,
\begin{equation}
  \pi_1(P_0\lOm\cdot, P_1\lOm\cdot)\colon
  B^{s_0}_{p_0,q_0}(\overline{\Omega})\oplus 
  B^{s_1}_{p_1,q_1}(\overline{\Omega})
 \to  B^{s_1-d_1-(\fracc n{p_0}-s_0+d_0)_+-\varepsilon_0}_{p_1,q_1}.
  \label{pi1-eq}
\end{equation}
In the same manner one has, since $u$ appears in the second entry of $\pi_3$,
that for $s_0-d_1<\fracc n{p_0}$ and $\varepsilon_1=0$,
\begin{equation}
  \pi_3(P_0\lOm\cdot, P_1\lOm\cdot)\colon
  B^{s_1}_{p_1,q_1}(\overline{\Omega})\oplus  
  B^{s_0}_{p_0,q_0}(\overline{\Omega})
 \to B^{s_1-d_0-(\fracc n{p_0}-s_0+d_1)_+-\varepsilon_1}_{p_1,q_1}.
  \label{pi3-eq}
\end{equation}
For $s_0-d_0>\fracc n{p_0}$ the estimate \eqref{pi1LB-eq} and 
$B^{s_0-d_0}_{p_0,q_0}\hookrightarrow L_\infty$
clearly yields the conclusion in \eqref{pi1-eq} with $\varepsilon_0=0$. 
The term with
$\pi_3$ may be treated analogously for $s_0-d_1> \fracc n{p_0}$, leading
to \eqref{pi3-eq} once again.
For $s_0-d_j=\fracc n{p_0}$ one can use \eqref{pi1-eq} and \eqref{pi3-eq} at
the expense of some $\varepsilon_j>0$, eg fulfilling $0<\varepsilon_1<d_1-d_0$,
or $\varepsilon_0=\varepsilon_1$ if $d_1=d_0$. This is unless $q_0\le1$ for 
then the embedding into $L_\infty$ applies.

Comparing the three estimates (incl.\ the $\varepsilon$-modifications), 
\eqref{pi2-eq} is the same as \eqref{pi1-eq},
except when $\fracc n{p_0}-s_0+d_0\le 0$, but in this case
 $B^{s_1-d_1}_{p_1,q_1}$ or $B^{s_1-d_1-\varepsilon_0}_{p_1,q_1}$
in \eqref{pi1-eq} clearly contains the space on
the right hand side of  \eqref{pi2-eq}. Similarly the co-domain of
\eqref{pi3-eq} equals the last space in \eqref{pi2-eq}, except 
for $\fracc n{p_0}-s_0+d_1\le 0$, 
but then the assumption that $d_0\le d_1$ yields that
also $\fracc n{p_0}-s_0+d_0\le 0$ so that there is an
embedding into the corresponding space in \eqref{pi1-eq}.
Regardless of whether $(\fracc n{p_0}-s_0+d_j)_+$ equals $0$ for none, one
or both $j$ in $\{0,1\}$, it follows
that $L_u$ is a bounded linear operator
\begin{equation}
  L_u\colon B^{s_1}_{p_1,q_1}\to B^{s_1-\omega}_{p_1,q_1},
  \label{LuOm-eq}
\end{equation}
when $\omega$ is as in \eqref{om-eq} and $(s_1,p_1,q_1)$ 
fulfils \eqref{stdom-eq}.

In the Lizorkin--Triebel case the above argument works with minor
modifications. For one thing  the 
Sobolev 
embedding 
$F^{s_0-d_0+s_1-d_1}_{p_2,t}\hookrightarrow 
F^{s_1-d_1-(\fracci n{p_0}-s_0+d_0)}_{p_1,q_1}$
and \eqref{pi2FFF-eq} give an analogue of \eqref{pi2-eq}.

Secondly, for $s_0-d_0<\fracc n{p_0}$, it is easy to see from the dyadic
corona criterion and the summation lemma (in analogy with the proof of
Lemma~\ref{tensor-lem}) that if $r<0$, 
\begin{equation}
  \pi_1(\cdot,\cdot)\colon
  B^{r}_{\infty,\infty}\oplus F^{s_1}_{p_1,q_1}\to
  F^{s_1+r}_{p_1,q_1}.
\end{equation}
Combining this with $F^{s_0-d_0}_{p_0,q_0}\hookrightarrow
B^{s_0-d_0-\fracci n{p_0}}_{\infty,\infty}$, 
formula \eqref{pi1-eq} is carried over to the
Lizorkin--Triebel case. Otherwise one may proceed as in the Besov case,
noting that $F^{n/p}_{p,q}\hookrightarrow L_\infty $ when $p\le 1$.
\end{proof}

To shed light on \eqref{DmLu-eq}, one could consider an elliptic problem
$\{A,T\}$, say with $A$ of order $2m$, $T$ of class $m$ and a solution $u\in
H^m(\overline{\Omega})$, with $(m,2)\in \Dm(Q)$, of
\begin{align}
  Au+Q(u)&=f \quad\text{in}\quad \Omega
  \\
  Tu&=\varphi \quad\text{on}\quad \Gamma.
\end{align}
According to \eqref{DmLu-eq}, $\Dm(L_u)$ then consists of parameters
$(s,p,q)$ with  
\begin{equation}
  s>\frac{d_0+d_1}{2}+(\frac np-\frac n2)_+-(m-\frac{d_0+d_1}{2}),
\end{equation}
so that $\Dm(L_u)$ is obtained from the quadratic standard domain $\Dm(Q)$
in \eqref{stQdom-eq}
simply by a downward shift given by the last parenthesis, which is positive
for $(m,2)\in \Dm(Q)$. Therefore
$\Dm(L_u)\supset\Dm(Q)$; by an extension of the argument 
this is seen to hold also in general when $(s_0,p_0,q_0)\in \Dm(Q)$. 

When deriving easy-to-apply criteria for $A$-moderacy,
for some given linear operator $A$ of constant order $d_A$ 
on a parameter domain $\Dm(A)$, it is clearly 
a necessary condition that $d_A>d_2+\max(d_0,d_1)$, for both $\sigma$ and
$\omega$ are $\ge d_2+\max(d_0,d_1)$.

\begin{cor}
  \label{Amod-cor}
Let $Q(u)$ be of product type $(d_0,d_1,d_2)$ with $d_0\le d_1$.
When $d_A> d_2+d_1$, then $Q$ is
$A$-moderate on every $E^{s}_{p,q}$ in $\Dm(A)\cap\Dm(Q)$ if $d_1-d_0\ge n$,
or else on the $E^s_{p,q}$ in $\Dm(A)\cap\Dm(Q)$ fulfilling
\begin{equation}
    s>\fracnp-d_A+d_0+d_1+d_2.
  \label{sdddd-eq}
\end{equation}
The exact paralinearisation $L_u$ is $A$-moderate on 
$\Dm(L_u)$ when $Q$ is $A$-moderate on the space  $E^{s_0}_{p_0,q_0}\ni u$.  
\end{cor}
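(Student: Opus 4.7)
The plan is to read off everything from the explicit formula for $\sigma(s,p,q)$ in Proposition~\ref{nlnr2-prop}: by Definition~\ref{mod-defn}, $Q$ is $A$-moderate on $E^{s}_{p,q}$ exactly when $\sigma(s,p,q)<d_A$. I would split into two cases according to the sign of $\fracnp+d_0-s$. First, if $s>\fracnp+d_0$, then $(\fracnp+d_0-s)_+=0$ and the indicator in \eqref{sgm-eq} vanishes, so $\sigma=d_2+d_1$; the standing hypothesis $d_A>d_2+d_1$ then gives moderacy at once. Secondly, if $s\le\fracnp+d_0$, then (choosing $\varepsilon>0$ small enough in the borderline case, which is permitted since $\varepsilon$ can be chosen freely in \eqref{sgm-eq}) the inequality $\sigma(s,p,q)<d_A$ simplifies to $s>\fracnp-d_A+d_0+d_1+d_2$, that is, to \eqref{sdddd-eq}.

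Next I would check when \eqref{sdddd-eq} is automatic on $\Dm(A)\cap\Dm(Q)$. Since $\Dm(A)\subset\Sdm$ is unrestrictive here, it suffices to use the defining inequality of the quadratic standard domain, $s>\tfrac{d_0+d_1}{2}+(\fracnp-\tfrac{n}{2})_+$. For $p\ge 2$ the right-hand side of \eqref{sdddd-eq} is automatically bounded above by $\tfrac{d_0+d_1}{2}$ provided $d_A\ge d_2+\tfrac{d_0+d_1}{2}+\fracnp$; combined with $d_A>d_2+d_1$ this reduces to $(d_1-d_0)/2\ge\fracnp$, valid for all $p\ge 2$ precisely when $d_1-d_0\ge n$. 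For $p<2$ the same substitution reduces the automatic implication to $d_A\ge d_2+\tfrac{d_0+d_1}{2}+\tfrac{n}{2}$, which in view of $d_A>d_2+d_1$ again amounts to $d_1-d_0\ge n$. Hence under $d_1-d_0\ge n$ the implication $\Dm(A)\cap\Dm(Q)\subset\{\sigma<d_A\}$ is free; otherwise \eqref{sdddd-eq} has to be imposed as an extra condition, proving the first statement.

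For the second statement I would invoke Theorem~\ref{mod-thm} together with Remark~\ref{LuQ-rem}: the order $\omega$ in \eqref{om-eq} of $g\mapsto L_ug$ is a constant depending only on $(s_0,p_0,q_0)$ and, when the $\varepsilon$-contributions are chosen identically, it coincides formally with $\sigma(s_0,p_0,q_0)$. Thus if $Q$ is $A$-moderate on $E^{s_0}_{p_0,q_0}$, i.e.\ $\sigma(s_0,p_0,q_0)<d_A$, then $\omega<d_A$ uniformly on $\Dm(L_u)$, which is exactly the inequality \eqref{order-ineq} required for $A$-moderacy of $L_u$.

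The main obstacle I anticipate is not algebraic difficulty but the bookkeeping of the two borderline cases: the factor $\kd{\fracnp+d_0=s}\kd{q>1}$ (or $\kd{p>1}$ in the $F$-case) in \eqref{sgm-eq} must be handled with a free $\varepsilon>0$, so the inequality $\sigma<d_A$ should be read as strict from the outset, which is consistent with the strict inequality in \eqref{sdddd-eq}. The case split at $p=2$ when verifying the automatic implication under $d_1-d_0\ge n$ is the only place where care is needed, and it reduces to the elementary bound $(d_1-d_0)/2\ge n/2\ge\fracnp$ for $p\ge 2$.
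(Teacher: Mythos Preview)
Your proposal is correct and follows essentially the same route as the paper: both read off the condition $\sigma(s,p,q)<d_A$ from the explicit formula \eqref{sgm-eq}, handle the $\varepsilon$-term by choosing $\varepsilon\in\,]0,d_A-d_2-d_1[\,$, and treat the case $d_1-d_0\ge n$ by a $p\ge2$ versus $p<2$ split. The only organisational difference is that for $d_1-d_0\ge n$ the paper shows directly that $\tfrac{1}{2}(d_0+d_1)+(\fracnp-\tfrac{n}{2})_+\ge\fracnp+d_0$, so that every point of $\Dm(Q)$ already lies in your first case ($\sigma=d_2+d_1$), whereas you verify that \eqref{sdddd-eq} is automatically satisfied on $\Dm(Q)$; the underlying inequalities are identical.
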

\begin{proof}
Given \eqref{sdddd-eq}
one has $d_A-d_2-d_1>(\fracc n{p}-s+d_0)_+\ge0$.
So by taking $\varepsilon\in \,]0,d_A-d_2-d_1[\,$, clearly this gives
$d_A>\sigma$ so that $Q$ is $A$-moderate on $E^{s}_{p,q}$.
However, if $d_1-d_0\ge n$ it is easy to see, both for $p<2$ and $p\ge 2$,
that every $(s,p,q)$ fulfills 
\begin{equation}
 \tfrac{1}{2}(d_0+d_1)+(\fracc n{p}-\tfrac{n}{2})_+
  \ge
  \fracc n{p}+d_0.
\end{equation}
Concequently $s>\fracc n{p}+d_0$, so $\sigma=d_2+d_1$.
Hence $Q$ is $A$-moderate on the entire domain $\Dm(A)\cap\Dm(Q)$ in this case.

The statement on $L_u$ follows since $\omega$ equals $\sigma$ on
the space containing the linearisation point $u$.
\end{proof}

In cases with $d_1-d_0<n$, there always is a
part of the quadratic standard domain $\Dm(Q)$ where \eqref{sdddd-eq} must be
imposed. Indeed, the last two terms in \eqref{sgm-eq}
contributes to the value of $\sigma$ in the \emph{slanted slice} 
of $\Dm(Q)$ given by
\begin{equation}
  \tfrac{1}{2}(d_0+d_1)+(\fracc np-\tfrac{n}{2})_+
  < s \le \fracc np+d_0.
  \label{slice-eq}
\end{equation}
For $d_1-d_0<n$ any $p<2$ leads to solutions $(s,p)$
of these inequalities, so the slice in
\eqref{slice-eq} is non-empty. Because $\sigma>d_2+d_1$ in the slice,
$A$-moderacy is obtained only where $d_A>\sigma$, ie where \eqref{sdddd-eq}
holds. Note, however, that $L_u$ by the formulae for $\sigma$ and $\omega$ 
is born to be $A$-moderate
on the entire domain $\Dm(L_u)$, if only $Q$ is so on a space containing $u$.

\begin{rem}
  \label{mod-rem}
Concerning the model problem \eqref{bsc-eq} and Example~\ref{modl-ex},
where $d_0=0$, $d_1=1$ and $d_A=2$, 
the above \eqref{stQdom-eq} leads to the quadratic standard domains in 
\eqref{Qdm-eq} and \eqref{Ndm-eq}. Notice that the more important domains
$\Dm(\cal A,Q)$ and $\Dm(A,\cal N)$ in \eqref{delQ-eq} and
\eqref{Ndm-eq'} are obtained from the conjunction of
\eqref{stQdom-eq} and \eqref{sdddd-eq} (the latter is redundant for $n=2$ and
$n=3$). Similarly \eqref{Bdm-eq} follows  from \eqref{DmLu-eq}.
\end{rem}

\begin{rem}
One could compare \eqref{bsc-eq} (or the stationary Navier--Stokes problem)
with the von~Karman problem (cf Section~\ref{vK-sect}).
They both fulfil $d_1-d_0\le1<n$.
In the former problem \eqref{sdddd-eq} is felt, and the quadratic
term is only $\lap_{\gamma_0}$-moderate on the part of
$\Dm(Q)\cap\Dm_1$ where  $s>\fracnp-1$, by \eqref{sdddd-eq}. 
(For the Neumann condition, \eqref{sdddd-eq} gives again $s>\fracnp-1$,
that now should be imposed on the smaller region 
$\Dm(Q)\cap\Dm_2$ because the boundary condition has class $2$.)  
But in the von~Karman problem,
\eqref{sdddd-eq} is not felt, for it is fulfilled on all of the
quadratic standard domain of the form $[\cdot,\cdot]$, and even after this
has been extended to the $B(\cdot,\cdot)$ of
type $(1,1,2)$ given in Example~\ref{MA-ex}, 
it \emph{still} holds that $\omega<4=d_{\lap^2}$ on all of
$\Dm(Q)$. But nevertheless a small portion of $\Dm(Q)$ must be
disregarded to have $\lap^2$-moderacy, simply because the boundary condition
in the Dirichl\'et realisation of $\lap^2$ is felt; cf Figure~\ref{vK-fig}
below. 
In view of this, it seems pointless to generalise beyond 
Corollary~\ref{Amod-cor}.
\end{rem}

\subsection{Boundedness in a borderline case}
  \label{brdl-ssect}
In the cases given by equality in \eqref{nec2-cnd} 
it is more demanding to estimate $L_u$. For later reference
a first result on such extensions of $\Dm(L_u)$ is sketched. It adopts
techniques from a joint work with W.~Farkas and W.~Sickel
\cite{FaJoSi00}, in which approximation spaces
$A^s_{p,q}$ (that go back to S.~M.~Nikol$'$ski\u\i) were useful for the 
borderline investigations.

Recall that $A^s_{p,q}(\Rn)$ for $s\ge (\fracc np-n)_+$, $p,q\in
\,]0,\infty]$ (with $q\le1$ for $s=\fracnp-n$), 
consists of the $u\in \cal S'(\Rn)$ that have 
an $\cal S'$-convergent decomposition $u=\sum_{j=0}^\infty v_j$ fulfilling 
$\supp\hat v_j\subset\{\,|\xi|\le 2^{j+1} \,\}$ for
$v_j\in \cal S'\cap L_p$
with
\begin{equation}
  (\sum_{j=0}^\infty 2^{sjq}\norm{v_j}{L_p}^q)^{\fracci1q}<\infty.
\end{equation}
Then $\norm{u}{A^s_{p,q}}$ is the infimum of
these numbers, over all such decompositions.

The idea of \cite{FaJoSi00} is that, while the dyadic
ball criterion cannot yield convergence for $s=\fracc np-n$ (at least not
for $q>1$), 
one can sometimes show directly that such $\sum v_j$
converges to some $u$ in $L_1$ or $\cal S'$; 
then the finiteness of the above number gives $\sum v_j\in A^s_{p,q}$.
For this purpose the next borderline result is recalled
from \cite[Prop.~2.5]{JJ94mlt}.
\begin{lem}
  \label{F0-lem}
Let $0<q\le 1\le p<\infty$ and let $\sum_{j=0}^\infty u_j$ be such that
$F(q)<\infty$ for $F(q)=\nrm{(\sum|u_j|^q)^{1/q}}{p}$.
Then $\sum u_j$ converges in $L_p$ to a sum $u$ fulfilling 
$\norm{u}{L_p}\le F(q)$. 
\end{lem}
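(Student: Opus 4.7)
The plan is to exploit the subadditivity of $t\mapsto t^{q}$ for $0<q\le 1$ in order to dominate the partial sums pointwise by the function $(\sum_j|u_j|^q)^{1/q}$, which is already in $L_p$ by assumption, and then deduce both convergence and the norm bound from standard $L_p$ arguments.

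\textbf{Step 1: Pointwise domination.} First I would observe that for non-negative reals $a_0,a_1,\dots$, the inequality $(\sum a_j)^q\le \sum a_j^q$ (valid for $0<q\le 1$ by iterating $(a+b)^q\le a^q+b^q$) yields
\begin{equation*}
  \sum_{j=0}^{\infty}|u_j(x)|\le \Bigl(\sum_{j=0}^{\infty}|u_j(x)|^q\Bigr)^{1/q}
  =: G(x),
\end{equation*}
for every $x$ where the right-hand side is finite. By hypothesis $G\in L_p$, so $G(x)<\infty$ a.e., and consequently $\sum_j u_j(x)$ converges absolutely for a.e.\ $x$ to some measurable function $u(x)$. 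The same bound gives $|u(x)|\le G(x)$ a.e.

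\textbf{Step 2: Convergence in $L_p$.} For any partial sums $S_N=\sum_{j=0}^{N}u_j$, one has $|u(x)-S_N(x)|\le \sum_{j>N}|u_j(x)|\le G(x)$ pointwise a.e., while the left-hand side tends to $0$ a.e.\ as $N\to\infty$. Since $G\in L_p$ with $1\le p<\infty$, Lebesgue's dominated convergence theorem applied in $L_p$ (i.e., to $|u-S_N|^p\le (2G)^p\in L_1$) gives $\|u-S_N\|_p\to 0$, so $\sum u_j$ converges to $u$ in $L_p$.

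\textbf{Step 3: The norm bound.} From $|u(x)|\le G(x)$ one has, by monotonicity of the $L_p$-norm,
\begin{equation*}
  \|u\|_{L_p}\le \|G\|_{L_p}
   =\Nrm{\bigl(\sum_{j=0}^{\infty}|u_j|^q\bigr)^{1/q}}{p}=F(q),
\end{equation*}
which is the claimed estimate.

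There is no real obstacle here; the only subtlety is that $q\le 1$ is essential for turning the $\ell_q$-quasi-norm into a bound on $\ell_1$-norm pointwise (for $q>1$ the inequality reverses and the argument collapses), while $p\ge 1$ is used only to legitimise viewing $L_p$ as a Banach space for the absolute convergence; in fact the same reasoning goes through for $0<p<\infty$ via the $\lambda$-subadditive quasi-norm inequality $\|f+g\|_p^{\min(1,p)}\le \|f\|_p^{\min(1,p)}+\|g\|_p^{\min(1,p)}$, so the restriction $p\ge 1$ in the statement is only for the clean $L_p$-norm bound.
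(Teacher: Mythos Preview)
Your proof is correct and follows essentially the same approach as the paper's: both rest on the pointwise inequality $\sum_j|u_j|\le(\sum_j|u_j|^q)^{1/q}$ (equivalently $F(1)\le F(q)$) and then use dominated convergence in $L_p$. The paper phrases the convergence step as showing that the tails $\sum_{j\ge k}|u_j|$ tend to $0$ in $L_p$, hence the series is fundamental, whereas you first identify the a.e.\ limit and then apply dominated convergence directly; this is a cosmetic difference only.
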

\begin{proof}
With $\sum |u_j(x)|$ as a majorant (since $F(1)\le F(q)$), 
$\norm{\sum_{j=k}^\infty |u_j|}{L_p}
\underset{k\to \infty}{\longrightarrow} 0$. 
Hence $\sum u_j$ is a fundamental series in $L_p$,
and the estimate follows. 
\end{proof}

\begin{thm}
  \label{mod'-thm}
Let $B=\pi_\Omega(P_0\cdot,P_1\cdot)$ with $d_0\le d_1$ and let $u\in
B^{s_0}_{p_0,q_0}(\overline{\Omega})$ be fixed. For $(s,p,q)$ such that
\begin{equation}
  s_0+s=d_0+d_1+(\fracc n{p_0}+\fracc n{p}-n)_+,\qquad
  \fracc1{q_2}:=\fracc1{q_0}+\fracc1{q}\ge 1
\end{equation}
the operator $L_u$ is continuous 
\begin{equation}
  L_u\colon B^{s}_{p,q}(\overline{\Omega})\to
     B^{s-\omega}_{p,\infty}(\overline{\Omega}),
\end{equation}
provided, in case $\fracc1{p_2}:=\fracc1{p_0}+\fracc1{p}>1$, that
$p_2\ge q_2$ or $p\ge1$ holds.   
Moreover, $L_u\colon F^{s}_{p,q}(\overline{\Omega})\to
B^{s-\omega}_{p,\infty}(\overline{\Omega})$ is continuous
if $u\in F^{s_0}_{p_0,q_0}(\overline{\Omega})$, when
$\kd{q_0>1}$ in \eqref{om-eq} is replaced by $\kd{p_0>1}$ (no restrictions
for $p_2<1$).
\end{thm}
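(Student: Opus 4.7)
The plan is to reprise the proof of Theorem~\ref{mod-thm} but replace the dyadic ball criterion (Lemmas~\ref{B-lem}/\ref{F-lem}), which requires a strict inequality $s_0+s>d_0+d_1+(\fracc n{p_0}+\fracc n p-n)_+$, by its borderline substitute Lemma~\ref{F0-lem}. First I would reduce via the universal extension $\lOm$ to an estimate on $\Rn$, exactly as in the proof of Theorem~\ref{mod-thm}.

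The delicate term is $\pi_2(P_0\lOm u, P_1\lOm g)$: writing it as $\sum_j v_j$ with each $v_j$ spectrally supported in $\{|\xi|\le A2^j\}$, successive Hölder inequalities in $L_{p_2}$ (via $\fracc1{p_2}=\fracc1{p_0}+\fracc1p$) and in $\ell_{q_2}$ (via $\fracc1{q_2}=\fracc1{q_0}+\fracc1q$) yield
\begin{equation*}
\Bigl(\sum_{j}2^{j(\fracci n{p_2}-n)_+q_2}\|v_j\|_{p_2}^{q_2}\Bigr)^{1/q_2}\le c\,\|u\|_{B^{s_0}_{p_0,q_0}}\|g\|_{B^{s}_{p,q}},
\end{equation*}
where the exponent on $2^j$ is precisely the borderline value of $s_0-d_0+s-d_1$. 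When $\fracc1{p_2}\le 1$ the weight collapses and $q_2\le 1$ by hypothesis, so Lemma~\ref{F0-lem} gives convergence of $\sum v_j$ in $L_{p_2}$. Combined with the dyadic spectral support, this membership upgrades to an approximation space $A^0_{p_2,\infty}$, which embeds into $B^0_{p_2,\infty}$ and Sobolev-embeds into $B^{-(\fracci n{p_2}-\fracci n p)}_{p,\infty}=B^{s-d_1-(\fracci n{p_0}-s_0+d_0)}_{p,\infty}$, a subspace of the target $B^{s-\omega}_{p,\infty}$. For $\fracc1{p_2}>1$, the hypothesis $p_2\ge q_2$ lets Lemma~\ref{vNPP-lem} convert the $\ell_{q_2}(L_{p_2})$ norm into an $L_r$-bound of a supremum with $r<p_2$, while $p\ge 1$ instead allows the dyadic convolution estimate $L_1*L_p\subset L_p$ to produce an analogous conclusion.

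For the terms $\pi_1(P_0\lOm u,P_1\lOm g)$ and $\pi_3(P_0\lOm g,P_1\lOm u)$, the dyadic \emph{corona} criterion (Lemma~\ref{DCC-lem}) applies without any smoothness restriction, so the borderline in $s_0+s$ is only felt through $\pi_2$. The secondary borderline $s_0-d_j=\fracc n{p_0}$ is absorbed into $\omega$ via the $\varepsilon\,\kd{\cdot}\,\kd{q_0>1}$ correction exactly as in Theorem~\ref{mod-thm}: for $q_0\le 1$ one has $B^{n/p_0}_{p_0,q_0}\hookrightarrow L_\infty$ and no $\varepsilon$ is needed, whereas for $q_0>1$ a small sacrifice $\varepsilon>0$ brings the smoothness index strictly below $\fracc n{p_0}$ so that \eqref{pi1BBB-eq} applies. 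The Lizorkin--Triebel version goes through verbatim with \eqref{pi2FFF-eq} replacing \eqref{pi2BBB-eq}; the factor $\kd{q_0>1}$ in $\omega$ is replaced by $\kd{p_0>1}$ because the governing embedding is now $F^{n/p_0}_{p_0,q_0}\hookrightarrow L_\infty$, which demands $p_0\le 1$.

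The hard part will be the $\pi_2$-estimate in the regime $\fracc1{p_2}>1$: here the dyadic ball criterion fails by exactly the gap $\fracc n{p_2}-n>0$, so the auxiliary hypothesis $p_2\ge q_2$ or $p\ge 1$ is genuinely needed to supply a valid substitute for Lemma~\ref{F0-lem}. In the $F$-scale the vector-valued maximal-function inequalities underlying \eqref{pi2FFF-eq} subsume both auxiliary routes, which explains why no corresponding restriction appears in the Lizorkin--Triebel conclusion.
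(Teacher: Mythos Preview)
Your overall strategy matches the paper's: reprise the $\pi_1,\pi_3$ estimates from Theorem~\ref{mod-thm} (weakening the sum-exponent to $\infty$), and treat $\pi_2$ at the borderline by first establishing convergence in a Lebesgue space and then embedding. The $p_2\ge 1$ case and the Lizorkin--Triebel case are handled essentially as you describe.

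However, your treatment of the Besov case with $p_2<1$ has the key tools misidentified. Lemma~\ref{vNPP-lem} is an $L_p(\ell_q)$ inequality, hence a Lizorkin--Triebel tool; it does not apply to the $\ell_{q_2}(L_{p_2})$ quantity you have from H\"older. What the paper actually does is: use the \emph{scalar} Nikol$'$ski\u\i--Plancherel--Polya inequality to pass from $L_{p_2}$ to $L_1$, so that $q_2\le 1$ gives $\sum_j\|v_j\|_1<\infty$ and hence $L_1$-convergence; this places $\sum v_j$ in the approximation space $A^{n/p_2-n}_{p_2,q_2}$ by definition, and then the embedding $A^{n/p_2-n}_{p_2,q_2}\hookrightarrow B^{n/r-n}_{r,\infty}$ for $r\ge\max(p_2,q_2)$ from \cite[Thm.~6]{FaJoSi00} is invoked with $r=p_2$, which is exactly where the hypothesis $p_2\ge q_2$ enters. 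Your proposal omits this approximation-space step, and without it the condition $p_2\ge q_2$ has no hook.

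Similarly, the alternative hypothesis $p\ge 1$ (your $p_1\ge 1$) does not work via ``$L_1*L_p\subset L_p$''; the mechanism is simply that the $L_1$-convergence just obtained, together with $L_1\hookrightarrow B^0_{1,\infty}\hookrightarrow B^{n/p-n}_{p,\infty}$ for $p\ge 1$, already lands in the target $B^{s-\omega}_{p,\infty}$. So the two auxiliary routes for $p_2<1$ are: (i) approximation spaces plus the \cite{FaJoSi00} embedding when $p_2\ge q_2$, or (ii) direct embedding of $L_1$ when $p\ge 1$.
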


\begin{proof}
With notation as in the proof of Theorem~\ref{mod-thm},
the assumption $q_2\le 1$ gives $\ell_{q_2}\hookrightarrow \ell_1$,
so for $p_2\ge1$ insertion of $1= 2^{s_0-d_0+s_1-d_1}$ into a double
application of H{\"o}lder's inequality shows that the series  
defining $\pi_2(P_0\lOm\cdot,P_1\lOm\cdot)$ converges absolutely 
in $L_{p_2}$. There is a Sobolev embedding 
$L_{p_2}\hookrightarrow B^{\tilde s}_{p_1,\infty}$ for
$\tilde s=s_1-d_1-(\fracc n{p_0}-s_0+d_0)$,
since $p_1\ge p_2$, 
so the conclusion of \eqref{pi2-eq} holds with the modification that the
sum-exponent is $\infty$ in this case.

For $p_2<1$ one uses
the Nikol$'$ski\u\i--Plancherel--Polya inequality to estimate $L_1$-norms
by  $2^{\fracci n{p_2}-n}=2^{s_0+s_1-d_0-d_1}$ times corresponding
 $L_{p_2}$-norms, leading to convergence in $L_1$. 
After this convergence has been established, the same estimates also give
the strengthened conclusion that,
for $A^s_{p,q}$ as above,
\begin{equation}
  \pi_2(P_0\lOm\cdot,P_1\lOm\cdot)\colon
  B^{s_0}_{p_0,q_0}\oplus B^{s_1}_{p_1,q_1}\to
  A^{\fracci n{p_2}-n}_{p_2,q_2}.
\end{equation}
By \cite[Thm.~6]{FaJoSi00} the conjunction of $r\ge\max(p_2,q_2)$ and
$o=\infty$ is equivalent to 
\begin{equation} 
    A^{\fracci n{p_2}-n}_{p_2,q_2}\hookrightarrow B^{\fracci nr-n}_{r,o}.
  \label{AB-eq}
\end{equation}
Therefore  $\pi^{12}_{2\Omega}(u,\cdot)
:=\rOm\pi_{2}(P_0\lOm u,P_1\lOm \cdot)$ is continuous 
$B^{s_1}_{p_1,q_1}(\overline{\Omega})\to 
B^{\fracci n{p_2}-n}_{p_2,\infty}(\overline{\Omega})$ for $p_2\ge{q_2}$, 
hence into $B^{\tilde s}_{p_1,\infty}(\overline{\Omega})$
as desired; for $p_1\ge 1$ this is seen directly from
the above $L_1$-estimate. 

Since \eqref{pi1-eq} and \eqref{pi3-eq} also hold in the present context, 
and since this implies weaker statements with the sum-exponents equal to
$\infty$ on 
the right hand sides there,  $L_u$ has the property in \eqref{LuOm-eq} except
that the co-domain should be $B^{s_1-\omega}_{p_1,\infty}$.

For the $F^{s}_{p,q}$-spaces the estimates of 
$\pi^{12}_{2\Omega}(u,\cdot)$ are derived
in the same way, except that the $\ell_{q_2}$-norms are calculated
pointwisely, before the $L_{p_2}$-norms. Indeed, for $p_2\ge 1$, 
Lemma~\ref{F0-lem} gives 
(since $q_2\le 1$ in this case) that $\pi_2(P_0\lOm\cdot,P_1\lOm\cdot)$
maps $F^{s_0}_{p_0,q_0}\oplus F^{s_1}_{p_1,q_1}$ 
to $L_{p_2}$: for $p_2>1$ this co-domain is embedded 
via $F^{\tilde s}_{p_1,q_1}$ into $B^{\tilde s}_{p_1,\infty}$, while  
$L_{p_2}\hookrightarrow B^{0}_{1,\infty}\hookrightarrow
 B^{\tilde s}_{p_1,\infty}$ for $p_2=1$.

For $p_2<1$ one finds by the vector-valued Nikol$'$ski\u\i--Plancherel--Polya
inequality in Lemma~\ref{vNPP-lem} that eg 
(when $f_k:=\Phi_k(D)f$ etc on $\Rn$)
\begin{equation}
  \nrm{\sum_{k=0}^\infty |f_kg_k|}{1} \le
  c\nrm{(\sum_{k=0}^\infty 2^{k(\fracci n{p_2}-n)q_2}
           |f_kg_k|^{q_2})^{\fracci1{q_2}}}{{p_2}} 
 \le c' \norm{f}{F^{s_0-d_0}_{p_0,q_0}}\norm{g}{F^{s_1-d_1}_{p_1,q_1}}.
  \label{AFF-eq}
\end{equation}
In this way $\pi^{12}_{2\Omega}(u,\cdot)$ is shown to map 
$F^{s_1}_{p_1,q_1}$ into $L_1(\Omega)$.
Hence into $B^{\tilde s}_{p_1,\infty}(\overline{\Omega})$ for $p_1\ge1$.
In general there is $p_3\in \,]p_2,p_1[\,$ ($p_0<\infty$) and
the $A^{\fracci n{p_3}-n}_{p_3,p_3}$-norm of $\pi^{12}_{2\Omega}(u,v)$ 
is estimated by an $L_{p_2}(\ell_{q_2})$-norm as in the middle of
\eqref{AFF-eq}, for the sum and integral may be excanged
and the estimate realised through Lemma~\ref{vNPP-lem}.
By \eqref{AB-eq}--\eqref{AFF-eq} this means that
$\pi^{12}_{2\Omega}(u,\cdot)$ maps $F^{s_1}_{p_1,q_1}$ into 
$B^{\fracci n{p_3}-n}_{p_3,\infty}\hookrightarrow
B^{\tilde s}_{p,\infty}$ for $p_2<1$.
Comparison with the $F^{s}_{p,q}$-results for the other terms shows that
 $L_u\colon F^{s_1}_{p_1,q_1}\to B^{\tilde s}_{p_1,\infty}$.
\end{proof}

The above result suffices for the present paper, but it could probably be
sharpened in several ways, 
perhaps with a consistent use of $A^s_{p,q}$ as co-domains.

\subsection{Relations to pseudo-differential 
operators of type $\mathbf{1},\mathbf{1}$}
  \label{sym-ssect}
For the local regularity improvements later, it is useful to express
paralinearisations in terms of pseudo-differential operators 
with symbols in $S^d_{1,1}$. 
Recall that $a(x,\xi)\in C^\infty(\R^{2n})$ belongs to  
$S^d_{1,1}(\Rn\times\Rn)$ for $d\in\R$,
if for all multiindices $\alpha$, $\beta$ there is $c_{\alpha\beta}>0$
such that for $x$, $\xi\in \Rn$,
\begin{equation}
 |D^\beta_{x}D^\alpha_\xi a(x,\xi)|\le c_{\alpha\beta} 
 \ang{\xi}^{d-|\alpha|+|\beta|};
  \qquad \ang{\xi}=(1+|\xi|^2)^{1/2}.
  \label{Sd11-eq}  
\end{equation}
The operator $a(x,D)\varphi(x)
=(2\pi)^{-n}\int_{\Rn} e^{\im x\cdot\xi}a(x,\xi)\hat\varphi(\xi)\,d\xi$ 
obviously induces a bilinear map 
$S^d_{1,1}\times\cal S\to\cal S$ that is continuous with respect to the
Fr\'echet topologies. 
In general $A:=a(x,D)=\OP(a)$ 
cannot be extended to $\cal S'$ by duality, for the
adjoint of $A$ need not be of type $1,1$.
However, $A$ can be defined as a linear operator with domain
$D(A)\subset\cal S'(\Rn)$ by analogy with \eqref{pi-eq}. More precisely
$u\in \cal S'$ is in $D(A)$ when the limit
\begin{equation}
  a_\psi(x,D)u:=\lim_{k\to\infty } \OP(\psi_k(D_x)a(x,\xi)\psi_k(\xi))u
  \label{vfm-eq}
\end{equation}
exists in $\cal D'(\Rn)$ for all $\psi\in C^\infty_0(\Rn)$ with $\psi=1$ in a
neighbourhood of the origin, and when moreover 
$a_{\psi}(x,D)u$ is independent of such $\psi$ so that it
makes sense to let $a(x,D)u=a_\psi(x,D)u$ then. 

This definition by so-called vanishing frequency modulation was
introduced recently and investigated from several perspectives in
\cite{JJ08vfm}. 
As the symbol on the right-hand side of \eqref{vfm-eq} 
is in $S^{-\infty }$ the definition
means roughly that in $a(x,D)u$ one should regularise the symbol $a$
instead of the argument $u$; it clearly gives the integral after
\eqref{Sd11-eq} for $u\in \cal S(\Rn)$.

Previously
L.~H{\"o}rmander  determined (up to a limit point)
the $s$ for which $A$ extends to a continuous map $H^{s+d}_2\to H^s_2$;
cf \cite{H88,H89} and \cite[Ch~9.3]{H97}. 
Eg continuity for all $s\in \R$ is proved there for $a(x,\xi)$ satisfying
his twisted diagonal condition. 
However, it was proved in \cite{JJ04Dcr,JJ05DTL}
that there always are bounded extensions, for $1\le p<\infty$, 
\begin{equation}
  F^d_{p,1}(\Rn)\xrightarrow{a(x,D)} L_p(\Rn),
  \qquad
  B^d_{\infty,1}(\Rn)\xrightarrow{a(x,D)} L_\infty(\Rn) ,
  \label{BFd11-eq}
\end{equation}
and that, without further knowledge about $a(x,\xi)$, this is optimal 
within the $B^{s}_{p,q}$ and $F^{s}_{p,q}$ scales for $p<\infty$. 
For $s>(\fracnp-n)_+$ there is continuity 
\begin{equation}
  B^{s+d}_{p,q}(\Rn)\xrightarrow{a(x,D)} B^{s}_{p,q}(\Rn),
  \qquad
  F^{s+d}_{p,q}(\Rn)\xrightarrow{a(x,D)} F^{s}_{p,r}(\Rn) \quad\text{($r$ as
in \eqref{r-eq})}.  
\end{equation}
This extends to all $s\in \R$ under the twisted diagonal condition; cf
\cite[Cor.~6.2]{JJ05DTL}; cf also \cite{JJ08vfm}.
The reader may consult \cite{H97,JJ08vfm} for 
various aspects of the theory of operators in $\OP(S^d_{1,1})$. 

The just mentioned results will not be
directly used here, but they shed light on how difficult it is to determine
the domain $D(A)$. Nevertheless one has the \emph{pseudo-local} property:
\begin{equation}
  \singsupp A u\subset \singsupp u\quad\text{ for all }\quad u\in D(A).
  \label{ssupp-eq}
\end{equation}

\begin{thm}
  \label{Sd11ss-thm}
Every pseudo-differential operator $a(x,D)$ in
$\op{OP}(S^{d}_{1,1}(\Rn\times\Rn))$ has the property in \eqref{ssupp-eq}.
\end{thm}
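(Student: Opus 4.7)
The plan is to adapt the classical pseudo-local argument to the type $1,1$ situation, using the smoothing approximations built into the vanishing frequency modulation definition \eqref{vfm-eq}. Fix $u\in D(A)$ and $x_0\notin\singsupp u$. Choose cut-offs $\varphi,\chi\in C^\infty_0(\Rn)$ with $\varphi\equiv 1$ near $x_0$, $\chi\equiv 1$ on a neighbourhood of $\supp\varphi$, and $\supp\chi\cap\singsupp u=\emptyset$. Then $\chi u\in C^\infty_0\subset\cal S\subset D(A)$ and $A(\chi u)\in C^\infty(\Rn)$, because the oscillatory integral defining $a(x,D)$ converges absolutely on a Schwartz argument. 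By linearity of the limit in \eqref{vfm-eq}, $v:=(1-\chi)u=u-\chi u\in D(A)$; it therefore suffices to show $\varphi\cdot Av\in C^\infty(\Rn)$.

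The regularised operators $A_k=\op{OP}(\tilde a_k)$ with $\tilde a_k(x,\xi)=(\check\psi_k*_x a)(x,\xi)\psi_k(\xi)\in S^{-\infty}$ have smooth kernels $K_k(x,y)=(2\pi)^{-n}\int e^{i(x-y)\cdot\xi}\tilde a_k(x,\xi)\,d\xi$ that are Schwartz in $y$ for each $x$. Since $\|\check\psi_k\|_{L_1}=\|\check\psi\|_{L_1}$ is $k$-independent, while the $\xi$-derivatives of $\psi_k$ carry factors $2^{-k|\gamma|}$ compensating $\ang{\xi}^{|\gamma|}\lesssim 2^{k|\gamma|}$ on $\supp\psi_k$, the family $\{\tilde a_k\}$ is bounded in $S^d_{1,1}$ and $\tilde a_k\to a$ in $C^\infty_{\loc}(\R^{2n})$; for the pointwise convergence Lemma~\ref{singsupp-lem} applies in $x$ for each fixed $\xi$. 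Off the diagonal one integrates by parts $N$ times in $\xi$ using the standard first-order operator built from $(x-y)\cdot D_\xi$; this produces the $k$-uniform majorant $C_N|x-y|^{-N}\ang{\xi}^{d+|\alpha|+|\beta|-N}$ for the $\partial_x^\alpha\partial_y^\beta$-integrand, integrable once $N>d+|\alpha|+|\beta|+n$. Dominated convergence then yields $K_k\to K$ in the $C^\infty$-topology on $\{(x,y)\colon x\ne y\}$, where $K$ is the analogous oscillatory integral for $a$.

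To conclude, note that $\supp\varphi$ and $\supp v\subset\supp(1-\chi)$ have positive distance, and fix $\eta\in C^\infty_0$ with $\eta\equiv 1$ on a neighbourhood of $\supp v$ and $\supp\eta\cap\supp\varphi=\emptyset$. For $x\in\supp\varphi$ one has $A_kv(x)=\ang{v,K_k(x,\cdot)}=\ang{v,\eta(\cdot)K_k(x,\cdot)}$, and the $C^\infty$-convergence of $\eta K_k$ on $\supp\varphi\times\Rn$ forces $\ang{v,\eta K_k(x,\cdot)}\to\ang{v,\eta K(x,\cdot)}$ in $C^\infty(\{\varphi\ne 0\})$ as $k\to\infty$. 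But $\varphi\cdot A_kv\to\varphi\cdot Av$ in $\cal D'$ by \eqref{vfm-eq}, so the two limits coincide and $\varphi\cdot Av$ equals the $C^\infty$-function $x\mapsto\ang{v,\eta K(x,\cdot)}$. Hence $x_0\notin\singsupp Au$, as required. The main obstacle lies in the uniform type-$1,1$ control of $\tilde a_k$ together with the attendant off-diagonal $C^\infty$-convergence of the kernels; in particular one has to verify that the $x$-smoothing $\check\psi_k*_x$ does not destroy the $\xi$-derivative estimates, which the anisotropic structure of $S^d_{1,1}$ accommodates.
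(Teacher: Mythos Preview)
Your strategy—splitting $u=\chi u+(1-\chi)u$ and controlling $A((1-\chi)u)$ near $x_0$ through off-diagonal kernel estimates for the approximants $A_k$—is the right classical template, and your uniform $S^d_{1,1}$ bounds on $\tilde a_k$ together with the resulting $C^\infty$-convergence $K_k\to K$ off the diagonal are correctly argued. There is, however, a genuine gap in the final step.

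You pick $\eta\in C^\infty_0$ with $\eta\equiv 1$ on a neighbourhood of $\supp v$. But $v=(1-\chi)u$ with $u\in\cal S'$, and $1-\chi\equiv 1$ outside a compact set, so $\supp v$ is typically unbounded and no compactly supported $\eta$ can equal $1$ on it; the identity $A_kv(x)=\ang{v,\eta K_k(x,\cdot)}$ therefore fails as written. The repair is the complementary choice: take $\zeta\in C^\infty_0$ with $\zeta\equiv 1$ near $\supp\varphi$ and $\supp\zeta\cap\supp v=\emptyset$, so that $A_kv(x)=\ang{v,(1-\zeta)K_k(x,\cdot)}$ for $x\in\supp\varphi$. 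But then, to pass the limit through the pairing with the \emph{tempered} distribution $v$, you need $(1-\zeta)K_k(x,\cdot)\to(1-\zeta)K(x,\cdot)$ in $\cal S(\Rn)$, uniformly for $x\in\supp\varphi$, not merely in $C^\infty_{\loc}$. This does follow from your off-diagonal bounds once you observe that $|x-y|\gtrsim\ang{y}$ for $x\in\supp\varphi$ and large $y\in\supp(1-\zeta)$, giving Schwartz decay in $y$ uniformly in $k$; combined with your $C^\infty$-convergence on compacta this yields convergence in every $\cal S$-seminorm. That argument is missing from your write-up and is precisely where the non-compactness of $\supp v$ bites.

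For comparison: the paper does not prove the theorem here but refers to \cite{JJ08vfm}, noting only that the argument there uses the vanishing frequency modulation definition together with the Regular Convergence Lemma (Lemma~\ref{singsupp-lem}) in an essential way. Your invocation of that lemma is incidental—since $a(\cdot,\xi)$ is already $C^\infty$ in $x$, ordinary mollification suffices—so your kernel-based route likely differs from the published one, though a detailed comparison is not possible from the present paper alone.
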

This was first proved in \cite[Thm.~6.4]{JJ08vfm}, to which  
the reader is referred. The proof given there 
exploits the definition of type
$1,1$-operators given above as well as the Regular Convergence Lemma; cf
Lemma~\ref{singsupp-lem}.

The exact paralinearisations turn out to factor through
pseudo-differential operators of type $1,1$.
This entails that the former are
pseudo-local:

\begin{thm}
  \label{symb-thm}
Let $B$ be of product type and
$u\in B^{s_0}_{p_0,q_0}(\overline{\Omega})$ for some arbitrary
$(s_0,p_0,q_0)$.
Then the exact paralinearisation in \eqref{plin-eq}  
factors through an operator  
$P_u\in\op{OP}(S^{\omega}_{1,1}(\Rn\times\Rn))$ with
$\omega$ as in \eqref{om-eq}. That is, for every $(s,p,q)$ in 
$\Dm(L_u)$, cf \eqref{DmLu-eq},
there is a commutative diagram
\begin{equation}
\begin{CD}
  E^s_{p,q}(\overline{\Omega})   @>{\ell_\Omega}>> E^s_{p,q}(\Rn)  \\
 @V{L_u}VV    @VV{P_u}V \\
  E^{s-\omega}_{p,q}(\overline{\Omega})   @<<{\rOm}< E^{s-\omega}_{p,q}(\Rn). 
\end{CD}
  \label{LuP-eq} 
\end{equation}
Moreover, $g\mapsto L_ug$ is pseudo-local when
$g\in E^s_{p,q}(\overline{\Omega})$ and $(s,p,q)$ is in $\Dm(L_u)$. 
\end{thm}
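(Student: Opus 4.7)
The plan is to read the operator $P_u$ directly off the three paraproducts in \eqref{plin-eq}. With $U = \lOm u$, the map $g \mapsto \pi_1(P_0 U, P_1 \lOm g)$ is the pseudo-differential operator with symbol
\begin{equation*}
a_1(x,\xi) = \sum_{j\ge 0} \bigl(\Psi_{j-2}(D) P_0 U\bigr)(x)\, P_1(\xi)\Phi_j(\xi),
\end{equation*}
and likewise the $\pi_3$-piece corresponds to $a_3(x,\xi) = \sum_j (\Psi_{j-2}(D) P_1 U)(x)\, P_0(\xi)\Phi_j(\xi)$, while the $\pi_2$-piece yields three analogous series with $\Psi_{j-2}$ replaced by $\Phi_{j-1}$ or $\Phi_j$. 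I would define $P_u := -P_2(D) \circ \OP(a_1+a_2+a_3)$, so that the composition calculus gives $P_u = \OP(p_u)$ with $p_u(x,\xi)$ having leading term $-P_2(\xi)(a_1+a_2+a_3)(x,\xi)$ plus the standard Leibniz corrections coming from $P_2(\xi + D_x)$.

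The core verification is that $p_u \in S^\omega_{1,1}(\Rn\times\Rn)$ for the $\omega$ of \eqref{om-eq}. Here I would argue dyadically: for $\xi$ in a shell of scale $2^k$ only summands with $j \approx k$ contribute. The $\xi$-derivatives on $P_1(\xi)\Phi_j(\xi)$ are bounded by $c\, 2^{j(d_1-|\alpha|)}$, while Bernstein's inequality combined with the Sobolev embedding $B^{s_0-d_0}_{p_0,q_0}\hookrightarrow B^{s_0-d_0-n/p_0}_{\infty,\infty}$ gives
\begin{equation*}
\bigl|\partial_x^\beta \Psi_{j-2}(D)(P_0 U)(x)\bigr| \le c\, 2^{j(|\beta| + (n/p_0 - s_0 + d_0)_+)},
\end{equation*}
with an $\varepsilon$-loss in the borderline $n/p_0 - s_0 + d_0 = 0$ routed via $B^0_{p_0,q_0}\hookrightarrow B^{-\varepsilon}_{\infty,\infty}$ (valid for $q_0>1$, resp.\ $p_0>1$). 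Analogous estimates apply to $a_2$ and $a_3$; the hypothesis $d_0 \le d_1$ ensures the dominant contribution is exactly $\omega$. Composition with the polynomial symbol $P_2(\xi)$ adds $d_2$ to the $\xi$-order via the type $1,1$ composition rule, since for a Fourier multiplier $P_2$ all correction terms in $P_2(\xi+D_x)a(x,\xi)$ remain in $S^{d_2+\omega}_{1,1}$. The $\pi_2$-terms fit in because their coefficients $\Phi_j(D)(P_0 U)$ and $\Phi_{j-1}(D)(P_0 U)$ obey the same Bernstein bound at scale $2^j$, despite being spectrally supported in a dyadic shell rather than a ball.

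With $P_u$ so defined, the identity $\rOm P_u \lOm g = L_u g$ holds by construction termwise in the paraproduct expansion; both sides converge in $E^{s-\omega}_{p,q}(\overline\Omega)$ for every $(s,p,q)\in \Dm(L_u)$ by Theorem~\ref{mod-thm}, which settles commutativity of \eqref{LuP-eq}. For pseudo-locality, take $g\in E^{s}_{p,q}(\overline\Omega)$ and $x_0 \in \Omega\setminus\singsupp g$; since $\rOm\lOm = I$, the distribution $\lOm g$ coincides with $g$ on $\Omega$, and in particular is $C^\infty$ in an open neighbourhood of $x_0$, so $x_0 \notin \singsupp \lOm g$. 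Theorem~\ref{Sd11ss-thm} applied to $P_u \in \OP(S^\omega_{1,1})$ gives $x_0 \notin \singsupp P_u \lOm g$, whence $x_0 \notin \singsupp L_u g$ because $\rOm$ preserves smoothness at interior points.

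The main obstacle is the symbol bookkeeping for the $\pi_2$-contribution in the borderline case $n/p_0 - s_0 + d_0 = 0$: the direct Bernstein bound on $\Phi_j(D)(P_0 U)$ yields only $O(1)$ rather than $O(2^{-j\varepsilon})$, so one must first pass through the embedding $B^0_{p_0,q_0}\hookrightarrow B^{-\varepsilon}_{\infty,\infty}$ (needing $q_0>1$, or $p_0>1$ in the Lizorkin--Triebel case) to recover a $2^{j\varepsilon}$ factor; this is what reproduces the correction $\varepsilon\kd{n/p_0-s_0+d_0=0}\kd{q_0>1}$ already recorded in \eqref{om-eq}.
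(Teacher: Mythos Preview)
Your strategy matches the paper's: read off the symbol from the paraproducts, verify it lies in $S^\omega_{1,1}$ via Bernstein-type bounds, and deduce pseudo-locality from Theorem~\ref{Sd11ss-thm}. The symbol estimates you sketch are essentially the same (the paper simplifies by taking $d_2=0$ and collapsing $\pi_1+\pi_2$ into a single term with coefficient $\Psi_{j+1}(D)P_0\tilde u$).

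There is, however, a genuine gap where you invoke Theorem~\ref{Sd11ss-thm}. That result gives $\singsupp a(x,D)v\subset\singsupp v$ only for $v$ in the domain $D(a(x,D))$, and this domain is defined through the vanishing-frequency-modulation limit \eqref{vfm-eq}, \emph{not} through the paraproduct formula. You have in effect two operators on $E^s_{p,q}(\Rn)$: the paraproduct map (bounded by the argument behind Theorem~\ref{mod-thm}) and the type~$1,1$ operator $\OP(p_u)$ of \eqref{vfm-eq}. For Schwartz $g$ they coincide by direct computation, but for general $g\in E^s_{p,q}$ with $(s,p,q)\in\Dm(L_u)$ this is not automatic: the general continuity results for $\OP(S^\omega_{1,1})$ quoted before Theorem~\ref{Sd11ss-thm} give boundedness only for $s-\omega>(\fracnp-n)_+$, which does not cover all of $\Dm(L_u)$; the $\Psi_{j+1}$-piece of the symbol fails the twisted diagonal condition that would lift this restriction; and density arguments break down for $q=\infty$. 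The paper devotes its step~$4^\circ$ to precisely this point: it shows directly that for each $f\in B^s_{p,q}(\Rn)$ in $\Dm(L_u)$ the approximants $\OP(\psi_m(D_x)a(x,\xi)\psi_m(\xi))f$ converge to the paraproduct expression, by splitting the approximating symbol into a partial paraproduct sum plus a remainder $R_mf$ that is driven to zero via the dyadic ball and corona criteria. Without that verification you have not established $\lOm g\in D(P_u)$, so the appeal to Theorem~\ref{Sd11ss-thm} for pseudo-locality is unsupported.
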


\begin{proof}
$1^\circ $. By linearity, it suffices to treat 
$P_m=D^{\eta_m}$ for $|\eta_m|=d_m$, and $d_0\le d_1$, $d_2=0$.
Set $\tilde u=\lOm u$.

$2^\circ $.
Applying $L_u$ to $\lOm g\in \cal S$, 
it is a composite $L_u=\rOm a(x,D)\lOm$ for a symbol
$a(x,\xi)$ satisfying \eqref{Sd11-eq} for $d=\omega$ with $\omega$ as in
\eqref{om-eq}, namely
\begin{equation}
  a(x,\xi)=-\sum_{j=0}^\infty \bigl(\Psi_{j+1}(D_x)D^{\eta_0}_x
            \tilde u(x)\xi^{\eta_1}+ 
            \Psi_{j-2}(D_x)D^{\eta_1}_x \tilde u(x)\xi^{\eta_0}
             \bigr)\Phi_{j}(\xi)
  \label{Lusym-eq}
\end{equation}
Indeed, the formula for $a(x,\xi)$ 
follows directly from Definition~\ref{plin-defn} and
\eqref{paramultiplication-eq} once $a\in S^\omega_{1,1}$ has been verified. 
To prove that 
$P_u=a(x,D)$ is of type $1,1$, note that $a(x,\xi)$ is $C^\infty$ since
each $\xi$ is in $\supp{\Phi_j}$ for at most two
values of $j$, and for these $2^{j-1}\le|\xi|\le 2^{j+1}$,
so that
$|D^\alpha(\xi^{\eta_m} \Phi_j(\xi))|\le c\ang{\xi}^{d_m-|\alpha|}$ 
holds for all $\alpha$. Concerning the estimates for $x\in\Rn$ and
$\xi\in \supp\Phi_j$, so that $2^j\le 2\ang{\xi}$, note that
if $k=j+1$ and $\varepsilon>0$ is fixed, the convenient short-hand
$\varepsilon':=\varepsilon\kd{\fracci n{p_0}-s_0+d_0=0}\kd{q_0>1}$
fulfils $\varepsilon'\ge0$ and gives
\begin{equation}
  |D^{\beta}_x\Psi_k(D)D^{\eta_0}\tilde u(x)|\le c
  \ang{\xi}^{|\beta|+(\fracci n{p_0}-s_0+d_0)_+
   +\varepsilon'}.
  \label{term1-eq}
\end{equation}
In fact, for $q_0\le1$ one has $\ell_q\hookrightarrow \ell_1$, so the
Nikol$'$ski\u\i--Plancherel--Polya inequality yields
\begin{equation}
  \begin{split}
  |\Psi_k(D)D^{\beta+\eta_0}\tilde u(x)| &\le c
  \smash[b]{\sum_{l=0}^k} 2^{l(s_0-|\beta+\eta_0|)}
    \norm{\Phi_l(D)D^{\beta+\eta_0}\tilde u}{L_{p_0}}
   2^{l(|\beta|+\fracci n{p_0}-s_0+d_0)}
\\
  &\le c     \norm{u}{B^{s_0}_{p_0,q_0}}
   \ang{\xi}^{(\fracci n{p_0}-s_0+d_0)_++|\beta|};
  \end{split}
  \label{symb-est}
\end{equation}
for $q_0>1$ H{\"o}lder's inequality applies to the first line in
\eqref{symb-est}, if
$2^{k(\fracci n{p_0}-s_0+d_0)_++k|\beta|}$ 
is taken out in front of the summation (it is less than
$(4\ang{\xi})^{|\beta|+(\fracci n{p_0}-s_0+d_0)_+}$);
except when $\fracci n{p_0}-s_0+d_0=0$, ie $\varepsilon'>0$,
then $|\beta|$ should just have $\varepsilon$ added and subtracted. 
This shows \eqref{term1-eq}.

Terms with $|\Psi_{j-2}(D)D^{\beta+\eta_1}\tilde u(x)|$ are treated
analogously, in the first line of \eqref{symb-est} the factor 
$2^{l(s_0-|\beta+\eta_0|)}$ may be estimated by $2^{l(s_0-|\beta+\eta_1|)}$
(which is absorbed by the Besov norm on $u$) times $2^{j(d_1-d_0)}$;
the latter, together with the estimate of
$D^\alpha(\xi^{\eta_0}\Phi_j(\xi))$, gives the 
estimates in \eqref{Sd11-eq} also for these terms.

$3^\circ $.
To prove \eqref{LuP-eq} also for non-smooth functions, it is noted that
there is a linear map
$P_u\colon E^s_{p,q}(\Rn)\to E^{s-\omega}_{p,q}(\Rn)$ 
that is bounded for $(s,p,q)\in \Dm(L_u)$. 
This is seen as 
in the proof of Theorem~\ref{mod-thm}, cf \eqref{LuOm-eq},
for one can keep the first entry in the 
expressions with $\pi_1$, $\pi_2$ there 
equal to $P_0\tilde u$ while the other entry runs through
$P_1(E^s_{p,q}(\Rn))$, for $\pi_3$ the first entry is taken in
$P_0(E^s_{p,q})$ and the second equal to $P_1\tilde u$.
From the definition of $L_u$ it is then evident that 
$L_u=\rOm\circ P_u\circ\lOm$, hence \eqref{LuP-eq} holds.

$4^\circ $. To show that $P_u$ from 
step $3^\circ $ equals the type $1,1$-operator
$a(x,D)$ given by the symbol in step $2^\circ $, 
it remains by \eqref{vfm-eq} to be verified that one has 
the limit relation
$P_uf=\lim_{m\to\infty }\OP(\psi_m(D_x)a(x,\xi)\psi_m(\xi))f$ for all
$\psi\in C^\infty_0(\Rn)$ with $\psi=1$ around $0$, 
whenever $f\in B^{s}_{p,q}(\Rn)$ with
$(s,p,q)\in \Dm(L_u)$, ie for
\begin{equation}
  s_0-d_0+s-d_1>\max(0,\fracc n{p_0}+\fracc np-n).
  \label{Rm-ineq}
\end{equation}
(If $f\in F^{s}_{p,q}$, then $f\in  B^{s}_{p,\infty }$
that also fulfils \eqref{Rm-ineq}.)
This is tedious but results from consistent use of the techniques that
gave boundedness of $P_u$. 

Indeed, for every $\psi$ and a (large) $m$ as above, it is straightforward
to see that $\Phi_j\psi_m=\Phi_j$ and $\Psi_{j+1}\psi_m=\Psi_{j+1}$ 
for $j$ below a certain limit $J(m)$, so that
the symbol of the approximating operator can be written as follows, when $'$
indicates summation over $l=m-j$ in a fixed finite subset of $\Z$,
\begin{equation}
  \begin{split}
  \psi_m(D_x)a(x,\xi)\psi_m(\xi) 
&=
  -\sum_{j\le J(m)} \bigl(\Psi_{j+1}(D_x)D^{\eta_0}_x
            \tilde u(x)\xi^{\eta_1}+ 
\\[-2\jot]
&\hphantom{-\sum_{j\le J(m)} \bigl(\Psi_{j+1}(D_x)(D_x)}
            \Psi_{j-2}(D_x)D^{\eta_1}_x \tilde u(x)\xi^{\eta_0}
             \bigr)\Phi_{j}(\xi)
\\  
 &\quad -\sideset{}{'}\sum_l\bigl(\psi_m(D_x)\Psi_{m-l+1}(D_x)D^{\eta_0}_x
            \tilde u(x)\xi^{\eta_1}+ 
\\[-1\jot]
 &\qquad
            \psi_m(D_x)\Psi_{m-l-2}(D_x)D^{\eta_1}_x \tilde u(x)\xi^{\eta_0}
             \bigr)\Phi_{m-l}(\xi)\psi_m(\xi).
  \end{split}
\end{equation}
The operator induced by the first sum here converges to $P_u$
for $m\to\infty$, by \eqref{Lusym-eq} and the construction of $P_u$. 
Therefore it suffices to show
that the primed sum defines an operator $R_m$ 
for which $R_mf\to0$ for $m\to\infty $.
Fixing $l$ one has the contribution
\begin{multline}
  R_{l,m}f=
  (\psi_m(D)\Psi_{m-l+1}(D)D^{\eta_0}
            \tilde u\cdot  D^{\eta_1}
\\
   +\psi_m(D)\Psi_{m-l-2}(D)D^{\eta_1}\tilde u(x)\cdot 
            D^{\eta_0})\Phi_{m-l}(D)\psi_m(D) f,
\end{multline}
the worst part of which is 
\begin{equation}
  \tilde R_{l,m}f=
  \psi_m(D)(\Phi_{m-l-1}+\Phi_{m-l}+\Phi_{m-l+1})(D)D^{\eta_0}
            \tilde u\cdot  D^{\eta_1}\Phi_{m-l}(D)\psi_m(D) f.
\end{equation}
Clearly $\supp\cal F\tilde R_{l,m}f$ is contained in $B(0,c2^m)$, ie it
fulfils the dyadic ball condition in Lemma~\ref{B-lem}. To estimate the
quantity $B$ there, note that 
in case $p, p_0\ge 1$ the family $\psi_m(D)$ is uniformly bounded in $L_p$
and $L_{p_0}$,  
so when $\fracc 1{p_2}=\fracc1{p_0}+\fracc1p$ and
$\fracc 1{q_2}=\fracc1{q_0}+\fracc1q$, then
\begin{equation}
  \begin{split}
  (\sum_{m=0}^\infty  2^{(s_0+s-(d_0+d_1))mq_2} 
                     \nrm{\tilde R_{l,m}f}{p_2}^{q_2})^{\fracci1{q_2}}
  &\le c\norm{\tilde u}{B^{s_0}_{p_0,q_0}}
\\[-2,5\jot]
  &\quad \times (\sum_{m=0}^\infty 2^{(s-d_1)mq}
                      \nrm{\psi_m(D)\Phi_{m-l}(D)D^{\eta_1}f}{p}^q)^{\fracci1q}
\\
  &\le c' \norm{f}{B^{s}_{p,q}} <\infty .  
  \end{split}
  \end{equation}
Hence $\sum_{m=0}^\infty \tilde R_{l,m}f$ converges by Lemma~\ref{B-lem}
(cf \eqref{Rm-ineq}), so
as desired $\tilde R_{l,m}f\to 0$. 
If $p$ and/or $p_0$ is in $\,]0,1[\,$ one can use Sobolev embeddings
into $B^{s+n-\fracci n{p}}_{1,q}$ and $B^{s_0+n-\fracci n{p_0}}_{1,q_0}$,
since these spaces also fulfil \eqref{Rm-ineq}.

The rest of $R_{l,m}f$ may be handled with Lemma~\ref{DCC-lem},
as done in the $\pi_1$- and $\pi_3$-parts of $P_u$
(this is also analogous to the proof of Lemma~\ref{tensor-lem}). 
This shows that 
$\sum_{m=0}^\infty R_{l,m}f$ converges in $\cal S'$ so that
$\lim_m R_{l,m}f=0$, 
hence $\lim_m\sideset{}{'}\sum_{l} R_{l,m}f=\lim_m R_mf=0$.
Hence $P_u$ is of type $1,1$ as claimed.

$5^\circ $.
If $g$ is as in the theorem, 
$x\in \singsupp \lOm g$ implies  that $x\in \singsupp g\cup\Rn\setminus\Omega$.
By $4^\circ $ and Theorem~\ref{Sd11ss-thm}, $\singsupp\lOm g$ is not 
enlarged by $P_u$, so $\rOm P_u\lOm g$ is $C^\infty$ in the part of
$\Omega$ where $g$ is so.
\end{proof}

\begin{rem}
As indicated above, the theory of type $1,1$ operators 
is still far from complete. To avoid any ambiguity, the
exact paralinearisations have been defined here without reference
to these operators, and the Paralinearisation Theorem was for the same
reason proved directly, before the factorisation through type $1,1$
operators was established. 
\end{rem}

\begin{rem}   \label{symb-rem}
One way to attempt a symbolic calculus would be to
replace $\lOm$ by $e_\Omega$, ie by extension by
zero outside of $\Omega$. 
The resulting linearisation $\tilde L_u$ would have the form
$\tilde L_ug=\rOm P\eOm g$ where
$P$ is in $\op{OP}(S^{\omega}_{1,1}(\Rn\times\Rn))$
as in Theorem~\ref{symb-thm}.
For $\tilde L_u$ to have
order $\omega$ in spaces with $s>0$, it is envisaged
that the transmission property would be needed for $P$. However, 
transmission \emph{conditions} have been worked out for $S^d_{\rho,\delta}$
with $\delta<1$, cf \cite{GH}. For $\delta=1$ there is 
a fundamental difficulty because $\op{OP}(S^{\omega}_{1,1})$ in general, cf
\eqref{BFd11-eq}, is defined on $H^s_p$ for $s>\omega>0$\,---\,whereas  
the usual induction proof of the continuity 
of truncated pseudo-differential operators with transmission property
effectively requires application to spaces with $s<0$ 
(in the induction step,
$\rOm P$ is applied to distributions supported by the boundary
$\Gamma\subset\Rn$). 
Also the powers $(R_DL_u)^N$ should be covered, so the
general rules of composition with the operators in the Boutet de~Monvel
calculus should be established.
All in all this is better investigated elsewhere; it could
be useful eg in reductions where traces or solution operators
of other problems are applied to the parametrix formula.
\end{rem}

\section{The von Karman equations of non-linear vibration}
  \label{vK-sect}
The preceding sections apply to von Karman's equations
for a thin, buckling plate, initially filling an open domain
$\Omega\subset\R^2$ with $C^\infty $-boundary $\Gamma$. 
The following is inspired by 
\cite[Ch.~1.4]{L} and by the treatise of P.~G.~Ciarlet
\cite[Ch.~5]{Ci97}, that also settles the applicability of the model to
physical problems.

In the stationary case the problem is to find two real-valued functions
$u_1$ and $u_2$ (displacement and stress) defined in $\Omega$ and fulfilling 
\begin{subequations}
  \label{vK-eqs}  
\begin{alignat}{2}
  \lap^2 u_1 - [u_1,u_2]&=f \quad&&\text{in $\Omega$}  \\
  \lap^2 u_2 + [u_1,u_1]&=0 \quad&&\text{in $\Omega$}  \\
  \gamma_k u_1&=0           \quad&&\text{on $\Gamma$ for $k=0$, $1$} \\
  \gamma_k u_2&=\psi_k      \quad&&\text{on $\Gamma$ for $k=0$, $1$}.
\end{alignat}
\end{subequations}
Hereby $\lap^2$ denotes the biharmonic operator, whilst $[\cdot,\cdot]$
as in Example~\ref{MA-ex} stands for the bilinear operator 
\begin{equation}
  [v,w]=D^2_1 vD^2_2 w+
        D^2_2 vD^2_1 w-2D^2_{12}vD^2_{12}w.
\end{equation}
For the real-valued case with $\psi_0=\psi_1=0$, it is well known that
Brouwer's fixed point theorem implies the existence of 
solutions with $u_j\in F^{2}_{2,2}(\overline{\Omega})$ for given data
$f\in F^{-2}_{2,2}(\overline{\Omega})$; cf \cite[Thm.~4.3]{L} and
\eqref{HF-eq}. 
For $\psi_k\in F^{2-k-1/2}_{2,2}(\Gamma)$ solutions are
established by non-linear minimisation in \cite[Thm.~5.8-3]{Ci97}.
Concerning the regularity it was eg shown in \cite[Thm.~4.4]{L} that if
$f\in L_p(\Omega)$ for some $p>1$, then any of the above solutions 
of \eqref{vK-eqs} fulfils
that $u_1\in F^{4}_{p,2}(\overline{\Omega})$ while $u_2$ belongs to
$F^{4}_{q,2}(\overline{\Omega})$ for any $q<\infty$. 
It was also noted in \cite{L} that iteration would give more, eg that the
problem is hypoelliptic. Corresponding results for non-trivial $\psi_0$ and
$\psi_1$ may be found in   \cite[Thm.~5.8-4]{Ci97}.

\bigskip

These results are generalised in three ways in the present paper, as a
consequence of the general investigations: firstly the assumptions on the data
and on the solution $(u_1,u_2)$ are considerably weaker,
including fully inhomogeneous data; secondly the weak solutions
are carried over to a wide range of spaces with $p\ne2$. Thirdly the
non-linear terms are shown to have no influence on the solution's regularity
(within the Besov and Lizorkin--Triebel scales).

In the discussion of \eqref{vK-eqs}, the coupling of the two
non-linear equations is a little inconvenient, since the Exact
Paralinearisation Theorem, \ref{mod-thm}, needs a modification to this
situation. But this can 
be done easily when $u_1$ and $u_2$ are given in the same space, for in the
proof of Theorem~\ref{mod-thm} the mapping properties will then remain
the same regardless of whether $u_1$ or $u_2$ is inserted in the various
$\pi_j$-expressions. For brevity, it is left for the reader to substantiate
this expansion of the theorem. (More general methods follow in
Section~\ref{sys-sect}.)

Because $[v,w]$ is of type $(2,2)$, the quadratic standard domain in 
\eqref{stQdom-eq} is for $Q_0(u):=[u,u]$ given 
by $s>2+(\fracc 2p-1)_+$, and clearly $(s,p,q)=(2,2,2)$ is at the boundary
of and therefore outside of $\Dm(Q_0)$; cf Figure~\ref{vK-fig}. 
Hence Theorem~\ref{mod-thm}  does barely
not apply as it stands.

To carry over weak solutions to other spaces, one can use 
the more refined estimates for the borderlines
in Theorem~\ref{mod'-thm}. In fact the co-domain of type
$B_{p,\infty}$ is embedded into
$E^{s-\omega-\varepsilon}_{p,q}$ for $\varepsilon>0$,
so this gives that $L_{(u_1,u_2)}$ has order $\omega=3+\varepsilon$ 
when both $(s_0,p_0,q_0)$ and $(s,p,q)$ equal $(2,2,2)$. For other choices
of $(s,p,q)$ the continuity properties of $L_{(u_1,u_2)}$ are given by
Theorem~\ref{mod-thm}. In addition, 
$L_{(u_1,u_2)}$ linearises the non-linear terms in \eqref{vK-eqs}, 
for at $(2,2,2)$ these only contains products of $L_2$-functions, whence the
conclusions of Lemma~\ref{Lu-lem} remain valid (the assumption 
$s>\max(d_0,d_1)$ is then not needed in the proof of the lemma).
In this way
Theorem~\ref{ir-thm} can be used for the von~Karman problem, when 
$\Dm(\cal N)$ is taken as $\Dm(Q_0)\cup\{\,(2,2,2)\,\}$ and $\Dm(B_u)$
likewise is the union of $\Dm(L_{(u_1,u_2)})$ and $\{(2,2,2)\}$. (Parameter
domains were not required to be open in Theorem~\ref{ir-thm}.)

One could also envisage
other problems in which the weak solutions belong to spaces at the
borderline of the quadratic standard domain, so that results like
Theorem~\ref{mod'-thm} 
would be the only manageable way to apply Theorem~\ref{ir-thm}.

\begin{figure}[htbp]
\setlength{\unitlength}{0.0004in}
{\renewcommand{\dashlinestretch}{30}
\begin{picture}(5705,5800)(0,-10)
\thicklines
\path(405,5092)(375,5212)(345,5092)
\path(375,5212)(375,312)(5175,312)
\path(5055,282)(5175,312)(5055,342)
\path(75,312)(375,312)(375,12)
\thinlines
\path(1575,312)(1575,162)
\path(2775,312)(2775,162)
\path(225,1512)(375,1512)
\path(225,2712)(375,2712)
\texture{0 0 0 888888 88000000 0 0 80808 
	8000000 0 0 888888 88000000 0 0 80808 
	8000000 0 0 888888 88000000 0 0 80808 
	8000000 0 0 888888 88000000 0 0 80808 }
\shade\path(375,5112)(375,1512)(2775,2712)(5175,5112) 
\path(375,1512)(2775,2712)(5175,5112) 
\dottedline{80}(5175,5112)(5375,5312)
\dashline{190}(375,1512)(1575,1512)(2775,2712)
\dottedline{180}(375,2712)(1575,2712)(3988,5125)
\put(375,5262){\makebox(0,0)[cb]{$s$}}
\put(5475,310){\makebox(0,0){$\fracc2p$}}
\put(1575,0){\makebox(0,0)[cc]{$1$}}
\put(2775,0){\makebox(0,0)[cc]{$2$}}
\put(75,1512){\makebox(0,0)[cc]{$1$}}
\put(75,2712){\makebox(0,0)[cc]{$2$}}
\put(1750,1337){\makebox(0,0)[lc]{$\Dm(Q)$}}
\put(3675,3387){\makebox(0,0)[lc]{$\omega=4$}}
\put(5575,5012){\makebox(0,0){$\Dm_2$}}
\end{picture}
}
\caption{The quadratic standard domains of $Q$ and 
 $Q_0$ (in dots) in relation to $\Dm_2$.}
  \label{vK-fig}
 \end{figure}
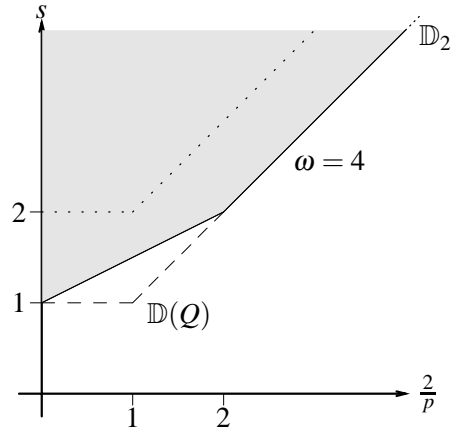
For the von~Karman problem, however, the symmetry properties of $[v,w]$
make it possible to avoid the rather specialised estimates in
Theorem~\ref{mod'-thm}. Indeed,  as recalled in Example~\ref{MA-ex}, 
$[\cdot,\cdot]$ is a restriction of
\begin{equation}
  B(v,w)= D^2_{12}(D_1vD_2w+D_2vD_1w)-D^2_1(D_2vD_2w)
          -D^2_2(D_1vD_1w).
  \label{BvK-eq}
\end{equation}
Since $B$ is of type $(1,1,2)$, the larger domain $\Dm(Q)$ is given by
$s>1+(\fracc 2p-1)_+$ according to \eqref{stQdom-eq}. 
But by \eqref{Dmk-eq} the appropriate parameter domain for the linear part is
$\Dm_2$, and $\Dm(Q)\cap \Dm_{2}= \Dm_{2}$, cf Figure~\ref{vK-fig}.

On the resulting domain $\Dm_2$, the operator $Q$ is $\lap^2$-moderate
in view of
Corollary~\ref{Amod-cor}. 
It is moreover easy to infer from \eqref{om-eq} that $\omega=4$ holds on
the borderline with $s=2/p$ (for $p<1$) of $\Dm_2$.

This leads to the following result on the fully inhomogeneous problem: 
\begin{thm}
  \label{vK-thm}
Let two functions  $u_1$, $u_2\in B^{s}_{p,q}(\overline{\Omega})$ 
with $(s,p,q)$ in $\Dm_2$ solve
\begin{subequations}\label{vK'-eqs}
\begin{align}
  \lap^2 u_1 - B(u_1,u_2)&=f_1 \quad\text{in $\Omega$}  
  \label{vK'1-eq} \\
  \lap^2 u_2 + B(u_1,u_1)&=f_2 \quad\text{in $\Omega$}  
  \label{vK'2-eq}\\
  \gamma_k u_1 &=\varphi_k \quad\text{on $\Gamma$ for $k=0$, $1$} \\
  \gamma_k u_2&=\psi_k \quad\text{on $\Gamma$ for $k=0$, $1$},
\end{align}
\end{subequations}
for data $f_k\in B^{t-4}_{r,o}(\overline{\Omega})$,
with $k=1$, $2$, together with
 $\varphi_0$, $\psi_0\in B^{t-\fracci1r}_{r,o}(\Gamma)$
and $\varphi_1$, $\psi_1\in B^{t-1-\fracci1r}_{r,o}(\Gamma)$
whereby $(t,r,o)\in \Dm_2\cap\Dm(L_{(u_1,u_2)})$, that is 
\begin{equation}
  \begin{aligned}
  t&> 1+\fracc1r +(\fracc1r-1)_+,
 \\
  t&> 2-s +(\fracc2r+\fracc2p-2)_+.
\end{aligned}
  \label{vKdom-eq}
\end{equation}
Then $u_1$, $u_2$ belong to $B^{t}_{r,o}(\overline{\Omega})$. 
If instead $f_k\in F^{t-4}_{r,o}(\overline{\Omega})$,
 $\varphi_0$, $\psi_0\in B^{t-\fracci1r}_{r,r}(\Gamma)$
and $\varphi_1$, $\psi_1\in B^{t-1-\fracci1r}_{r,r}(\Gamma)$
for some  $(t,r,o)$ fulfilling \eqref{vKdom-eq}, then it follows 
that $u_1$, $u_2\in F^{t}_{r,o}(\overline{\Omega})$. 
\end{thm}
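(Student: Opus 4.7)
The plan is to deduce the theorem from the abstract Parametrix Theorem~\ref{ir-thm} by recasting the von~Karman system as a single semi-linear elliptic boundary problem for the vector $u := (u_1, u_2)$. First I would set
\[
 \cal A u := \bigl( \lap^2 u_1,\; \lap^2 u_2,\; \gamma_0 u_1,\; \gamma_1 u_1,\; \gamma_0 u_2,\; \gamma_1 u_2 \bigr),
\]
an elliptic Green operator of order $4$ and class $2$ that, by the Boutet de~Monvel calculus (as used around \eqref{lin-eq'}), admits a parametrix $\widetilde{\cal A}$ of class $-2$ continuous between the Besov and Lizorkin--Triebel spaces attached to $(t,r,o) \in \Dm_2$. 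The non-linear part is $\cal N(u) = (-B(u_1, u_2),\, B(u_1, u_1),\, 0,\, 0,\, 0,\, 0)$ with $B$ of product type $(1,1,2)$ as in \eqref{BvK-eq}.

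Next I would apply a system analogue of Theorem~\ref{mod-thm} to construct a moderate linearisation $L_u$ with $\cal N(u) = -L_u u$. Since $B$ has $d_0 = d_1 = 1$, $d_2 = 2$, the parameter domain $\Dm(L_u)$ is described by \eqref{DmLu-eq} with these values, namely $s' > 2 - s + (2/p' + 2/p - 2)_+$. To secure $\lap^2$-moderacy via Corollary~\ref{Amod-cor}, note that $d_A = 4 > d_1 + d_2 = 3$ while $d_1 - d_0 = 0 < n = 2$, so \eqref{sdddd-eq} reads $s' > 2/p'$; this is automatic on $\Dm_2$. Hence $L_u$ is $\cal A$-moderate on the entire set $\Dm_2 \cap \Dm(L_u)$ with a uniformly positive gain $\delta$ there.

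The axioms \eqref{XY-cnd}--\eqref{del-cnd} are then straightforward: the Besov/Lizorkin--Triebel embeddings give \eqref{XY-cnd}; Boutet de~Monvel supplies \eqref{AXY-cnd} together with the commutative diagrams, using Proposition~\ref{chi-prop} for extension-by-zero consistency; the construction above is \eqref{B-cnd}; the set $\Dm_2 \cap \Dm(L_u)$ is connected because both defining inequalities cut out regions convex in $(1/p', s')$-coordinates, giving \eqref{Dm-cnd}; and \eqref{del-cnd} holds since $\Dm_2$ is upwards unbounded in $s'$ while $\delta$ stays uniformly positive on compacta. Because $(s, p, q) \in \Dm_2$ places the given solution in $\Dm(\cal A) \cap \Dm(\cal N)$ (using $\Dm_2 \subset \Dm(Q)$) and \eqref{vKdom-eq} places $(t, r, o)$ in $\Dm_u := \Dm_2 \cap \Dm(L_u)$, invoking Theorem~\ref{ir-thm} produces the parametrix formula \eqref{par-eq}; the right hand side belongs to the target space for $N$ large enough, yielding $u \in B^{t}_{r,o}(\overline{\Omega})^2$. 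The $F$-case is handled identically, the abstract framework being agnostic to the $B$/$F$ choice for $X^t_r$.

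The main obstacle will be verifying the system analogue of Theorem~\ref{mod-thm}, since the cross term $B(u_1, u_2)$ mixes the two components of $u$. Fortunately, every paraproduct estimate in the proof of Theorem~\ref{mod-thm} depends only on the individual Besov (or Lizorkin--Triebel) norms of the two factors, so the argument carries over with purely notational changes when occurrences of $u$ are replaced by $u_1$ or $u_2$ from the same space. A subsidiary but essential point is that, by rewriting the equations with $B(v, w)$ of type $(1,1,2)$ in place of $[v, w]$ of type $(2,2)$, the hypothesis $(s, p, q) \in \Dm_2$ already places the weak solution strictly inside the quadratic standard domain $\Dm(Q)$ for $B$, so the borderline estimates of Theorem~\ref{mod'-thm} are not needed in the present statement.
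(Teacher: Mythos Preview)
Your proposal is correct and follows essentially the same route as the paper: the paper does not give a standalone proof of Theorem~\ref{vK-thm} but declares it a special case of the general vector-bundle result Theorem~\ref{semiell-thm}, with the linear part supplied by Example~\ref{biharm-ex} and the compatible product-type structure of the non-linearity by Remark~\ref{prod-rem}. Your direct verification of axioms \eqref{XY-cnd}--\eqref{del-cnd} and appeal to Theorem~\ref{ir-thm}, together with the observation that the paraproduct estimates in Theorem~\ref{mod-thm} go through unchanged when $u_1,u_2$ lie in the same space, is precisely the argument the paper sketches in the paragraphs preceding Theorem~\ref{vK-thm} and in Remark~\ref{vK-rem} before repackaging it in the Section~\ref{sys-sect} formalism.
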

Since $\Dm_2$ is open, it is not a loss of generality here to assume for the
Lizorkin--Triebel case that $u_1$ and $u_2$ are given in a Besov space.

One can prove the theorem directly, as indicated above, but it will follow
from the general considerations in Section~\ref{sys-sect}.
So instead the consequences for existence of solutions in
Besov and Lizorkin--Triebel spaces are given; this amounts to 
a solvability theory for the domain bounded by the dotted lines in 
Figure~\ref{vK-fig}.
It is also noteworthy that solutions exist for data
with arbitrarily large norms:

\begin{cor}
  \label{vK-cor}
Let 
$f\in B^{s-4}_{p,q}(\overline{\Omega})$ and
 $\psi_k\in B^{s-k-\fracpi}_{p,q}(\Gamma)$, for $k=0$, $1$, 
be real-valued data for some $(s,p,q)$ fulfilling
\begin{subequations} \label{vK-cnd}
\begin{align}   
  s&> 2+(\fracc 2p-1)_+, \quad\text{or} 
  \label{vK-cnd'} \\
  s&= 2+(\fracc 2p-1)_+ \quad\text{and}\quad q\le 2.
  \label{vK-cnd''}
\end{align}
\end{subequations}
Then there exists a solution $(u_1,u_2)$ in 
$B^{s}_{p,q}(\overline{\Omega})^2$ of the
equations in \eqref{vK-eqs}.

If $f\in F^{s-4}_{p,q}(\overline{\Omega})$ 
and $\psi_k\in B^{s-k-\fracpi}_{p,p}(\Gamma)$, for $k=0$, $1$,
and $(s,p,q)$ fulfils either \eqref{vK-cnd'} or 
\begin{equation}
  s= 2+(\fracc 2p-1)_+, \quad\text{and $q\le 2$ if $p\ge2$,}
\end{equation}
then \eqref{vK-eqs} has a solution $(u_1,u_2)$ in 
$F^{s}_{p,q}(\overline{\Omega})^2$.
\end{cor}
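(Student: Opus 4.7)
My approach is to combine the classical existence of weak $H^2$-solutions with the regularity theorem~\ref{vK-thm}, so as to transfer those solutions to the full Besov and Lizorkin--Triebel scales.

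First I would verify that, under \eqref{vK-cnd}, the given data embed continuously into the classical setting: $f\in F^{-2}_{2,2}(\overline{\Omega})$ and $\psi_k\in B^{2-k-\fracc12}_{2,2}(\Gamma)$ for $k=0,1$. For $p\ge 2$ this follows by combining the finite-measure embedding \eqref{fmem-eq} (which brings the integral-exponent down to $2$) with the sum-exponent embedding $B^\sigma_{2,q}\hookrightarrow B^\sigma_{2,2}$ valid for $q\le 2$, which is exactly what the borderline case \eqref{vK-cnd''} supplies. For $p<2$ one uses instead the genuine Sobolev embedding $B^\sigma_{p,q}\hookrightarrow B^{\sigma-\fracc2p+1}_{2,q}$; at the borderline $s=1+\fracc2p$ the resulting smoothness index then lands exactly on $-2$ over $\overline{\Omega}$ and on $\tfrac{3}{2}-k$ over $\Gamma$. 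Under the strict inequality \eqref{vK-cnd'}, a harmless loss of smoothness removes any remaining $q$-dependence. The $F$-data case is handled analogously, using Jawerth's $F$-type Sobolev embedding for $p<2$ (which is $q$-free) and, for $p\ge 2$, the bounded-domain embedding $F^s_{p,q}(\overline{\Omega})\hookrightarrow F^s_{2,q}(\overline{\Omega})$ combined with $F^s_{2,q}\hookrightarrow F^s_{2,2}$ for $q\le 2$.

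With the data so embedded, the classical results recalled in the discussion before the corollary --- Brouwer's fixed-point theorem as in \cite[Thm.~4.3]{L} together with the non-linear minimisation of \cite[Thm.~5.8-3]{Ci97} to accommodate inhomogeneous boundary data --- produce a real-valued solution
\begin{equation*}
(u_1,u_2)\in H^2(\overline{\Omega})^2=F^2_{2,2}(\overline{\Omega})^2=B^2_{2,2}(\overline{\Omega})^2
\end{equation*}
of \eqref{vK-eqs}, without any smallness assumption. One may now invoke Theorem~\ref{vK-thm} at the linearisation point $(s_0,p_0,q_0)=(2,2,2)$ with target $(t,r,o)=(s,p,q)$. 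The $\Dm_2$-condition $s>1+\fracc1p+(\fracc1p-1)_+$ follows from \eqref{vK-cnd} in every range of $p$ (for $p\ge 2$ one has $2\ge 1+\fracc1p$; for $p<2$ one has $1+\fracc2p>1+\fracc1p$), and the paralinearisation condition in \eqref{vKdom-eq} reduces with $(s_0,p_0)=(2,2)$ to $s>(\fracc2p-1)_+$, which is \emph{a fortiori} satisfied. This lifts $(u_1,u_2)$ to $B^s_{p,q}(\overline{\Omega})^2$ in the Besov case and to $F^s_{p,q}(\overline{\Omega})^2$ in the Lizorkin--Triebel case.

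The main technical hurdle lies in the first step, and especially in the borderline case \eqref{vK-cnd''}: the sum-exponent restriction $q\le 2$ must be deployed so that the data land \emph{exactly} in $F^{-2}_{2,2}$ and $B^{3/2-k}_{2,2}(\Gamma)$, with no margin to spare, and one must be careful with the $F$-to-$B$ conversions when $p\ge 2$ in the Lizorkin--Triebel case. Once the weak $H^2$-solution is produced, the final bootstrapping through Theorem~\ref{vK-thm} is a purely formal verification of the parameter conditions in \eqref{vKdom-eq}.
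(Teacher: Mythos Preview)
Your proposal is correct and follows essentially the same route as the paper's own proof: embed the data into $F^{-2}_{2,2}(\overline{\Omega})$ and $B^{2-k-\frac12}_{2,2}(\Gamma)$ via the standard embeddings, invoke \cite[Thm.~5.8-3]{Ci97} to obtain a weak solution in $F^{2}_{2,2}(\overline{\Omega})^2$, and then apply Theorem~\ref{vK-thm} at $(s_0,p_0,q_0)=(2,2,2)$ to lift it to the target space. The paper dispatches the first step with the phrase ``as seen by the usual embeddings''; your case-by-case discussion of $p\gtrless 2$ and of the borderline \eqref{vK-cnd''} simply makes that step explicit.
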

\begin{proof}
Under the assumptions on $(s,p,q)$, the data $f$ and $\psi_k$ belong 
to $F^{-2}_{2,2}(\overline{\Omega})$ and $B^{2-k-\frac12}_{2,2}(\Gamma)$, as
seen by the usual embeddings. So by invoking \cite[Thm.~5.8-3]{Ci97} there is a
solution $(u_1,u_2)\in F^{2}_{2,2}(\overline{\Omega})^2$; according to
Theorem~\ref{vK-thm} it also belongs to $B^{s}_{p,q}(\overline{\Omega})^2$
or $F^{s}_{p,q}(\overline{\Omega})^2$, respectively.
\end{proof}

\begin{exmp}
  \label{vK-exmp}
Equation \eqref{vK-eqs} may be considered with 
force term $f(x_1,x_2)$ equal to $1(x_1)\otimes\delta_0(x_2)$ and
$0\in\Omega$. Such singular data could model displacements and stresses
generated by a heavy rod lying along the
$x_1$-axis on a table, obtained by clamping a wooden plate along its edges
to a sturdy metal frame.

By \eqref{1delta-ex}, 
this $f\in B^{\fracpi-1}_{p,\infty}(\overline{\Omega})$ for 
every $p\in\,]0,\infty]$.
So Corollary~\ref{vK-cor} gives for 
every set of  $\psi_k\in B^{3-k}_{p,\infty}(\Gamma)$, $k=0,1$,  
with fixed $p\in\,]0,\infty]$,
a solution $(u_1,u_2)$ in $B^{3+\fracpi}_{p,\infty}(\overline{\Omega})^2$ 
of \eqref{vK-eqs}. By Theorem~\ref{vK-thm}, it belongs to this space for
every $p\in\,]0,\infty]$, when $\psi_0=\psi_1=0$. 
\end{exmp}
\begin{rem}
  \label{vK-rem}
Although the coupling of the two non-linear equations in \eqref{vK-eqs}, as
described, could be handled using that $u_1$ and $u_2$ 
are sought after in the same space, it seems more flexible to stick with the
general set-up in Section~\ref{main-sect} by developing a
theory in which the pair $(u_1,u_2)$ is regarded as the unknown, entering
the bilinear form twice. This only requires some projections onto $u_1$ and
$u_2$, cf the details around \eqref{Btilde-eq} below.
For this purpose it is convenient to generalise product
type operators to a framework of vector bundles, as done in the next
section.
\end{rem}

\section{Systems of semi-linear boundary problems}
  \label{sys-sect}

In this section the abstract results of Section~\ref{main-sect} and those on
paralinearisation in Section~\ref{prod-sect} will
be carried over to a general 
framework for semi-linear elliptic boundary problems. This is formulated in
a vector bundle set-up, not just because this is natural for linear elliptic
systems of multi-order, but also because vector bundles are useful for 
handling \emph{non-linearities}, as mentioned in Remark~\ref{vK-rem} above.

\subsection{General linear elliptic systems}
  \label{ell-ssect}
Because the parametrix construction relies on a linear theory 
with the properties in \eqref{XY-cnd}--\eqref{AXY-cnd} of
Section~\ref{main-sect}, it is natural to utilise the Boutet
de~Monvel calculus \cite{BM71}. 
The $L_p$-results for this are reviewed briefly below
(building on \cite{JJ96ell}, that extends $L_p$-results of G.~Grubb
\cite{G3} and J.~Franke \cite{F1,F2}).
Introductions to the calculus may be found in \cite{G97,G2} or
\cite[Sect.~4.1]{JoRu97}, and a thorough account in \cite{G1}.

Recall that $\Omega\subset\Rn$ denotes a
smooth, open, bounded set with $\partial\Omega=\Gamma$.
The main object is then a multiorder Green operator, designated by $\cal A$,
ie,  
\begin{equation}
 \cal A=\begin{pmatrix}P_\Omega+G&K\\ T& S\end{pmatrix}
  \label{grnA-eq} 
\end{equation}
where $P=(P_{ij})$ and $G=(G_{ij})$, $K=(K_{ij})$, $T=(T_{ij})$ 
and $S=(S_{ij})$. Here $i\in I_1:=\{\,1,2,\dots,i_\Omega\,\}$ and
 $i\in I_2:=\{\,i_\Omega+1,\dots,i_\Gamma \,\}$, respectively, 
in the two rows of the block matrix $\cal A$. Similarly it holds that
$j\in J_1:=\{\,1,2,\dots,j_\Omega\,\}$ and
 $j\in J_2:=\{\,j_\Omega+1,\dots,j_\Gamma \,\}$, respectively,  in
the two columns of $\cal A$; that is, $\cal A$ is an $i_\Gamma\times j_\Gamma$
matrix operator with indices belonging to $I\times J$, when $I=I_1\cup I_2$
and $J=J_1\cup J_2$. 

Each $P_{ij}$, $G_{ij}$, $K_{ij}$, $T_{ij}$ 
and $S_{ij}$ belongs to the poly-homogeneous calculus of pseudo-differential
boundary problems. More precisely, $P$  is a pseudo-differential operator
satisfying the uniform two-sided transmission condition (at $\Gamma$), $G$
is a singular Green operator, $K$ a Poisson and $T$ a trace operator, while
$S$ is an ordinary pseudo-differential operator on $\Gamma$. 
(The well-known requirements on the symbols and symbol kernels may be found in
the references above; they are not recalled, since they will not enter the
arguments directly here.)  
The operator in the $ij$\ord{th} entry of $\cal A$ is taken to be of order
$d+b_i+a_j$, where $d\in\Z$, $\Bold{a}=(a_j)\in\Z^{j_\Gamma}$ 
and $\Bold{b}=(b_i)\in\Z^{i_\Gamma}$; for each $j$, both $P_{ij,\Omega}+G_{ij}$ 
and $T_{ij}$ is supposed to be of class  $\kappa+a_j$ for some fixed
$\kappa\in\Z$.  
For short $\cal A$ is then said to be of order $d$ and class $\kappa$
(relatively to $(\Bold{a},\Bold{b})$, more precisely). 

Recall that  the transmission condition ensures that $P_\Omega:=r_\Omega
Pe_\Omega$ has the same order on all spaces on which it is defined. More
explicitly this means that each $P_{ij,\Omega}$ has order $d+a_j+b_i$ on
every $B^{s}_{p,q}$ and $F^{s}_{p,q}$ with arbitrarily high
 $s>\kappa +a_j+1-\fracp$; implying, say that $C^\infty(\overline{\Omega})$ is
mapped into $C^\infty(\overline{\Omega})$, without blow-up at
$\Gamma$. (Thus $P_\Omega$ has the transmission \emph{property}.)

In general the operators act on spaces of sections of
vector bundles $E_j$ over $\Omega$ and $F_j$ over $\Gamma$, with $j$ running
in $J_1$ and $J_2$, respectively; they map into
sections of other such bundles $E'_i$ and $F'_i$. The fibres of
 $E_j$, $F_j$ have dimension $M_j$, $N_j$, while $\dim E'_i=M'_i$ and
 $\dim F'_i=N'_i$. 
Letting
\begin{align}
  V& =(E_1\oplus\dots\oplus E_{j_\Omega})\cup 
      (F_{j_\Omega+1}\oplus\dots\oplus F_{j_\Gamma})
  \label{V-eq}  \\
  V'&=(E'_1\oplus\dots\oplus E'_{i_\Omega})\cup
      (F'_{i_\Omega+1}\oplus\dots\oplus F'_{i_\Gamma}),
  \label{V'-eq}
\end{align}
then $\cal A$ is a map $C^\infty(V)\to C^\infty(V')$. 
One may either regard $C^\infty(V)$ as a short hand 
for $C^\infty(E_1)\oplus\dots\oplus C^\infty(F_{j_\Gamma})$, or view
$V$ as a vector bundle with the dimension of both the base manifold 
$\Omega\cup\Gamma$ and of the fibres over its
points $x$ be depending on whether $x\in\Omega$ or $x\in\Gamma$ (as allowed in
eg the set-up of \cite{Lan}). 
Similarly for $V'$. 

The following spaces are adapted to the orders and classes of $\cal A$,
 \begin{align}
 B^{s+\Bold{a}}_{p,q}(V)&=
 (\bigoplus_{j\le j_\Omega} B^{s+a_j}_{p,q}(E_j) )
 \oplus (\bigoplus_{j_\Omega<j}  B^{s+a_j-\fracpi}_{p,q}(F_j) )
 \label{100} \\
 B^{s-\Bold{b}}_{p,q}(V')&=( \bigoplus_{i\le i_\Omega}  B^{s-b_i}_{p,q}(E'_i))
 \oplus ( \bigoplus_{i_\Omega<i}  B^{s-b_i-\fracpi}_{p,q}(F'_i) ).
 \label{101}
 \end{align}
Here the spaces of $B^{s}_{p,q}$-sections of $E_j$ etc is defined and normed
as usual via local trivialisations.
$F^{s+\Bold{a}}_{p,q}(V)$ and $F^{s-\Bold{b}}_{p,q}(V')$ are
analogous ($p<\infty$), except that $q=p$ in the summands
over $\Gamma$; as usual $F^{s}_{p,p}(F_j)=B^{s}_{p,p}(F_j)$ etc.
For convenience
\begin{align}
  \norm{v}{B^{s+\Bold{a}}_{p,q}}
 &=\big(\norm{v_1}{B^{s+a_1}_{p,q}(E_1)}^q+\dots+
 \norm{v_{j_\Gamma}}{B^{s+a_{j_\Gamma}-\fracpi}_{p,q}(F_{j_\Gamma})}^q\big)^{\fracci1q}
  \label{BV-eq}
  \\
 \norm{v}{F^{s+\Bold{a}}_{p,q}}
 &=\big(\norm{v_1}{F^{s+a_1}_{p,q}(E_1)}^p+\dots+
 \norm{v_{j_\Gamma}}{F^{s+a_{j_\Gamma}-\fracpi}_{p,p}(F_{j_\Gamma})}^p\big)^{\fracpi},
  \label{FV-eq}
\end{align}
with similar conventions for $B^{s-\Bold{b}}_{p,q}$ and $F^{s-\Bold{b}}_{p,q}$.
With respect to these spaces, 
$\cal A$ is {\em continuous\/}
 \begin{equation}
 \cal A\colon B^{s+\Bold{a}}_{p,q}(V)\to B^{s-d-\Bold{b}}_{p,q}(V'),
 \quad \cal A\colon F^{s+\Bold{a}}_{p,q}(V)\to F^{s-d-\Bold{b}}_{p,q}(V'),
 \label{104}
 \end{equation}
for each $(s,p,q)\in\Dm_\kappa$, when $p<\infty$ in the Lizorkin--Triebel
spaces. 

Ellipticity for multi-order Green operators is similar to
this notion for single-order operators, except that the principal symbol 
 $p^0(x,\xi)$ is a
matrix with $p^0_{ij}$ equal to the principal symbol of $P_{ij}$ {\em
relatively\/} to the order $d+b_i+a_j$ of $P_{ij}$; invertibility 
of $p^0(x,\xi)$ should hold for all $x\in\Omega$ and $|\xi|\ge1$.
The principal boundary operator $a^0(x',\xi',D_n)$ is similarly defined and
should be invertible as a map from $\cal S(\Rp)^{M}\times \C^{N}$ to
 $\cal S(\Rp)^{M'}\times \C^{N'}$ with $M:=\sum_{j\le j_{\Omega}} M_j$, 
$N:=\sum_{j_\Omega< j\le j_\Gamma} N_j$ etc.

For the mapping properties of elliptic systems $\cal A$ and their
parametrices one has the next theorem, which is an anbridged version of
\cite[Thm~5.2]{JJ96ell}.

\begin{thm} \label{grn-thm}
Let $\cal A$ denote a multi-order Green operator going from $V$ to $V'$,
and of order $d$ and class $\kappa$ relatively to $(\Bold{a},\Bold{b})$ as
described above. 
If $\cal A$ is injectively or surjectively 
elliptic, then $\cal A$ has, respectively, a left- or right-parametrix
$\widetilde{\cal A}$ in the calculus. $\widetilde{\cal A}$ can
be taken of order $-d$ and class $\kappa-d$, and then 
$\widetilde{\cal A}$ is bounded  in the opposite
direction in \eqref{104} for all the parameters $(s,p,q)\in\Dm_{\kappa}$.
The corresponding is true for $F^{s+\Bold{a}}_{p,q}(V)$ 
and $F^{s-d-\Bold{b}}_{p,q}(V')$.
In the elliptic case, all these properties hold for $\cal A$, and the
parametrices are two-sided.
\end{thm}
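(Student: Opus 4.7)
My plan is to reduce to the single-order case, construct the parametrix within the Boutet de Monvel calculus, and then read off the continuity properties from the known continuity of each operator class on Besov and Lizorkin--Triebel spaces.

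First I would perform a diagonal order-reduction. Let $\Lambda^{t}$ stand for an elliptic, invertible pseudo-differential operator of order $t$ (with the transmission property on $\Omega$, and its usual analogue on $\Gamma$). Setting
\[
\cal A'=\op{diag}(\Lambda^{-b_i})_i\,\cal A\,\op{diag}(\Lambda^{-a_j})_j,
\]
the operator $\cal A'$ is a Green operator of a \emph{single} order $d$ and class $\kappa$, and injective resp.\ surjective ellipticity is preserved because the diagonal order-reducers act as invertible matrices on the principal interior and boundary symbols. Conversely, a parametrix of $\cal A$ is recovered from one of $\cal A'$ by $\widetilde{\cal A}=\op{diag}(\Lambda^{-a_j})_j\,\widetilde{\cal A'}\,\op{diag}(\Lambda^{-b_i})_i$, which has the stated order $-d$ and class $\kappa-d$.

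Second, for the single-order case I would invoke the classical construction of \cite{BM71,G1}: from the invertibility of $p^0(x,\xi)$ and of the principal boundary symbol operator $a^0(x',\xi',D_n)$ one builds an approximate symbolic inverse locally, patches these via a partition of unity, and iterates symbolically so that $\widetilde{\cal A'}\cal A'-I$ (resp.\ $\cal A'\widetilde{\cal A'}-I$) is a smoothing operator of the calculus. The main obstacle I anticipate is to realise the singular Green operator part of $\widetilde{\cal A'}$ with class no larger than $\kappa-d$; for this I would follow the argument in \cite[Ex.~3.15]{G3}, where a careful decomposition of the singular Green symbol-kernel is used to minimise the class. This is essential, because the class controls the range of parameters $(s,p,q)\in\Dm_\kappa$ on which $\widetilde{\cal A}$ is meaningfully defined.

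Third, the continuity statements follow directly: once $\widetilde{\cal A}$ has been built within the calculus as an operator of order $-d$ and class $\kappa-d$, the continuity in the opposite direction of \eqref{104} on both the $B^{s+\Bold{a}}_{p,q}$- and $F^{s+\Bold{a}}_{p,q}$-scales for every $(s,p,q)\in\Dm_\kappa$ is just the mapping theorem for each of the four operator classes of the calculus (truncated $\psi$do's with transmission property, singular Green, Poisson and trace operators, and ordinary $\psi$do's on $\Gamma$) on these scales; this is collected in \cite[Thm.~5.5]{JJ96ell}. Finally, in the fully elliptic case the preceding yields both a left- and a right-parametrix, and comparing $\widetilde{\cal A}_\ell=\widetilde{\cal A}_\ell(\cal A\widetilde{\cal A}_r)\equiv(\widetilde{\cal A}_\ell\cal A)\widetilde{\cal A}_r=\widetilde{\cal A}_r$ modulo smoothing operators gives a two-sided parametrix.
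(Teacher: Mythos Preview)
The paper does not give a proof of this theorem at all: it is stated as ``an anbridged version of \cite[Thm~5.2]{JJ96ell}'' and is simply quoted from that reference, with no argument supplied in the present article. Your outline (diagonal order-reduction to the single-order case, symbolic parametrix construction in the Boutet de~Monvel calculus with attention to the class of the singular Green part, and then invoking the $B^{s}_{p,q}$/$F^{s}_{p,q}$ mapping theorem \cite[Thm.~5.5]{JJ96ell}) is exactly the route taken in \cite{JJ96ell} and \cite{G3}, so your proposal is correct and aligned with the cited source.
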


The above statement is deliberately rather brief.
It should be added that \eqref{104} is sharp, since it only holds
for $(s,p,q)$ outside
$\overline{\Dm}_\kappa$ if the class is effectively lower than $\kappa$.
Moreover, the kernel of $\cal A$ 
is a finite-dimensional space in $C^\infty(V)$, which is the same for all
$(s,p,q)$ and in the $B$- and $F$-cases; the range is closed with
complements that can be chosen to have similar properties. The reader is
referred to \cite{G3,JJ96ell} for this. In particular the
$(s,p,q)$-invariance of the range complements implies that the compatibility
conditions on the data are fulfilled for all $(s,p,q)$, if they are so for
one parameter. Hence these conditions can be ignored in the following
regularity investigations.

For the inverse regularity properties of an 
injectively elliptic system
$\cal A$, note that, by the above theorem, the
left-parametrix $\widetilde{\cal A}$ may be chosen so that
$\cal R:=I-\widetilde{\cal A}\cal A$
has class $\kappa$ and order $-\infty$, hence is continuous
\begin{equation}
  \cal R\colon B^{s+\Bold{a}}_{p,q}(V)\to C^\infty(V)
  \quad\text{for every}\quad (s,p,q)\in\Dm_{\kappa}.
  \label{Rran-eq}
\end{equation}
So if $\cal A u=f$ for some $u\in B^{s_1+\Bold{a}}_{p_1,q_1}(V)$ 
and data $f\in B^{s_0-d-\Bold{b}}_{p_0,q_0}(V')$, and if $(s_j,p_j,q_j)$
belongs to $\Dm_\kappa$ for $j=0$ and $1$, then
application of $\widetilde{\cal A}$ to $\cal A u=f$ yields 
(cf \eqref{lin-eq}--\eqref{lin-eq'} ff)
\begin{equation}
  u=\widetilde{\cal A}f+\cal Ru\in B^{s_0+\Bold{a}}_{p_0,q_0}(V).
\end{equation}

It can now be explicated how this framework fits with the
conditions \eqref{XY-cnd}--\eqref{AXY-cnd} of Section~\ref{main-sect}:
for each fixed $q\in\,]0,\infty]$ let 
$\Sdm=\{\,(s,p)\mid s\in\R,\ 0<p\le\infty\,\}$ and take
\begin{equation}
  X^s_p = B^{s+\Bold{a}}_{p,q}(V),\qquad
  Y^s_p = B^{s-\Bold{b}}_{p,q}(V'),   \qquad
  A_{(s,p)} = \cal A|_{B^{s+\Bold{a}}_{p,q}(V)},
  \qquad  \Dm(A)= \Dm_{\kappa}.
  \label{Agreen-eq}
\end{equation}
Moreover, $\widetilde{A} =  \widetilde{\cal A}$
should be chosen to be of class $\kappa-d$.
For the corresponding spaces $X^s_p=F^{s+\Bold{a}}_{p,q}(V)$ 
and $Y^s_p=F^{s-\Bold{b}}_{p,q}(V')$
one needs a little precaution because the sum and integral exponents in
\eqref{FV-eq} are equal in the spaces over the boundary bundles $F_j$. 
Then \eqref{fm-emb} is not a direct consequence of \eqref{fmem-eq} ff, but
for $p>r$, 
\begin{equation}
  F^{s+a_j-\fracpi}_{p,p}(F_j)\hookrightarrow
  F^{s+a_j-\fracpi}_{r,p}(F_j)\hookrightarrow
  F^{s+a_j-\fracci 1r}_{r,r}(F_j).
\end{equation}
In this way \eqref{XY-cnd} and \eqref{AXY-cnd} holds also for these spaces.

\begin{exmp}
  \label{biharm-ex}
For the Dirichl\'et problem for $\lap^2$, which enters the von Karman
equations, it is natural to let 
\begin{equation}
  \cal A=  \begin{pmatrix}\lap^2 &0\\ 0 &\lap^2 \\
                          \g& 0\\ \gamma_1&0\\ 
                          0& \g\\ 0& \gamma_1 \end{pmatrix},
  \label{Dbh-eq}
\end{equation}
whereby $d=4$, $\kappa=2$, $\Bold{a}=(0,0)$ and $\Bold{b}=(0,0,-4,-3,-4,-3)$.
The choice in \eqref{Agreen-eq} amounts to
\begin{align}
  X^s_p&= B^{s}_{p,q}(\overline{\Omega})^2
  \\
  Y^{s-4}_p&= B^{s-4}_{p,q}(\overline{\Omega})^2\oplus
    (B^{s-\fracpi}_{p,q}(\Gamma)\oplus B^{s-1-\fracpi}_{p,q}(\Gamma))^2;
\end{align}
this is clear since one can use the trivial bundles $V=\Omega\times\C^2$ and
$V'=(\Omega\times\C^2)\cup(\Gamma\times\C)^4$ for this problem.
\end{exmp}

\subsection{General product type operators}
  \label{prod-ssect}
Together with the Green operator $\cal A$ in \eqref{Dbh-eq} above, a
treatment of the von Karman equation may conveniently use the bilinear operator
$\tilde B$ given on $v=(v_1,v_2)$ and $w=(w_1,w_2)$ by
\begin{equation}
  \tilde B(v,w)=  \begin{pmatrix} -[v_1,w_2]& [v_1,w_1]&
0&0&0&0\end{pmatrix}^{\op{T}} .
  \label{Btilde-eq}
\end{equation}
Indeed, in the set-up of the previous section, a solution $u=(u_1,u_2)$ of
\eqref{vK-eqs} is a section of the trivial bundle $\Omega\times\C^2$, of which
the two canonical projections $u_1$ and $u_2$ enter directly into the
expressions in \eqref{vK-eqs}. The same projections enter for $v=w=u$ in
\eqref{Btilde-eq} above, and this is taken as the guiding principle in a
generalisation of product type operators to vector bundles.

Between vector bundles, a product type operator is roughly just an operator
that locally has the form introduced in Section~\ref{pdty-sect}.
But in relation to a given elliptic system $\cal A$ of order $d$ and class
 $\kappa$ with respect to a fixed set of integers $(\Bold{a},\Bold{b})$, it
is useful to introduce a class of product type operators with compatible
mapping properties.

Since the non-linearities typically send sections over $\Omega$ to other
such sections (so that sections over $\Gamma$ and zero-entries as in
\eqref{Btilde-eq} can be tacitly omitted), the following
framework should suffice for most applications:

Given bundles over $\Omega$ 
as in \eqref{V-eq}--\eqref{V'-eq}, there are bundles 
\begin{gather}
  W=E_1\oplus \dots\oplus E_{j_\Omega},\qquad
  W'=E'_1\oplus \dots\oplus E'_{i_\Omega},
  \label{WW'-eq}
 \\
  \beta_{j}\colon E_j\to\Omega,\qquad\beta'_i\colon E'_i\to\Omega
\end{gather}
in which sections $w$ and $w'$, respectively, may naturally be regarded as
$j_\Omega$- and $i_\Omega$-tuples of sections (by means of projections $\pr_j$ 
and $\pr'_i$)
\begin{equation}
  w=(w_1,\dots,w_{j_\Omega}),\qquad w'=(w'_1,\dots,w'_{i_\Omega}).
  \label{ww'-eq}
\end{equation}
There is also a finite covering 
$\Omega=\bigcup U_{\kappa}$ of local coordinate systems
 $\kappa\colon U_{\kappa}\to \tilde U_{\kappa}$, for disjoint open balls or
half balls $\tilde U_\kappa$ in $\Rn$. 
Alternatively $\tilde U_\kappa$ is written $U_{\tilde\kappa}$,
as it is the domain of $\tilde\kappa:=\kappa^{-1}$; 
then $E^s_{p,q}(\overline{ U_{\tilde\kappa}})$ denotes the function spaces
over $\tilde U_\kappa$.

With this there are associated
trivialisations $\tau_{j{\kappa}}$ and $\tau'_{i{\kappa}}$, for each $j$,
$i$ and ${\kappa}$, 
together with associated projections $\pr_{j{\kappa}m}$
onto the $m^{\op{th}}$ coordinate of $\C^{M_j}$:
\begin{equation}
  \beta_j^{-1}(U_{\kappa})\overset{\tau_{j{\kappa}}}{\longrightarrow} 
    \tilde U_{\kappa}\times\C^{M_j}
  \xrightarrow{\ \pr_{j{\kappa}m}\ } \C.
  \label{tt-eq}
\end{equation}
For short, $\tau_{j{\kappa}m}
:=\pr_{j{\kappa}m}\circ\tau_{j{\kappa}}\circ\pr_j$, and similarly for
$\tau'_{i{\kappa}m}$  and $\pr'_{i{\kappa}m}$ in the sequel. 

\begin{defn}
  \label{comp.ptyp-defn}
An operator $B$ from $W\oplus W$ to $W'$ is of product type
$(d_0, d_1, d_2)$ \emph{compatibly} with integers 
$(\Bold{a},\Bold{b})$ as in \eqref{100}--\eqref{101}~ff if the following
holds:
\begin{rmlist}
  \item  \label{ptyp1-cnd} 
Each map $\tau'_{i{\kappa}m}B(v,w)$ can be written
\begin{equation}
  \tau'_{i{\kappa}m}B(u,v)= \sum_{j_0,m_0,j_1,m_1}
    B^{j_0{\kappa}m_0,j_1{\kappa}m_1}_{i{\kappa}m}
    (\tau_{j_0{\kappa}m_0}(u),\tau_{j_1{\kappa}m_1}(v)) ,
  \label{ptyp1-eq}
\end{equation}
where $B^{j_0{\kappa}m_0,j_1{\kappa}m_1}_{i{\kappa}m}$ maps pairs of
sections of $W$ to sections of $\tilde U_{\kappa}\times\C$ and only
depends on two projections $\tau_{j_0{\kappa}m_0}(v)$
and $\tau_{j_1{\kappa}m_1}(w)$, 
where $1\le m_0\le M_{j_0}$ 
and $1\le m_1\le M_{j_1}$.
  \item   \label{ptyp2-cnd}
Each $B^{j_0{\kappa}m_0,j_1{\kappa}m_1}_{i{\kappa}m}$ is of product
type $(d_0+a_{j_0},d_1+a_{j_1}, d_2+b_i)$
on the open set $\tilde U_{\kappa}$ of $\Rn$.
\end{rmlist}
\end{defn}
\begin{rem}
  \label{prod-rem}
The non-linear operator $\tilde B$ in \eqref{Btilde-eq}, that enters the von
Karman equation, has the structure in
Definition~\ref{comp.ptyp-defn}. Indeed, working in $\Omega\times\C^2$ one has
$i=j=1$, but the choice $m=1$ in \eqref{ptyp1-cnd} gives $-[v_1,w_2]$ 
(if $\Omega$ is flat such as a ball), 
so that the non-trivial terms in \eqref{ptyp1-eq}
have $m_0=1$, $m_1=2$; whilst $m=2$ gives $m_0=m_1=1\ne m$.

As another illustration, the finite sums appear directly in the 
Navier--Stokes equation, where the unknown $(u,\goth{p})$ is a section of
 $W=W'=(\Omega\times\Cn)\oplus(\Omega\times\C)$, at least for the
Dirichl\'et condition. Here $(u,\goth{p})$
enters the non-linear term
$((u\cdot\nabla)u,0)$.  For $i=1$ each $m$ gives
rise to the sum $\sum_{m_0=1}^n v_{m_0}\partial_{m_0}w_m$,
where obviously any 
$m_0\in\{\,1,\dots,n\,\}$ occurs and $m_1=m$. (For $i=2$ the
zero-operator appears.)
\end{rem}

In the next result pseudo-local operators are defined
as usual to be those that decrease or preserve singular supports;
the singular support of 
eg a section $v$ of $W$ is the complement in $\Omega$ of the $x$ for which
$\tau_{j{\kappa}m}\circ v$ is $C^\infty$ from a neighbourhood of $x$ to
$\C$, for all  $U_{\kappa}\ni x$ and all $j$ and $m$.
It is understood that universal extension operators
have been chosen for the sets $\tilde U_{\kappa}$, so the exact
paralinearisations are meaningful on these sets.

\begin{thm}
  \label{ptyp-thm}
Let $B$ be of product type $(d_0,d_1,d_2)$ compatibly with
 $(\Bold{a},\Bold{b})$ and with $d_0\le d_1$;
and let Besov and Lizorkin--Triebel spaces be defined as in
\eqref{100}--\eqref{101}~ff, with the unified 
notation $E^{s+\Bold{a}}_{p,q}(V)$ 
and $E^{s-\Bold{b}}_{p,q}(V')$.
Then $Q(v):=B(v,v)$ is bounded
\begin{equation}
 E^{s+\Bold{a}}_{p,q}(V)\to E^{s-\sigma(s,p,q)-\Bold{b}}_{p,q}(V') 
 \quad\text{for every $(s,p,q)\in\Dm(Q)$,}
  \label{Vsigma-eq}
\end{equation}
whereby $\Dm(Q)$ and $\sigma(s,p,q)$ are given by \eqref{stQdom-eq} 
and \eqref{sgm-eq}, respectively. 

Moreover, for each $u\in E^{s_0+\Bold{a}}_{p_0,q_0}(V)$ there is a moderate
linearisation $L_u$, which with $\omega$ 
as in \eqref{om-eq} is bounded
\begin{equation}
  L_u\colon E^{s+\Bold{a}}_{p,q}(V)\to 
  E^{s-\omega-\Bold{b}}_{p,q}(V')
\end{equation}
for every $(s,p,q)$ in the parameter domain $\Dm(L_u)$ given  
by \eqref{DmLu-eq}. Furthermore, $L_u$ is pseudo-local on every such
$E^{s+\Bold{a}}_{p,q}(V)$. 
\end{thm}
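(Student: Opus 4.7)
The strategy is to localise and reduce the statements to the scalar results of Sections 4 and 5. Fix a $C^\infty$-partition of unity $\{\chi_{\kappa}\}$ subordinate to the finite cover $\{U_{\kappa}\}$. Via the trivialisations $\tau_{j\kappa m}$ and $\tau'_{i\kappa m}$ from \eqref{tt-eq}, the norms on $E^{s+\Bold{a}}_{p,q}(V)$ and $E^{s-\Bold{b}}_{p,q}(V')$ are equivalent to finite sums of scalar Besov or Lizorkin--Triebel norms of $\chi_{\kappa}\cdot\tau_{j\kappa m}(v)$ on $\tilde U_{\kappa}$, with smoothness index $s+a_j$, and analogously for $\tau'_{i\kappa m}(B(v,w))$ with smoothness index $s-b_i$. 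By Definition \ref{comp.ptyp-defn} each $\tau'_{i\kappa m}B(v,w)$ is a finite sum over $(j_0,m_0,j_1,m_1)$ of scalar bilinear product type operators of type $(d_0+a_{j_0}, d_1+a_{j_1}, d_2+b_i)$ on $\tilde U_{\kappa}$, applied to scalar projections of $v$ and $w$ of smoothness $s+a_{j_0}$ and $s+a_{j_1}$.

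For the first assertion I apply the bilinear estimates behind Proposition \ref{nlnr2-prop} (i.e.\ the paraproduct bounds in Remark \ref{pmest-rem} together with Theorem \ref{dom-thm}) to each scalar component in the case $v=w$. The defining inequality reads
\begin{equation*}
  (s+a_{j_0})+(s+a_{j_1}) > (d_0+a_{j_0})+(d_1+a_{j_1})+\bigl(\tfrac{2n}{p}-n\bigr)_+,
\end{equation*}
which collapses to the quadratic standard domain condition \eqref{stQdom-eq}, independent of $(j_0,j_1)$; correspondingly the scalar output smoothness on $\tilde U_{\kappa}$ works out to $s-\sigma(s,p,q)-b_i$ with $\sigma$ as in \eqref{sgm-eq}, independent of $(j_0,j_1,m_0,m_1,m)$. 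Summing over the finite index set and reassembling via the partition of unity then yields \eqref{Vsigma-eq}.

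For the linearisation, for fixed $u\in E^{s_0+\Bold{a}}_{p_0,q_0}(V)$ I prescribe $\tau'_{i\kappa m}(L_u g)$ as the sum over $(j_0,m_0,j_1,m_1)$ of the exact scalar paralinearisation in Definition \ref{plin-defn} of each $B^{j_0\kappa m_0, j_1\kappa m_1}_{i\kappa m}$ at the linearisation point built from the projections of $u$, applied to the corresponding projections of $g$; the local expressions are assembled globally by $L_u g := \sum_{\kappa}\chi_{\kappa}(L_u g)_{\kappa}$. Theorem \ref{mod-thm} applied componentwise yields the parameter domain \eqref{DmLu-eq} and the order $\omega$ of \eqref{om-eq}, with the shifts $a_j$ and $b_i$ again cancelling in the conditions so that $\Dm(L_u)$ is independent of $(i,j_0,j_1)$. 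Lemma \ref{Lu-lem} applied componentwise confirms $-L_u u = Q(u)$ on the quadratic standard domain, hence moderacy in the sense of Definition \ref{mod-defn}; and pseudo-locality is inherited from Theorem \ref{symb-thm} applied chart by chart, since multiplication by $\chi_{\kappa}$ and the finite assembly both preserve singular supports in the bundle sense.

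The main technical obstacle is the global coherence of the local formulas, since they depend on the choice of trivialisations and partition of unity. However, as the theorem only asserts the \emph{existence} of a moderate linearisation with the stated mapping properties, the partition-of-unity assembly above suffices: the resulting operator is linear in $g$, inherits boundedness and pseudo-locality from its scalar building blocks, and equals $-Q(u)$ at $g=u$ by Lemma \ref{Lu-lem} and the defining decomposition \eqref{ptyp1-eq}.
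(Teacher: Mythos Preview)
Your proposal is correct and follows essentially the same route as the paper: localise via charts and trivialisations, invoke the scalar results (Theorem~\ref{mod-thm}, Lemma~\ref{Lu-lem}, Theorem~\ref{symb-thm}) componentwise after checking that the shifts $a_{j_0},a_{j_1},b_i$ cancel in both the domain condition \eqref{DmLu-eq} and in the order $\omega$, and reassemble with a partition of unity. The only cosmetic difference is that the paper applies the partition $\psi_\kappa$ to the \emph{input} $v$ (so that $L_u(u)_i=\sum_\kappa(\tau'_{i\kappa})^{-1}\tau'_{i\kappa}B(u,\psi_\kappa u)=\pr_iB(u,u)$ follows by bilinearity in the second slot), while you apply $\chi_\kappa$ to the \emph{output}; both give a valid linearisation, and the paper also derives \eqref{Vsigma-eq} as the special case $(s_0,p_0,q_0)=(s,p,q)$ of the $L_u$-estimate rather than treating it separately.
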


\begin{proof}
Let $(s_0,p_0,q_0)$ and $u\in E^{s_0+\Bold{a}}_{p_0,q_0}(V)$ be given;
 and consider $(s,p,q)$ such that \eqref{DmLu-eq} holds. 
For each pair of projections $\tau_{j_0{\kappa}m_0}(u)\in
E^{s_0+a_{j_0}}_{p_0,q_0}(\overline{U_{\tilde\kappa}})$ 
and $\tau_{j_1{\kappa}m_1}(v)$, 
Theorem~\ref{mod-thm} applies to spaces with parameters
$(s_0+a_{j_0},p_0,q_0)$ and $(s+a_{j_1},p,q)$ since by \eqref{ptyp2-cnd} the
orders are $d_0+a_{j_0}$ and $d_1+a_{j_1}$, 
so there is a $u$-dependent linear operator 
$L^{j_0{\kappa}m_0,j_1{\kappa}m_1}_{i{\kappa}m}$ sending
$E^{s+a_{j_1}}_{p,q}(\overline{U_{\tilde\kappa}})$ continuously 
to $E^{s-\tilde\omega}_{p,q}(\overline{U_{\tilde\kappa}})$ for
\begin{equation}
  \tilde\omega=(d_2+b_i)+(d_1+a_{j_1})+(\fracc n{p_0}-s_0+d_0)_+
  +\varepsilon \qquad (\varepsilon\ge0).
\end{equation}
Therefore $L^{j_0{\kappa}m_0,j_1{\kappa}m_1}_{i{\kappa}m}$ is bounded 
$E^{s+a_{j_1}}_{p,q}(\overline{U_{\tilde\kappa}})\to
E^{s-\omega-b_i}_{p,q}(\overline{U_{\tilde\kappa}})$
for $\omega$ as in \eqref{om-eq}.
In case $(s_0,p_0,q_0)$ is in the domain
$\Dm(Q)$, one can take $(s,p,q)=(s_0,p_0,q_0)$ without violating
\eqref{DmLu-eq}, and then
\begin{equation}
  L^{j_0{\kappa}m_0,j_1{\kappa}m_1}_{i{\kappa}m}(\tau'_{j_1{\kappa}m_1}(u))
  =B^{j_0{\kappa}m_0,j_1{\kappa}m_1}_{i{\kappa}m}(\tau_{j_0{\kappa}m_0}(u),
     \tau_{j_1{\kappa}m_1}(u)).
  \label{Lilk-id}
\end{equation}
Summation over all $j_0$, $m_0$ and $j_1$, $m_1$ as in
\eqref{ptyp1-eq} gives
\begin{equation}
  \tau'_{i{\kappa}m}B(u,v)=\sum L^{j_0{\kappa}m_0,j_1{\kappa}m_1}_{i{\kappa}m}
    (\tau'_{j_1{\kappa}m_1}(v)).
\end{equation}
This determines a linear operator $L_{i{\kappa},u}$, which in the set of
sections of $U_{\tilde\kappa}\times\C^{M'_i}$ is given by
\begin{equation}
  L_{i{\kappa},u}(v)=(\sum   L^{j_0{\kappa}m_0,j_1{\kappa}m_1}_{i{\kappa}m}
    (\tau'_{j_1{\kappa}m_1}(v)))_{m=1,\dots,M'_i}. 
\end{equation}
As a composite map, $L_{i{\kappa},u}(v)$ is continuous 
$E^{s+\Bold{a}}_{p,q}(V)\to E^{s-\omega-b_i}_{p,q}(U_{\kappa})^{M'_i}$. 

Using a partition of unity
$1=\sum_{\kappa}\psi_{\kappa}$ subordinate to the coordinate patches
$U_{\kappa}$,
there is a bounded linear operator $L_u\colon E^{s+\Bold{a}}_{p,q}(V)\to
E^{s-\omega-\Bold{b}}_{p,q}(V')$ given  by 
\begin{equation}
  L_u(v)_i=\sum_{\kappa} (\tau'_{i{\kappa}})^{-1}\circ L_{i{\kappa},u}(\psi_{\kappa} v), 
  \quad\text{for}\quad i\in I_1.
  \label{Lu,i-id}
\end{equation}
It follows from Theorem~\ref{symb-thm} that each $L_{i{\kappa},u}$ is
pseudo-local; 
and so is $L_u$, since the class of pseudo-local operators 
is closed under addition.

When $(s_0,p_0,q_0)$ belongs to the domain $\Dm(Q)$ given by
\eqref{stQdom-eq}, then $v=u$ is possible for $(s_0,p_0,q_0)=(s,p,q)$,
and using \eqref{Lu,i-id}--\eqref{Lilk-id},
\begin{equation}
  L_u(u)_i=\sum_{\kappa} (\tau'_{i{\kappa}})^{-1}\circ \tau'_{i{\kappa}}
           B(u, \psi_{\kappa} u)= \pr_i B(u,\sum_{\kappa}\psi_{\kappa} u)=\pr_iB(u,u).
\end{equation}
Moreover, the value of $\omega$ equals
$\sigma(s,p,q)$, so \eqref{Vsigma-eq} is also proved.
\end{proof}

\subsection{Semi-linear elliptic systems}
  \label{semiell-ssect}
It is now straigthforward to specialise Theorem~\ref{ir-thm} 
to the vector bundle framework of multi-order systems.

For generality's sake it is observed that it suffices, by \eqref{AXY-cnd}, 
to take the linear part $\cal A$ injectively
elliptic, ie with a left parametrix $\widetilde{\cal A}$ 
and regularising operator $\cal R:=I-\widetilde{\cal A}\cal A$.
Recall that for a product type operator $B$,
the linearisation $L_u$ of $Q(u):=B(u,u)$ 
furnished by Theorem~\ref{ptyp-thm} enters the
parametrix 
\begin{equation}
  P^{(N)}=I+\widetilde{\cal A}L_u+\dots+(\widetilde{\cal A}L_u)^{N-1}.
  \label{parametrix-id}
\end{equation}
As above, 
$\Dm(\cal A,Q)=\{\,(s,p,q)\in\Dm_{\kappa}\cap \Dm(Q)
   \mid \sigma(s,p,q)<d\,\}$
is the domain where $Q$ is $\cal A$-moderate.
Using these ingredients, one has the following main result:

\begin{thm}
  \label{semiell-thm}
Let $\cal A$ be an injectively elliptic Green operator of order $d$ and
class $\kappa$ relatively to $(\Bold{a},\Bold{b})$, and
assume that $B$ is of product type $(d_0,d_1,d_2)$
compatibly with $(\Bold{a},\Bold{b})$, and with $d_0\le d_1$, 
so that $Q$ has order function $\sigma(s,p,q)$ 
on $\Dm(Q)$ and moderate linearisations $L_u$, 
according to Theorem~\ref{ptyp-thm}. 

For a section $u$ of $B^{s_0+\Bold{a}}_{p_0,q_0}(V)$
with $(s_0,p_0,q_0)\in \Dm(\cal A,Q)$,
and any choice $\widetilde{\cal A}$ 
of a left parametrix of $\cal A$ of class $\kappa-d$,
the parametrices $P^{(N)}$ in \eqref{parametrix-id} 
are bounded endomorphisms on
$B^{s+\Bold{a}}_{p,q}(V)$ for every $(s,p,q)$ 
in $\Dm_{\kappa}\cap\Dm(L_u)$.
And for $(s_1,p_1,q_1)$ and $(s_2,p_2,q_2)$ in $\Dm_{\kappa}\cap\Dm(L_u)$
the linear operator $(\widetilde{\cal A}L_u)^N$
maps $B^{s_1+\Bold{a}}_{p_1,q_1}(V)$ 
to $B^{s_2+\Bold{a}}_{p_2,q_2}(V)$ for all sufficiently large $N$.
If such a section $u$ solves the equation 
\begin{equation}
  \cal Au +Q(u)=f
  \label{AQ-eq}
\end{equation}
for data $f\in B^{t-d-\Bold{b}}_{r,o}(V')$ with 
$(t,r,o)\in\Dm_{\kappa}\cap\Dm(L_u)$, then
\begin{equation}
  u= P^{(N)}(\widetilde{\cal A}f +\cal Ru) + (\widetilde{\cal A}L_u)^N u
  \label{pmV-id}
\end{equation}
and $u\in B^{t+\Bold{a}}_{r,o}(V)$. 
Analogous results are valid for the 
scales $F^{s+\Bold{a}}_{p,q}(V)$ and $F^{s-\Bold{b}}_{p,q}(V')$.
\end{thm}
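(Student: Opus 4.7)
The plan is to derive Theorem~\ref{semiell-thm} as a direct specialisation of the abstract Parametrix Theorem~\ref{ir-thm}: the ingredients $(\cal A, Q, L_u)$ of the present Green-operator setting are fitted into the axiomatic framework of Section~\ref{main-sect} and the five axioms \eqref{XY-cnd}--\eqref{del-cnd} are verified. Concretely, with $q\in\,]0,\infty]$ kept fixed and $\Sdm=\R\times\,]0,\infty]$, I set $X^s_p:=B^{s+\Bold{a}}_{p,q}(V)$ and $Y^s_p:=B^{s-\Bold{b}}_{p,q}(V')$, together with $A:=\cal A$ on $\Dm(A):=\Dm_\kappa$ and $\widetilde A:=\widetilde{\cal A}$ chosen of class $\kappa-d$; further $\cal N:=Q$ on $\Dm(\cal N):=\Dm(Q)$ and $B_u:=L_u$ from Theorem~\ref{ptyp-thm}. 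Since $\omega$ in \eqref{om-eq} is independent of $(s,p)$, I take $\delta(s,p):=d-\omega$, which is a constant in $(s,p)$.

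Next I would verify the axioms. The embeddings in \eqref{XY-cnd} are standard on each fibre of \eqref{BV-eq}; in the Lizorkin--Triebel case they follow via the chain $F^{s+a_j-\fracpi}_{p,p}(F_j)\hookrightarrow F^{s+a_j-\fracpi}_{r,p}(F_j)\hookrightarrow F^{s+a_j-\fracci 1r}_{r,r}(F_j)$ for $p\ge r$ indicated after \eqref{Agreen-eq}. Axiom \eqref{AXY-cnd} is provided by Theorem~\ref{grn-thm} together with \eqref{Rran-eq} for the range of $\cal R=I-\widetilde{\cal A}\cal A$, while the commutative diagram \eqref{Acd-eq} and its analogue for $\widetilde{\cal A}$ hold because the various restrictions all arise from single linear maps on $\cal D'(V)$ and $\cal D'(V')$. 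Axiom \eqref{B-cnd} is exactly the content of Theorem~\ref{ptyp-thm}, the identity $-L_u(u)=Q(u)$ at $u\in X^{s_0}_{p_0}$ with $(s_0,p_0,q_0)\in\Dm(Q)$ having been established within its proof, cf.\ \eqref{Lilk-id}. Axiom \eqref{del-cnd} reduces to $\omega<d$ on $\Dm(A)\cap\Dm(B_u)$, which holds by the last statement of Theorem~\ref{ptyp-thm}: the hypothesis $(s_0,p_0,q_0)\in\Dm(\cal A,Q)$ renders $Q$ itself $\cal A$-moderate at $u$, whence the built-in inheritance ($\omega=\sigma$ at the linearisation point, cf.\ Remark~\ref{LuQ-rem}) makes $L_u$ $\cal A$-moderate on all of $\Dm(L_u)$. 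The lift property \eqref{delta-lift} is immediate because $\Dm_\kappa$ is unbounded upwards in $s$, and \eqref{delK-cnd} is trivial as $\delta$ is a positive constant.

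The main obstacle is condition \eqref{Dm-cnd}, the connectedness of $\Dm_\kappa\cap\Dm(L_u)$ with respect to the metric on the $(\fracnp,s)$-plane. Passing to the coordinates $(\fracnp,s)$, both \eqref{Dmk-eq} and \eqref{DmLu-eq} describe open epigraphs of continuous, piecewise-linear, convex functions of $\fracnp$, and any upward vertical ray emanating from a point in the region stays in it. Consequently $\Dm_\kappa\cap\Dm(L_u)$ is the open epigraph of the pointwise maximum of two continuous convex functions, hence itself path-connected: any two points are joined by first moving vertically upward high enough and then horizontally within the joint epigraph along a graph curve.

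With all five axioms in hand, Theorem~\ref{ir-thm}(1) yields verbatim the boundedness of $P^{(N)}$ as an endomorphism of every $X^s_p=B^{s+\Bold{a}}_{p,q}(V)$ with $(s,p,q)\in\Dm_\kappa\cap\Dm(L_u)$, as well as the existence, for each pair $(s_j,p_j,q_j)\in\Dm_\kappa\cap\Dm(L_u)$, $j=1,2$, of some $N$ for which $(\widetilde{\cal A}L_u)^N$ maps $X^{s_1}_{p_1}$ into $X^{s_2}_{p_2}$. Given a solution $u$ of \eqref{AQ-eq} with data $f\in Y^{t-d}_r=B^{t-d-\Bold{b}}_{r,o}(V')$ and $(t,r,o)\in\Dm_\kappa\cap\Dm(L_u)$, Theorem~\ref{ir-thm}(2) supplies the parametrix formula \eqref{pmV-id}, and the resulting membership $u\in B^{t+\Bold{a}}_{r,o}(V)$ follows since $\widetilde{\cal A}f+\cal Ru\in X^t_r$ by \eqref{AXY-cnd} and $P^{(N)}$ is an endomorphism of $X^t_r$, while the error term lies in $X^t_r$ by choosing $N$ according to \eqref{err-eq}. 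The Lizorkin--Triebel statement is obtained in exactly the same way, using only the embedding modifications recalled above.
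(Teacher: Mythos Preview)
Your approach is the same as the paper's: fit the Green-operator data into the abstract framework of Section~\ref{main-sect}, verify axioms \eqref{XY-cnd}--\eqref{del-cnd}, and read everything off from Theorem~\ref{ir-thm}. Your treatment of the connectedness axiom \eqref{Dm-cnd} is in fact more explicit than the paper's, which simply asserts it.

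There is, however, a genuine gap concerning the sum exponent. You fix $q$ at the outset, so that $X^s_p=B^{s+\Bold{a}}_{p,q}(V)$ carries a single $q$ throughout; Theorem~\ref{ir-thm} then only delivers statements within one $q$-fibre. But Theorem~\ref{semiell-thm} allows $q_1\ne q_2$ in the error-term claim and $q_0\ne o$ in the regularity conclusion. Your sentence ``for each pair $(s_j,p_j,q_j)\in\Dm_\kappa\cap\Dm(L_u)$'' hides this: the symbols $X^{s_1}_{p_1}$, $X^{s_2}_{p_2}$ both refer to the same fixed $q$, so what you actually obtain from Theorem~\ref{ir-thm}(1) is only $(\widetilde{\cal A}L_u)^N\colon B^{s_1+\Bold{a}}_{p_1,q}\to B^{s_2+\Bold{a}}_{p_2,q}$. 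Likewise, Theorem~\ref{ir-thm}(2) as invoked gives $u\in B^{t+\Bold{a}}_{r,q_0}$, not $u\in B^{t+\Bold{a}}_{r,o}$.

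The paper closes this with a one-line $\varepsilon$-shift in each case, using that $\Dm_\kappa\cap\Dm(L_u)$ is open: for the error term one embeds $B^{s_1+\Bold{a}}_{p_1,q_1}\hookrightarrow B^{s_1-\varepsilon+\Bold{a}}_{p_1,q_2}$ and then works in the $q_2$-fibre; for the regularity one embeds $B^{s_0+\Bold{a}}_{p_0,q_0}\hookrightarrow B^{s_0-\varepsilon+\Bold{a}}_{p_0,o}$ so that one may assume $q_0=o$ before applying Theorem~\ref{ir-thm}(2). You should add these two remarks.
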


\begin{proof}
As observed in \eqref{Agreen-eq}, the choice
$X^s_p=B^{s+\Bold{a}}_{p,q}(V)$ and $Y^s_p=B^{s-\Bold{b}}_{p,q}(V')$ makes
conditions \eqref{XY-cnd} and \eqref{AXY-cnd} satisfied. 
As the $B_u$ in \eqref{B-cnd} one can take $L_u$, for its construction
via paramultiplication implies that it is unambigously defined on
intersections of the form $X^s_p\cap X^{s'}_{p'}$. Similarly there is
commutative diagrams for $\cal A$ and $\widetilde{\cal A}$ by the general
constructions in the Boutet de~Monvel calculus and the results in
Section~\ref{ext-ssect}. 

Moreover, $\Dm(\cal A,Q)$ is 
connected and $\delta=d-\omega(s,p,q)$ is constant
and positive; hence \eqref{Dm-cnd} and \eqref{del-cnd} hold.
The claims on $P^{(N)}$ may now be read off from Theorem~\ref{ir-thm}.
For $(\widetilde{\cal A}L_u)^N$ the sum exponents should also be controlled,
but $\Dm_{\kappa}\cap\Dm(L_u)$ is
open, hence contains $(s_1-\varepsilon,p_1,q_2)$ for $\varepsilon>0$, 
so that the larger space
$B^{s_1-\varepsilon+\Bold{a}}_{p_1,q_2}(V)$ is mapped into
$B^{s_2+\Bold{a}}_{p_2,q_2}$ for all sufficiently large $N$, according to
Theorem~\ref{ir-thm}. 

Finally, since $(s_0-\varepsilon,p_0,q_0)$ also belongs
to $\Dm_{\kappa}\cap\Dm(L_u)$ for sufficiently small $\varepsilon>0$, 
one can assume $q_0=o$. So according to Theorem~\ref{ir-thm} the section $u$
fulfils \eqref{pmV-id} and
belongs to $X^t_r=B^{t+\Bold{a}}_{r,o}(V)$. 
\end{proof}

It should be mentioned that while the abstract framework in
Theorem~\ref{ir-thm} was formulated with only $s$ and $p$ as parameters, for
convenience, the third parameter $q$ was easily handled in the proof above
by simple embeddings. 

From the given examples it is clear that 
Theorem~\ref{vK-thm} on the von~Karman problem is just a special case of the
above result. One also has

\begin{cor}
For operators $\cal A$ and $B$ as in Theorem~\ref{semiell-thm}, the equation
\begin{equation}
  \cal A u+Q(u)=f
\end{equation}
is hypoelliptic, ie for $f$ in $C^\infty(V')$ any solution $u$ belongs to
$C^\infty(V)$. 
\end{cor}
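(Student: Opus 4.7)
The plan is to deduce the corollary directly from Theorem~\ref{semiell-thm} by pushing the target regularity index $t$ to $+\infty$, exploiting that $f\in C^\infty(V')$ places no constraint on $t$. By ``solution'' I take $u$ to be a section in some $E^{s_0+\Bold{a}}_{p_0,q_0}(V)$ with $(s_0,p_0,q_0)\in\Dm(\cal A,Q)$, so that the parametrix theorem is applicable (this is the setting in which the equation $\cal A u+Q(u)=f$ is meaningful in the present framework).

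First I would fix any convenient $r$ and $o$, say $r=o=2$. Since every element of $C^\infty(V')$ lies in $B^{t-d-\Bold{b}}_{r,o}(V')$ for every $t\in\R$, the hypothesis on $f$ in Theorem~\ref{semiell-thm} is automatically satisfied for \emph{all} $t$. A direct inspection of \eqref{Dmk-eq} and \eqref{DmLu-eq} shows that both $\Dm_\kappa$ and $\Dm(L_u)$ contain the triple $(t,r,o)$ once $t$ exceeds a threshold depending only on $s_0$, $p_0$, $n$, $\kappa$, $r$, $d_0$ and $d_1$; thus $\Dm_\kappa\cap\Dm(L_u)$ is upwards unbounded in the smoothness index. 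Hence Theorem~\ref{semiell-thm} yields $u\in B^{t+\Bold{a}}_{r,o}(V)$ for arbitrarily large $t$.

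Second, I would invoke the standard Sobolev embeddings componentwise: $B^{t+a_j}_{r,o}(\overline{\Omega})\hookrightarrow C^k(\overline{\Omega})$ once $t+a_j>k+n/r$, and similarly for the boundary components in $B^{t+a_j-1/r}_{r,o}(\Gamma)$. Since the $a_j$ are fixed integers and $t$ may be taken as large as desired, every component of $u$ belongs to $C^k$ for every $k\in\N_0$, so $u\in C^\infty(V)$. The argument transfers verbatim to the $F$-scale.

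The ``main obstacle''\,---\,if one can call it that\,---\,is only the bookkeeping needed to check that $(t,r,o)$ indeed lies in $\Dm_\kappa\cap\Dm(L_u)$ for all sufficiently large $t$; once that observation is in place, the parametrix formula \eqref{pmV-id} delivers the full smoothness in one stroke, with no iteration needed, because $(\widetilde{\cal A}L_u)^N u$ is sent into $B^{t+\Bold{a}}_{r,o}(V)$ for large enough $N$ by the error-term statement of Theorem~\ref{semiell-thm}, while $P^{(N)}(\widetilde{\cal A}f+\cal R u)$ lies there trivially by $f\in C^\infty$ and \eqref{Rran-eq}.
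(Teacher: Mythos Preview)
Your proposal is correct and follows exactly the route the paper intends: the corollary is stated without proof, as an immediate consequence of Theorem~\ref{semiell-thm}, and your argument\,---\,observing that $C^\infty(V')\subset B^{t-d-\Bold{b}}_{r,o}(V')$ for every $t$ and that $\Dm_\kappa\cap\Dm(L_u)$ is upward unbounded in $s$, then letting $t\to\infty$\,---\,is precisely the intended deduction. Your remark that no iteration is needed, because the parametrix formula already absorbs what would otherwise be a bootstrap, is also the point the paper emphasises throughout.
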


As an application of the parametrix formula
\eqref{pmV-id} it is shown that this corollary has a sharper local version.
This also uses the obvious fact that the class of pseudo-local
maps is stable under composition, in particular $\widetilde{\cal A}L_u$ is
pseudo-local. (This really only involves the $P_\Omega+G$-part of
$\widetilde{\cal A}$, since $L_u$
goes from $W$ to $W'$. And the pseudo-differential part clearly inherits
pseudo-locality from the operators on $\Rn$, since $P_\Omega=\rOm P\eOm$. 
For the singular Green part one
can extend \cite[Cor.~2.4.7]{G1} by means of
Rem.~2.4.9 there on $(x_n,y_n)$-dependent singular Green operators to get the
pseudo-local property. Details are omitted since it is 
outside of the subject.) 

Let $\Xi\subset\Omega$ be an open subregion 
with positive distance to the boundary, that is 
$\Xi\Subset \Omega$.  
Then, if $f$ in \eqref{AQ-eq} in addition fulfils
$f\in B^{t_1-d-\Bold{b}}_{r_1,o_1}(V'_{|_\Xi};\loc)$
it will be shown for any 
solution $u$ of \eqref{AQ-eq} that $u\in
B^{t_1+\Bold{a}}_{r_1,o_1}(V_{|_\Xi},\loc)$. 

More precisely, $f\in B^{t_1-d-\Bold{b}}_{r_1,o_1}(V'_{|_\Xi};\loc)$ 
means that
$\varphi f$ is in $B^{t_1-d-\Bold{b}}_{r_1,o_1}(V')$ for every $\varphi$
in $C^\infty(\overline{\Omega})$ with compact support contained in
$\Xi$. Hereby $\varphi f$ 
is calculated fibrewisely for the components of $f$, both in the bundles
$E'_i$ over $\Omega$, for $i\le i_\Omega$, and in the $F'_i$ over $\Gamma$, 
for $i_\Omega<i\le i_\Gamma$ 
(the last part is always $0$ for $\Xi\Subset\Omega$).  
That $u\in B^{t_1+\Bold{a}}_{r_1,o_1}(V_{|_\Xi},\loc)$ 
is defined similarly, and
these conventions extend to the $F$-spaces. 

\begin{thm}
  \label{semiell'-thm}
Under hypotheses as in Theorem~\ref{semiell-thm}, suppose 
$f\in E^{t_1-d-\Bold{b}}_{r_1,o_1}(V'_{|_\Xi};\loc)$
holds in addition to \eqref{AQ-eq} for some $(t_1,r_1,o_1)$ in 
$\Dm_{\kappa}\cap\Dm(L_u)$, for an open set $\Xi\Subset \Omega$. 
Then $u$ is also a section of
$E^{t_1+\Bold{a}}_{r_1,o_1}(V_{|_\Xi};\loc)$.
\end{thm}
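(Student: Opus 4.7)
The plan is to exploit the parametrix formula \eqref{pmV-id} from Theorem~\ref{semiell-thm} and combine it with the pseudo-local property of the ingredients. Writing
\begin{equation*}
  u = P^{(N)}(\widetilde{\cal A}f + \cal R u) + (\widetilde{\cal A}L_u)^N u,
\end{equation*}
I would first dispose of the error term: since $(t_1,r_1,o_1)\in \Dm_{\kappa}\cap\Dm(L_u)$ and $(s_0,p_0,q_0)\in\Dm(\cal A,Q)\subset\Dm_{\kappa}\cap\Dm(L_u)$, the mapping statement in Theorem~\ref{semiell-thm} furnishes an $N\in\N$ for which $(\widetilde{\cal A}L_u)^N u$ belongs \emph{globally} to $E^{t_1+\Bold{a}}_{r_1,o_1}(V)$, hence certainly to $E^{t_1+\Bold{a}}_{r_1,o_1}(V_{|_\Xi};\loc)$. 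Fix such an $N$ once and for all.

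Next I would treat the main term $P^{(N)}(\widetilde{\cal A}f+\cal R u)$ by establishing, iteratively, that a pseudo-local operator preserves local regularity on $\Xi$. The pseudo-local property is available for all operators in play: $\widetilde{\cal A}$ is pseudo-local since its $P_\Omega$-, singular Green, Poisson and $\Gamma$-components are all so, and $L_u$ is pseudo-local by Theorem~\ref{symb-thm}. Hence every composite $(\widetilde{\cal A}L_u)^k$ occurring in $P^{(N)}$ is pseudo-local. The standard cut-off trick runs as follows: given $\varphi\in C^\infty(\overline{\Omega})$ with $\supp\varphi\Subset\Xi$, pick $\psi\in C^\infty_0(\Xi)$ with $\psi\equiv 1$ on a neighbourhood of $\supp\varphi$, and for a section $g$ write $g=\psi g+(1-\psi)g$. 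Then $\psi g$ is globally in $E^{t_1-d-\Bold{b}}_{r_1,o_1}$ (when $g=f$) or in $E^{t_1+\Bold{a}}_{r_1,o_1}$ (when $g$ is an intermediate section), while $(1-\psi)g$ is smooth on a neighbourhood of $\supp\varphi$; pseudo-locality then makes $T((1-\psi)g)$ smooth there for any $T$ in sight, and $T(\psi g)$ is controlled globally by the ordinary mapping properties in Theorem~\ref{semiell-thm}. Multiplying by $\varphi$ gives the desired local regularity of $Tg$.

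Applying this device first to $\widetilde{\cal A}f$ (with $\cal R u\in C^\infty(V)$ trivially covered) shows that $\widetilde{\cal A}f+\cal Ru\in E^{t_1+\Bold{a}}_{r_1,o_1}(V_{|_\Xi};\loc)$. Iterating through the $N-1$ factors of $\widetilde{\cal A}L_u$ making up $P^{(N)}$, one sees that $P^{(N)}(\widetilde{\cal A}f+\cal Ru)$ also lies in this local space (here it is essential that $\widetilde{\cal A}L_u$ has non-positive order on each space in $\Dm_{\kappa}\cap\Dm(L_u)$, so no intermediate parameters leave the domain). Combined with the error term, the formula \eqref{pmV-id} yields $u\in E^{t_1+\Bold{a}}_{r_1,o_1}(V_{|_\Xi};\loc)$.

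The main obstacle is the cut-off step: one must be sure that multiplication by $\psi\in C^\infty_0(\Xi)$ is a bounded operation on each $E^{s+\Bold{a}}_{p,q}$ involved (which is standard for the Besov and Lizorkin--Triebel scales), and that no intermediate parameter $(s,p,q)$ produced when iterating $\widetilde{\cal A}L_u$ falls outside $\Dm_{\kappa}\cap\Dm(L_u)$. The latter is precisely the content of the openness/connectedness assumptions and of the negative order of $\widetilde{\cal A}L_u$ observed in Theorem~\ref{semiell-thm}, so the argument stays within the established calculus.
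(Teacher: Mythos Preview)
Your proposal is correct and follows the same strategy as the paper: use the parametrix formula \eqref{pmV-id}, handle the error term $(\widetilde{\cal A}L_u)^N u$ globally, and treat the main term via pseudo-locality combined with cut-offs. The only difference is in the execution. You iterate through the factors $\widetilde{\cal A}L_u$ of $P^{(N)}$, which forces you to introduce a nested family of cut-offs (one per iteration step) and to keep track of both the global regularity $E^{t+\Bold a}_{r,o}$ and the local regularity $E^{t_1+\Bold a}_{r_1,o_1}$ at each stage. The paper instead cuts off $f$ \emph{once}, writing $f=\chi_0 f+\chi_1 f$ with $\chi_1\in C^\infty_0(\Xi)$ and $\chi_0\equiv0$ near $\supp\psi$, and then exploits that the \emph{composite} $P^{(N)}$ is itself pseudo-local (as a finite sum of compositions of pseudo-local maps). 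This gives the conclusion in a single stroke: $\psi P^{(N)}\widetilde{\cal A}(\chi_0 f)\in C^\infty_0(V)$ by pseudo-locality, while $\widetilde{\cal A}(\chi_1 f)+\cal R u$ is globally in $E^{t_1+\Bold a}_{r_1,o_1}(V)$ and $P^{(N)}$ has order~$0$ there. Your worry about ``intermediate parameters leaving the domain'' is unfounded in either version: $\widetilde{\cal A}L_u$ has order~$0$, so one stays at the fixed parameter $(t_1,r_1,o_1)$ throughout.
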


\begin{proof}
Let $\psi,\chi_0$ and $\chi_1\in C^\infty(\overline{\Omega})$ be chosen so
that $\supp \chi_1\subset\Xi$ and
\begin{equation}
  \chi_0+\chi_1\equiv 1,\qquad \chi_j\equiv j 
  \text{ on a neighbourhood of $\supp\psi$.}
\end{equation} 
By the parametrix formula \eqref{pmV-id},
\begin{equation}
  \psi u=
  \psi P^{(N)}\bigl(\widetilde{\cal A}(\chi_1 f)+\cal R u\bigr)
  +\psi P^{(N)}\widetilde{\cal A}(\chi_0 f) + 
  \psi (\widetilde{\cal A}L_u)^N u
  \label{pmVloc-id}
\end{equation}
and here the last term belongs to $E^{t_1+\Bold{a}}_{r_1,o_1}(V)$ for a
sufficiently large $N$, according to the first part of
Theorem~\ref{semiell-thm}. 
Since $\widetilde{\cal A}L_u$ is pseudo-local so is $P^{(N)}$, and therefore
the inclusion $\singsupp \widetilde{\cal A}(\chi_0 f)\subset\supp\chi_0$
implies that $\psi P^{(N)}\widetilde{\cal A}(\chi_0 f)$ is in
$C^\infty_0(V)\subset E^{t_1+\Bold{a}}_{r_1,o_1}(V)$. 
And because $\widetilde{\cal A}(\chi_1f)+\cal Ru$ is in
$E^{t_1+\Bold a}_{r_1,o_1}(V)$, the fact that $P^{(N)}$ has order zero gives
that also the first term on the right hand side of
\eqref{pmVloc-id}
is in $E^{t_1+\Bold a}_{r_1,o_1}(V)$.
\end{proof}

When $\Xi$ adheres to the boundary of $\Omega$ one can depart from the
parametrix formula in the same way. But it seems to require more techniques
to show that $\psi P^{(N)}\widetilde{\cal A}(\chi_0 f)$ is in
$E^{t_1+\Bold{a}}_{r_1,o_1}(V)$, for although this term is in $C^\infty(V)$,
a possible blow-up at the boundary should be ruled out.

\section{Final remarks}
  \label{finrem-sect}

To sum up, a semi-linear elliptic boundary problem of product type as
in \eqref{AQ-eq} can conveniently be treated by determining
\begin{description}
  \item[$\Dm(\cal A)$] to have well-defined boundary conditions, ie
$\Dm(\cal A)=\Dm_{\kappa}$ when $\cal A$ is of class $\kappa$,
cf~\eqref{Dmk-eq};
  \item[$\Dm(Q)$] the quadratic standard domain of $Q$, cf~\eqref{stQdom-eq};
  \item[$\Dm(\cal A,Q)$] the domain where $Q$ is $\cal A$-moderate, 
obtained from $\Dm(\cal A)\cap\Dm(Q)$ and the inequality 
$s>\fracnp-d+d_0+d_1+d_2$ (unnecessary if $d_1-d_0\ge n$),
 cf \eqref{sdddd-eq};
  \item[$\Dm(L_u)$] the domain of the exact paralinearisation at
$u$, that is given by the inequality 
$s>-s_0+d_0+d_1+(\fracc np+\fracc n{p_0}-n)_+$,
cf~\eqref{DmLu-eq};
  \item[$\Dm_u$] equal to $\Dm(\cal A)\cap\Dm(L_u)$, ie the domain where the
parametrices $P^{(N)}_u$ induced by a given solution $u$ are defined
and the parametrix formula \eqref{pmV-id} holds.
\end{description}
Stated briefly, any given solution $u$ in $\Dm(\cal A,Q)$ then
leads to the parametrix formula \eqref{pmV-id}, 
and $u$ belongs to any space associated with the data, as long as
this space is in the larger domain $\Dm_u$.
Theorems~\ref{semiell-thm}--\ref{semiell'-thm} contain the precise statements,
including hypoellipticity and local properties in
subregions $\Xi\Subset\Omega$.

\subsection{A last example}
The use of parameter domains is finally illustrated 
by the following polyharmonic 
Dirichl\'et problem perturbed by $Q(u)=u^2$, and with
$\gamma u=(\gamma_1 u,\dots ,\gamma_{m-1}u)$:
\begin{subequations}   \label{mlap-eqs}
\begin{align}
  (\mlap)^m u +u^2&= cf \quad\text{in $\Omega\subset\Rn$}
  \\
  \gamma u&=0 \quad\text{on $\Gamma$}.
\end{align}
\end{subequations} 
Data are taken as
a constant $c>0$ times the function $f(x)=|x^2_1+\dots+x^2_k|^{a/2}$ 
in a domain $\Omega\subset\Rn$, $n\ge2$, with $\Omega\ni 0$. 
For $a\in \,]-k,0[\,$ it is clear that $|x'|^a$ is locally integrable on
$\R^k$, hence is in $\cal D'(\R^k)$, 
so by Proposition~\ref{x32-prop} $f$ is in
$B^{k/p+a}_{p,\infty}(\overline{\Omega})$ for all $p>0$.  

Now $(\mlap)^m\colon H^m_0(\overline{\Omega})\to
H^{-m}(\overline{\Omega})$ is a bijection by Lax--Milgram's lemma, and 
$f\in B^{k/2+a}_{2,\infty}\subset H^{-m}$ for $k+2a+2m>0$, so under
this condition data are consistent with the linear problem; because
$H^m=B^m_{2,2}$ this means that $(m,2,2)\in \Dm(\lap^m_{\gamma})
=\Dm_m$.
(Here $\lap^m_{\gamma}$ denotes the realisation of $(\mlap)^m$ 
induced by the condition $\gamma u=0$.)

If moreover $Q$ is of order $<2m$ on $H^m_0$, ie $(m,2,2)$
is in $\Dm(\lap^m_{\gamma},Q)$, that
by \eqref{sdddd-eq} holds for $m> n/6$,
then \eqref{mlap-eqs} is by Proposition~\ref{small-prop} 
solvable for certain $c>0$.

However, it is a consequence of the theory here that any solution $u$ in
$H^m_0$ also is an element of $W^{2m}_1(\overline{\Omega})$. This is an
improvement in the sense that any derivative $D^\alpha u$ with $|\alpha|\le
2m$ is a \emph{function}, which is not true for every element of $H^m$.

\begin{thm}
  \label{mlap-thm}
Let $\Omega\subset\Rn$ ($n\ge2$)
be smooth open and bounded, $0\in \Omega$. When $m\ge n/6$, $k\in
\{1,\dots,n \}$ and $-k<a<0$, then
problem \eqref{mlap-eqs} with $f(x)=c|x'|^a$
has a solution $u\in H^m_0(\overline{\Omega})$ for sufficiently small
$c>0$. Every solution in $H^m(\overline{\Omega})$ is then also in 
$B^{k+a+2m}_{1,\infty}(\overline{\Omega})$, which is a subspace of
$W^{2m}_1(\overline{\Omega})$. 
\end{thm}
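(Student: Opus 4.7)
The plan is to handle existence via Proposition~\ref{small-prop} applied to the Lax--Milgram realisation of $(\mlap)^m$, and to deduce the regularity statement from Theorem~\ref{semiell-thm} by a careful choice of target parameters.

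For existence I would set $A=\lap^m_\gamma\colon H^m_0(\overline{\Omega})\to H^{-m}(\overline{\Omega})$, a linear isomorphism by Lax--Milgram, and $B(u,v)=\pi(u,v)$. The latter is bounded $H^m_0\oplus H^m_0\to H^{-m}$ by Sobolev embedding and duality; the cut-off $m\ge n/6$ is precisely the specialisation of \eqref{sdddd-eq} for $d=2m$, $d_0=d_1=d_2=0$, $p=2$, $s=m$, which places $(m,2,2)$ in $\Dm(\cal A,Q)$. Proposition~\ref{x32-prop} puts $f=c|x'|^a$ into $B^{k/2+a}_{2,\infty}(\overline{\Omega})\hookrightarrow H^{-m}$ (as noted just before the theorem, via $k+2a+2m>0$), and for $c>0$ small enough $\|A^{-1}(cf)\|_{H^m}$ satisfies the ball condition \eqref{y-ball}, producing a solution.

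For the regularity claim, I would fix any solution $u\in H^m(\overline{\Omega})=B^m_{2,2}(\overline{\Omega})$ and apply Theorem~\ref{semiell-thm} to the Green operator $\cal A$ with first row $(\mlap)^m$ and subsequent rows $\gamma_0,\ldots,\gamma_{m-1}$ (so $d=2m$, $\kappa=m$, and $(\Bold{a},\Bold{b})$ trivial), together with $Q(u)=\pi_\Omega(u,u)$ of product type $(0,0,0)$; Theorem~\ref{ptyp-thm} then supplies the moderate linearisation $L_u$. The target triple is $(t,r,o)=(k+a+2m,1,\infty)$: Proposition~\ref{x32-prop} at $p=1$ gives $f\in B^{k+a}_{1,\infty}(\overline{\Omega})=B^{t-2m}_{r,o}(\overline{\Omega})$, and zero boundary data is harmless.

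The crucial step is verifying $(t,r,o)\in\Dm_m\cap\Dm(L_u)$: formula \eqref{Dmk-eq} with $r=1$ reduces to $t>m$, which follows from $a>-k$ (hence $k+a>0$) and $m\ge 1$; formula \eqref{DmLu-eq} with $d_0=d_1=0$, $s_0=m$, $p_0=2$, $p=1$ requires $t>n/2-m$, which follows from $3m\ge n/2$ together with $k+a>0$. Theorem~\ref{semiell-thm} then delivers $u\in B^{k+a+2m}_{1,\infty}(\overline{\Omega})$; the concluding inclusions $B^{k+a+2m}_{1,\infty}\hookrightarrow B^{2m}_{1,1}\hookrightarrow W^{2m}_1$ are standard (the first exploits $k+a>0$ via the convergent geometric tail $\sum_j 2^{-(k+a)j}$; the second uses Bernstein's inequality summed over dyadic frequencies). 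The main obstacle to anticipate is the tight simultaneous control of both parameter-domain conditions at the extreme corner $r=1$, $o=\infty$ of the $B$-scale; it is precisely the hypothesis $m\ge n/6$ that lets $\Dm(L_u)$ reach that corner.
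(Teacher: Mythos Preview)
Your proposal is correct and follows essentially the same route as the paper's proof: existence via Proposition~\ref{small-prop} applied to the Lax--Milgram isomorphism, and regularity via Theorem~\ref{semiell-thm} with target $(t,r,o)=(k+a+2m,1,\infty)$, checking $\Dm_m\cap\Dm(L_u)$ membership through \eqref{Dmk-eq} and \eqref{DmLu-eq}. You are in fact slightly more explicit than the paper, which only verifies the $\Dm(L_u)$ condition and leaves both the $\Dm_m$ check and the embedding $B^{k+a+2m}_{1,\infty}\hookrightarrow W^{2m}_1$ unstated; your chain through $B^{2m}_{1,1}$ is a clean way to supply the latter.
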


\begin{proof}
Solvability was noted above. To see that any solution $u$ in
$H^m\subset B^m_{2,\infty}$ is in $B^{k+a+2m}_{1,\infty}$, note first that
$f\in B^{k+a}_{1,\infty}$ by the above. Moreover, to see that
the parameter $(k+a+2m,1,\infty)$ is in $\Dm(L_u)$, it suffices to apply
\eqref{DmLu-eq} with $(s_0,p_0,q_0)=(m,2,\infty)$, that yields 
$k+a+3m>n/2$. This inequality is  fulfilled since $m>n/6$ and
$k+a>0$ are assumed. So by Theorem~\ref{semiell-thm}, $u$ is in
$B^{k+a+2m}_{1,\infty}\subset W^{2m}_1$.
\end{proof}

In dimensions $n\in \{2,3,4,5\}$ the theorem allows $m=1$, hence 
covers eg the Dirichl\'et problem
of $\mlap$ 
for any choice of $k$, and every $a\in \,]-k,0[\,$.
For $n\in \{6,\dots,11\}$ the requirement that
$m>n/6$ shows that one gets the $W^{2m}_1$ regularity at least for 
$m=2$, ie for the
biharmonic Dirichl\'et problem; etc in higher dimensions.  

\begin{rem}   \label{illdef-rem}
In  many cases the square $Q(u)=u^2$ is
ill-defined on the `target' space $B^{k+a+2m}_{1,\infty}$, for this space
is outside of $\Dm(Q)$ if \eqref{stQdom-eq} is violated, ie if
$k+a+2m\le n/2$. But by taking $k+a>0$ close to $0$, it will be enough to
have $m<n/4$, so there are examples of such target spaces whenever 
$m$ can be taken in 
$\,]\tfrac{n}{6},\tfrac{n}{4}[\,\cap\N$, which is non-empty for
$n\in \{5,9,10,11\}$ and for $n\ge13$.
For the slightly larger space $W^{2m}_1$ one can refer to 
Remark~\ref{Wm-rem} for a specific proof that $Q$ cannot be continuous from
$W^{2m}_1\to \cal D'$ for  $m<n/4$.
Note that the result is sharp: 
if it could be shown that
$u\in B^t_{1,\infty}(\overline{\Omega})$ for $t$ so large that
$(t,1,\infty)$ is in $\Dm(\lap^m_{\gamma},Q)$, ie $t>n-2m>\tfrac{n}{2}$,
then $cf=\mlap^mu+u^2$ would be in
$B^{t-2m}_{1,\infty}$, which by Proposition~\ref{x32-prop} would imply
$t\le k+a+2m\le \tfrac{n}{2}$, giving a contradiction. 
Hence the ill-definedness of $Q$ 
at $B^{k+a+2m}_{1,\infty}$ is not explained by partial knowledge at $p=1$,
but rather by the fact that $Q$ is defined on $H^m\ni u$. 
\end{rem}

\bigskip

All in all there are legion examples of regularity properties
corresponding to spaces outside of the parameter domains of $A$-moderacy.
They are of importance for the general theory of partial
differential equations, albeit at some distance from the most common boundary
problems of mathematical physics. 

\subsection{Other types of problems}
  \label{pbs-ssect}
The analysed product type operators are obtained roughly by inserting
derivatives of the unknown $u$ in a polynomial of degree two;
cf~Section~\ref{pdty-sect}.  
This restriction to the second order case could seem artificial, but
it has been made in order not to burden the exposition.

In fact products $u_1(x)\dots u_m(x)$ have been analysed in $B^{s}_{p,q}$
and $F^{s}_{p,q}$ spaces by
paramultiplication in eg \cite[Ch.~4.5]{RuSi96}. The approach is the
same as for $m=2$ with collection of terms in two groups to which the dyadic
ball and corona criteria applies, respectively, but the complexity of this
is rather larger for $m>2$ because of the many indices. When
needed one can undoubtedly obtain, say $u^m=-L_u(u)$ and analyse the exact
paralinearisation $L_u$ along the lines of Theorem~\ref{mod-thm}, using the
framework of \cite[Ch.~4.5]{RuSi96}. Therefore these applications are left
for the future, while the second order case is treated here with its
consequences for eg the von Karman problem in Section~\ref{vK-sect}; as
mentioned the developed results also apply to the stationary Navier--Stokes
equation. 

An extension of the parametrix formulae to quasi-linear
problems seems to require further techniques.

\begin{rem}
  \label{parabolic-rem}
Parabolic boundary value problems could also be covered by
Theorem~\ref{ir-thm}, by taking $A$ as the full parabolic system
$(\partial_t-a(x,D_x),r_0,T)$ acting in anisotropic spaces ($r_0$ is
restriction to $t=0$, and $T$ a trace operator defining the boundary
conditions). 
For the linear problems, the reader is
referred to \cite[Sect.~4]{G4} for the $L_p$-theory (using classical
Besov and Bessel potential spaces) with a complete set of
compatibility conditions on fully inhomogeneous data. In particular
Corollary~4.5 there applies because the underlying 
manifold $\,]0,b[\,\times\Omega$ for $0<b<\infty$ is bounded, so that the
solution spaces $X^s_p$ fulfil \eqref{XY-cnd} above. Because of the stronger
data norms introduced to control the compatibility of the boundary- and
initial-data for exceptional values of $s$, cf \cite[(4.16)]{G4}, it is here
convenient  
that the $Y^s_p$-scale is not required to fulfil
\eqref{s-emb}--\eqref{fm-emb}. (The compatibility conditions
may force one to work with rather small parameter domains, once data are
given. But even so the present results may well allow considerable
improvements of the solution's integrability.)
For the non-linear terms, the
product type operators of Section~\ref{prod-sect} are
straightforward to treat in the corresponding anisotropic spaces, since the
necessary paramultiplication estimates have been established in this
framework \cite{Y1,JJ94mlt}.
\end{rem}

For problems of composition type, T.~Runst and the author \cite{JoRu97}
obtained solutions using the Leray--Schauder fixed point theorem and carried
the existence over to a large domain of $B^{s}_{p,q}$- and
$F^{s}_{p,q}$-spaces with a boot-strap argument. 
However, the domain of $A$-moderacy $\Dm(\cal A,Q)$ 
is not convex for such problems, cf \cite[Fig~1]{JoRu97}, so the iteration
almost developed into a formal algorithm.
J.-Y.~Chemin and C.-J.~Xu \cite{ChXu97} used a boot-strap method to
give a simplified proof of the smoothness of
weak solutions to the Euler--Lagrange equations of harmonic maps;
the basic step was to obtain hypoellipticity of a class of semi-linear
problems with terms of the form 
$\sum a_{j,k}(x,u(x))\partial_ju\partial_ku$.
Formally this incorporates both composition and product type
non-linearities, but the difficulties met in \cite{JoRu97} did not show up
in \cite{ChXu97}, since the weak solutions in this case are
known to be bounded (so that consideration of $u\mapsto F(u)$ on the
\emph{full} spaces $H^s_p$ or $B^{s}_{p,q}$ with $1<s<\fracnp$ was
unnecessary).  
However, this well indicates that larger families of non-linearities
will be relevant and potentially require disturbingly many additional
efforts. 

It has therefore been natural to treat only the class of product
type operators in the present article,
although Section~\ref{main-sect} applies at least to bounded solutions of
composition type problems.
But the latter sphere of problems 
could in general deserve stronger methods,
say to get rid of the boot-strap algorithm in \cite{JoRu97}. However, it seems
rather demanding to analyse the exact paralinearisation of $F(u)$
when $u$ is an unbounded function, say an element of $H^s_p$ for
$s<\fracnp$; this is probably an open problem.

\section*{Acknowledgement}
I am grateful to prof.\ J.-M.~Bony who asked for `inversion'
of semi-linear boundary problems.
Thanks are also due to prof.\ G.~Grubb for suggesting local regularity
improvements via pseudo-locality.

%
%
\providecommand{\bysame}{\leavevmode\hbox to3em{\hrulefill}\thinspace}
\providecommand{\MR}{\relax\ifhmode\unskip\space\fi MR }
\providecommand{\MRhref}[2]{%
  \href{http://www.ams.org/mathscinet-getitem?mr=#1}{#2}
}
\providecommand{\href}[2]{#2}

\end{document}